\newenvironment{customproof}[1]{%
  \par\pushQED{\qed}%
  \normalfont\topsep6pt \trivlist
  \item[\hskip\labelsep\itshape
    Proof of #1\@addpunct{.}]\ignorespaces
}{%
  \popQED\endtrivlist\@endpefalse
}
\theoremstyle{plain}
\newtheorem{theorem}{Theorem}[section]
\newtheorem{lemma}[theorem]{Lemma}
\newtheorem{corollary}[theorem]{Corollary}
\newtheorem{preremark}[theorem]{Remark}
\newenvironment{remark}{\begin{preremark}\rm}{\medskip \end{preremark}}
\theoremstyle{definition}
\newtheorem{definition}[theorem]{Definition}
\numberwithin{equation}{section}
\newcommand{\C}{\mathcal{C}}
\newcommand{\R}{\mathbb R}
\newcommand{\LL}{\mathcal L}
\newcommand{\A}{\mathcal A}
\newcommand{\dd}{\, \mathrm{d}}
\newcommand{\eps}{\varepsilon}
\newcommand{\one}{\mathds{1}}
\DeclareMathOperator{\supp}{supp}
\newcommand{\p}{\partial}
\DeclareMathOperator*{\Tail}{{Tail}}
\newcommand{\osc}{\operatorname{osc}}
\newcommand{\kmax}{N}
\title{ $C^{1+\alpha}$ regularity for fractional $p$-harmonic functions}
\author{Davide Giovagnoli, David Jesus and Luis Silvestre}
\newcommand{\Addresses}{{
  \bigskip
  \footnotesize

    Davide Giovagnoli, \textsc{Dipartimento di Matematica,
Universit\`a di Bologna, 
 Bologna, BO 40126, Italy}\par\nopagebreak
  \textit{E-mail address:} {\tt d.giovagnoli@unibo.it}\\

  \medskip

    David Jesus, \textsc{Applied Mathematics and Computational Sciences (AMCS), King
Abdullah University of Science and Technology (KAUST),  Thuwal 23955-6900, Kingdom of Saudi Arabia}\par\nopagebreak
  \textit{E-mail address:} {\tt david.dejesus@kaust.edu.sa}\\

\medskip

  Luis Silvestre, \textsc{Mathematics Department,
 University of Chicago,  
 Chicago, IL 60637, USA}\par\nopagebreak
  \textit{E-mail address:} {\tt luis@math.uchicago.edu}\\

 }

}
\date{\today}
\begin{document}

\begin{abstract}
We establish interior $C^{1,\alpha}$ regularity estimates for some $\alpha > 0$, for solutions of the fractional $p$-Laplace equation $(-\Delta_p)^s u = 0$ when $p$ is in the range $p \in [2,2/(1-s))$.
\end{abstract}

\maketitle

\section{Introduction}

The $p$-Laplace equation is a standard equation in the Calculus of Variations. It results from considering the first variation of the $W^{1,p}$ seminorm. A classical regularity result states that $p$-harmonic functions are $C^{1,\alpha}$ regular for some $\alpha > 0$. This was proved by Uraltseva \cite{ural1968degenerate} and Uhlenbeck \cite{uhlenbeck1977regularity} originally for systems and $p \geq 2$. Several different proofs for the scalar case and generalizations have been given by various authors \cite{evans1982,dibenedetto1983c,lewis1983,tolksdorf,giaquinta1986,wang1994compactness}.

A natural nonlocal variant arises from considering the standard Gagliardo $W^{s,p}$ semi-norm for $s \in (0,1)$ and $p \in [1,\infty)$, instead of the $W^{1,p}$ seminorm.
\begin{align}\label{eq:def_gagliardo}
    [u]_{W^{s,p}(\R^d)} := \left( \iint_{\R^d \times \R^d} \frac{|u(x)-u(y)|^p}{|x-y|^{d+sp}} \dd x \dd y \right)^{1/p}.
\end{align}

Minimizers of this seminorm for functions with prescribed values outside a domain $\Omega \subset \R^d$ satisfy the fractional $p$-Laplace equation
\begin{equation} \label{eq:fracplap}
    (-\Delta_p)^s u = 0  \qquad \text{ for $x \in \Omega$}.
\end{equation}
Here, $(-\Delta_p)^s u$ stands for the standard fractional $p$-Laplacian, which is defined as
\[
(-\Delta_p)^s u (x) = \int_{\R^d} \frac{|u(x)-u(y)|^{p-2}}{|x-y|^{d+sp}} (u(x)-u(y)) \dd y.
\]

The fractional $p$-Laplacian has been the subject of numerous recent studies. We review some relevant results in Section \ref{s:history}. By analogy with the local case, it is natural to expect that solutions to the equation \eqref{eq:fracplap} may also be $C^{1,\alpha}$ for some $\alpha>0$, at least for some range of $p$ and $s$. This is a well-known conjecture in the area that has remained open for a few years. In this paper, we prove it in the range $p \in [2,2/(1-s))$.

\begin{theorem} \label{t:main}
    Let $u \in L_{sp}^{p-1}(\R^d)$, be a solution to the equation \eqref{eq:fracplap} in the ball $B_2$. Assume that $s \in (0,1)$ and $2 \leq p <2/(1-s)$. Then $u$ is $C^{1,\alpha}$ regular in $B_1$ for some $\alpha > 0$ and the following estimate holds
    \[
    \|u\|_{C^{1,\alpha}(B_1)} \leq C \left( \|u\|_{L^\infty(B_2)} +  {\Tail}_{p-1,sp}(u;2) \right).
    \]
    Here $\alpha$ and $C$ are positive constants depending only on $d$, $s$ and  $p$. The quantity $ {\Tail}_{p-1,sp}(u,2)$ accounts for the values of $u$ outside the ball $B_2$. It is defined below.
\end{theorem}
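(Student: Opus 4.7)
The plan is to follow the improvement-of-flatness / compactness scheme that is now standard for nonlinear nonlocal regularity problems. Hölder and, in fact, Lipschitz interior estimates for $(-\Delta_p)^s u = 0$ are already available in this range of $p$ and $s$ in the literature, so at the outset we may assume $u \in C^{0,1}(B_{3/2})$ with a quantitative bound in terms of $\|u\|_{L^\infty(B_2)}$ and $\Tail_{p-1,sp}(u;2)$. The remaining task is then to promote Lipschitz to $C^{1,\alpha}$.

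I would set this up as a discrete iteration: fix a small $\lambda\in(0,1)$ and an $\alpha\in(0,1)$ to be chosen, and for every $x_0\in B_1$ produce a sequence of affine functions $\ell_k(x)=a_k+q_k\cdot(x-x_0)$ such that $\|u-\ell_k\|_{L^\infty(B_{\lambda^k}(x_0))}\leq C\lambda^{k(1+\alpha)}$ and $|q_{k+1}-q_k|\leq C\lambda^{k\alpha}$. A standard telescoping argument then converts such a sequence into a uniform $C^{1,\alpha}$ bound at $x_0$. The heart of the proof is therefore a single iteration step: if the bound holds at scale $\lambda^k$, then it can be upgraded at scale $\lambda^{k+1}$ with an improved affine function. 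I would prove this step by contradiction and a blow-up compactness argument along a sequence of would-be counterexamples.

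The behavior of the blow-up limit depends on whether the slopes $q_k$ stay bounded away from zero. In the \emph{nondegenerate} regime, writing $u=\ell+v$ and Taylor-expanding the nonlinearity, the suitably rescaled perturbations should converge to a solution of a linear nonlocal equation with anisotropic kernel
\[
K(y)\sim\frac{|\hat q\cdot y|^{p-2}}{|y|^{d+sp}},
\]
whose effective order is $\sigma:=sp-(p-2)=2-p(1-s)$. The hypothesis $p<2/(1-s)$ is exactly the condition $\sigma>0$, so the linearized operator is of strictly positive fractional order, and the corresponding Hölder regularity theory (in the spirit of Caffarelli–Silvestre and its refinements) yields $C^{1,\alpha_0}$ regularity of the blow-up limit for some $\alpha_0>0$. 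Taking $\alpha<\alpha_0$ then contradicts the assumed failure of flatness improvement. In the \emph{degenerate} regime $|q_k|\to 0$, the rescalings converge instead to a solution of $(-\Delta_p)^s u_\infty=0$ itself, and the available Hölder estimate for this equation closes the argument directly.

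The hardest step will be the analysis of the anisotropic limit operator in the nondegenerate case: the kernel $K$ vanishes on the hyperplane orthogonal to $q$, so it fails the standard two-sided ellipticity bound $\lambda_0|y|^{-d-\sigma}\leq K(y)\leq \Lambda_0|y|^{-d-\sigma}$ that underlies the cleanest nonlocal Schauder or Krylov–Safonov theorems. One must instead exploit that $K$ is bounded below on a cone around $\pm\hat q$, together with the symmetry of the kernel, and feed this into a more delicate argument (Krylov–Safonov adapted to degenerate directions, or a perturbative Schauder scheme modeled on $|q|^{p-2}(-\Delta)^{\sigma/2}$) to extract $C^{1,\alpha_0}$ regularity of the limit. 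A secondary obstacle is controlling the nonlocal tails through the iteration, since rescaling blows up exterior contributions and the induced tail term has to be absorbed either by the induction hypothesis or by the $\Tail_{p-1,sp}(u;2)$ quantity appearing in the statement. Once these two points are addressed, iterating the improvement step gives the stated $C^{1,\alpha}$ estimate.
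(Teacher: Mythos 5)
Your outline is the natural first attempt, but it has a genuine gap precisely at the point where this problem has resisted solution: the \emph{degenerate} regime of your dichotomy is circular. If $|q_k|\to 0$, the normalized blow-ups converge to a solution of $(-\Delta_p)^s u_\infty=0$ itself, and to close a $C^{1,\alpha}$ improvement-of-flatness step you would need the limit to be $C^{1,\alpha_0}$ for some $\alpha_0>\alpha$ --- which is exactly the theorem you are trying to prove. The ``available H\"older estimates'' for this equation in the range $p\in[2,2/(1-s))$ give at most almost-Lipschitz regularity (Brasco--Lindgren--Schikorra, B\"ogelein et al.), which is not enough to beat the exponent $1+\alpha$ in the excess decay. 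There is also a problem in the nondegenerate regime: to identify the anisotropic limit kernel $|\hat q\cdot y|^{p-2}|y|^{-d-sp}$ you need $\nabla u_k\to q$ strongly (the kernel $K_u(x,y)$ depends on the actual increments of $u$, not on the affine approximant), whereas uniform convergence of Lipschitz functions only gives weak-$*$ convergence of gradients; a Lipschitz function that is $L^\infty$-close to $\ell$ can still have a wildly oscillating gradient, so the kernel can degenerate on a large set of pairs and the Taylor expansion of the nonlinearity is not justified. Finally, even granting the limit equation, the treatment of the kernel that vanishes on the hyperplane $\hat q^\perp$ is left as ``a more delicate argument,'' which is where the real work lies.

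The paper avoids compactness altogether. Its first stage proves a direct dyadic decay of $\|\nabla u\|_{L^\infty}$ via a De Giorgi-type iteration on the localized directional derivatives $v_e$, where the key new idea is a measure-theoretic mechanism to extract nondegeneracy of $K_u(x,x+t\nu)$: along each ray one tracks the first time $t_\nu$ at which $\nabla u$ has spent a definite proportion of its time outside $B_r(e)$, and for $t<t_\nu$ the fundamental theorem of calculus gives $|u(x+t\nu)-u(x)|\gtrsim t$, hence $K_u\gtrsim t^{-d+p(1-s)-2}$ (Lemmas \ref{l:Bound_Jgap1} and \ref{l:bound_Jr}). This either forces $e\cdot\nabla u\le 1-\delta$ on the half-ball (improving the gradient bound and handling what you call the degenerate regime) or certifies that $\nabla u$ is close to $e$ on most of the ball. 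A second stage (Ishii--Lions applied to $u-e\cdot x$) upgrades closeness in measure to pointwise closeness of the gradient --- exactly the strong-convergence issue your blow-up glosses over --- and only then does the third stage run a linear H\"older estimate for the directional derivatives, using coercivity for kernels bounded below merely on a cone (Theorem \ref{Thm:ChakerLuis}) together with the Kassmann--Schwab scheme. If you want to salvage your plan, you would need to replace the degenerate branch with a genuine gradient-decay mechanism of this kind rather than an appeal to known estimates.
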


Theorem \ref{t:main} applies to weak or viscosity solutions of \eqref{eq:fracplap}. In fact, it has been proven that these two notions of solutions coincide for the equation \eqref{eq:fracplap}. In order to keep the exposition as clear as possible, we initially explain the proof of the a priori estimate in Theorem \ref{t:main} when $u$ is a classical solution with enough regularity for all the computations to make sense. Then, in Section \ref{s:weak-solutions}, we discuss the required technical modifications necessary for the proof to apply to weak solutions. The proof of the a priori estimate is quite involved on its own. For that reason, we believe that presenting the proof in terms of weak solutions from the outset would significantly hinder the paper's readability.

By $L_{sp}^{p-1}(\R^d)$ we denote the \textit{tail space} defined by
\[
L_{sp}^{p-1}(\R^d):= \left\{ u \in L_{loc}^{p-1}(\R^d) : \int_{\R^d} \frac{|u(y)|^{p-1}}{(1+|y|)^{d+sp} } \dd y < \infty \right\}.
\]
The condition $u \in L_{sp}^{p-1}(\R^d)$ is a standard requirement to make sense of $(-\Delta_p)^s u$.
We also define the \textit{tail function} as
\begin{equation} \label{def:Tail_fun}
\text{Tail}_{p-1,sp}(u;x,R):= \left( \int_{B_R^c(x)}\frac{|u(y)|^{p-1}}{|y-x|^{d+sp}} \dd y \right)^{\frac{1}{p-1}}, \quad R>0,
\end{equation}
and whenever $x=0$ we write it as $\text{Tail}_{p-1,sp}(u;R)$. 

\subsection{Previous results}

\label{s:history}

To our knowledge, the precise formulation of the fractional $p$-Laplacian that we use in this paper was first considered in 2010 by Ishii and Nakamura in \cite{ishii2010class}. In that work, they develop the viscosity solution theory, including the comparison principle and existence of solutions by Perron's method, related to this equation. Slightly earlier, Andreu, Maz\'on, Rossi and Toledo \cite{andreu2008nonlocal} considered a parabolic equation with a nonlocal version of the $p$-Laplacian similar to \eqref{eq:fracplap} but with an integrable kernel $J(x-y)$ instead of $|x-y|^{-d-sp}$.

The work by Bjorland, Caffarelli and Figalli \cite{bjorland2012non} from 2012 was motivated by the fractional $p$-Laplace equation \eqref{eq:fracplap}, as explained in its first page. The results presented on that paper, however, are about a related but different class of integro-differential equations.

After those early works, there have been many publications concerning the equation \eqref{eq:fracplap} and related models. There are results on well posedness, regularity, and bounds for eigenvalues. In the last decade, the interest in this equation has exploded. The question of $C^{1,\alpha}$ regularity of solutions is a well-known open problem in this area, which is explicitly mentioned at least in \cite{mosconi2016recent,Palatucci2018dirichlet,kuusi2018regularity,garofalo2019thoughts,kuusi2022gradient,bogelein2024announcement,bogelein2024regularity,iannizzotto2024survey,biswas2025lipschitz,Diening2025calderon,garain2025higher}. In this paper, we resolve this open problem in the range $p \in [2,2/(1-s))$.

It is impossible to review all the related results concerning the equation \eqref{eq:fracplap} that have been published in recent years, even if we restrict our attention to regularity results. We mention a few specific results below that are most closely related to the work in this paper.

In 2014, Di Castro, Kuusi and Palatucci obtained a Harnack inequality for solutions of \eqref{eq:fracplap} in \cite{di2014nonlocal}. The same authors proved the H\"older continuity of solutions, for a small H\"older exponent, in \cite{di2016local}. They use techniques inspired by the ideas of De Giorgi, Nash and Moser. In \cite{lindgren2016holder}, Lindgren also obtained H\"older estimates but using non-divergence techniques as in \cite{silvestre2006holder}. These interior H\"older estimates were extended up to the boundary by Iannizzotto, Mosconi and Squassina in \cite{Iannizzotto2016global}.

In 2017, Brasco and Lindgren obtained regularity estimates in Sobolev spaces in \cite{brasco2017higher}. A year later in \cite{brasco2018higher}, Brasco, Lindgren and Schikorra obtained H\"older regularity estimates with precise exponents. Specifically, for $p > 2$, they prove that the solution is $C^\alpha$ for every $\alpha$ strictly less than $\min(1,sp/(p-1))$. In particular, they get that the solution is almost Lipschitz in the range $p \in [2,1/(1-s)]$. H\"older estimates for precise exponents in the range $p<2$ were studied in 2024 by Garain and Lindgren in \cite{garain2024higher}.

There are two interesting survey articles that appeared in 2018 that describe problems related to the equation \eqref{eq:fracplap}. One is by Palatucci \cite{Palatucci2018dirichlet} and the other is by Kuusi, Mingione and Sire \cite{kuusi2018regularity}. Both of these articles mention the potential $C^{1,\alpha}$ regularity of solutions of \eqref{eq:fracplap} for $p$ and $s$ in some range as an open problem in the area.

In 2018, in a short paper by Schikorra, Shieh and Spector \cite{schikorra2018regularity}, they study a modified equation and show by a simple argument that the solutions are $C^{s+\alpha}$ for some $\alpha>0$. While this result does not apply to the equation \eqref{eq:fracplap}, it presented a precise regularity result for a similarly-looking equation.

Estimates in Sobolev spaces with precise exponents were obtained more recently by B\"ogelein, Duzaar, Liao and Bisci in \cite{bogelein2024gradient,bogelein2024regularity,bogelein2025gradient} and by Diening, Kim, Lee and Nowak \cite{Diening2025higher}. From these estimates, they can also derive estimates in H\"older spaces. In particular, \cite{bogelein2024regularity}, they prove that solution is almost Lipschitz for the range $p \in [2,2/(1-s))$.

The Lipschitz regularity of solutions for $p < 1/(1-s)$ was recently obtained by Biswas and Topp. The range of applicability was later extended to $p < 2/(1-s)$ by Biswas and Sen \cite{biswas2025improved}. These papers introduced the idea of applying the Ishii-Lions technique to the equation \eqref{eq:fracplap}.

We summarize the main result from \cite{biswas2025lipschitz,biswas2025improved} in our setting.
\begin{theorem} \label{t:biswas&co}
    If $u \in W^{s,p}_{loc}(B_2) \cap L^{p-1}_{sp}(\R^d)$ is a weak solution to \eqref{eq:fracplap} in $B_2$, then $u$ is locally Lipschitz continuous in $B_1$. Moreover, an estimate holds of the form
    \[ \|u\|_{Lip(B_1)} \leq C \left( \|u\|_{L^\infty(B_2)} + {\Tail}_{p-1,sp}(u;2) \right).\]
\end{theorem}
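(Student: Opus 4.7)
The plan is to use the Ishii--Lions doubling-of-variables technique, now standard in nonlocal regularity theory, adapted to the degenerate nonlinear kernel of the fractional $p$-Laplacian. Although the statement is for weak solutions, the equivalence between weak and viscosity solutions for \eqref{eq:fracplap} lets me work within the viscosity framework. For a fixed $x_0 \in B_1$, I would consider the doubled function
\[
\Phi(x,y) = u(x) - u(y) - L\,\omega(|x-y|) - \tfrac{\sigma}{2}\bigl(|x-x_0|^2 + |y-x_0|^2\bigr),
\]
with a strictly concave modulus $\omega(r) = r - \gamma r^{1+\beta}$ for small $\beta,\gamma > 0$, a localization parameter $\sigma > 0$, and $L$ chosen proportionally to $\|u\|_{L^\infty(B_2)} + \Tail_{p-1,sp}(u;2)$. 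The target is $\sup \Phi \leq 0$ on $B_{3/2}\times B_{3/2}$; letting $\sigma \to 0$ then yields $|u(x) - u(x_0)| \leq L|x - x_0|$.

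Arguing by contradiction, suppose $\Phi$ attains a positive interior maximum at $(\bar x, \bar y)$ with $\bar x \neq \bar y$. The $L^\infty$ bound on $u$ and the choice of $L$ force $|\bar x - \bar y|$ to be small and keep $\bar x, \bar y$ away from the boundary. Then the functions
\[
\varphi_1(x) = L\omega(|x - \bar y|) + \tfrac{\sigma}{2}|x-x_0|^2 + \text{const}, \qquad \varphi_2(y) = -L\omega(|\bar x - y|) - \tfrac{\sigma}{2}|y-x_0|^2 + \text{const},
\]
touch $u$ from above at $\bar x$ and from below at $\bar y$, respectively. Testing the viscosity equation $(-\Delta_p)^s u = 0$ at both points and subtracting produces an inequality of the schematic form $I_{\mathrm{good}} + I_{\mathrm{bad}} + I_{\mathrm{tail}} \leq 0$, where each term is a singular integral over a doubling variable $h$.

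The heart of the argument is extracting a coercive negative contribution $I_{\mathrm{good}}$ from a conic neighborhood of directions orthogonal to $\bar x - \bar y$. In that cone, the concavity of $\omega$ supplies a second-order quantity with a strictly negative sign, while the nonlinear weight $|u(x)-u(y)|^{p-2}$ scales like $L^{p-2}|\bar x - \bar y|^{p-2}$ on the relevant scale (as guaranteed by the positive-maximum hypothesis). Integrating against $|h|^{-d-sp}$ yields a gain of order $-c\,L^{p-1}|\bar x - \bar y|^{\,2-p(1-s)}$. The condition $p < 2/(1-s)$ is exactly what makes the exponent $2-p(1-s)$ positive, so that the transverse singular integral converges and dominates the remaining terms. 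The complementary "bad" contributions are bounded by $L\omega(|\bar x - \bar y|)$ with a small prefactor (after tuning $\gamma,\beta$), and the "tail" term is controlled by $\Tail_{p-1,sp}(u;2)$; both are absorbed into $I_{\mathrm{good}}$ for $L$ sufficiently large, producing the contradiction.

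The principal obstacle is the degenerate nonlinear weight $|u(x)-u(y)|^{p-2}$, which makes the operator direction-dependent and forces one to decompose $\R^d$ into carefully chosen cones around and transverse to $\bar x - \bar y$, proving in each region that the increments really scale like $L|\bar x - \bar y|$. A secondary technical point is compatibility with the viscosity formulation of $(-\Delta_p)^s$: the singular integral at the touching point demands regularization or truncation of $\varphi_1,\varphi_2$ near the singularity of $\omega$, after which one passes to the limit using uniform bounds. Both issues are precisely where the quantitative range $p \in [2,2/(1-s))$ enters the analysis.
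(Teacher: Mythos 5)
The paper does not prove Theorem \ref{t:biswas&co}; it quotes it from \cite{biswas2025lipschitz,biswas2025improved}, and your Ishii--Lions doubling strategy is indeed the route taken there (and mirrored in Lemma \ref{l:IL} of Section \ref{s:stage2}). However, your sketch has a genuine error at its core: you extract the coercive term from ``a conic neighborhood of directions orthogonal to $\bar x - \bar y$'' where ``the concavity of $\omega$ supplies a strictly negative second-order quantity.'' This is backwards. Writing $\bar a = \bar x - \bar y$ and $\phi(x) = L\omega(|x-\bar y|) + \dots$, for $z \perp \bar a$ one has $\nabla \phi(\bar x)\cdot z = 0$ and $|\bar a + z| > |\bar a|$, so $\delta^2\phi(\bar x, z) = L\bigl(\omega(|\bar a|) - \omega(|\bar a + z|)\bigr) < 0$: in orthogonal directions it is the monotonicity of $\omega$ that governs, and it gives the \emph{unfavorable} sign. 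Worse, the degenerate weight is weakest exactly there, since the relevant first-order factor at the touching point is $\nabla\phi(\bar x) \approx L\,\omega'(|\bar a|)\,\bar a/|\bar a|$, so $|\nabla\phi(\bar x)\cdot z|^{p-2}$ vanishes as $z$ becomes orthogonal to $\bar a$. The coercive contribution must instead come from the cone of directions nearly \emph{parallel} to $\bar a$ (compare \eqref{def:cone} and Lemmas \ref{l:2nd-increment-omega}, \ref{l:cone}), where $\omega'' < 0$ yields $\delta^2\phi \gtrsim L|z|^2\,|\omega''(|\bar a|)|$ and the weight is nondegenerate of size $\sim (L|z|)^{p-2}$.

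The quantitative bookkeeping is also off, in a way that matters for where $p < 2/(1-s)$ enters. The weight in the singular integrals is $|u(\bar x) - u(\bar x + z)|^{p-2}$ (increments in the integration variable), not $|u(\bar x) - u(\bar y)|^{p-2}$; the positive-maximum hypothesis alone does not make it comparable to the constant $L^{p-2}|\bar a|^{p-2}$, and if you treated it as such, the near-origin integral $\int |z|^{2-d-sp}\,\dd z$ need not even converge (the range allows $sp > 2$). With the correct weight $\sim (L|z|)^{p-2}$, the cone integral produces a gain of order $L^{p-1}|\bar a|^{p(1-s)-1}$ (up to the cone aperture and logarithmic factors), not $L^{p-1}|\bar a|^{2-p(1-s)}$, and the restriction $p < 2/(1-s)$ is used not to make some ``transverse integral converge'' but in the comparison of this gain against the error terms from the intermediate region (of order $|\bar a|^{p(1-s)/2}$) and the far field (of order $|\bar a|$), exactly as in the proof of Lemma \ref{l:IL}; this comparison is also where passing from $p<1/(1-s)$ in \cite{biswas2025lipschitz} to $p<2/(1-s)$ in \cite{biswas2025improved} required the additional splitting of the domain ($\mathcal D_1$, $\mathcal D_2$). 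As written, your coercive term has both the wrong sign structure and the wrong size, so the contradiction does not close without reworking the cone decomposition along the lines of Section \ref{s:stage2}.
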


In this paper, we use Theorem \ref{t:biswas&co} as the starting point of our analysis. Moreover, we use some of the ideas from \cite{biswas2025lipschitz,biswas2025improved} in part of our proof. We also use some ideas that originated in the study of regularity properties for general integro-differential equations like \cite{silvestre2006holder,Kassman-Schwab,imbert2019weak}. In the next section, we outline the ideas of our proof.

\subsection{Strategy of the proof} \label{s:strategy}

The starting point of our analysis comes after the Lipschitz continuity results from \cite{biswas2025lipschitz,biswas2025improved}. From these, we know that our solution $u$ is Lipschitz continuous in any ball $B_r$ with $r<2$ with an estimate of its Lipschitz norm. Consequently, we know that the gradient $\nabla u$ is bounded in $B_1$, hence up to a rescaling, we suppose $\|\nabla u\|_{L^\infty(B_1)}\leq 1$.

The \textbf{first stage} in the proof is to establish an improvement on our estimate for the gradient $\nabla u$ in dyadic balls. That is, we show iteratively that $\|\nabla u\|_{L^\infty(B_{2^{-k}})} \leq C_0 (1-\delta)^k$ for $k = 0,1,2,\dots,\kmax$. Here $\delta >0$ is a small parameter depending on $d$, $s$, $p$. The value of $\kmax$ may be finite or infinite. It will be infinite in those cases that $\nabla u(0) = 0$. When $\kmax < \infty$, we will see that our iteration only stops when $\nabla u$ is very close to a fixed direction $e \in S^{d-1}$ in most of the ball $B_{2^{-\kmax}}$. When we reach that condition, the iteration in the first stage of the proof stops and we move on to the second stage. The proofs of the first stage are the most delicate in this paper. We achieve them analyzing the linearized equation applied to the directional derivatives. The linearized kernel may be highly degenerate. We need to find those directions where the kernel is nondegenerate and they correspond to obtaining a growth condition for $u$. The technique to leverage the interplay between the average values of the gradient of $u$ in certain directions and the non-degeneracy of the kernel is a technical novelty introduced in this paper.

We give a rough description of this technique. Assuming we have a solution of \eqref{eq:fracplap} such that $|\nabla u|\leq 1$ in $B_1$, we start from a point $x\in B_{1/2}$  such that $\nabla u(x)$ is close to some unit vector $e$. If this point does not exist, then the improvement of flatness readily follows. We want to show that either $\nabla u$ is close to $e$ in most of $B_1$, or the directional derivative $e \cdot \nabla u$ is less than some quantifiable quantity $(1-\delta)$ in all of $B_{1/2}$. We consider a line segment $[x,x+\nu]$ with direction $\nu\in S^{d-1}$. We can see $\nabla u$ as a map from $B_1$ to itself. Furthermore, $\{\nabla u(x+t\nu)\,:\, t\in[0,1]\}$ corresponds to a curve starting from a point close to $e$. Unless $\nabla u$ stays close to $e$ in most of $B_1$, there must be some directions $\nu$ for which the curve will eventually leave far from $e$. The main idea of the proof is to consider the first time $t_\nu$ when we \emph{feel} the set $\{e \cdot \nabla u < 1-r\}$. The minimality of $t_\nu$ implies that for $t\leq t_\nu$ the difference $u(x+t\nu)-u(x)$ is comparable with $t \nu \cdot e$. We deduce the non-degeneracy of the kernel $K_u(x,y)$ along this segment. Then, we show how to carry out an iteration in the style of De Giorgi to get the results that lead to the first stage of the proof. The details are given in Section \ref{s:reduc_grad}.

At the end of the first stage, in most of $B_{2^{-\kmax}}$, the gradient $\nabla u$ is very close to a vector $e$ with norm $C_0 (1-\delta)^{\kmax}$. The \textbf{second stage} consists in showing that in $B_{2^{-\kmax-1}}$ the gradient $\nabla u$ is close to $e$ at every point, not just in most of the points. This is done by a different argument, using the Ishii-Lions method applied to the function $u(x)-e \cdot x$, borrowing some ideas from \cite{biswas2025lipschitz,biswas2025improved}.

In the \textbf{third stage} of the proof, we establish the H\"older continuity of $\nabla u$ in $B_{2^{-\kmax-2}}$ by analyzing again the linearized equation applied to the directional derivatives. Since at this point we know that the gradient $\nabla u$ is everywhere close to a fixed nonzero vector $e$, we work in the realm of mild uniform ellipticity as in \cite{Kassman-Schwab}.

The value of $\kmax$ where the iteration of the first stage transitions into the second stage depends on the point $x$. In any case, the three stages fit together harmoniously providing a $C^{\alpha}$ estimate for $\nabla u$ at the origin that does not depend on the value of $\kmax$. By a standard translation argument, we can extend this estimate to all of $B_1$, or to any ball $B_r$ with $r<2$ adjusting the parameters of the inequality appropriately.

We point out that the techniques we use in the first stage of our proof are purely nonlocal. Therefore, the constants in the estimates we obtain with this method do not remain bounded in the limit $s \to 1$.

The analysis in this paper is restricted to the superquadratic case $p \geq 2$. The condition $p < 2$ would affect several parts of the proof, especially on the first stage. The nature of the equation is rather different. For $p>2$ the concern is that the linearized kernel (which we call $K_u$ and define in \eqref{eq:kernel}) may vanish at some pairs of points. When $p<2$ the problem is that $K_u$ may equal $+\infty$ at some places. It is also natural to expect $C^{1,\alpha}$ regularity for the equation \eqref{eq:fracplap} for some range of $p < 2$ and $s$, but that remains open for now. 

The restriction $p < 2/(1-s)$ coincides with the range where almost-Lipschitz regularity is obtained in \cite{bogelein2024regularity} and Lipschitz regularity is obtained in \cite{biswas2025improved}. Based on these results, the solutions of \eqref{eq:fracplap} are not expected to be Lipschitz for $p > 2/(1-s)$. However, no examples are known to us with a zero right-hand side.

\section{Preliminaries}

We write $\LL_u$ for the linearization of $(-\Delta_p)^s$ at $u$. It is given by the following formula
\begin{equation} \label{eq:linearized}
\begin{aligned}
    \LL_u v(x) = (p-1) \int_{\R^d} \frac{|u(x)-u(y)|^{p-2}}{|x-y|^{d+sp}} (v(x)-v(y))\dd y
    = \int_{\R^d} (v(x)-v(y))\, K_u(x,y) \dd y.
\end{aligned}
\end{equation}
Here, we define the kernel $K_u$ by
\begin{equation} \label{eq:kernel}
    K_u(x,y) = (p-1) \frac{|u(x)-u(y)|^{p-2}}{|x-y|^{d+sp}}.
\end{equation}

It is convenient to keep in mind that when $u$ is a differentiable function, the kernel $K_u$ is of homogeneity $-d-2+p(1-s)$. This makes the linearized operator $\LL_u$ a nonlocal operator of order $2-p(1-s)$. Our assumption in Theorem \ref{t:main} that $p < 2/(1-s)$ is natural, as it ensures that $K_u$ is singular at the origin and the operator is generically elliptic of a positive fractional order. If $p \geq 2/(1-s)$, the linearized operator $\LL_u$ would be of order zero leading to no apparent regularity estimates on derivatives of $u$.

Note also that when $p < 1/(1-s)$, the linearized operator $\LL_u$ is generically of order greater than one. In this case, we expect similar regularity as in Theorem \ref{t:main} even if there was a non-zero bounded right-hand side.

\subsection{Viscosity and weak solutions}~~ \label{subsection_Def}

We recall below the notion of viscosity solution of \eqref{eq:fracplap} in $\Omega \subset \R^d$.
\begin{definition}\label{DefViscSol}
An upper (lower, resp.) semicontinuous function $u \in L_{sp}^{p-1}(\R^d)$ is called a viscosity subsolution (supersolution, resp.) of \eqref{eq:fracplap} in $\Omega$ if whenever there is a function $\phi\in C^2(B_r(x_0))$ for some $B_r(x_0) \subset \Omega$, with $\phi$ touching $u$ from above (from below, resp.) at $x_0$ in $B_r(x_0)$, we have $(-\Delta_p)^s \phi_r (x_0)\leq 0 \; (\geq 0\text{, resp.})$ where
\[
\phi_r(x)=\left\{\begin{array}{ll}
\phi(x) & \text{for}\; x\in B_r(x_0),
\\[2mm]
u(x) & \text{otherwise}.
\end{array}
\right.
\]
Furthermore, a viscosity solution of \eqref{eq:fracplap} in $\Omega$ is both a viscosity sub and supersolution in $\Omega$. 
\end{definition}
In order to recall the notion of weak solution, we define the following space
\[
W^{s, p}(\Omega)=\left\{ v\in L^p(\Omega)\; :\;\int_\Omega\int_\Omega \frac{|v(x)-v(y)|^p}{|x-y|^{d+sp}}\dd x\dd y<\infty \right\},
\]
and the corresponding local fractional Sobolev space, which is
\[
W^{s, p}_{loc}(\Omega)=\{ v\in L^p_{loc}(\Omega)\; :\;  v\in W^{s, p}(\widetilde\Omega)\; \text{for any}\; \widetilde\Omega\Subset \Omega \}.
\]

\begin{definition}\label{DefWeakSol}
A function $u\in W^{s, p}_{ loc}(\Omega)\cap L^{p-1}_{sp}(\R^d)$ is said to be a weak subsolution (supersolution, resp.) to \eqref{eq:fracplap} in $\Omega$ if we have
$$ \int_{\R^d}\int_{\R^d} |u(x)-u(y)|^{p-2}(u(x)-u(y))(\phi(x)-\phi(y))|x-y|^{-d-sp}\dd x\dd y\leq 0 \;( \geq 0\text{, resp.}),$$
for all nonnegative $\phi\in C^\infty_0(\Omega)$.
Moreover, $u$ is said to be a weak solution in $\Omega$ if it is both a weak sub and supersolution in $\Omega$. 
\end{definition}

The equivalence between weak solutions and viscosity solutions is established in \cite{korvenpaa2019equivalence}. 

For $\Omega \subset \R^d$ and $\gamma >0$, define $H^{\gamma}(\Omega)$ to be the space of functions $v \in L^2({\Omega})$ for which 
\[
\| v\|_{H^{\gamma}(\Omega)} := \| v\|_{L^2({\Omega})} +  [ v ] _{W^{\gamma,2}(\Omega)} <\infty,
\]
 where the Gagliardo's seminorm is defined in \eqref{eq:def_gagliardo}. 
 We use the following definitions for $H^\gamma_0(\Omega)$ and $H^{-\gamma}(\Omega)$.
\begin{align*}
    H^\gamma_0(\Omega) &:= \{ v \in H^\gamma(\R^d): v=0 \text{ outside } \Omega\}, \\
    H^{-\gamma}(\Omega)& := H^\gamma_0(\Omega)^\ast.
\end{align*}

We shall now state the following elementary estimate for the function $J_p(t)=|t|^{p-2}t$.

\begin{lemma}\label{l:appendix}
Suppose $p>1$ and $a, b\in \R$. Then 
\begin{align*}
&J_p(a) -J_p(b)= (p-1) \int_{0}^{1} |b+t(a-b)|^{p-2} (a-b)  \dd  t, \\
\intertext{ and}
&\frac{1}{C_p} (|b|+|a-b|)^{p-2}\leq \int_0^1 |b+t(a-b)|^{p-2} \dd t\leq C_p (|b|+|a-b|)^{p-2},
\end{align*}
for some constant $C_p$, depending only on $p$. 
\end{lemma}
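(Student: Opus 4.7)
I would split the lemma into its two assertions.

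For the integral identity, I would invoke the fundamental theorem of calculus applied to $g(t) := J_p(b + t(a-b))$. Since $J_p(x) = |x|^{p-2}x$ is $C^1$ for $p \geq 2$ and absolutely continuous for $p \in (1,2)$ (with weak derivative $(p-1)|x|^{p-2}$ in both cases), the composition $g$ is absolutely continuous on $[0,1]$. A direct differentiation gives $g'(t) = (p-1)|b+t(a-b)|^{p-2}(a-b)$ at every $t$ except possibly the unique value (if any) where $b+t(a-b)=0$. Integrating $g'$ from $0$ to $1$ yields the claimed identity.

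For the two-sided bound, the cleanest plan is scaling plus compactness. Both the integral and $(|b|+|a-b|)^{p-2}$ are homogeneous of degree $p-2$ under $(a,b)\mapsto(\lambda a,\lambda b)$ for $\lambda>0$, so after normalizing $|b|+|a-b|=1$ it suffices to show that $I(a,b) := \int_0^1 |b+t(a-b)|^{p-2}\, dt$ is bounded above and below by strictly positive constants on the compact set $\Gamma := \{(a,b)\in\R^2 : |b|+|a-b|=1\}$. The substitution $u = b+t(a-b)$, combined with the identity from the first part, yields the explicit formula $I(a,b) = (J_p(a)-J_p(b))/((p-1)(a-b))$ for $a\neq b$, with $I(a,a)=|a|^{p-2}$. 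This is continuous on $\R^2\setminus\{(0,0)\}$ and strictly positive on $\Gamma$, since the integrand does not vanish identically. By compactness, $I$ attains a positive minimum and finite maximum on $\Gamma$, yielding the constant $C_p$.

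The only mild subtlety is for $p\in(1,2)$, where the integrand is locally unbounded along segments passing near $0$; however, since $p-2>-1$, the integrand remains locally integrable and the closed-form expression is continuous across such configurations, so the compactness step needs no modification. For $p\geq 2$, one could alternatively get the upper bound at once from $|b+t(a-b)|\leq|b|+|a-b|$ and the lower bound via a short case analysis (splitting on whether $|a-b|\lesssim|b|$, in which case $|b+t(a-b)|\sim |b|$ throughout, or $|a-b|\gtrsim|b|$, in which case $|b|+|a-b|\sim|a-b|$ and the explicit primitive is directly comparable); but the scaling-compactness argument handles all $p>1$ uniformly and is my preferred route.
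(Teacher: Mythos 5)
Your proposal is correct and complete. The paper states this lemma as an elementary fact and supplies no proof of its own, so there is nothing to compare against; your argument — the fundamental theorem of calculus for the integral identity (with the correct observation that $J_p$ is absolutely continuous with locally integrable weak derivative $(p-1)|x|^{p-2}$ even for $p\in(1,2)$), followed by homogeneity of degree $p-2$ and compactness of $\{|b|+|a-b|=1\}$ together with the explicit formula $I(a,b)=(J_p(a)-J_p(b))/((p-1)(a-b))$ to get continuity and strict positivity — is a clean, self-contained way to establish both assertions for all $p>1$.
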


\subsection{Tail functions}\label{subsection_tail}

We define the following quantities for $q,\alpha >0$ that measure the effect of the function $u$ on the tail of integro-differential operators. First we define the Tail function
    \[
    \text{Tail}_{q,\alpha}(u;x,R):= \left(\int_{B_R^c(x)} \frac{|u(y)|^{q}}{|x-y|^{d+\alpha}} \, \dd y \right)^{\frac{1}{q}}.
    \]
    Whenever $x=0$ we write it as $\text{Tail}_{q,\alpha}(u;R)$.
    Note that $\text{Tail}_{q,\alpha}(u;R)$ is nonincreasing as a function of $R$.
By $L_{\alpha}^{q}(\R^d)$ we denote the $(q,\alpha)$--\textit{tail space} defined by
\[
L_{\alpha}^{q}(\R^d):= \left\{ u \in L_{loc}^{q}(\R^d) : \int_{\R^d} \frac{|u(y)|^{q}}{(1+|y|)^{d+\alpha}  }\dd y < \infty \right\}.
\]

We collect here a few useful inequalities for tail functions that will be repeatedly used in what follows. In particular, the quantities ${\Tail}_{p-2,sp}(u,1)$ and ${\Tail}_{p-1,sp+1}(u,1)$ naturally arise in connection with the operator $\LL_u$ introduced in \eqref{eq:linearized} and the differentiated equation (see Lemma \ref{l:eqforv_e_scaled}).

\begin{lemma} Let $p \in [2,2/(1-s))$  and assume $u \in L_{sp}^{p-1}(\R^d)$ and $R>0$. The following inequalities hold for a universal $C>0$
        \begin{equation} \label{Tail_functions}
        \begin{aligned}
          \text{Tail}_{p-2,sp}(u,R)^{p-2} \leq C R^{\frac{p(1-s)-2}{p-1}} \text{Tail}_{p-1,sp+1}(u,R)^{p-2}, \\
              \text{Tail}_{p-1,sp+1}(u,R)^{p-1} \leq R^{-1}\text{Tail}_{p-1,sp}(u,R)^{p-1}, \\
              \text{Tail}_{p-2,sp}(u,R)^{p-2} \leq C R^{-\frac{sp}{p-1}}\text{Tail}_{p-1,sp}(u,R)^{p-2}.
        \end{aligned}
        \end{equation}
    \end{lemma}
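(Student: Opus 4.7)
The plan is to prove the three inequalities in order of increasing difficulty: the middle one is a pointwise comparison, the first comes from a Hölder interpolation where the constraint $p < 2/(1-s)$ is exactly what makes an auxiliary integral converge, and the third follows by combining them algebraically.

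For the second inequality, I would simply observe that for $|y| \geq R$ one has $|y|^{-(d+sp+1)} \leq R^{-1} |y|^{-(d+sp)}$, multiply by $|u(y)|^{p-1}$, and integrate over $B_R^c$.

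For the first inequality, the strategy is to interpolate between the two tails using Hölder. I would write
\[
\frac{|u(y)|^{p-2}}{|y|^{d+sp}} = \left( \frac{|u(y)|^{p-1}}{|y|^{d+sp+1}} \right)^{(p-2)/(p-1)} \cdot |y|^{\beta},
\]
with $\beta$ forced by matching powers of $|u|$ and $|y|$, and then apply Hölder on $B_R^c$ with conjugate exponents $(p-1)/(p-2)$ and $p-1$. The first factor produces $\text{Tail}_{p-1,sp+1}(u,R)^{p-2}$, while the second becomes a purely geometric radial integral of the form $\int_R^\infty r^{p-3-sp} \dd r$. This integral converges precisely when $p-3-sp < -1$, which is equivalent to the hypothesis $p(1-s) < 2$; its value is a constant multiple of $R^{p-2-sp}$, and taking the $(p-1)$-th root gives the advertised factor $R^{(p(1-s)-2)/(p-1)}$.

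The third inequality is then a bookkeeping step: raising the second inequality to the power $(p-2)/(p-1)$ and plugging into the first yields an exponent of $R$ equal to $(p(1-s)-2)/(p-1) - (p-2)/(p-1) = -sp/(p-1)$, exactly what is claimed. The only mild obstacle is identifying the correct Hölder splitting in the first step and checking that the convergence threshold for the auxiliary integral coincides with $p < 2/(1-s)$; the borderline case $p = 2$ requires a separate inspection since $(p-1)/(p-2)$ degenerates, but there $|u|^{p-2} \equiv 1$ and all three inequalities reduce to elementary estimates on $\int_{B_R^c} |y|^{-d-sp} \dd y \asymp R^{-sp}$.
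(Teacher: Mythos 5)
Your proposal is correct and follows exactly the route the paper takes (which it states in one line): Hölder's inequality with exponents $(p-1)/(p-2)$ and $p-1$ for the first estimate, the pointwise bound $|y|^{-(d+sp+1)}\leq R^{-1}|y|^{-(d+sp)}$ for the second, and their combination for the third. Your exponent bookkeeping checks out, including the observation that convergence of $\int_R^\infty r^{p-3-sp}\,\dd r$ is equivalent to $p<2/(1-s)$ and the remark on the degenerate case $p=2$.
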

    \begin{proof}
  The first inequality follows directly using H\"older inequality. The second inequality trivially holds. The third inequality in \eqref{Tail_functions} can be obtained by combining the previous estimates.
    \end{proof}

The following result, contained in \cite[Lemma 2.2--Lemma 2.3]{brasco2018higher}, will be useful many times to compare tail functions with different centers. We restate it conveniently for our purposes.
\begin{lemma} \label{l:move_center_tail}
    Let $\alpha,q >0$, and $0<r<R$ and assume $u \in L_{\alpha}^{q}(\R^d)$. For $x_0 \in \R^d$ and $x \in B_{r}(x_0)$, we have
\begin{align*}
    \int_{B_R^c(x_0)} \frac{|u(y)|^{q}}{|x-y|^{d+\alpha}} \dd y \leq  \left( \frac{R}{R-r} \right)^{d+\alpha} {\Tail}_{q,\alpha}(u;x_0,R)^{q}.
\end{align*}
Furthermore, for $x_1 \in \R^d$ such that $B_{r}(x_0) \subset B_{R}(x_1)$, if in addition $ u \in L^{\infty}(B_{R}(x_1)) $ then 
\begin{align*}
    {\Tail}_{q,\alpha}(u;x_0,r)^q &\leq \left( \frac{R}{R-|x_1-x_0|}\right)^{d+\alpha}  {\Tail}_{q,\alpha}(u;x_1,R)^q  +  C \frac{R^d}{r^{d+\alpha}}\|u \|_{L^{\infty}(B_R(x_1))}^{q}.
\end{align*}
Here $C$ depends only on $\alpha$, $q$, $d$, $s$ and $p$.
\end{lemma}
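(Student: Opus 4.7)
The plan is to prove both inequalities by elementary geometric comparisons based on the triangle inequality, splitting the integration domain when necessary.

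For the first inequality, I would observe that if $x \in B_r(x_0)$ and $y \in B_R^c(x_0)$, then the triangle inequality gives
\[
|x-y| \geq |x_0 - y| - |x - x_0| \geq |x_0-y| - r = |x_0-y|\Bigl(1 - \frac{r}{|x_0-y|}\Bigr).
\]
Since $|x_0-y| \geq R$, the factor in parentheses is at least $(R-r)/R$. Thus
\[
\frac{1}{|x-y|^{d+\alpha}} \leq \Bigl(\frac{R}{R-r}\Bigr)^{d+\alpha} \frac{1}{|x_0-y|^{d+\alpha}},
\]
and integrating over $B_R^c(x_0)$ yields the first claim.

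For the second inequality, I would split the domain of integration as
\[
B_r^c(x_0) = \bigl(B_r^c(x_0) \cap B_R(x_1)\bigr) \,\cup\, \bigl(B_r^c(x_0) \cap B_R^c(x_1)\bigr).
\]
On the first piece, $|u(y)| \leq \|u\|_{L^\infty(B_R(x_1))}$ and $|y-x_0| \geq r$, so this part contributes at most $C R^d r^{-d-\alpha} \|u\|_{L^\infty(B_R(x_1))}^q$ (using that $B_R(x_1)$ has volume comparable to $R^d$). On the second piece, I would repeat the triangle inequality argument of the first part, but now comparing $|y - x_0|$ with $|y - x_1|$: if $y \in B_R^c(x_1)$, then $|y-x_1| \geq R > |x_1 - x_0|$ (the latter inequality coming from the hypothesis $B_r(x_0) \subset B_R(x_1)$), so
\[
|y-x_0| \geq |y-x_1| - |x_1-x_0| \geq |y-x_1| \cdot \frac{R - |x_1-x_0|}{R}.
\]
This yields the pointwise bound $|y-x_0|^{-d-\alpha} \leq (R/(R-|x_1-x_0|))^{d+\alpha} |y-x_1|^{-d-\alpha}$, and integrating over $B_R^c(x_1)$ gives the remaining tail contribution.

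There is no real obstacle here; the proof is essentially two applications of the triangle inequality together with a single splitting of the integration domain. The only bookkeeping task is ensuring the geometric factor $(1 - r/|x_0-y|)$ or $(1 - |x_1-x_0|/|y-x_1|)$ is estimated by the worst value on the relevant domain, which gives exactly the constants stated. The hypothesis $B_r(x_0) \subset B_R(x_1)$ is used precisely to guarantee $|x_1-x_0| < R$, making the constant $R/(R-|x_1-x_0|)$ finite.
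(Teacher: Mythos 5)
Your proof is correct. The paper does not actually prove this lemma -- it only cites \cite[Lemmas 2.2--2.3]{brasco2018higher} -- and your two applications of the triangle inequality (bounding $|x-y|\geq |x_0-y|(R-r)/R$ on $B_R^c(x_0)$, and $|y-x_0|\geq |y-x_1|(R-|x_1-x_0|)/R$ on $B_R^c(x_1)$), together with the splitting of $B_r^c(x_0)$ along $B_R(x_1)$, reproduce the standard argument from that reference and yield exactly the stated constants. The one point worth making explicit is that $B_r(x_0)\subset B_R(x_1)$ gives $|x_1-x_0|\leq R-r<R$, which you correctly identify as the reason the second constant is finite.
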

\subsection{Scaling} \label{subsec_Scaling}

Given a solution $u$, we may scale it in the following way. For any $r>0$, we define the rescaled function
\begin{align*}
    u_r(x) &= r^{-1} u(rx).
\end{align*}
We now record some basic properties of $u_r$, which follow by direct computation:
\begin{itemize}
\item $\nabla u_r(x) = \nabla u(rx)$.
\item If $|\nabla u| \leq C_0$ in $B_1$, then $|\nabla u_r| \leq C_0 $ in $B_{r^{-1}}$.
\item For any $\alpha,q >0$ and any $\rho>0$, we have
\begin{equation} \label{scale_tails}
    {\Tail}_{q,\alpha}(u_r;\rho)^q = r^{\alpha-q}{\Tail}_{q,\alpha}(u; r \rho)^q. 
\end{equation}
\end{itemize}

In particular, we need \eqref{scale_tails} in the case $q = p-1$ and $\alpha = sp+1$.  
Notice that in this situation we have $2-p(1-s)> 0$. Hence for small $r$, which corresponds to zooming in, one has $r^{2-p(1-s)} < 1$ and therefore the Tail becomes smaller. It is standard to say that the tail is a subcritical quantity with respect to the scaling of the equation. This is important for our proof since we apply several estimates in a sequence of scales that involve zooming in the solution. The computation above helps us keep the effect of the tails negligible in small scales.

\section{The linearized equation satisfied by directional derivatives} \label{s:linearized}

The first and third stages of our proof, described in Section \ref{s:strategy}, involve analyzing the equation satisfied by directional derivatives. In practice, we want to work with a localized version of them. To that end, we construct the functions $v_e$ which coincide with $e \cdot \nabla u$ in some ball, and equal zero outside of the double ball. In this section, we describe these functions $v_e$ and compute the equation in terms of the linearized operator $\LL_u v_e$.

We start with a technical discussion on the linearized operator $\LL_u$.

\subsection{The linearized operator}

Recall that $\LL_u$ is the linearized operator with kernel $K_u$ defined in \eqref{eq:linearized}. We want to make sure that $\LL_u v$ is well-defined in some sense, at least when $u$ and $v$ are smooth enough.

The linearized operator $\LL_u$ only appears in our proofs in Sections \ref{s:reduc_grad} and \ref{s:stage3} through quadratic expressions of the form
\begin{equation} \label{eq:quadratic-form}
\int_B \LL_u v(x) \varphi(x) \dd x.
\end{equation}
This expression makes sense under rather mild regularity conditions on $u$, $v$ and $\varphi$. We will see below that \eqref{eq:quadratic-form} is well-defined if $u$ is Lipschitz, $v \in H^{1-p(1-s)/2}(B)$ and $\varphi \in H^{1-p(1-s)/2}_0(B)$, in addition to $u,v \in L^{p-1}_{sp}(\R^d)$. There are many other combinations of spaces that would make \eqref{eq:quadratic-form} well-defined, for example, if the three functions belong to $W^{s,p}(B) \cap L^{p-1}_{sp}(\R^d)$.

As we mentioned in the introduction, we initially write the proof of Theorem \ref{t:main}, and all the lemmas leading to it, assuming that our solution $u$ is sufficiently smooth so that all the expressions make sense classically. In Section \ref{s:weak-solutions}, we explain the extra technical work necessary to apply our result to weak solutions. In the case of the first stage of the proof, this is done through an approximate equation that has smooth solutions.

The case $p > 1/(1-s)$ is the simpler one. For this range, Lipschitz regularity of $u$ and $v$ suffice to make the integral in \eqref{eq:linearized} absolutely convergent and $\LL_u v(x)$ well-defined pointwise.

\begin{lemma} \label{l:LL-well-defined-Lipschitz}
Let $p > 1/(1-s)$ and $p\geq 2$. Assume that the functions $u$ and $v$ are both in $L^{p-1}_{sp}(\R^d)$ and Lipschitz at $x$. Then the integral in \eqref{eq:linearized} is well-defined.
\end{lemma}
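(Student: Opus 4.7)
The plan is to split the integral defining $\LL_u v(x)$ into a local part near $x$ and a far-field part, and estimate each separately. Fix a radius $r > 0$ small enough that both $u$ and $v$ are Lipschitz on $B_r(x)$ with some constant $L$, and such that $u,v$ are bounded on $B_r(x)$.

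For the local part, I would use the pointwise Lipschitz bounds: since $p \geq 2$, we have
\[
|u(x)-u(y)|^{p-2} \leq L^{p-2}|x-y|^{p-2}, \qquad |v(x)-v(y)| \leq L|x-y|,
\]
so the integrand is controlled by $L^{p-1}|x-y|^{p-1-d-sp}$. This is integrable on $B_r(x)$ precisely when $p-1-sp > 0$, i.e.\ when $p > 1/(1-s)$, which is our standing hypothesis. So the local contribution is finite.

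For the far-field part on $\{|x-y|>r\}$, I would use the elementary bounds $|u(x)-u(y)|^{p-2} \leq 2^{p-2}(|u(x)|^{p-2}+|u(y)|^{p-2})$ (valid since $p \geq 2$) and $|v(x)-v(y)| \leq |v(x)|+|v(y)|$. Since $|u(x)|$ and $|v(x)|$ are finite, and $\int_{|x-y|>r} |x-y|^{-d-sp} \dd y < \infty$, the matter reduces to bounding the three integrals
\[
\int_{|x-y|>r} \frac{|u(y)|^{p-2}}{|x-y|^{d+sp}}\dd y,\quad \int_{|x-y|>r} \frac{|v(y)|}{|x-y|^{d+sp}}\dd y,\quad \int_{|x-y|>r} \frac{|u(y)|^{p-2}|v(y)|}{|x-y|^{d+sp}}\dd y.
\]
All three are handled by H\"older's inequality with exponents $\tfrac{p-1}{p-2}$ and $p-1$, using the weight $|x-y|^{-d-sp}$. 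For instance, the third satisfies
\[
\int_{|x-y|>r} \frac{|u(y)|^{p-2}|v(y)|}{|x-y|^{d+sp}}\dd y \leq \left(\int_{|x-y|>r}\frac{|u(y)|^{p-1}}{|x-y|^{d+sp}}\dd y\right)^{\!\frac{p-2}{p-1}}\!\left(\int_{|x-y|>r}\frac{|v(y)|^{p-1}}{|x-y|^{d+sp}}\dd y\right)^{\!\frac{1}{p-1}}\!,
\]
and both factors are finite thanks to $u,v \in L^{p-1}_{sp}(\R^d)$. The first two integrals are estimated analogously, pairing $|u(y)|^{p-2}$ (resp.\ $|v(y)|$) against $1$ in the H\"older split.

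There is no real obstacle here; the only subtlety is the threshold $p > 1/(1-s)$, which enters exclusively in the local estimate to guarantee integrability of $|x-y|^{p-1-d-sp}$ near $y=x$. Combining the two bounds gives absolute convergence of the integral and hence $\LL_u v(x)$ is well-defined pointwise.
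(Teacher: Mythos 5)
Your proof is correct and follows essentially the same route as the paper's: a split into a near-field part controlled by the Lipschitz bounds (where $p>1/(1-s)$ gives integrability of $|x-y|^{p-1-d-sp}$) and a far-field part bounded via $|u(x)-u(y)|^{p-2}\lesssim |u(x)|^{p-2}+|u(y)|^{p-2}$ and H\"older's inequality against the tail-space condition. The only cosmetic difference is that the paper splits at radius $1$ rather than a small $r$, which changes nothing.
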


\begin{proof}
Let us start by analyzing the tails of the integral
\begin{align*}
\int_{B_1^c(x)} &\left| \frac{|u(x)-u(y)|^{p-2}}{|x-y|^{d+sp}} (v(x)-v(y)) \right| \dd y 
\leq C \int_{B_1^c(x)}  \frac{|u(x)|^{p-2} + |u(y)|^{p-2}}{|x-y|^{d+sp}} \left( |v(x)|+|v(y)| \right) \dd y \\
 &\qquad \leq C \left( |u(x)|^{p-2} + {\Tail}_{p-1,sp}(u;x,1)^{p-2} \right) \left( |v(x)| + {\Tail}_{p-1,sp}(v;x,1) \right).
\end{align*}
We used H\"older inequality in the last estimate.

We now restrict our attention to the integration domain $y \in B_1(x)$. Here, we use the Lipschitz continuity of $u$ and $v$ at $x$, to get
\begin{align*}
\int_{B_1(x)} \left| \frac{|u(x)-u(y)|^{p-2}}{|x-y|^{d+sp}} (v(x)-v(y)) \right| \dd y &\leq [u]_{Lip(x)}^{p-2} [v]_{Lip(x)} \int_{B_1(x)}  |x-y|^{p-1-d-sp} \dd y  < +\infty.
\end{align*}
The last integral is finite because $p > 1/(1-s)$.
\end{proof}

The case $p \in [2,1/(1-s)]$ is more complicated. In this case, even further regularity for both $u$ and $v$ is not enough for the integral in \eqref{eq:linearized} to be well-defined even in the principal-value sense. We will define $\LL_u v$ only as a distribution in certain negative-order Sobolev space. If $u$ and $v$ are sufficiently smooth, the pointwise value of $\LL_u v(x)$ is computable through \eqref{eq:linearized} at points $x$ where $\nabla u(x) \neq 0$. We discuss these subtle issues in Appendix \ref{s:appendix_2}, even though it is not strictly necessary for our proofs in this paper.

When $2 < p < 1/(1-s)$, we make sense of $\LL_u v$ as a distribution in $H^{-\gamma}(B_1)$, for some $\gamma>0$. For the proofs in this paper, we may take $\gamma = 1-p(1-s)/2$.

\begin{lemma} \label{l:LL-in-Walpha}
Let $p \geq 2$. Assume that $u$ is in $L^{p-1}_{sp}(\R^d)$ and is Lipschitz in $B_1$. Let $v$ be a function in $L^{p-1}_{sp}(\R^d)$ that is also in $H^\alpha(B_1)$ for some $\alpha \in (\max(0,1-p(1-s)),1)$. Then $\LL_u v(x)$ is in $H^{-\beta}(B_r)$ for all $r < 1$ where $\beta = 2-p(1-s)-\alpha$.

In particular $\alpha=\beta$ if $\alpha = 1 - p(1-s)/2$.
\end{lemma}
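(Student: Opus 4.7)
The plan is to define $\LL_u v$ as an element of $H^{-\beta}(B_r)$ by duality: for a test function $\varphi \in C^\infty_c(B_r)$, I would set
\begin{equation*}
\langle \LL_u v, \varphi\rangle := \frac{1}{2}\iint_{\R^d \times \R^d} (v(x)-v(y))(\varphi(x)-\varphi(y)) K_u(x,y) \dd y \dd x,
\end{equation*}
the natural symmetrization of $\int \varphi \LL_u v \dd x$. The task is then to show the bound $|\langle \LL_u v, \varphi\rangle| \leq C \|\varphi\|_{H^\beta_0(B_r)}$ with $C$ depending on $\|u\|_{\mathrm{Lip}(B_1)}$, $[v]_{H^\alpha(B_1)}$, $\|v\|_{L^2(B_1)}$, and the tails ${\Tail}_{p-1,sp}(u,1)$ and ${\Tail}_{p-1,sp}(v,1)$. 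Extension to arbitrary $\varphi \in H^\beta_0(B_r)$ then follows by density.

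To prove the estimate, I would fix an intermediate radius $\rho\in(r,1)$ and split the integration into $B_\rho\times B_\rho$, the two symmetric off-diagonal blocks $B_\rho\times B_\rho^c$ and $B_\rho^c\times B_\rho$, and $B_\rho^c\times B_\rho^c$. The last region contributes zero since $\varphi\equiv 0$ there. On the diagonal block, the Lipschitz hypothesis on $u$ gives the pointwise bound $K_u(x,y) \leq (p-1)\|u\|_{\mathrm{Lip}(B_1)}^{p-2}|x-y|^{-d-(2-p(1-s))}$. Factoring $|x-y|^{-d-(2-p(1-s))} = |x-y|^{-d/2-\alpha}\cdot|x-y|^{-d/2-\beta}$, which is possible precisely because $\alpha+\beta = 2-p(1-s)$, and applying Cauchy--Schwarz yields
\begin{equation*}
\left|\iint_{B_\rho\times B_\rho}(v(x)-v(y))(\varphi(x)-\varphi(y))K_u(x,y)\dd x\dd y\right| \leq C\,[v]_{H^\alpha(B_\rho)}\,[\varphi]_{H^\beta(B_\rho)}.
\end{equation*}
The hypothesis $\alpha\in(\max(0,1-p(1-s)),1)$ is exactly what ensures that both $\alpha$ and $\beta$ lie in $(0,1)$, so these Slobodeckij seminorms are the standard ones.

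For the off-diagonal block $B_\rho\times B_\rho^c$, the vanishing of $\varphi(y)$ reduces the integrand to $\varphi(x)(v(x)-v(y))K_u(x,y)$. Using $|u(x)-u(y)|^{p-2}\leq C(|u(x)|^{p-2}+|u(y)|^{p-2})$, exploiting $|x-y|\gtrsim (1-\rho)(1+|y|)$ for $x\in B_\rho$ and $y\in B_\rho^c$, and applying H\"older's inequality with conjugate exponents $p-1$ and $(p-1)/(p-2)$ to split the powers of $u$ and $v$, I would bound the inner integral $\int_{B_\rho^c}(v(x)-v(y))K_u(x,y)\dd y$ pointwise in $x\in B_\rho$ by a constant multiple of $1+|v(x)|$, with constant depending on $\|u\|_{\mathrm{Lip}(B_1)}$, ${\Tail}_{p-1,sp}(u,1)$ and ${\Tail}_{p-1,sp}(v,1)$. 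Multiplying by $|\varphi(x)|$, integrating, and Cauchy--Schwarz in $x$ then yields a contribution controlled by $C(1+\|v\|_{L^2(B_1)})\|\varphi\|_{L^2(B_r)}$, which is in turn bounded by $C\|\varphi\|_{H^\beta_0(B_r)}$ via the continuous inclusion $H^\beta_0(B_r)\hookrightarrow L^2(B_r)$.

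The main obstacle I anticipate is the tail bookkeeping: correctly packaging the H\"older-type estimates on the off-diagonal region so that the constants depend only on the norms appearing in the statement, without using finer pointwise information on $u$ or $v$ beyond what the hypothesis supplies. Once the three contributions are summed, the bound on $|\langle \LL_u v,\varphi\rangle|$ is established. The final parenthetical assertion is immediate: solving $\alpha+\beta=2-p(1-s)$ with $\alpha=\beta$ forces $\alpha=1-p(1-s)/2$.
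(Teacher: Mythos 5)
Your proposal is correct and follows essentially the same route as the paper: symmetrize the bilinear form, use the Lipschitz bound on $u$ to reduce the diagonal block to a Cauchy--Schwarz estimate pairing $[v]_{W^{\alpha,2}}$ with $[\varphi]_{W^{\beta,2}}$ (which works precisely because $\alpha+\beta=2-p(1-s)$), and control the off-diagonal part via H\"older's inequality and the tail quantities. The only differences (splitting at an intermediate radius $\rho$ rather than at $B_1$, and writing out the kernel factorization explicitly) are cosmetic.
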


\begin{proof}
For any $\varphi \in H^\beta_0(B_r)$, we analyze the bilinear form
\begin{align*}
&\left| \int_{B_1} \LL_u v(x) \, \varphi(x) \dd x \right| = \bigg \vert \frac 12 \iint_{B_1 \times B_1} \frac{|u(x)-u(y)|^{p-2} (v(x)-v(y)) (\varphi(x)-\varphi(y)}{|x-y|^{d+sp}} \dd x \dd y \\
&\qquad + \int_{B_1} \int_{\R^d \setminus B_1} \frac{|u(x)-u(y)|^{p-2} (v(x)-v(y))}{|x-y|^{d+sp}} \, \varphi(x) \dd y \dd x \bigg \vert \\
&\leq \frac 12 [u]_{Lip(B_1)}^{p-2} \left|\iint_{B_1 \times B_1} \frac{(v(x)-v(y)) (\varphi(x)-\varphi(y))}{|x-y|^{d-p(1-s)+2}} \dd x \dd y \right| \\
&\qquad + C \int_{B_1} \int_{\R^d \setminus B_1} \frac{(|u(x)|^{p-2}+|u(y)|^{p-2}) (|v(x)|+|v(y)|)}{|x-y|^{d+sp}} \, |\varphi(x)| \dd y \dd x \\
&\leq  \frac 12 [u]_{Lip(B_1)}^{p-2} [v]_{W^{\alpha,2}(B_1)} [\varphi]_{W^{\beta,2}(B_1)} \\ 
&\qquad + \frac{C}{(1-r)^{d+sp}} \left( \|u\|_{L^{\infty}(B_1)}^{p-2} + {\Tail}_{p-1,sp}(u,1)^{p-2}\right) \left( \|v\|_{L^2(B_1)}+ {\Tail}_{p-1,sp}(v,1)\right) \|\varphi \|_{L^{2}(B_1)} \\
&\leq C(u,v) \|\varphi\|_{H^\beta(B_1)},
\end{align*}
where we used  H\"older's inequality and $|x-y| \geq (1-r)|y|$ for $x \in \supp \varphi$ and $y \in B_1^c$.

Since the expression $\int \LL_u v \, \varphi \dd x$ is well-defined and bounded for $\varphi \in H^\beta_0(B_r)$, it means that $\LL_u v$ is well-defined in the space $H^{-\beta}(B_r)$.
\end{proof}

From this point on, until the beginning of Section \ref{s:weak-solutions}, we assume that the solution $u$ to \eqref{eq:fracplap} is sufficiently regular so that all our computations are justified. We use the linearized operator $\LL_u$ in our proofs in Sections \ref{s:reduc_grad} and \ref{s:stage3}, but it is only applied through its associated bilinear form. Our proofs in Section \ref{s:reduc_grad} are written under the assumption that $u \in C^1 \cap H^{1+\gamma}$ for $\gamma = 1-p(1-s)/2$. The estimates we obtain are independent of the norm of $u$ in these spaces. It is simply a qualitative condition for every one of our expressions in the proof to be well-defined. Later on, in Section \ref{s:weak-solutions} we discuss how to extend the proof to all weak solutions using an approximate equation that has smooth solutions.

It is interesting to point out that a result like Lemmas \ref{l:LL-well-defined-Lipschitz} and \ref{l:LL-in-Walpha} cannot hold in the subquadratic case $p < 2$. Indeed, if $u \equiv 0$, then the kernel $K_u(x,y) \equiv +\infty$ when $p < 2$, and the linearized operator $\LL_u$ would make no sense.

\subsection{Localizing the directional derivatives} 

Let $\eta : \R^d \to [0,1]$ be a smooth radially-symmetric cutoff function such that $\eta(x) = 1$ for $|x| \leq 3/2$ and $\eta(x) = 0$ for $|x| \geq 7/4$.
For any unit vector $e \in S^{d-1}$ and some $R>0$, we define the function
\begin{equation} \label{def_ve}
     v_e(x) = \eta(x/R) \, (e \cdot \nabla u).
\end{equation} 
The function $v_e$ coincides with the directional derivatives $(e \cdot \nabla u)$ in $ B_R$. Moreover, $v_e$ is globally bounded and has compact support contained in $B_{2R}$. 
Note that the function $v_e$ implicitly depends on the radius $R$ that we omit in our notation.

In the next lemma, we compute the linearization of \eqref{eq:fracplap} applied to $v_e$. This lemma will be used both in the first and third stages of our proof of Theorem \ref{t:main}.

\begin{lemma} \label{l:eqforv_e_scaled}
   Let $R>0$ and consider $u \in L_{sp}^{p-1}(\R^d)$ to be a smooth solution of \eqref{eq:fracplap} in $B_{2R}$. The function $v_e$ defined in \eqref{def_ve} satisfies the following equation 
\begin{align*}
    |\LL_{ u} v_e(x)| \leq  C \left(R^{-1-sp} \| u\|^{p-1}_{L^{\infty}(B_{R})} +   \,{\Tail}_{p-1,sp+1}( u,R)^{p-1}\right), \; \text{ for } x\in B_{R}.
    \end{align*}
    Here, the constant $C$ depends only on $d$, $s$, $p$.
\end{lemma}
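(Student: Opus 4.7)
The plan is to reduce $\LL_u v_e(x)$ to an integral supported far from $x$ by using that $\partial_e u$ solves the linearized equation pointwise, and then to integrate by parts in $y$ to trade the factor $\partial_e u(y)$ (on which we have no pointwise control in the tail region) for $u(y)$, which is controlled by the $L^\infty$ norm on $B_R$ and the tail. Throughout, $u$ is assumed smooth enough for all manipulations to be classical; this is the standing assumption of the section.

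First, since $u$ is a classical solution of \eqref{eq:fracplap} in $B_{2R}$, differentiating the equation in direction $e$ (after the substitution $y=x+h$ so the kernel becomes $x$-independent) yields the pointwise identity $\LL_u(\partial_e u)(x) = 0$ for every $x \in B_{2R}$. Because $\eta(x/R)=1$ on $B_{3R/2}$, the localized derivative $v_e$ coincides with $\partial_e u$ on $B_{3R/2}$, and in particular $v_e(x)=\partial_e u(x)$ for $x\in B_R$. Writing $\LL_u v_e(x)=\LL_u(\partial_e u)(x)+\LL_u(v_e-\partial_e u)(x)$ and noting that $v_e-\partial_e u=-(1-\eta(\cdot/R))\partial_e u$, the first term vanishes and one is left with
\[
\LL_u v_e(x)\;=\;(p-1)\int_{|y|\ge 3R/2}|u(x)-u(y)|^{p-2}\,(1-\eta(y/R))\,\partial_e u(y)\,|x-y|^{-d-sp}\dd y.
\]

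Next, set $J_p(t)=|t|^{p-2}t$ and use the chain rule identity $(p-1)|u(x)-u(y)|^{p-2}\partial_e u(y) = -\partial_{e,y}J_p(u(x)-u(y))$. Integrating by parts in $y$ moves the derivative onto the smooth factor $(1-\eta(y/R))|x-y|^{-d-sp}$; the boundary contribution at infinity vanishes thanks to the smoothness of $\eta$ and the decay $u \in L^{p-1}_{sp}$, yielding
\[
\LL_u v_e(x)\;=\;\int_{\R^d} J_p(u(x)-u(y))\,\partial_{e,y}\!\bigl[(1-\eta(y/R))|x-y|^{-d-sp}\bigr]\dd y.
\]
Applying the product rule splits the right-hand side into a piece with factor $-R^{-1}(\nabla\eta)(y/R)\cdot e\,|x-y|^{-d-sp}$, supported on the annulus $\{3R/2\le|y|\le 7R/4\}$ where $|x-y|\asymp R$ for $x\in B_R$, and a piece with factor $(1-\eta(y/R))(d+sp)(x-y)\cdot e\,|x-y|^{-d-sp-2}$, supported on $\{|y|\ge 3R/2\}$.

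Bounding $|J_p(u(x)-u(y))|\le C(|u(x)|^{p-1}+|u(y)|^{p-1})$, the $|u(x)|^{p-1}$ parts are absorbed into $\|u\|^{p-1}_{L^\infty(B_R)}$ after integrating in $y$: the annulus contributes $R^d\cdot R^{-d-sp}\cdot R^{-1}=R^{-1-sp}$ and the exterior integral $\int_{|y|\ge 3R/2}|x-y|^{-d-sp-1}\dd y\sim R^{-sp-1}$. The $|u(y)|^{p-1}$ parts reconstruct the tail: on the annulus one uses $|y|^{-d-sp-1}\gtrsim R^{-d-sp-1}$ to write $R^{-1-d-sp}\int_{annulus}|u(y)|^{p-1}\dd y\lesssim {\Tail}_{p-1,sp+1}(u,R)^{p-1}$, while on $\{|y|\ge 3R/2\}$ the bound $|x-y|\ge|y|/3$ (valid for $x\in B_R$) gives $|x-y|^{-d-sp-1}\lesssim |y|^{-d-sp-1}$ and hence again controls the integral by ${\Tail}_{p-1,sp+1}(u,R)^{p-1}$. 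Summing yields the claimed estimate.

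\textbf{Main obstacle.} The only nontrivial point is the integration by parts in the middle step: since the integral extends over the unbounded set $\{|y|\ge 3R/2\}$, one must justify the vanishing of the boundary terms at infinity. This is precisely what the smoothness of $u$ (the standing assumption of this section) and the tail decay $u\in L^{p-1}_{sp}$ provide. Once this is granted, the rest is a careful bookkeeping of scaling factors.
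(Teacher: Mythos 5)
Your computation lands on exactly the same identity as the paper, namely
\[
\LL_u v_e(x)=\int_{\R^d} |u(x)-u(y)|^{p-2}(u(x)-u(y))\; e\cdot\nabla_y\Bigl[\tfrac{1-\eta(y/R)}{|x-y|^{d+sp}}\Bigr]\dd y ,
\]
and your bookkeeping of the two resulting pieces (the annulus where $\nabla\eta(\cdot/R)$ lives and the exterior region) reproduces the paper's final estimate correctly. The gap is in how you arrive at that identity. The hypotheses give $u$ smooth in $B_{2R}$ and $u\in L^{p-1}_{sp}(\R^d)$, but no control whatsoever on $\partial_e u(y)$ for $|y|$ large. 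Consequently the quantity $\LL_u(\partial_e u)(x)$, whose tail is $\int |u(x)-u(y)|^{p-2}|\partial_e u(x)-\partial_e u(y)|\,|x-y|^{-d-sp}\dd y$ over large $|y|$, need not converge absolutely (take $u$ bounded but rapidly oscillating at infinity: $u\in L^{p-1}_{sp}$ holds while $|\nabla u(y)|$ grows without bound, and for $p>2$ the factor $|u(x)-u(y)|^{p-2}$ does not save you). The same objection applies to your intermediate expression $(p-1)\int_{|y|\ge 3R/2}|u(x)-u(y)|^{p-2}(1-\eta(y/R))\,\partial_e u(y)\,|x-y|^{-d-sp}\dd y$. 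So the decomposition $\LL_u v_e=\LL_u(\partial_e u)+\LL_u(v_e-\partial_e u)$ splits a convergent integral into two possibly divergent ones, and the assertion that the first summand vanishes --- which already presupposes differentiating the equation under the integral sign in the $h$-variable over all of $\R^d$, tail included --- is not justified. The obstacle you single out (vanishing of the boundary term at infinity in the integration by parts) is not the real difficulty; the real difficulty is that the integral you propose to integrate by parts may fail to exist.

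The paper's proof is organized precisely to sidestep this: it splits $(-\Delta_p)^s u = F_1+F_2$ with the cutoff $\eta(y/R)$ \emph{before} differentiating, performs the substitution $y=x+h$ (which is what puts the derivative on $u(x+h)$, i.e.\ on $\partial_e u(y)$) only in $F_1$, where the factor $\eta((x+h)/R)$ confines $h$ to a compact set, and differentiates $F_2$ in $x$ with $y$ held fixed, so the derivative only ever hits $u(x)$ and the kernel, never $u(y)$ in the tail. If you reorganize your argument along these lines --- or carry out your split and the integration by parts on a truncated domain $B_\rho$ and pass to the limit $\rho\to\infty$ only after the cancellation is explicit --- the remainder of your estimate goes through as written.
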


\begin{proof}
    Let us first use $\eta_R(y) = \eta(y/R)$ to split the domain of integration in the expression for $(-\Delta_p)^s u(x)$. We write $(-\Delta_p)^s u(x) = F_1(x) + F_2(x)$, where
    \begin{align*}
        F_1(x) &= \int_{\R^d} \frac{| u(x)- u(y)|^{p-2}}{|x-y|^{d+sp}} (u(x)-u(y)) \eta_R(y) \dd y \\
        F_2(x) &= \int_{\R^d} \frac{| u(x)- u(y)|^{p-2}}{|x-y|^{d+sp}} (u(x)-u(y)) (1-\eta_R(y)) \dd y
    \end{align*}
    From \eqref{eq:fracplap}, we have that $F_1(x)+F_2(x) = 0$ for all $x \in B_{2R}$. We differentiate this equation to get $e \cdot \nabla F_1(x)+e \cdot \nabla F_2(x) = 0$ and analyze both terms.

    To differentiate $F_1$, we use the change of variables $y = x+h$. Note that since $x \in B_R$ we have $\eta_R(x) = 1$
    \begin{align*}
        e \cdot \nabla F_1(x) &= e \cdot \nabla \int_{\R^d} \frac{| u(x)- u(x+h)|^{p-2}}{|h|^{d+sp}} (u(x)-u(x+h)) \eta_R(x+h) \dd h \\
        &= (p-1) \int_{\R^d} \frac{| u(x)- u(x+h)|^{p-2}}{|h|^{d+sp}} (e\cdot \nabla u(x)-e \cdot \nabla u(x+h)) \eta_R(x+h) \dd h \\
        & \qquad + \int_{\R^d} \frac{| u(x)- u(x+h)|^{p-2}}{|h|^{d+sp}} (u(x)-u(x+h)) (e \cdot \nabla \eta_R(x+h)) \dd h\\
        &= \LL_u v_e(x) + (p-1) \int_{\R^d} \frac{| u(x)- u(y)|^{p-2}}{|x-y|^{d+sp}} (e\cdot \nabla u(x)) (\eta_R(y)-1) \dd y\\
        & \qquad + \int_{\R^d} \frac{| u(x)- u(y)|^{p-2}}{|x-y|^{d+sp}} (u(x)-u(y)) (e \cdot \nabla \eta_R(y)) \dd y.
    \end{align*}

    We also differentiate $F_2(x)$.
    \begin{align*}
    e \cdot F_2(x) &= e\cdot \nabla \int_{\R^d} \frac{| u(x)- u(y)|^{p-2}}{|x-y|^{d+sp}} (u(x)-u(y)) (1-\eta_R(y)) \dd y \\
    &= (p-1) \int_{\R^d} \frac{| u(x)- u(y)|^{p-2}}{|x-y|^{d+sp}} (e \cdot \nabla u(x)) (1-\eta_R(y)) \dd y \\
    & \qquad + \int_{\R^d} | u(x)- u(y)|^{p-2} (u(x)-u(y)) (1-\eta_R(y)) e\cdot \nabla_x \left( \frac 1 {|x-y|^{d+sp}} \right)\dd y.
    \end{align*}

    Note that the factor $1-\eta_R(y)$ equals zero when $|x-y|$ is small. Therefore, there is no singularity in the last integrand.

    We add the expressions of $e \cdot \nabla F_1$ and $e \cdot \nabla F_2$ for $x \in B_R$. We notice some terms cancel out, and we obtain
    \begin{align*}
    0 &= \LL_u v_e(x) - \int_{\R^d} | u(x)- u(y)|^{p-2} (u(x)-u(y)) \, e \cdot \nabla_y \left( \frac{1-\eta_R(y)}{|x-y|^{d+sp}} \right) \dd y .
    \end{align*}
    Recall that $\eta(y/R) \equiv 1$ if $|y| < 3R/2$. Therefore $v_e$ satisfies
    \begin{equation*}
    \begin{aligned}
        |\LL_{u} v_e(x)|\,&\leq   C \int_{B_{3R/2}^c} \frac{|u(y)- u(x)|^{p-1}}{|x-y|^{d+sp+1}} \dd y \\
         &\leq C R^{-1-sp} |u(x)|^{p-1} + C  \,{\Tail}_{p-1,sp+1}(u,3R/2)^{p-1} \\
         &\leq C R^{-1-sp} \|u\|_{L^\infty(B_R)}^{p-1} + C  \,{\Tail}_{p-1,sp+1}(u,R)^{p-1}.
    \end{aligned}
    \end{equation*}    
\end{proof}

Fastidious readers may wonder if the manipulations in the proof of Lemma \ref{l:ve-eq-small-rhs} are well justified when $\LL_u v_e$ is only defined as a distribution in $H^{-1+p(1-s)/2}$. This is indeed the case. For example, one could modify the definition of $\LL_u v_e$ to integrate only in the complement of a small ball $B_\rho^c(x)$, differentiate this approximate expression, and then pass to the limit as $\rho \to 0$. There is no difficulty in this procedure.

\section{Reducing the norm of the gradient}\label{s:reduc_grad}

In this section we prove the main lemma that drives the iteration of the first stage of the proof described in Section \ref{s:strategy}. It is a type of improvement of oscillation lemma, applied to the directional derivatives of $u$ through the functions $v_e$ defined in the previous section. We follow a modified De Giorgi iteration. While some ideas can be compared with those in \cite{chihinchan}, the key idea to estimate the effect of sets of positive measure in this degenerate context is completely new. Note also that our De Giorgi iteration is not standard. Instead of the $L^2$-norm of truncations of the function $v_e$, we will get a recurrence relation on the measure of its level sets. More importantly, there is no coercitivy estimate anywhere in the proofs of this section. Instead, we use a purely nonlocal technique that is new (see Lemmas \ref{l:Bound_Jgap1} and \ref{l:bound_Jr}).

The main lemma of this section is the following.

\begin{lemma} \label{l:iteration}
    For any $r,\mu >0$ small, there exist $\delta, \eps_1>0$ (small) and $K_0>0$ (large) depending on $r,\,\mu, \, d,\, s,\, p$ such that if the following conditions hold for some unit vector $e \in S^{d-1}$,
    \begin{enumerate}
    \item $u$ satisfies \eqref{eq:fracplap} in $B_{2^{K_0+1}}$,
    \item $u(0) = 0$,
    \item  $\|\nabla u\|_{L^{\infty}(B_{2^{n}})} \leq (1-\delta)^{-n} ,$ for $n=0,\dots,K_0$, 
    \item $\Tail_{p-1,sp+1}(u,2^{K_0}) \leq \eps_1$,
    \item $|\{ x \in B_1 : |\nabla u(x) - e| \geq r \}| \geq \mu > 0$,
    \end{enumerate}
    then $e \cdot \nabla u \leq (1-\delta)$ in $B_{1/2}$.
\end{lemma}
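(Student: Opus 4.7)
We argue by contradiction: suppose some $x_0 \in B_{1/2}$ has $v_e(x_0) > 1 - \delta$, where $v_e$ is the localized directional derivative from Section~\ref{s:linearized}. Since $\|\nabla u\|_{L^\infty(B_1)} \leq 1$ (hypothesis (3) with $n=0$), the inequality $|\nabla u - e|^2 \geq r^2$ in hypothesis (5) forces $e \cdot \nabla u \leq 1 - r^2/2$, so there is a set $G \subset B_1$ with $|G| \geq \mu$ on which $v_e \leq 1 - r^2/2$. The dyadic growth (3) together with $u(0) = 0$ gives $\|u\|_{L^\infty(B_{2^n})} \leq 2^n (1-\delta)^{-n}$, and combined with the tail bound (4) and Lemma~\ref{l:eqforv_e_scaled} this yields $|\LL_u v_e| \leq \eps$ in the working ball, with $\eps = \eps(\eps_1, \delta, K_0)$ as small as we wish by choosing $\eps_1$ small and $K_0$ large.

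\emph{Step 1 (non-degeneracy along segments).} For $x$ with $v_e(x) \geq 1 - \delta'$ ($\delta' \ll r$) and $\nu \in S^{d-1}$ with $\nu \cdot e \geq 3/4$, define (in a density-point sense)
\[
t_\nu(x) := \inf\{t > 0 : v_e(x + t\nu) \leq 1 - r/4\}.
\]
For $t < t_\nu(x)$ one has $e \cdot \nabla u \geq 1 - r/4$ a.e.\ on $[x, x+t\nu]$, so
\[
u(x+t\nu) - u(x) = \int_0^t \nu \cdot \nabla u(x+s\nu)\,ds \;\geq\; c_0 \, t, \qquad c_0 = c_0(r) > 0.
\]
Plugging this into \eqref{eq:kernel} gives $K_u(x, x+t\nu) \gtrsim t^{p-2-d-sp}$ for $0 < t < t_\nu(x)$, i.e.\ along such segments the linearized kernel is as singular as the pure fractional-Laplacian kernel of order $2 - p(1-s) > 0$. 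This quantified non-degeneracy is the purely nonlocal ingredient announced in Section~\ref{s:strategy}.

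\emph{Step 2 (De Giorgi-type recurrence on level sets).} Fix thresholds $\ell_k = 1 - \delta(1+2^{-k})$ and shrinking radii $\rho_k \downarrow 1/2$, and set
\[
A_k := \{v_e > \ell_k\} \cap B_{\rho_k}(x_0), \qquad \mu_k := |A_k|.
\]
The target is a super-linear recurrence $\mu_{k+1} \leq C^k \mu_k^{1+\beta}$ for some $\beta > 0$, which forces $\mu_k \to 0$ and contradicts $x_0 \in A_k$ for every $k$. To extract it, integrate $|\LL_u v_e(y)| \leq \eps$ for $y \in A_{k+1}$ and split the kernel integral at the level $\ell_k$. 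The contribution over $\{v_e \leq \ell_k\}$ is bounded below via Step 1: a Fubini argument using $|G| \geq \mu$ shows that a controlled proportion of directions $\nu$ from $y$ in a cone around $e$ reaches $G$ before time $t_\nu(y)$, whence
\[
\int_{\{v_e \leq \ell_k\}} (v_e(y) - v_e(z))\, K_u(y,z)\, dz \;\gtrsim\; \delta \cdot 2^{-k} \cdot \Phi(\mu, r),
\]
with $\Phi > 0$ depending only on $\mu, r, s, p$. The contribution over $\{v_e > \ell_k\}$ is bounded above by an integral of $K_u(y,\cdot)$ over $A_k$, which is of order $\mu_k^{\tau}$ using $|v_e| \leq 1$ and integrability of $K_u$ away from the diagonal. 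Balancing against the smallness of $\LL_u v_e$ and then averaging over $y \in A_{k+1}$ via a covering argument yields the recurrence.

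\emph{Main obstacle.} The principal difficulty is the coupling between the stopping time $t_\nu$ and the level sets of $v_e$: the kernel lower bound of Step~1 holds only up to the unknown $t_\nu$, which is itself determined by $v_e$, so the iteration must track the distribution of $v_e$ as it proceeds --- and this must be done \emph{without} Caccioppoli or energy inequalities, which are unavailable in this degenerate setting. Formalizing this coupling is where the novelty lies (and where the auxiliary Lemmas~\ref{l:Bound_Jgap1}--\ref{l:bound_Jr} enter); once the measure recurrence is in hand, the passage $\mu_k \to 0$ and the resulting contradiction are standard.
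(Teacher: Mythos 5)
Your proposal correctly identifies the two main ingredients — the quantified non-degeneracy of $K_u$ along segments where $\nabla u$ stays near $e$ (via a stopping time $t_\nu$), and a De Giorgi-type recurrence on the measures of level sets of $v_e$ run without any coercivity estimate. This is indeed the skeleton of the paper's argument (Lemmas \ref{l:Bound_Jgap1}, \ref{l:bound_Jr}, \ref{l:A2small}, \ref{l:iteration-ineq-measures}). However, there is a genuine gap in Step 2, at the point where you assert that ``a Fubini argument using $|G|\geq\mu$ shows that a controlled proportion of directions $\nu$ from $y$ in a cone around $e$ reaches $G$.'' This is false for an arbitrary base point $y$ (or $x_0\in B_{1/2}$): the kernel $K_u$ is only non-degenerate along directions $\nu$ with $|\nu\cdot e|$ bounded below, and the set $G=\A_r$ may sit entirely outside the cone $C_{\Gamma(e,a_0)}(y)$ — e.g.\ concentrated in directions nearly perpendicular to $e$ as seen from $y$ — in which case your lower bound on the ``gap'' contribution has no content and the iteration cannot be started. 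The paper resolves this with a geometric selection and propagation scheme that your plan omits: Lemma \ref{l:pickaball} shows that for at least one of the two antipodal points $x_1=\pm\tfrac12 e$, the cone from \emph{every} point of $B_{1/8}(x_1)$ captures half of $\A_r$; the De Giorgi iteration then yields $v_e\leq 1-\delta$ only in $B_{1/16}(x_1)$. One must then re-run the whole argument twice more, using $B_{1/16}(x_1)$ itself as a new set where $\nabla u$ is far from $e$ (with $r_1=\delta$) to reach the antipodal ball, and finally using the pair $B_{1/16}(x_1)\cup B_{1/16}(x_2)$ — positioned along $\pm e$ so that every cone $C_{\Gamma(e,1/6)}(x)$ with $x\in B_{1/8}(z)$, $z\in B_{7/16}$, contains one of them — to cover all of $B_{1/2}$. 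Without this three-step propagation, the conclusion on all of $B_{1/2}$ does not follow.

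Two further, more minor issues. First, your stopping time is a first-exit time of the level set $\{v_e\leq 1-r/4\}$, whereas the paper's $t_\nu$ is the first time the \emph{occupation measure} $|\A_{r,\nu}(t)|$ exceeds $c_0\mu t/|S^{d-1}|$; the occupation-measure version is what guarantees both that $u(x+t\nu)-u(x)\gtrsim t$ up to $t_\nu$ (via the elementary Lemma \ref{l:elementary-1D}) \emph{and} that the set of times over which you integrate the non-degenerate kernel has measure $\gtrsim \mu t_\nu$, which is needed for the lower bound $J_{gap,k}\gtrsim r^2\mu^2\int v_k$. With a first-exit definition the segment $[x,x+t_\nu\nu]$ may never meet $\A_r$ at all. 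Second, the final contradiction should be phrased as $|\bigcap_k A_k|>0$ (a neighborhood of $x_0$ where $v_e>1-\delta$ has positive measure) versus $|A_k|\to 0$, rather than merely ``$x_0\in A_k$ for every $k$,'' since a single point is consistent with $|A_k|\to 0$.
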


We emphasize that if $\delta$, $\eps_1$ and $K_0$ are given values of these  parameters for which Lemma \ref{l:iteration} applies, the statement of the lemma remains true for any smaller values for $\delta$, $\eps_1$ and any larger value for $K_0$.

This whole section is devoted to proving Lemma \ref{l:iteration}. The proof requires several auxiliary lemmas to build up the ideas and finally derive the proof of Lemma \ref{l:iteration} at the end of the section.

To prove Lemma \ref{l:iteration} we carry out an iteration in the style of De Giorgi. We face the difficulty that the kernel $K_u(x,y)$ is degenerate wherever $|u(x)-u(y)| \ll |x-y|$. We must find  pairs of points $(x,y)$ where this does not happen based on the information we have on the directional derivative $e \cdot \nabla u$.

We explain a new key idea, put forward in Lemma \ref{l:Bound_Jgap1}. We want to prove that, if
\[
|\A_{r}| := |\{ x \in B_1 : |\nabla u(x) - e| \geq r \}|\geq \mu,
\]
then $e\cdot \nabla u\leq 1-\delta$ in $B_{1/2}$. Since we started from the assumption that $|\nabla u|\leq 1$ in $B_1$, we can see $\nabla u$ as a map from $B_1$ to itself. Take a point $x\notin \A_r$ and split $\A_r$ into radial pieces starting from $x$:
\[
\A_{r,\nu}(t) := \{ \tau \in (0,t) : \nabla u (x + \tau \nu ) \notin B_{r}(e)\}.
\]

Starting from the apriori assumption that $\nabla u$ is continuous (however the argument is independent of this modulus of continuity), we can consider $\nabla u (x + t \nu )$, for $t\in(0,1)$ and $\nu\in \p  B_1$ , as a family of curves in $B_1$, starting from the point $\nabla u(x)\in B_r(e)$. Our assumption that $|\A_r|\geq \mu$ implies that some of these curves will exit $B_r(e)$ and spend some time outside $B_r(e)$. We quantify this by defining $t_\nu := \inf_{t} \{ |\A_{r,\nu}(t)| >c t\mu \}$, corresponding to the \textit{first time}, in each direction, where the curve spent \emph{some time} outside $B_r(e)$. Clearly $t_\nu>0$. We emphasize, however, that for $t<t_\nu$, the curve actually spent most of the time inside $B_r(e)$, where $\nabla u\approx e$ hence the equation is not degenerate (this is the key point in the argument). We also define \emph{good} directions as those close to $e$ for which the complete curve, $\nabla u(\A_{r,\nu}(1))$, spends a sufficient amount of time outside $B_r(e)$, i.e., those directions for which $t_\nu\leq 1$. Since $\A_r$ has positive measure, then for some directions, $\A_{r,\nu}$ also needs to have positive measure. Therefore, the measure of good directions has to be bounded from below by some universal constant.

For these good directions and for $t<t_\nu$, we are able to prove that 
\[
u(x+t \nu) - u (x)\geq ct,
\]
which, after writing the kernel in polar coordinates, gives
\[
K_u(x,x+t\nu) = (p-1) \frac{|u(x+ t\nu)-u(x)|}{t^{d+sp}}^{p-2} \geq ct^{-d-2+p(1-s)}
\]
where $-2+p(1-s)<0$ for $p<2/(1-s)$. The kernel has exactly the right singularity. Integrating in $\A_{r,\nu}(t_\nu)$ and in the good directions $\nu$, we obtain enough control on the linearized kernel to get an improvement of flatness for the directional derivatives of $u$.

\medskip

As in Lemma \ref{l:eqforv_e_scaled}, we define $v_e$ to be a properly localized version of the directional derivative of $u$ with $R=2^{K_0-1}$,
\begin{equation} \label{def_ve_dyadic} 
v_e(x) = \eta(2^{1-K_0}x) (e\cdot \nabla u(x)).
\end{equation}
With this definition, $v_e$ has support contained in $B_{2^{K_0}}$  and $v_e(x) = e \cdot \nabla u(x)$ for $x \in B_{2^{K_0-1}}$. Moreover, from Condition \textit{(3)}, we have $|v_e| \leq 1$ in $B_{1}$ and $|v_e(x)| \leq (2|x|)^{\alpha_0}$ in $B_{1}^c$ where $\alpha_0=\log(1-\delta)/\log(1/2)$.
In the next lemma, we analyze the equation satisfied by $v_e$ in $B_1$. 

\begin{lemma} \label{l:ve-eq-small-rhs}
Let $u$ be a function satisfying the assumptions of Lemma \ref{l:iteration}. Let $v_e$ be defined as in \eqref{def_ve_dyadic}. Then, it solves the equation
\begin{equation} \label{eq:equation_ve}
|\LL_u v_e| \leq \eps_0 \text{ in } B_{1},
\end{equation} 
where $\eps_0$ is arbitrarily small if we choose $\eps_1$ and $\delta$ small and $K_0$ large in Lemma \ref{l:iteration}.
\end{lemma}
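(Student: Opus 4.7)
The plan is to derive this directly from Lemma~\ref{l:eqforv_e_scaled}, applied with $R = 2^{K_0-1}$---the scale at which the cutoff in \eqref{def_ve_dyadic} agrees with the one in \eqref{def_ve}. Condition~(1) of Lemma~\ref{l:iteration} guarantees that $u$ solves \eqref{eq:fracplap} in $B_{2R} = B_{2^{K_0}}$, and provided $K_0 \geq 1$ we have $B_1 \subset B_R$. Thus Lemma~\ref{l:eqforv_e_scaled} yields, for every $x \in B_1$,
\[
|\LL_u v_e(x)| \leq C\left(R^{-1-sp}\|u\|_{L^\infty(B_R)}^{p-1} + \Tail_{p-1,sp+1}(u,R)^{p-1}\right),
\]
and it suffices to show that each term on the right can be made smaller than $\eps_0/2$ by choosing $\delta$ and $\eps_1$ small and $K_0$ large.

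For the first term, the normalization $u(0) = 0$ combined with the dyadic gradient bound in Condition~(3) gives $\|u\|_{L^\infty(B_R)} \leq R(1-\delta)^{-(K_0-1)}$. Substituting produces a bound of the form
\[
C \left(\frac{(1-\delta)^{-(p-1)}}{2^{\,2 - p(1-s)}}\right)^{K_0-1}.
\]
The key observation is that the structural hypothesis $p < 2/(1-s)$ is equivalent to $2 - p(1-s) > 0$, so by choosing $\delta$ small enough (depending only on $p$ and $s$) the base is strictly less than $1$. The expression then decays geometrically in $K_0$ and is $\leq \eps_0/2$ for $K_0$ sufficiently large.

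For the tail term, the only subtlety is that Condition~(4) controls the tail at radius $2^{K_0}$, whereas we need it at radius $R = 2^{K_0-1}$. I would split the defining integral of $\Tail_{p-1,sp+1}(u,R)^{p-1}$ into the annulus $A = B_{2^{K_0}} \setminus B_{2^{K_0-1}}$ and the outer complement $B_{2^{K_0}}^c$. On $A$, Condition~(3) gives $|u| \leq 2^{K_0}(1-\delta)^{-K_0}$, and the resulting integral has the same geometric form as above, so it is small by the same choice of $\delta$ and $K_0$. The integral over $B_{2^{K_0}}^c$ equals $\Tail_{p-1,sp+1}(u,2^{K_0})^{p-1} \leq \eps_1^{p-1}$, which is small by Condition~(4). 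Combining these two contributions gives the required bound $\leq \eps_0/2$.

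There is no serious obstacle here; the proof is essentially bookkeeping around Lemma~\ref{l:eqforv_e_scaled}. The only point requiring any care is ensuring that the geometric base $(1-\delta)^{-(p-1)} \cdot 2^{-(2 - p(1-s))}$ can be made strictly less than $1$, which is exactly where the hypothesis $p < 2/(1-s)$ enters decisively; this is the same mechanism that makes the tail a subcritical quantity in the rescaling discussion of Subsection~\ref{subsec_Scaling}.
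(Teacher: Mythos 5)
Your proposal is correct and takes essentially the same route as the paper: apply Lemma \ref{l:eqforv_e_scaled} with $R=2^{K_0-1}$, bound $\|u\|_{L^\infty(B_R)}$ via $u(0)=0$ and the dyadic gradient bound, and observe that $(1-\delta)^{-(p-1)}2^{p(1-s)-2}<1$ for $\delta$ small so the first term decays geometrically in $K_0$. The one place you are more careful than the paper is the mismatch between the tail radius $2^{K_0-1}$ produced by the lemma and the radius $2^{K_0}$ in Condition (4); your annulus splitting handles this correctly and produces a contribution of the same geometric form, whereas the paper silently absorbs it.
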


\begin{proof}
We use Assumptions \textit{(3)}--\textit{(4)} and apply Lemma \ref{l:eqforv_e_scaled} to obtain
\begin{align*}
|\LL_u v_e| &\leq C \left( 2^{-K_0(1+sp)} \|u\|^{p-1}_{L^\infty(B_{2^{K_0}})} +{\Tail}_{p-1,sp+1}(u,2^{K_0})^{p-1} \right)  \\
&\leq C \left( 2^{-K_0(1+sp)} (1-\delta)^{-K_0(p-1)}2^{K_0(p-1)} + \eps_1 \right) \\& \leq C \left( \left((1-\delta)^{-(p-1)} 2^{-2+p(1-s)} \right)^{K_0}+ \eps_1  \right) =: \eps_0. 
\end{align*} 
We point out that $\eps_0$ can be made arbitrarily small by taking $K_0$ large enough, since $(1-\delta)^{-(p-1)} 2^{p(1-s)-2}  <1$ for $\delta$ small and $\eps_1$ is small.    
\end{proof}

The equation \eqref{eq:equation_ve} is the main tool we use in this section to prove Lemma \ref{l:iteration}. We will first prove the following preparatory lemma. Lemma \ref{l:iteration} will follow later on as a consequence.

\begin{lemma} \label{l:insomeball}
Let $u$ be a function satisfying the assumptions of Lemma \ref{l:iteration} and $v_e$ defined by \eqref{def_ve_dyadic}. Then, there exists a point $x_1 \in \p B_{1/2}$ such that
\[
v_e\leq 1-\delta \quad \text{ in } B_{1/16}(x_1).
\]
\end{lemma}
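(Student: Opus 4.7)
My plan is to argue by contradiction, applying the first-exit-time / kernel-non-degeneracy technique sketched in Section \ref{s:strategy} at a single carefully chosen peak point. Suppose the conclusion fails: for every $x_1 \in \partial B_{1/2}$ there exists $y \in B_{1/16}(x_1)$ with $v_e(y) > 1-\delta$. Since $|\nabla u| \leq 1$ in $B_1$, such a $y$ satisfies $|\nabla u(y)-e| \leq \sqrt{2\delta} \leq r$ once $\delta$ is chosen small in terms of $r$. In particular, a peak $x$ can be fixed in the annulus $B_{9/16} \setminus B_{7/16}$, and by averaging over the many available peak positions, one can select $x$ so that a positive portion of $\mathcal A_r$ lies in the half-space $\{y : (y-x) \cdot e \geq 0\}$, which will be essential for the non-degeneracy step below.

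The heart of the argument is the following estimate at $x$. For each $\nu \in S^{d-1}$ I set
\[
\mathcal A_{r,\nu}(t) := \{\tau \in (0,t) : \nabla u(x+\tau\nu) \notin B_r(e)\}, \qquad t_\nu := \inf\{t \in (0,3/2) : |\mathcal A_{r,\nu}(t)| > c_0 \mu\, t\},
\]
for a small universal constant $c_0$, and call $\nu$ \emph{good} if $t_\nu < 3/2$ and $e \cdot \nu \geq 1/2$. Fubini, together with $B_1 \subset B_{3/2}(x)$ and $|\mathcal A_r \cap B_1| \geq \mu$, shows the set $G$ of good directions has surface measure at least $\kappa(\mu,d) > 0$. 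For $\nu \in G$ and $t$ in a one-sided neighborhood of $t_\nu$ of relative width $\sim \mu$, the minimality of $t_\nu$ still yields $|\mathcal A_{r,\nu}(t)| \leq t/10$, and therefore
\[
u(x+t\nu) - u(x) = \int_0^t \nabla u(x+\tau\nu) \cdot \nu \, d\tau \geq \tfrac{1}{4} t,
\]
producing the kernel lower bound $K_u(x, x+t\nu) \geq c\, t^{-d - 2 + p(1-s)}$. Moreover, the threshold-crossing definition of $t_\nu$ guarantees that in the same window the set of times $t$ with $x+t\nu \in \mathcal A_r$ has linear measure bounded below by $c_0 \mu\, t_\nu$.

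Passing to polar coordinates, the positive contribution to $\mathcal L_u v_e(x)$ from such $(\nu, t)$ pairs --- where $v_e(x) - v_e(x+t\nu) \geq r^2/4$ because $v_e(x) > 1 - \delta$ while $v_e \leq 1 - r^2/2$ on $\mathcal A_r$ --- is bounded below by a constant $c(\mu, r) > 0$ depending only on $d, s, p, \mu, r$. The remaining ``bad'' contributions, coming from $\nu \notin G$, from the near-diagonal singularity, and from the tail region outside $B_1$, are controlled using $|\nabla u| \leq 1$ in $B_1$, the polynomial growth $|v_e(y)| \leq (2|y|)^{\alpha_0}$ in $B_1^c$ provided by condition \textit{(3)}, and the tail estimates of Section \ref{subsection_tail}. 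Choosing $\delta \ll r^2$ and then, via Lemma \ref{l:ve-eq-small-rhs}, $\eps_0 \ll c(\mu, r)$ by taking $K_0$ large and $\eps_1$ small, the resulting inequality $\mathcal L_u v_e(x) \geq c(\mu, r)/2$ contradicts $|\mathcal L_u v_e(x)| \leq \eps_0$.

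The main obstacle I anticipate is the honest accounting of the bad contributions in the estimate of $\mathcal L_u v_e(x)$, cleanly disentangling the degenerate from the non-degenerate regions of the kernel; this is essentially where the purely nonlocal Lemmas \ref{l:Bound_Jgap1} and \ref{l:bound_Jr} announced in the introduction enter the proof. A secondary subtlety is the selection of the peak $x$: the non-degeneracy argument requires an $x$ from which a positive portion of $\mathcal A_r$ is reachable by directions in $G$, a geometric constraint that is consistent with the lemma's ``there exists $x_1$'' formulation (as opposed to a pointwise bound throughout all of $B_{1/2}$) and that has to be verified under the contradiction hypothesis that peaks are dense around $\partial B_{1/2}$.
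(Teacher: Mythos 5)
Your positive (``gap'') contribution is set up essentially as in the paper's Lemma \ref{l:Bound_Jgap1}: the first-exit time $t_\nu$, the set of good directions of measure $\gtrsim \mu$, the fundamental-theorem-of-calculus bound $u(x+t\nu)-u(x)\gtrsim t$, and the resulting kernel lower bound are all correct and match the paper. The fatal problem is the step you defer to ``honest accounting of the bad contributions'': the near-diagonal negative part of $\LL_u v_e(x)$ at a single peak point cannot be controlled by any of the quantitative hypotheses of Lemma \ref{l:iteration}. Concretely, for $y$ near $x$ you only know $v_e(x)-v_e(y)\geq -\omega(|x-y|)$, where $\omega$ is the (purely qualitative) modulus of continuity of $e\cdot\nabla u$, while in the non-degenerate cone near $x$ (where $\nabla u$ is close to $e$, so there is no degeneracy to save you) the kernel satisfies $K_u(x,y)\approx |x-y|^{-d-2+p(1-s)}$, which is \emph{not} integrable since $2-p(1-s)>0$. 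Thus $\int_{B_\rho(x)}\bigl(v_e(x)-v_e(y)\bigr)_- K_u(x,y)\,\dd y$ is bounded only in terms of $[\nabla u]_{C^{0,\alpha}(x)}$-type quantities (equivalently, the $C^{2}$-norm of $u$), which is exactly the regularity the theorem is trying to establish and which the paper insists its estimates must not depend on. The crude bound $|v_e(x)-v_e(y)|\leq \delta$ (from $v_e\leq 1$ and $v_e(x)>1-\delta$) does not help either, because it multiplies a divergent integral. Your chosen $x$ is an arbitrary peak, not a maximum point of $v_e$, so there is no sign information to kill this term.

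This is precisely why the paper does not run a pointwise argument. Instead it tests the distributional inequality $|\LL_u v_e|\leq\eps_0$ against the truncations $v_k=(v_e-(1-\delta_k\eta_k))_+$ and symmetrizes: the dangerous near-diagonal interaction then appears inside $J_{r,k}=\tfrac12\iint (v_k(x)-v_k(y))^2K_u\geq 0$, which has a favorable sign, and the only remaining cross term $J_{\varphi_k}$ involves increments of the smooth cutoff $\eta_k$, for which $(\eta_k(x)-\eta_k(y))^2K_u(x,y)\lesssim |x-y|^{p(1-s)-d}$ \emph{is} integrable; it is then absorbed via Cauchy--Schwarz into $J_{r,k}$. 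The conclusion $v_e\leq 1-\delta$ on $B_{1/16}(x_1)$ is reached not in one shot but through the De Giorgi recurrence $|A_{k+1}|\leq C2^{6k}|A_k|^{1+\kappa}$ on the level sets, seeded by the smallness of $|A_2|$. Your proposal omits this entire mechanism, and without it the contradiction $\LL_u v_e(x)\geq c(\mu,r)/2$ cannot be established with constants independent of the smoothness of $u$. (A secondary, fixable issue: the selection of the peak ``by averaging'' so that a positive portion of $\A_r$ is reachable in good directions is left vague; the paper's Lemma \ref{l:pickaball} does this cleanly via the two antipodal points $\pm\tfrac12 e$ whose cones cover $B_1$.)
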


We introduce the following notation. We denote the unitary sphere in $\R^d$ by $S^{d-1}$. Given a vector $e \in S^{d-1}$, $\Gamma \subset S^{d-1}$ and $a_0,\rho_0>0$, we define the sets
\begin{equation}
    \begin{aligned} \label{A_rho}
    \A_{\rho_0} &:= \{ x \in B_1 : |\nabla u(x) - e| \geq \rho_0 \}, \\
    \Gamma(e,a_0)&:= \{ \nu \in S^{d-1} \, : |\nu \cdot e| > a_0\},\\
    C_\Gamma(x) &:= \{ x + t\nu: t \in [0,1) \text{ and } \nu \in  \Gamma  \}.
\end{aligned} 
\end{equation}
In particular, Assumption \textit{(5)} in Lemma \ref{l:iteration} can be rewritten as $|\A_r| \geq \mu > 0$.

The point $x_1$ in Lemma \ref{l:insomeball} is chosen based on the following lemma. 

\begin{lemma} \label{l:pickaball}
Assume that $|\A_r| \geq \mu$. There is some point $x_1 \in \p B_{1/2}$ such that \begin{equation} \label{cond}
    |C_{\Gamma(e,1/6)}(x) \cap \A_r| \geq \mu/2  \qquad \text{for all $x \in B_{1/8}(x_1)$}.
\end{equation}
\end{lemma}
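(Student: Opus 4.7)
The approach combines a cone-robustness reduction with a Fubini/averaging argument over the sphere $\partial B_{1/2}$.

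\emph{Step 1 (Robustness reduction).} I would first eliminate the uniformity over $x \in B_{1/8}(x_1)$ by replacing $C_{\Gamma(e,1/6)}(x)$ by a slightly smaller ``inner'' cone anchored at the single point $x_1$. Concretely, set
\[
\tilde C(x_1) := \bigl\{\, x_1 + t\nu \,:\, t \in (0, 7/8),\ \nu \in S^{d-1},\ |\nu \cdot e| > \tfrac{1}{4} \,\bigr\}.
\]
Using the triangle inequality, a direct computation shows that if $y \in \tilde C(x_1)$ is not too close to $x_1$ (say $|y - x_1| \geq \rho$ for a small universal $\rho$), then for every $x \in B_{1/8}(x_1)$ one has simultaneously $|y - x| < 1$ and $|(y - x) \cdot e| > |y - x|/6$, i.e. $y \in C_{\Gamma(e,1/6)}(x)$. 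The length margin $1 - 7/8 = 1/8$ absorbs the distance perturbation and the gap between $1/4$ and $1/6$ absorbs the angular one. The region $B_\rho(x_1)$ that is discarded has small universal volume, so the problem reduces to finding $x_1 \in \partial B_{1/2}$ with $|\tilde C(x_1) \cap \mathcal{A}_r| \geq \mu/2$.

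\emph{Step 2 (Averaging over the sphere).} By Fubini's theorem,
\[
\int_{\partial B_{1/2}} |\tilde C(x_1) \cap \mathcal{A}_r| \, dS(x_1) \;=\; \int_{\mathcal{A}_r} \mathcal{H}^{d-1}\!\bigl(\{x_1 \in \partial B_{1/2} : y \in \tilde C(x_1)\}\bigr) \, dy.
\]
For each fixed $y \in B_1$ the inner set of admissible $x_1$ is explicit: it is the intersection of $\partial B_{1/2}$ with the Euclidean ball of radius $7/8$ centered at $y$, minus the thin slab $\{|(y - x_1) \cdot e| \leq |y - x_1|/4\}$. An elementary spherical-geometry calculation bounds its $\mathcal{H}^{d-1}$ measure from below uniformly in $y \in B_1$; exploiting that the angular band to avoid is very narrow (aperture $\arcsin(1/4)$) and that $\partial B_{1/2}$ lies well inside $B_1$, one obtains the desired lower bound. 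Combining with the Fubini identity gives the existence of $x_1^* \in \partial B_{1/2}$ with $|\tilde C(x_1^*) \cap \mathcal{A}_r| \geq \mu/2$, and Step 1 transfers this to $|C_{\Gamma(e,1/6)}(x) \cap \mathcal{A}_r| \geq \mu/2$ for every $x \in B_{1/8}(x_1^*)$.

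\emph{Main obstacle.} The delicate point is the uniform pointwise bound in Step 2: to match the constant $\mu/2$ in the statement one needs the inner sphere measure to be at least $\tfrac12 \mathcal{H}^{d-1}(\partial B_{1/2})$ for every $y \in B_1$. The tightest configuration is $y$ near $\partial B_1$ with $y \cdot e \approx 0$, where the distance bound $|y - x_1| < 7/8$ and the angular constraint simultaneously become restrictive, so the numerical choices $1/2$, $1/8$, $1/6$, $7/8$, $1/4$ must be compatible. If the pointwise estimate falls short of $1/2$ and yields only a dimensional constant $c_d$, one still obtains $|\tilde C(x_1^*) \cap \mathcal{A}_r| \geq c_d\,\mu$, which suffices for the iteration in Lemma~\ref{l:iteration} after cosmetic relabeling of constants.
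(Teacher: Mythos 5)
There is a genuine gap, and it sits in Step 1. The claimed implication ``$y \in \tilde C(x_1)$ with $|y-x_1|\ge\rho$ small $\Rightarrow$ $y \in C_{\Gamma(e,1/6)}(x)$ for all $x \in B_{1/8}(x_1)$'' is quantitatively false with your constants. Writing $y = x_1 + t\nu$ with $|\nu\cdot e|>1/4$ and $x = x_1+w$ with $|w|<1/8$, the best you can say is $|(y-x)\cdot e| \ge t/4 - 1/8$ and $|y-x| \le t+1/8$, and the sufficient condition $t/4 - 1/8 > (t+1/8)/6$ forces $t > 7/4$, which is incompatible with $t<7/8$; so no point of $\tilde C(x_1)$ is certified. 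The failure is not an artifact of the estimate: for any $t<1/8$ one can choose $w=(t\,\nu\cdot e)e$ to make $(y-x)\perp e$, so the implication genuinely fails on a ball of radius comparable to $1/8$ around $x_1$. Hence $\rho$ cannot be ``small universal''; it must be at least of order $1/8$ (and with aperture $1/2$ instead of $1/4$ one needs $\rho \approx 7/16$). This matters because $\mu$ is an arbitrarily small parameter: discarding a ball of \emph{fixed} positive volume is not harmless when $|\A_r|$ may be only $\mu$, so the reduction has to be rebuilt, not just relabeled. Separately, as you yourself note, the pointwise spherical bound in Step 2 cannot reach $\tfrac12\mathcal H^{d-1}(\p B_{1/2})$ uniformly in $y\in B_1$ (already the cap $\{x_1\in\p B_{1/2}: |y-x_1|<7/8\}$ is less than half the sphere for $y$ near $\p B_1$), so at best the method yields $c_d\,\mu$ rather than $\mu/2$; that weaker constant would indeed suffice for Lemma \ref{l:iteration}, but the statement as written is not recovered.

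For comparison, the paper's proof is a two-line pigeonhole: take the two antipodal points $x_1,x_2\in\p B_{1/2}$ with $x_1-x_2$ parallel to $e$, observe that
\[
\Bigl[\bigcap_{x\in B_{1/8}(x_1)} C_{\Gamma(e,1/6)}(x)\Bigr]\cup\Bigl[\bigcap_{x\in B_{1/8}(x_2)} C_{\Gamma(e,1/6)}(x)\Bigr]=B_1,
\]
so one of the two common cones contains at least half of $\A_r$. This handles the uniformity over $x\in B_{1/8}(x_i)$ exactly where your Step 1 breaks down, by intersecting the cones over the whole ball of base points rather than trying to dominate them by a single shrunken cone, and it delivers the sharp constant $\mu/2$ with no measure-theoretic averaging. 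If you want to salvage your route, you would need to (i) replace the single-point inner cone by the genuine intersection $\bigcap_{x\in B_{1/8}(x_1)}C_{\Gamma(e,1/6)}(x)$ (or prove a correct inclusion with honestly computed constants), and (ii) accept the dimensional constant $c_d\mu$ in place of $\mu/2$.
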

\begin{proof}
    Consider $x_1$ and $x_2$ two antipodal points on $\p B_{1/2}$ such that $x_1-x_2$ is parallel to $e$. Observe that, by construction,
    \[
    \left[ \bigcap_{x \in B_{1/8}(x_1)} C_{\Gamma(e,1/6)}(x) \right] \bigcup \left[ \bigcap_{x \in B_{1/8}(x_2)} C_{\Gamma(e,1/6)}(x) \right] = B_1.
    \]
    Thus, either $x_1$ or $x_2$ satisfies \eqref{cond}.
\end{proof}

The idea of Lemma \ref{l:pickaball} is that we are finding a ball $B_{1/8}(x_1)$ so that the set $\A_r$ intersects a cone of nondegenerate directions for the kernel $K_u$ centered at any point in this ball. In Lemma \ref{l:insomeball}, which we will prove through this section, we take advantage of this fact to improve our bound on $v_e$ inside the ball $B_{1/16}(x_1) $. We will eventually use the fact that this set intersects the cone of non-degeneracy from any point in $B_{1/2}$ to propagate our improved bounds and obtain Lemma \ref{l:iteration}.

We now explain the setup that eventually leads to the proof of Lemma \ref{l:insomeball}. It follows the ideas of De Giorgi's iteration scheme.

Let $x_1$ be the point from Lemma \ref{l:pickaball}. We consider a bump function $\eta_1$ that is equal to $1$ in $B_{3/32}(x_1)$ and vanishes outside $B_{1/8}(x_1)$. In fact, we will consider a whole family of such bump functions $\eta_k$ so that
\begin{itemize}
\item $\eta_k$ is equal to $1$ in $B_{1/16 + 4^{-k-2}}(x_1)$.
\item $\eta_k$ is equal to $0$ outside $B_{1/16 + 4^{-k-1}}(x_1)$.
\item $\eta_k$ is smooth, with $|\nabla \eta_k| \leq C 2^{2k}$ and $|\nabla^2 \eta_k| \leq C 2^{4k}$.
\end{itemize}
Note that by construction, the function $\eta_k$ is identically equal to one on the support of $\eta_{k+1}$.

We also consider a sequence $\delta_k$ of small positive numbers. For some fixed parameter $\delta>0$ (to be determined), we define
\[ \delta_k = (1 + 2^{-k}) \, \delta. \]

The important properties of the sequence $\delta_k$ to keep in mind are $\delta \leq \delta_k \leq 2 \delta$, and $\delta_k - \delta_{k+1} = 2^{-k-1} \delta$. We also use the name $\varphi_k$ to denote the function \begin{equation} \label{def_varphik}
    \varphi_k=1-\delta_k \eta_k. 
\end{equation}

Let us define the set $A_k$ where $v_e$ is larger than $1-\delta_k \eta_k$
\begin{equation} \label{def_levelsetsAk}
A_k := \{ x \in B_1 : v_e(x) > 1 - \delta_k \eta_k(x) \}.
\end{equation}

Since $v_e$ is uniformly bounded by $1$, we have that $A_k$ is a subset of the support of $\eta_k$. Our objective is to show that $|A_k|$ is arbitrarily small for large $k$, effectively proving that $\bigcap_{k=1}^\infty A_k$ has measure zero. From this we will conclude that $v_e \leq 1 - \delta$ in $B_{1/16}(x_1)$ for some small $\delta > 0$.

For fixed $e \in S^{d-1}$ and any value of $k$, we define
\begin{equation} \label{def:v_k}
v_k= \begin{cases}
 \left( v_e - (1-\delta_k \eta_k)  \right)_+ & \text{in } B_1, \\
    0 & \text{outside } B_1.
\end{cases} \end{equation}

We multiply the equation $\LL_u [v_e] \leq \varepsilon_0$ by $v_k$ and integrate over $B_1$ to get

\begin{equation} \label{eq:testvk}
    \int \LL_u [v_e] v_k\,\dd x\leq \varepsilon_0 \int v_k \,\dd x.
\end{equation}

\begin{remark}
From Lemma \ref{l:LL-in-Walpha}, we see that the integral on the left-hand side of \eqref{eq:testvk} makes sense as soon as $u \in L^{p-1}_{sp}$ is Lipschitz in $B_1$, and $v_e$ (and thus also $v_k$) is in $H^{1-p(1-s)/2}(B_1)$.

When $p \in (1/(1-s),2/(1-s))$, using Lemma \ref{l:LL-well-defined-Lipschitz}, we may also see that the integrand is well-defined pointwise and bounded provided that $u \in C^{1,1}$.

Incidentally, for $p \in [2,1/(1-s)]$, the integrand is also pointwise well-defined and bounded if $u \in C^{2,\alpha}$ for some $\alpha > 1 - p(1-s)$. We explain in Appendix \ref{s:appendix_2} that the linearized operator $\LL_u v(x)$ may not make sense pointwise, even if $u$ and $v$ are smooth, where $\nabla u(x) = 0$. However, the support of $v_k$ consists only of points where $e \cdot \nabla u > 1-\delta_k$. We have $|\nabla u| \geq 1-\delta_k > 0$ at these points. Lemma \ref{l:LL-well-defined} applies and tells us that $\LL_u v_e(x)$ is well-defined in the support of $v_k(x)$.
\end{remark}

Note that the symmetry $K_u(x,y)=K_u(y,x)$ gives $\LL_u$ a variational structure. We split the left hand side into several terms.  We write
\[
\int_{B_1} \LL_u [v_e] v_k\,\dd x\geq J_{Tail,k}+J_{\varphi,k}+J_{gap,k}+J_{r,k},
\]
where
\begin{equation} \label{eq:defJgapk}
\begin{aligned}
    J_{Tail,k} =&\,\int_{\R^d\setminus B_1}\int_{B_1} (\varphi_k(x)-v_e(y))v_k(x) K_u(x,y)\,\dd x\dd y\\
J_{\varphi_k}=&\,\iint_{B_1\times B_1}(\varphi_k(x)-\varphi_k(y))v_k(x)K_u(x,y)\,\dd x\dd y\\
    J_{gap,k}=&\,\iint_{B_1\times B_1}[\varphi_k(y)-v_e(y)]_+v_k(x)K_u(x,y)\,\dd x\dd y\\
    J_{r,k}=&\, \iint _{B_1\times B_1} (v_k(x)-v_k(y))v_k(x) K_u(x,y)\,\dd x\dd y.
\end{aligned}
\end{equation}
Indeed, 
\begin{align*}
    \int_{B_1} \LL_u [v_e] v_k\,\dd x 
    &=\int_{\R^d\setminus B_1}\int_{B_1} (v_e(x)-v_e(y))v_k(x) K_u(x,y)\,\dd x\dd y  \\
    &+ \iint_{B_1\times B_1} [(v_e(x)-v_k(x)) + v_k(x) -(v_e(y)-v_k(y))-v_k(y)] v_k(x)  K_u(x,y)\,\dd x\dd y 
    \intertext{note that both $v_e \geq \varphi_k$ and $v_e -v_k=\varphi_k $ hold in $\supp v_k$}
    &\geq\int_{\R^d\setminus B_1}\int_{B_1} (\varphi_k(x)-v_e(y))v_k(x) K_u(x,y)\,\dd x\dd y \\
    &+ \iint_{B_1\times B_1} [\varphi_k(x) + v_k(x) +\varphi_k(y) - \varphi_k(y) -(v_e(y)-v_k(y)) -v_k(y)] v_k(x)  K_u(x,y)\,\dd x\dd y  \\
    &= J_{Tail,k}+\iint_{B_1\times B_1}(\varphi_k(x)-\varphi_k(y))v_k(x)K_u(x,y)\,\dd x\dd y \\
    &+ \,\iint_{B_1\times B_1}[\varphi_k(y)-v_e(y)]_+v_k(x)K_u(x,y)\,\dd x\dd y \\ &+ \iint_{B_1\times B_1}(v_k(x)-v_k(y))v_k(x)K_u(x,y)\,\dd x\dd y.  
\end{align*}

We will see that the terms $J_{gap,k}$ and $J_{r,k}$ are strictly positive, while all the other terms are not too negative. Appropriate lower bounds for these four terms combined with \eqref{eq:testvk} will yield a sequence of estimates on the measure of the sets $A_k$.

\subsection{Estimating \texorpdfstring{$J_{Tail,k}$}{J-Tail}}

We start with some simple bounds for the tails of the kernel $K_u$ under the assumptions of Lemma \ref{l:iteration}. They will be used to estimate $J_{Tail,k}$.

\begin{lemma} \label{l:tails_of_kernel_1}
Let $u$ be a function satisfying the assumptions of Lemma \ref{l:iteration} and $K_u$ be the kernel defined in \eqref{eq:kernel}. Assuming that $\delta$ and $\eps_1$ are small, it satisfies the following bounds
\begin{align*}
    \int_{B_1^c} K_u(x,y) \, \dd y &\leq C \text{ for } x \in B_{3/4}.
\end{align*}
Here, $C$ depends on dimension, $s$ and $p$.
\end{lemma}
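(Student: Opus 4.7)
The approach is to split $B_1^c$ into dyadic annuli up to scale $2^{K_0}$ together with the far tail beyond $2^{K_0}$, and estimate each piece by matching the Lipschitz bound in Assumption \textit{(3)} at the corresponding scale. More precisely, I would write $B_1^c = \bigsqcup_{n=0}^{K_0-1} (B_{2^{n+1}}\setminus B_{2^n}) \sqcup B_{2^{K_0}}^c$ and treat each annulus and the far tail separately.

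On a dyadic annulus $B_{2^{n+1}}\setminus B_{2^n}$, since $x\in B_{3/4}$ one has $|x-y|\gtrsim 2^n$, while the Lipschitz bound in Assumption \textit{(3)} combined with $u(0)=0$ yields $|u(y)| \leq (1-\delta)^{-(n+1)}\cdot 2^{n+1}$ and $|u(x)|\leq 1$. Using $p\geq 2$, I bound $|u(x)-u(y)|^{p-2}\leq C_p(|u(x)|^{p-2}+|u(y)|^{p-2})$. A direct computation (the annulus has volume $\sim 2^{nd}$ and the kernel denominator contributes $\sim 2^{-n(d+sp)}$) then reduces the integral over this annulus to a geometric-series term with ratio $(1-\delta)^{-(p-2)}\cdot 2^{p(1-s)-2}$.

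For the far tail, the $|u(x)|^{p-2}$ contribution is trivially bounded using Assumption \textit{(3)}. For the $|u(y)|^{p-2}$ contribution, the bound $|x-y|\geq |y|/2$ (valid for $|y|\geq 2^{K_0}\geq 2$) reduces the integral to a constant times ${\Tail}_{p-2,sp}(u,2^{K_0})^{p-2}$, which is then controlled by combining Assumption \textit{(4)} with the first inequality in \eqref{Tail_functions}.

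The main obstacle is obtaining a bound uniform in $K_0$ depending only on $d,s,p$. This hinges on the geometric-series ratio $(1-\delta)^{-(p-2)}\cdot 2^{p(1-s)-2}$ being strictly less than $1$. The factor $2^{p(1-s)-2}$ is strictly less than $1$ precisely by the hypothesis $p<2/(1-s)$; this gap is exactly what allows one to absorb $(1-\delta)^{-(p-2)}$ by choosing $\delta$ sufficiently small (no smallness on $\delta$ is needed when $p=2$), and it is the smallness assumption referenced in the statement. The same negative exponent $p(1-s)-2<0$ also makes the far-tail piece bounded (in fact small) uniformly in $K_0$, via the power $R^{(p(1-s)-2)/(p-1)}$ appearing in \eqref{Tail_functions}.
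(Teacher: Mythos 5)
Your proof is correct and follows essentially the same route as the paper's: the paper first reduces the kernel integral to $C(1+{\Tail}_{p-1,sp+1}(u,1)^{p-2})$ via the H\"older inequality in \eqref{Tail_functions} and then performs the dyadic decomposition on that Tail (see \eqref{eq:bound_tail_iter}), whereas you decompose the kernel integral directly and keep the exponent $p-2$, which merely changes the geometric ratio from $(1-\delta)^{-(p-1)}2^{p(1-s)-2}$ to $(1-\delta)^{-(p-2)}2^{p(1-s)-2}$ — both strictly less than $1$ for $\delta$ small precisely because $p<2/(1-s)$. Your treatment of the far tail via Assumption \textit{(4)} and the first inequality of \eqref{Tail_functions} matches the paper's as well, so no gap.
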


\begin{proof}
Indeed, for $x \in B_{3/4}$, using that $|u(x)|\leq 1$ and \eqref{Tail_functions}, we obtain
\begin{align*}
    \int_{B_1^c} K_u(x,y) \dd y &\leq \int_{B_1^c} \frac{(|u(y)|+1)^{p-2}}{|x-y|^{d+sp}} \dd y  \\
    & \leq C \left( {\Tail}_{p-2,sp}(u,1)^{p-2} +1 \right) \\
    & \leq C\left( {\Tail}_{p-1,sp+1}(u,1)^{p-2} +1 \right).
\end{align*}
for a universal constant $C>0$. The tail of $u$ is bounded as follows.

Define $\theta_0 := 2^{p(1-s)-2}<1$. We choose $\delta$ sufficiently small so that $\theta := (1-\delta)^{-(p-1)} 2^{p(1-s)-2}< (1+\theta_0)/2$. Note that this value of $\theta$ depends on $s$ and $p$ only.

\begin{equation} \label{eq:bound_tail_iter}
  \begin{aligned} 
  \text{Tail}_{p-1,sp+1}(u,1)^{p-1}
  &\leq \sum_{n=0}^{K_0-1} \int_{B_{2^{n+1}} \setminus B_{2^n}} \frac{|u(z)|^{p-1}}{|z|^{d+sp+1}}  \dd  z +\text{Tail}_{p-1,sp+1}(u,2^{K_0})^{p-1}  \\
  & \leq \sum_{n=1}^{K_0-1} \int_{B_{2^{n+1}} \setminus B_{2^n}} \frac{|(1-\delta)^{-n}2^n)|^{p-1}}{|2^n|^{d+sp+1}}  \dd  z + \eps_1^{p-1} \\ 
  &\leq C \sum_{n=1}^{K_0-1} \theta^{n} + \eps_1^{p-1}
  \\
  & \leq C \frac{1}{1-\theta} + \eps_1^{p-1}   \leq C_1,
\end{aligned}
\end{equation}
where $C_1$ is a positive constant depending on $d$, $s$ and $p$ only.
\end{proof}

\begin{lemma} \label{l:tails_of_kernel_2}
Let $u$ be a function satisfying the assumptions of Lemma \ref{l:iteration} and $K_u$ be the kernel defined in \eqref{eq:kernel}. Assuming $\delta$ and $\eps_1$ are small, it satisfies the following bound
\begin{align*}
    \sum_{n=0}^\infty \int_{B_{2^n} \setminus B_{2^{n+1}}} (1-\delta)^{-n} K_u(x,y) \, \dd y &\leq C \text{ for } x \in B_{3/4}.
\end{align*}
Moreover, we also have
\begin{align*}
    \sum_{n=0}^\infty \int_{B_{2^n} \setminus B_{2^{n+1}}} \left( (1-\delta)^{-n} - 1 \right) K_u(x,y) \, \dd y &\leq C(\delta) \text{ for } x \in B_{3/4},
\end{align*}
where $C(\delta) \to 0$ as $\delta \to 0$. 

In this lemma, $C$ depends on $d$, $s$, $p$. The bound $C(\delta)$ depends on these quantities as well as $\delta$.
\end{lemma}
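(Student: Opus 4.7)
The plan is to decompose each sum annulus-by-annulus and reduce both estimates to convergent geometric series, where the convergence hinges on the hypothesis $p < 2/(1-s)$, i.e. $2-p(1-s) > 0$ (I read the set $B_{2^n}\setminus B_{2^{n+1}}$ in the statement as $B_{2^{n+1}}\setminus B_{2^n}$, which is the natural choice partitioning $B_1^c$).

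First I would bound a single annular piece of $K_u$. For $x \in B_{3/4}$, $y \in B_{2^{n+1}}\setminus B_{2^n}$ and $1 \leq n \leq K_0-1$, conditions (2) and (3) together with $u(0)=0$ give $|u(y)| \leq 2^{n+1}(1-\delta)^{-(n+1)}$, while $|u(x)| \leq 3/4$ and $|x-y| \geq 2^{n-1}$. Plugging into \eqref{eq:kernel} and multiplying by the annulus volume $\sim 2^{nd}$,
\[
\int_{B_{2^{n+1}}\setminus B_{2^n}} K_u(x,y)\, \dd y \;\leq\; C\,\rho_\delta^{\,n}, \qquad \rho_\delta \;:=\; 2^{-(2-p(1-s))}\,(1-\delta)^{-(p-2)}.
\]
The $n=0$ term is bounded directly since $|x-y|\geq 1/4$ there. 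For $n \geq K_0$, where (3) no longer applies, I would mimic the tail computation in Lemma~\ref{l:tails_of_kernel_1}, controlling $\text{Tail}_{p-2,sp}(u,2^{K_0})$ by $\text{Tail}_{p-1,sp+1}(u,2^{K_0})$ through \eqref{Tail_functions} and applying (4); the resulting contribution is bounded uniformly and in fact small in $\eps_1$.

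With this annular bound in hand, the first estimate is immediate:
\[
\sum_{n=0}^{\infty} (1-\delta)^{-n} \int_{B_{2^{n+1}}\setminus B_{2^n}} K_u(x,y)\, \dd y \;\leq\; C\sum_{n=0}^{\infty} \left( 2^{-(2-p(1-s))}(1-\delta)^{-(p-1)}\right)^n.
\]
At $\delta=0$ the ratio equals $2^{-(2-p(1-s))} < 1$ by $p<2/(1-s)$, so by continuity it remains strictly below $1$ for all small $\delta$, giving a uniform bound $C = C(d,s,p)$. For the second estimate I would split the sum at a threshold $N$. For $n \leq N$, the factor $(1-\delta)^{-n}-1$ is $O_N(\delta)$, and it multiplies a $\delta$-independent bound $C\rho_0^n$ with $\rho_0:=\rho_0(0)$, so this portion tends to zero as $\delta\to 0$ for any fixed $N$. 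For $n > N$, I reuse the first estimate to bound the tail by $C\sum_{n>N}\rho_\delta^{\,n}(1-\delta)^{-n} \leq C\,r_\delta^{\,N}/(1-r_\delta)$, where $r_\delta = (1-\delta)^{-(p-1)}2^{-(2-p(1-s))} < 1$ uniformly in small $\delta$. Picking $N$ large first to kill the tail, then $\delta$ small to kill the low-frequency part, yields $C(\delta)\to 0$.

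The main obstacle is purely a bookkeeping check: verifying that the geometric ratio $(1-\delta)^{-(p-1)}2^{-(2-p(1-s))}$ stays bounded away from $1$ uniformly for small $\delta$, which is exactly where the hypothesis $p<2/(1-s)$ plays its role and which one must track through to keep the constants universal. Handling $n \geq K_0$ requires only the same tail inequalities already used in the proof of Lemma~\ref{l:tails_of_kernel_1}, so it adds no real new difficulty.
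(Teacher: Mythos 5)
Your core mechanism is the right one and is essentially the paper's: the weighted annular sums reduce to a geometric series with ratio $\theta=(1-\delta)^{-(p-1)}2^{p(1-s)-2}$, which is $<1$ uniformly for small $\delta$ precisely because $p<2/(1-s)$. The paper packages this slightly differently: instead of bounding each annulus via the growth hypothesis \textit{(3)} and summing, it absorbs the weight $(1-\delta)^{-n}\leq |y|^{\alpha_0}$ (with $2^{-\alpha_0}=1-\delta$) into the kernel exponent, reducing everything to the single quantity ${\Tail}_{p-2,sp-\alpha_0}(u,1)$, which is then compared to ${\Tail}_{p-1,sp+1}(u,1)$ by H\"older and bounded as in \eqref{eq:bound_tail_iter}. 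For the second estimate the paper uses a dominated-convergence argument on the factor $(2|y|)^{\alpha_0}-1$ rather than your split at a threshold $N$; your $\varepsilon/2$ argument is a legitimate alternative and arguably more elementary.

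The one place where your sketch does not close as written is the range $n\geq K_0$. There condition \textit{(3)} gives no pointwise bound on $u$, so you fall back on the tail hypothesis \textit{(4)} -- but the weight $(1-\delta)^{-n}$ is unbounded on this range, so you cannot control $\sum_{n\geq K_0}(1-\delta)^{-n}\int_{\text{ann}_n}K_u$ by ${\Tail}_{p-2,sp}(u,2^{K_0})$ alone, as you propose. You must first absorb the weight via $(1-\delta)^{-n}\leq |y|^{\alpha_0}$ on the $n$-th annulus, which produces ${\Tail}_{p-2,sp-\alpha_0}$ instead, and the H\"older comparison with ${\Tail}_{p-1,sp+1}$ then requires $\alpha_0<(2-p(1-s))/(p-1)$, i.e.\ $\delta$ small. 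The same correction is needed in the $n>N$, $n\geq K_0$ portion of your second estimate (where, as a consequence, that piece is controlled by the smallness of the tail and of $\alpha_0$ rather than by $N$). This is a bookkeeping fix, not a conceptual gap, and the needed inequality is exactly the one the paper invokes.
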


\begin{proof}
    We do a computation similar to Lemma \ref{l:tails_of_kernel_1} using that $|u(x)|\leq 1$.
    \begin{align*}
        \sum_{n=0}^\infty \int_{B_{2^n} \setminus B_{2^{n+1}}} (1-\delta)^{-n} K_u(x,y) \, \dd y &= \sum_{n=0}^\infty \int_{B_{2^{n+1}} \setminus B_{2^{n}}} (1-\delta)^{-n} \frac{|u(x)-u(y)|^{p-2}}{|x-y|^{d+sp}} \, \dd y \\
        & \leq \sum_{n=0}^\infty \int_{B_{2^{n+1}} \setminus B_{2^{n}}} (1-\delta)^{-n} \frac{(1 + |u(y)|)^{p-2}}{|x-y|^{d+sp}} \, \dd y \\
        &\leq C\int_{B_1^c} \frac{(1+|u(y)|)^{p-2}}{|y|^{d+sp-\alpha_0}} \\
        &\leq C \left( 1 + {\Tail}_{p-2,sp-\alpha_0}(u,1) \right).
    \end{align*}
    Here, $\alpha_0$ is the exponent so that $2^{-\alpha_0} = (1-\delta)$. Note that $\alpha_0$ is small if $\delta$ is chosen small. Using H\"older's inequality similarly as in \eqref{Tail_functions}, provided $\alpha_0 < (p(1-s)-2)/(p-1)$, we get for all $R \geq 1$,
    \[ {\Tail}_{p-2,sp-\alpha_0}(u,R) \leq C R^{\frac{p(1-s)-2}{p-1} + \alpha_0} {\Tail}_{p-1,sp+1}(u,R).\]
    In particular, we apply it for $R=1$ and proceed similarly as in the proof of Lemma \ref{l:tails_of_kernel_1}.

    When we proceed with the computation of the upper bound for the second quantity, we obtain
    \begin{align*}
        \sum_{n=0}^\infty \int_{B_{2^n} \setminus B_{2^{n+1}}} \left( (1-\delta)^{-n} - 1 \right) K_u(x,y) \, \dd y &= \sum_{n=0}^\infty \int_{B_{2^{n+1}} \setminus B_{2^{n}}} \left((1-\delta)^{-n}-1  \right)\frac{|u(x)-u(y)|^{p-2}}{|x-y|^{d+sp}} \, \dd y \\
        & \leq C \int_{B_1^c} ((2|y|)^{\alpha_0}-1) \frac{1 + |u(y)|^{p-2}}{|y|^{d+sp}} \, \dd y \\
        \intertext{applying H\"older's inequality,}
        &\leq C(\alpha_0) + C 
    \; {\Tail}_{p-1,sp+1}^{p-2} \left( \int_{B_1^c} \frac{((2|y|)^{\alpha_0}-1)^{p-1}}{ |y|^{(d+sp) - p + 2}} \dd y \right)^{1/(p-1)}.
    \end{align*}  
    The last integral factor converges to zero as $\alpha_0 \to 0$. Recall that $\alpha_0$ equals $-\log (1-\delta) / \log 2$, and $\alpha_0 \to 0$ as $\delta \to 0$. This concludes the proof of the second inequality.
\end{proof}

\begin{lemma}\label{l:bound_J_Tail}
    Under the assumptions of Lemma \ref{l:iteration}, we have
    \begin{equation} \label{est:JTail}
          J_{Tail,k} \geq - \varepsilon(\delta) \int_{B_1} v_k\,\dd x,
    \end{equation}
    where $\varepsilon(\delta)$ is small, provided $\delta$ is small. In addition to $\delta$, the value of $\eps(\delta)$ depends on $d$, $p$ and $s$.
\end{lemma}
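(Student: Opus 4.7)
The plan is to bound the integrand $\varphi_k(x) - v_e(y)$ from below pointwise, and then integrate against the nonnegative factors $v_k(x)\,K_u(x,y)$ and apply the tail estimates from Lemmas~\ref{l:tails_of_kernel_1} and~\ref{l:tails_of_kernel_2}. For $x$ in the support of $v_k$, which lies strictly inside $B_1$, we have $\varphi_k(x)=1-\delta_k\eta_k(x)\geq 1-2\delta$; and for $y\in B_1^c$ Assumption \textit{(3)} yields the global bound $|v_e(y)|\leq (2|y|)^{\alpha_0}$ with $\alpha_0 := -\log(1-\delta)/\log 2$. Combining these gives
$$\varphi_k(x)-v_e(y)\;\geq\; -2\delta \;+\; \bigl(1-(2|y|)^{\alpha_0}\bigr).$$

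Plugging this into the definition of $J_{Tail,k}$ reduces the estimate to bounding two integrals from below. The first, $-2\delta\int_{B_1} v_k(x)\int_{B_1^c} K_u(x,y)\dd y\dd x$, is immediately controlled by $-C\delta\int_{B_1} v_k$ via Lemma~\ref{l:tails_of_kernel_1}.

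The second integral, $-\int_{B_1} v_k(x)\int_{B_1^c}\bigl((2|y|)^{\alpha_0}-1\bigr) K_u(x,y)\dd y\dd x$, is where Lemma~\ref{l:tails_of_kernel_2} is crucial. I would split the outer integral dyadically; for $y\in B_{2^{n+1}}\setminus B_{2^n}$ one has $(2|y|)^{\alpha_0}\leq 4^{\alpha_0}(1-\delta)^{-n}$, so
$$(2|y|)^{\alpha_0}-1\;\leq\; 4^{\alpha_0}\bigl((1-\delta)^{-n}-1\bigr)\;+\;\bigl(4^{\alpha_0}-1\bigr).$$
Applying Lemma~\ref{l:tails_of_kernel_2} to the first summand and Lemma~\ref{l:tails_of_kernel_1} to the second gives a bound of order $4^{\alpha_0}C(\delta)+(4^{\alpha_0}-1)C$ times $\int_{B_1} v_k$, and both factors vanish as $\delta\to 0$ since $\alpha_0\to 0$ with $\delta$.

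Combining the two pieces yields the claim with $\varepsilon(\delta)=C\delta+4^{\alpha_0}C(\delta)+(4^{\alpha_0}-1)C$, which tends to zero with $\delta$. The only conceptual step is recognizing that Lemma~\ref{l:tails_of_kernel_2} is designed precisely for this kind of bound: the annular growth $(1-\delta)^{-n}$ of $v_e$ matches the weight appearing in that lemma. Past that observation, the argument is a routine pointwise-plus-Fubini reduction to the two preceding tail lemmas, so I do not anticipate any serious obstacle.
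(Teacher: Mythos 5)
Your proposal is correct and follows essentially the same route as the paper: bound $\varphi_k(x)$ below by $1-2\delta$, bound $v_e(y)$ above annulus by annulus using Assumption \textit{(3)} (the paper works directly with $(1-\delta)^{-n-1}$ on $B_{2^{n+1}}\setminus B_{2^n}$, which is equivalent to your $(2|y|)^{\alpha_0}$ form), and then absorb the two resulting error terms via Lemmas \ref{l:tails_of_kernel_1} and \ref{l:tails_of_kernel_2}. The algebraic split $4^{\alpha_0}((1-\delta)^{-n}-1)+(4^{\alpha_0}-1)$ is a harmless repackaging of the same cancellation the paper exploits, so no gap.
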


\begin{proof}
By construction $v_e$ is supported inside $B_{2^{K_0}}$. Moreover, $v_e \leq |\nabla u| \leq (1-\delta)^{-n}$ in $B_{2^n}$. We get 
    \begin{align*}
        J_{Tail,k} &\geq \int_{B_{2^{K_0}} \setminus B_1}\int_{B_1} \left[ (1-\delta_k\eta_k)-v_e(y) \right]v_k(x)K_u(x,y)\,\dd x\dd y\\
        &\geq - \left( \int_{B_1} v_k(x)\,\dd x \right) \cdot \sup_{x \in \supp v_k} \left( \delta_k \int_{B_1} \eta_k(y) K_u(x,y) \dd y + \sum_{n=0}^\infty \int_{B_{2^{n+1}} \setminus B_{2^n}} (1-\delta)^{-n-1} K_u(x,y) \dd y \right) \\
        &\geq -(C_{1,k}\delta_k + C_2(\delta))\int_{B_1} v_k(x)\,\dd x.
    \end{align*}
We use Lemma \ref{l:tails_of_kernel_1} to bound $C_{1,k}$ and Lemma \ref{l:tails_of_kernel_2} to bound $C_2(\delta)$. We recall that $C_2(\delta) \to 0$ as $\delta \to 0$ and that $\delta_k \leq 2 \delta$. The result follows.
\end{proof}

\subsection{Estimating \texorpdfstring{$J_{gap,k}$}{Jgap}}

We now focus on estimating $J_{gap,k}$. 

A key idea to control the non-degeneracy of the kernel in some directions shows in this proof. At some point, we will need to find pairs of points $x$ and $y$ so that $K_u(x,y)$ has a bound from below. We do it in terms of the values of $v_e$, by looking for $(x-y)$ that is not perpendicular to $e$, and so that $\nabla u$ stays close to $e$ in a large proportion of the segment from $x$ to $y$. The following elementary calculus lemma will come handy.

\begin{lemma} \label{l:elementary-1D}
Let $f : [0,t] \to \R$ be a continuously differentiable function so that
\begin{itemize}
\item $|f'(\tau)| \leq 1$ for $\tau \in [0,t]$.
\item $|\{\tau \in (0,t) : f'(\tau) < 1/10\}| \leq c_0 t$, where $c_0 = 1/22$.
\end{itemize}
Then $f(t) - f(0) > t/20$.
\end{lemma}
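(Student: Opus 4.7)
The proof should be a direct application of the fundamental theorem of calculus, splitting the domain $(0,t)$ according to the hypothesis on $f'$.

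The plan is to write
\[
f(t)-f(0) = \int_0^t f'(\tau) \dd\tau = \int_{E} f'(\tau)\dd\tau + \int_{(0,t)\setminus E} f'(\tau)\dd\tau,
\]
where $E := \{\tau \in (0,t) : f'(\tau) < 1/10\}$. On $E$, I would only use the lower bound $f'(\tau) \geq -1$ coming from $|f'|\leq 1$, which combined with the measure bound $|E|\leq t/22$ gives $\int_E f'(\tau)\dd\tau \geq -t/22$. On the complement $(0,t)\setminus E$, by definition $f'(\tau)\geq 1/10$, and $|(0,t)\setminus E| \geq t - t/22 = 21t/22$, so $\int_{(0,t)\setminus E} f'(\tau)\dd\tau \geq (1/10)(21t/22) = 21t/220$.

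Adding these two contributions yields
\[
f(t)-f(0) \;\geq\; -\frac{t}{22} + \frac{21t}{220} \;=\; -\frac{10t}{220} + \frac{21t}{220} \;=\; \frac{t}{20}.
\]
The numerical choice $c_0 = 1/22$ is tuned exactly so that the loss from the bad set $E$ (worst case $-1$) and the gain from its complement (at least $1/10$) combine to $t/20$. The strict inequality $> t/20$ follows because equality in the chain above would force $f'\equiv -1$ a.e.\ on $E$, $f'\equiv 1/10$ a.e.\ on $(0,t)\setminus E$, and $|E|=t/22$ exactly, which is incompatible with the continuity of $f'$ on $[0,t]$ (the level set $\{f'=1/10\}$ separating the two regions would have to produce a discontinuous jump).

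There is no real obstacle in this lemma; it is a purely one-dimensional averaging argument designed to later be applied to $f(\tau) = u(x+\tau\nu)$, where the derivative lower bound $f'\geq 1/10$ on a large set will come from the points in the segment $[x,x+t\nu]$ where $\nabla u$ has not yet left the ball $B_r(e)$ (so $\nu\cdot \nabla u \approx \nu\cdot e$ is uniformly positive when $\nu$ is in the good cone $\Gamma(e,1/6)$), and the bad set corresponds to the slice of $\A_{r,\nu}$. The constants $1/10$, $1/22$, $t/20$ are not sharp; they are chosen compatibly with the later cone opening $a_0=1/6$ and the definition of $t_\nu$.
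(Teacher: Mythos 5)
Your proof is correct and is essentially the paper's argument: split $(0,t)$ into the bad set $E$ and its complement, bound $f'$ by $-1$ and $1/10$ respectively, and use $|E|\leq t/22$ to get $f(t)-f(0)\geq t/10-\tfrac{11}{10}|E|\geq t/20$. Your additional remark on why equality cannot occur is a nice touch, since the paper's own computation only yields the non-strict inequality and then asserts the result.
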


\begin{proof}
This is a straightforward application of the fundamental theorem of Calculus.
\begin{align*}
    f(t) - f(0) &= \int_0^t f'(\tau) \dd \tau, \\
    &\geq (1-c_0) \frac t {10} - c_0t = \left( \frac 1 {10} - c_0 \frac {11} {10} \right) t.
\end{align*}
And the result follows. 

Note that the constants $1/10$ and $1/20$ could be changed by any other pair of ordered fractions in $(0,1)$ by appropriately adjusting the value of $c_0$.
\end{proof}

We now derive a lower bound on $J_{gap,k}$.

\begin{lemma}\label{l:Bound_Jgap1} For any $ r,\mu>0$, let $u \in C^{1}(B_2)$ and for $e \in S^{d-1}$ let $v_e$ be defined as in \eqref{def_ve} and the set $\A_r$ be the one defined in \eqref{A_rho}. If $|\A_r|>\mu|B_1|$, then there exists  $\delta>0$ depending on $\mu,\, r,\, d,\,s,\,p$ such that, given $v_k$ defined as \eqref{def:v_k} and $J_{gap,k}$ defined in \eqref{eq:defJgapk}, we have 
    \[
    J_{gap,k}  \geq c\,r^2 \displaystyle \mu^2 \int_{B_1} v_k(x)   \dd x .
    \]
\end{lemma}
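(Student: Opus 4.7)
The plan is to produce, uniformly for $x \in \supp v_k$, a lower bound
\[
I(x) := \int_{B_1} [\varphi_k(y) - v_e(y)]_+\, K_u(x,y)\, \dd y \geq C r^2 \mu^2,
\]
and then multiply by $v_k(x)$ and integrate. Throughout I fix $\delta \leq r^2/8$ and, by replacing $r$ with $\min(r, 1/15)$ if necessary (at the cost of a universal factor, since $\A_{r'} \supset \A_r$ for $r' \leq r$ and $r$ is trivially bounded), assume $r \leq 1/15$. This ensures $\nu \cdot e - r \geq 1/10$ for every $\nu$ with $\nu \cdot e > 1/6$.

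The starting point is Lemma \ref{l:pickaball}: for every $x \in \supp v_k \subset B_{1/8}(x_1)$, the cone $C_{\Gamma(e,1/6)}(x)$ meets $\A_r$ in measure at least $\mu/2$. Splitting that cone according to the sign of $\nu \cdot e$ and keeping the more populated half $\Gamma^+ \subset \{\nu \in S^{d-1}: \nu \cdot e > 1/6\}$, I still have $|C_{\Gamma^+}(x) \cap \A_r| \geq \mu/4$. Writing this volume in polar coordinates as $\int_{\Gamma^+} g(\nu)\,\dd \sigma(\nu)$ with $g(\nu) := \int_0^1 \one_{\A_r}(x + \tau\nu)\,\tau^{d-1}\, \dd \tau$, a Chebyshev estimate produces a set of good directions
\[
\Gamma_{good}(x) := \{\nu \in \Gamma^+ : g(\nu) \geq c_1 \mu\}
\]
of measure at least $c_2 \mu$ for universal constants $c_1, c_2$. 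This is one of the two factors of $\mu$.

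For each $\nu \in \Gamma_{good}(x)$ introduce the time section $A_{r,\nu}(t) := \{\tau \in (0,t) : x + \tau \nu \in \A_r\}$ and the stopping time $t_\nu := \inf\{t > 0 : |A_{r,\nu}(t)| \geq c_3 t \mu\}$, where the universal constant $c_3$ is chosen so that $c_3 < c_1$ and $c_3\mu \leq 1/22$. The condition $g(\nu) \geq c_1 \mu$ forces $t_\nu \leq 1$, and continuity gives $|A_{r,\nu}(t_\nu)| \geq c_3 t_\nu \mu$. For every $\tau \leq t_\nu$ the set $A_{r,\nu}(\tau)$ has measure at most $\tau/22$; on its complement one has $\nu \cdot \nabla u \geq \nu \cdot e - r \geq 1/10$. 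Applying Lemma \ref{l:elementary-1D} to $f(\tau') := u(x + \tau'\nu)$ yields the pointwise comparison
\[
u(x + \tau \nu) - u(x) \geq \tau/20 \quad \text{for every } \tau \in [0, t_\nu],
\]
which feeds directly into the definition of the kernel to give the non-degenerate lower bound $K_u(x, x+\tau\nu) \geq c\, \tau^{-(d+2-p(1-s))}$ on $A_{r,\nu}(t_\nu)$.

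To close the estimate, on $\A_r$ we have $v_e \leq 1 - r^2/2$, while $\varphi_k \geq 1 - 2\delta \geq 1 - r^2/4$, so $[\varphi_k - v_e]_+ \geq r^2/4$ there. Combining these three facts and integrating in polar coordinates, the $\tau$-integral on a good ray is
\[
\int_{A_{r,\nu}(t_\nu)} \tau^{-3+p(1-s)}\, \dd \tau \geq c \mu\, t_\nu^{-2+p(1-s)} \geq c \mu,
\]
by the rearrangement inequality for the decreasing integrand together with $|A_{r,\nu}(t_\nu)| \geq c_3 t_\nu \mu$ and $t_\nu \leq 1$. Integrating over $\nu \in \Gamma_{good}(x)$ then gives $I(x) \geq C r^2 \mu^2$, and multiplying by $v_k(x)$ and integrating finishes the proof. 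The main obstacle is the joint calibration of the thresholds $c_1, c_2, c_3$ so that simultaneously (a) $t_\nu \leq 1$, (b) Lemma \ref{l:elementary-1D} applies on $[0,t_\nu]$, and (c) the singular $\tau$-integral on a set of measure $\gtrsim \mu t_\nu$ survives; the last point is exactly where the hypothesis $p < 2/(1-s)$ enters in an essential way, since it is equivalent to $-3 + p(1-s) < -1$, making the kernel genuinely singular along the good rays.
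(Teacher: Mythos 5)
Your proof is correct and follows essentially the same route as the paper's: restrict to the cone from Lemma \ref{l:pickaball}, extract a set of good directions of measure $\gtrsim\mu$ by a Chebyshev/polar-coordinates argument, introduce the stopping time $t_\nu$, apply Lemma \ref{l:elementary-1D} to get $u(x+\tau\nu)-u(x)\geq\tau/20$ and hence the kernel lower bound, and integrate $\tau^{p(1-s)-3}$ over a set of measure $\gtrsim\mu t_\nu$. Your explicit calibration $c_3<c_1$ (which also keeps $t_\nu$ below the exit time of the ray from $B_1$, so the gradient control on the complement of $A_{r,\nu}$ is legitimate) and the sign-splitting of the cone are minor presentational refinements of what the paper does.
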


\begin{proof}
Recall that $v_k \leq \delta_k \eta_k$, and these functions are supported inside the ball $B_{1/8}(x_1)$ obtained in Lemma \ref{l:pickaball}.

For any $x \in B_{1/8}(x_1)$, if we take $y \in \A_r$ then $\varphi_k(y)$ and $v_e(y)$  separate strictly for $\delta$ smaller than $r^2/2$. Indeed, $\varphi_k(y)\geq 1-\delta_k$ and $v_e(y) \leq 1-\frac{r^2}{2}$ for $y \in \A_r$.  Therefore, we obtain
\begin{align*}
    J_{gap,k}&= \,\iint_{B_1\times B_1}[\varphi_k(y)-v_e(y)]_+v_k(x)K_u(x,y) \dd  x \dd  y \\ 
    & \geq \int_{B_1} \int_{C_{\Gamma(e,1/6)}(x) \cap \A_r }[\varphi_k(y)-v_e(y)]_+v_k(x)K_u(x,y) \dd  x \dd  y \\
    &\geq \int_{B_1} v_k(x) \left( \frac{r^2}{2} - \delta_k\right) \int_{C_{\Gamma(e,1/6)}(x) \cap \A_r } K_u(x,y)  \dd  y  \dd  x.
 \end{align*}

We choose $\delta$ sufficiently small to ensure that $r^2 / 2 - \delta_k \geq r^2/4$.

The values of the kernel $K_u(x,y)$ depend on the values of $u$ at $x$ and $y$. In order to get a good lower bound for the integral on the right-hand side above, we must avoid those points where $u(x)$ and $u(y)$ are too close, making the kernel $K_u(x,y)$ small. The choice of $y$ in the cone $C_{\Gamma(e,1/6)}(x)$ is made with that purpose, but it is not enough to ensure $|u(x) - u(y)| \gtrsim |x-y|$.

We introduce the following notation. Given a fixed $x$ in the support of $v_k$ (which is contained in $B_{1/8}(x_1)$), for each direction $\nu \in S^{d-1}$ and $t\in(0,1)$, we define a subset of the line segment $[x,x+t\nu]$ by 
\[
\A_{r,\nu}(t) := \{ \tau \in (0,t) : \nabla u (x + \tau \nu ) \notin B_{r}(e)\}
\]
and
\[
t_\nu := \inf_{t} \left\{ |\A_{r,\nu}(t)| \geq \frac{c_0 \mu} {|S^{d-1}|} t \right\}.
\]
Here, $c_0>0$ is the universal constant of Lemma \ref{l:elementary-1D}.

We may think about the set $\A_{r,\nu}$ in terms of the curve described by $\nabla u(x+\tau \nu)$ as $\tau \in (0,t)$. The set $\A_{r,\nu}$ corresponds to the values $\tau$ for which this curve moves outside of $B_r(e)$. Figure \ref{fig:Arnu} explains this geometric interpretation.

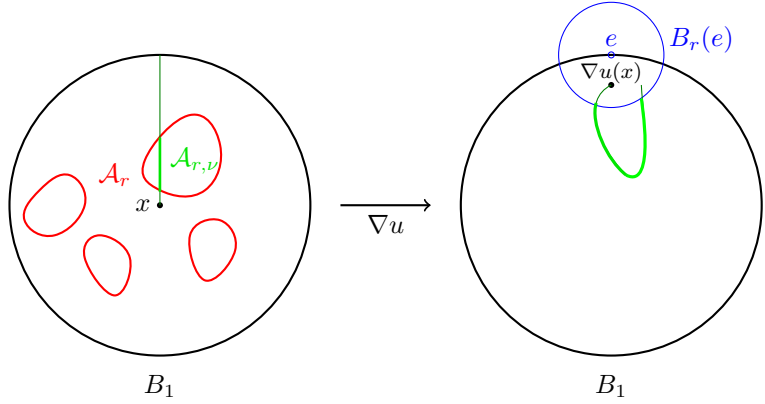
\begin{figure}[ht]
    \centering
    \begin{tikzpicture}[scale=2]

% LEFT BIG BALL
\draw[thick] (0,0) circle(1) node at (0,-1.2) {$B_1$};

% Red weird set
\begin{scope}
  \draw[red,thick]
    plot[smooth cycle,tension=0.8] coordinates{(-0.1,0.3) (0.2,0.6) (0.4,0.4) (0.3,0.1) (0,0.1)} ;
  \draw[red,thick]
    plot[smooth cycle,tension=0.8] coordinates{(-0.7,-0.2) (-0.5,0) (-0.6,0.2) (-0.8,0.1) (-0.9,-0.1) } node  at (-0.3,0.2){$\mathcal{A}_r$};
  \draw[red,thick]
    plot[smooth cycle,tension=0.8] coordinates{(0.3,-0.5) (0.5,-0.3) (0.4,-0.1) (0.2,-0.2)};
      \draw[red,thick]
    plot[smooth cycle,tension=0.8] coordinates{(-0.3,-0.6) (-0.5,-0.35) (-0.4,-0.2) (-0.2,-0.35)};
\end{scope}

% Point x
\fill[black] (0,0) circle(0.02) node[left] {$x$};

%\draw[->,black,thick] (-0.1,0.5) -- (-0.1,0.8);
%\draw[black] node at (-0.2,0.6) {$\nu$}; 
% Radial segment
\draw[green!50!black] (0,0) -- (0,1);

% Highlighted part inside red set
\draw[line width=1pt,green!90!black] (0,0.09) -- (0,0.45) node[below right] {$\mathcal{A}_{r,\nu}$};

% RIGHT BIG BALL
\draw[thick] (3,0) circle(1) node at (3,-1.2) {$B_1$};

% Map arrow
\draw[->,black,thick] (1.2,0) -- (1.8,0) node[midway,below] {$\nabla u$};

% Small ball in the north hemisphere of right B1
\draw[blue] (3,1) circle(0.35) node at (3.6,1.1) {$B_r(e)$};

\draw[blue] (3,1)  circle(0.02) node[above] {$e$};

% Point Du(x)
\fill[black] (3,0.8) circle(0.02) node[font=\footnotesize] at (3,0.88) {$\nabla u(x)$};

\draw[green!50!black]
  (3.2,0.8) .. controls (3.2,0.7) and (3.3,0.1) .. (3.1,0.2)
          .. controls (2.9,0.4) and (2.8,0.7) .. (3,0.8);

% Middle piece only (green) left part
\draw[green!90!black,very thick]
  (3.1,0.2) .. controls (2.97,0.3) and (2.86,0.55) .. (2.9,0.68);

% Middle piece only (green) right part
\draw[green,very thick]
  (3.2,0.71) .. controls (3.2,0.7) and (3.3,0.1) .. (3.1,0.2);

 \end{tikzpicture} 
    \caption{Description of the method.}
    \label{fig:Arnu}
\end{figure}

\medskip

Since $x$ is a point in the support of $v_k$, we know that $v_e(x) > 1-\delta$, and in particular $\nabla u(x) \in B_r(e)$. Thus, $0 \notin \A_{r,\nu}$ and $t_\nu > 0$ (from the continuity of $\nabla u$). Define also the set of \emph{good} directions 
\[
L:= \left\{   \nu \in \Gamma(e,1/6) : |\A_{r,\nu}(1)|\geq \frac{c_0 \mu} {|S^{d-1}|} \right\}.
\]
Since \eqref{cond} holds, we have that there exists a small dimensional constant $c_1$ such that $|L|>c_1\mu $. This is justified below by evaluating the measure of $\A_r$ using integration in polar coordinates.
\begin{align*}
    \mu |B_1|\leq |\A_r| &\leq \frac{|L|}{d}+\int_{L^c}\int_{\A_{r,\nu}(1)}t^{d-1}  \dd  t \dd  \nu\\
    &\leq \frac{|L|}{d} + \int_{L^c} |\A_{r,\nu}(1)| \dd \nu \\
    &\leq \frac{L}{d}\ + \max_{\nu\in L^c}|\A_{r,\nu}(1)| \cdot |S^{d-1}|, \\
    \intertext{using that $|\A_{r,\nu}(1)|\leq c_0 \mu / |S^{d-1}|$ for all $\nu \notin L$,}
    \mu |B_1| &\leq \frac{|L|}{d} + c_0 \mu, \\
    \intertext{which finally gives}
    |L| &\geq c_1 \, \mu,
\end{align*}
as intended.

 Clearly, for any $\nu \in L$ the infimum is attained and thus we have $t_\nu \leq 1$. 

Define also the cone with vertex at $x$ consisting of the good directions
\[
C_L(x):= \{x + \tau\nu  : \nu \in L, \tau \in (0,t_{\nu})\}.
\]
In the cone $C_L(x)$, we can obtain good properties for the kernel $K_u$
    \begin{align*}
    J_{gap,k}&\geq  \int_{B_1} v_k(x) \left( \frac{r^2}{2} - \delta_k\right) \int_{C_L(x)} K_u(x,y)  \dd  y  \dd  x ,
    \intertext{expressing the inner integral in polar coordinates and recalling that $\delta_k < r^2/4$, we get}
    &= \frac {r^2}4 \int_{B_1} v_k(x) \left( \int_{L} \int_{\A_{r,\nu}(t_\nu)} K_u(x,x+t\nu) t^{d-1}  \dd  t  \dd  \nu \right) \dd  x .
\end{align*}

Recall that $L$ is a subset of the directions $\Gamma(e,1/6)$. In particular for any $\nu \in L$ we have $\nu \cdot e > \frac{1}{6}$. If $t \leq t_{\nu}$ we also have that $|\A_{r,\nu}(t)| \leq c_0 \mu |S^{d-1}|^{-1} t < c_0t$. Thus, we can apply Lemma \ref{l:elementary-1D} to the function $f(\tau) = u(x+\tau \nu)$. Indeed, we see that $|f'|\leq 1$ and $f'(\tau) \geq (1/6-r) \geq 1/10$ if $\tau \notin \A_{r,\nu}$. Therefore, we deduce that $u(x+t\nu) - u(x) \geq t/20$.

Hence, for $\nu \in L$ and $t \in \A_{r,\nu}(t_\nu)$ we have
\[
K_u(x,x+t\nu) \geq c\, t^{p-2-(d+sp)}.
\]
Therefore
\begin{align*}
     \int_{L} \int_{\A_{r,\nu}(t_\nu)} K_u(x,x+t\nu) t^{d-1}  \dd  t  \dd  \nu &\geq c   \int_{L} \int_{\A_{r,\nu}(t_\nu)} t^{p(1-s)-3}  \dd  t \dd \nu.
\end{align*}
Note that 
\begin{align*}
    \int_{\A_{r,\nu}(t_\nu)} t^{p(1-s)-3} \dd t &\geq |\A_{r,\nu}(t_\nu)| t_\nu^{p(1-s)-3} \geq c \mu t_\nu^{p(1-s)-2}.
\end{align*}
This bound allows us to control $J_{gap,k}$. Going back to the computation at the beginning of this proof, we obtain
\begin{align*}
J_{gap,k} &\geq c r^2 \int_{B_1} v_k(x) \int_{C_{\Gamma(e,1/6)}(x) \cap \A_r } K_u(x,y)  \dd  y  \dd  x \\
&\geq c r^2 \mu \, \int_{B_1} v_k(x) \left( \int_{L} t_\nu^{p(1-s)-2}   \dd  \nu \right) \dd  x.
\end{align*}
Hence, exploiting that $|L| > c_1\mu $ and $t_\nu \leq 1 $ for $\nu \in L$, we conclude
\[
    J_{gap,k} \geq c r^2  \mu^2 \int_{B_1} v_k(x) \dd x .
\]

\end{proof}

\subsection{Estimating \texorpdfstring{$J_{\varphi_k}$}{Jphi}}

We use the symmetry $K_u(x,y)=K_u(y,x)$ to rewrite the double integral defining $J_{\varphi_k}$ and $J_{r,k}$ in a more symmetric form.
\begin{equation} \label{est:Jvarphik}
\begin{aligned}
    J_{\varphi_k}&= \iint_{B_1 \times B_1} (\varphi_k(x)-\varphi_k(y))v_k(x)K_u(x,y)\,\dd x\dd y  \\
    &=\frac{1}{2} \iint_{B_1 \times B_1} (\varphi_k(x)-\varphi_k(y))(v_k(x)-v_k(y))K_u(x,y)\,\dd x\dd y \\
    &= \frac{\delta_k}{2} \iint_{B_1 \times B_1} (\eta_k(x)-\eta_k(y))(v_k(x)-v_k(y))K_u(x,y)\,\dd x\dd y ,\\
   J_{r,k} &=\, \iint _{B_1\times B_1} (v_k(x)-v_k(y))v_k(x) K_u(x,y)\,\dd x\dd y \\
   &=\frac{1}{2} \iint_{B_1 \times B_1} (v_k(x)-v_k(y))^2 K_u(x,y)  \dd  x \dd  y.
\end{aligned}
\end{equation}

The term $J_{\varphi_k}$ may have either sign. We must estimate its absolute value appropriately.

\begin{lemma} \label{l:bound_Jphi}
    The term $J_{\varphi_k}$ satisfies the following upper bound.
    \[ |J_{\varphi_k}| \leq C \delta_k |A_k|^{1/2} \|\eta_k\|_{Lip(B_1)} \sqrt{J_{r,k}}.\]
    Here, $C$ is a universal constant depending only on $d$, $s$ and $p$.
\end{lemma}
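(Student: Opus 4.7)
The plan is a direct Cauchy--Schwarz estimate applied to the symmetric form of $J_{\varphi_k}$ recorded in \eqref{est:Jvarphik}, viewing $K_u(x,y)\dd x\dd y$ as a positive measure on $B_1\times B_1$. The first step is to notice that the integrand in that symmetric form vanishes whenever both $v_k(x)=0$ and $v_k(y)=0$, so the effective domain of integration reduces to $S:=(A_k\times B_1)\cup(B_1\times A_k)$. Applying Cauchy--Schwarz with $f(x,y)=\eta_k(x)-\eta_k(y)$ and $g(x,y)=v_k(x)-v_k(y)$ then gives
$$|J_{\varphi_k}|^2\leq\frac{\delta_k^2}{4}\left(\iint_S(\eta_k(x)-\eta_k(y))^2K_u(x,y)\dd x\dd y\right)\cdot 2J_{r,k},$$
where the last factor comes from $\iint_{B_1\times B_1}(v_k(x)-v_k(y))^2K_u(x,y)\dd x\dd y=2J_{r,k}$.

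To bound the first factor I would use $(\eta_k(x)-\eta_k(y))^2\leq\|\eta_k\|_{Lip(B_1)}^2|x-y|^2$ together with the symmetry of $S$, reducing the problem to the pointwise estimate
$$\int_{B_1}K_u(x,y)\,|x-y|^2\dd y\leq C\qquad\text{for every }x\in B_1.$$
This estimate follows from hypothesis (3) of Lemma \ref{l:iteration} at $n=0$: since $\|\nabla u\|_{L^\infty(B_1)}\leq 1$, one has $|u(x)-u(y)|\leq|x-y|$ on $B_1\times B_1$, and the assumption $p\geq 2$ upgrades this to $K_u(x,y)|x-y|^2\leq(p-1)|x-y|^{p-d-sp}$. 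The exponent $p(1-s)>0$ ensures integrability on $B_1$ via polar coordinates. Consequently,
$$\iint_S(\eta_k(x)-\eta_k(y))^2K_u(x,y)\dd x\dd y\leq 2C|A_k|\|\eta_k\|_{Lip(B_1)}^2.$$

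Putting the two estimates together yields $|J_{\varphi_k}|^2\leq C\delta_k^2|A_k|\|\eta_k\|_{Lip(B_1)}^2 J_{r,k}$, and the conclusion follows by taking square roots. The argument is essentially mechanical; the only delicate point is restricting the integration to $S$ before Cauchy--Schwarz, which is what produces the factor $|A_k|^{1/2}$ on the right-hand side (without this restriction one would obtain an uncontrolled universal constant in its place). The superquadratic assumption $p\geq 2$ enters in a single place, through the monotonicity of $t\mapsto t^{p-2}$ that converts the Lipschitz bound on $u$ into the pointwise control of $K_u(x,y)\,|x-y|^2$.
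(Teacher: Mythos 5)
Your proof is correct and follows essentially the same route as the paper: Cauchy--Schwarz on the symmetrized form of $J_{\varphi_k}$ from \eqref{est:Jvarphik}, with the crucial restriction of the $\eta_k$-factor to the set where $v_k(x)>0$ or $v_k(y)>0$ (which by symmetry reduces to $A_k\times B_1$ and produces the $|A_k|^{1/2}$), followed by the second-moment bound $\int_{B_1}|x-y|^2K_u(x,y)\,\dd y\leq C$ coming from $\|\nabla u\|_{L^\infty(B_1)}\leq 1$, $p\geq 2$ and $p(1-s)>0$. The paper leaves that last kernel estimate implicit in the final line of \eqref{eq:ineq11}; you have simply spelled it out, and your observation that the $\sqrt{2}$ from $\iint(v_k(x)-v_k(y))^2K_u=2J_{r,k}$ is absorbed into $C$ is also consistent with the paper.
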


\begin{proof}
We use the Cauchy-Schwarz inequality to estimate $J_{\varphi_k}$, paying attention to the support of the integrand. We have
\begin{equation} \label{eq:ineq11}
\begin{aligned}
|J_{\varphi_k}| &= \frac {\delta_k} 2  \left| \iint_{B_1 \times B_1} (\eta_k(x)-\eta_k(y))(v_k(x)-v_k(y)) K_u(x,y) \dd y \dd x \right| \\
& \leq  \frac{\delta_k}{2} \left( \iint_{B_1 \times B_1} (\eta_k(x)-\eta_k(y))^2 \one_{\{v_k(x)>0 \text{ or } v_k(y)>0\}} K_u(x,y) \dd y \dd x \right)^{1/2}\times \\
&\qquad \times \left( \iint_{B_1 \times B_1} (v_k(x)-v_k(y))^2 K_u(x,y) \dd y \dd x \right)^{1/2} \\
& \leq  \frac{\delta_k} 2 \left( 2 \iint_{B_1 \times B_1} (\eta_k(x)-\eta_k(y))^2 \one_{\{v_k(x)>0  \}} K_u(x,y) \dd y \dd x \right)^{1/2}\times \\
&\qquad \times
\left( \iint_{B_1 \times B_1} (v_k(x)-v_k(y))^2 K_u(x,y) \dd y \dd x \right)^{1/2} \\
& \leq C \delta_k |A_k|^{1/2} \|\eta_k\|_{Lip(B_1)} \left( \iint_{B_1 \times B_1} (v_k(x)-v_k(y))^2 K_u(x,y) \dd y \dd x \right)^{1/2}.
\end{aligned} 
\end{equation}
The last factor coincides with $J_{r,k}$.
\end{proof}

\subsection{Estimating \texorpdfstring{$J_{r,k}$}{Jr}}

The term $J_{r,k}$, as well as $J_{gap,k}$, has a strictly positive sign. In the next lemma, we compute a lower bound for $J_{r,k}$ in terms of the measures of the sets $A_k$ defined in \eqref{def_levelsetsAk}.

\begin{lemma} \label{l:bound_Jr}
    For any $k \geq 1$, we have
    \[ J_{r,k} = \frac 12 \iint_{B_1 \times B_1} (v_k(x)-v_k(y))^2 K_u(x,y) \dd y \dd x \geq c \delta^2 2^{-2k} |A_{k+1}| \cdot |A_k|^{-\frac{2-p(1-s)}{d}} . \]
\end{lemma}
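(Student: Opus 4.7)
The plan is to reduce the inequality to a nonlocal Poincaré–Sobolev–type estimate, where the only delicate point is extracting effective fractional coercivity from the degenerate kernel $K_u$. I would organize the proof in three stages.

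First, record the algebraic structure of $v_k$. Since $A_{k+1}\subset\supp\eta_{k+1}$ (because $v_e\le 1$ and $\eta_{k+1}$ must be positive for $x\in A_{k+1}$) and since $\eta_k\equiv 1$ on $\supp\eta_{k+1}$, for $x\in A_{k+1}$ one has
\[
v_k(x)=v_e(x)-1+\delta_k\ \ge\ \delta_k-\delta_{k+1}=2^{-k-1}\delta.
\]
On $B_1\setminus A_k$, $v_k\equiv 0$, and globally $0\le v_k\le 2\delta$ with $\supp v_k\subset A_k$. Setting $w:=v_k/(2^{-k-1}\delta)$, the bound reduces to showing
\[
\iint_{B_1\times B_1}(w(x)-w(y))^2\,K_u(x,y)\,\dd x\dd y\ \ge\ c\,|A_{k+1}|\,|A_k|^{-\sigma/d},
\qquad \sigma:=2-p(1-s)\in(0,2),
\]
where $w\ge 1$ on $A_{k+1}$, $\supp w\subset A_k$, and $\|w\|_\infty\le C$.

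The core step is a fractional coercivity inequality of the form
\[
\iint_{B_1\times B_1}(w(x)-w(y))^2\,K_u(x,y)\,\dd x\dd y\ \ge\ c\,[w]_{H^{\sigma/2}(B_1)}^2.
\]
This is the main obstacle, since $K_u$ is degenerate and no pointwise lower bound $K_u(x,y)\ge c|x-y|^{-d-\sigma}$ can hold. The remedy is the non-degeneracy mechanism already used in Lemma~\ref{l:Bound_Jgap1}: every $x\in A_k\subset\supp\eta_k\subset B_{1/8}(x_1)$ satisfies $v_e(x)>1-2\delta$, hence $\nabla u(x)$ is $O(\sqrt\delta)$-close to $e$. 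Choosing directions $\nu\in\Gamma(e,1/6)$ and the first exit time $t_\nu$ of the curve $\tau\mapsto\nabla u(x+\tau\nu)$ from a neighborhood of $e$, Lemma~\ref{l:elementary-1D} yields $u(x+t\nu)-u(x)\ge t/20$ for $t\le t_\nu$, so
\[
K_u(x,x+t\nu)\ \ge\ c\,|t|^{-d-\sigma}.
\]
Integrating in polar coordinates over this cone of good directions replaces the pointwise lower bound on $K_u$ by an integrated one that is sufficient to bound $[w]_{H^{\sigma/2}}^2$ from above.

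Finally, the target estimate follows from fractional Sobolev embedding and Hölder interpolation on the support of $w$. The embedding gives
\[
[w]_{H^{\sigma/2}(B_1)}^2\ \ge\ c\,\|w\|_{L^{2d/(d-\sigma)}(B_1)}^2,
\]
while H\"older applied to $\supp w\subset A_k$ yields
\[
\int_{B_1} w^2\,\dd x\ \le\ |A_k|^{\sigma/d}\,\|w\|_{L^{2d/(d-\sigma)}(B_1)}^2.
\]
Combining these with $w\ge 1$ on $A_{k+1}$, one obtains
\[
[w]_{H^{\sigma/2}}^2\ \ge\ c\,|A_k|^{-\sigma/d}\int_{B_1}w^2\,\dd x\ \ge\ c\,|A_k|^{-\sigma/d}\,|A_{k+1}|.
\]
Multiplying through by $(2^{-k-1}\delta)^2/2$ restores the normalization and produces the claimed lower bound for $J_{r,k}$. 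The only subtle step is the middle one; once the effective $H^{\sigma/2}$ coercivity is in hand, the isoperimetric-type dependence on $|A_k|$ is a routine consequence of Sobolev embedding and H\"older.
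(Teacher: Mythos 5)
There is a genuine gap, and it sits exactly at the step you yourself flag as ``the main obstacle'': the claimed coercivity
$\iint_{B_1\times B_1}(w(x)-w(y))^2K_u(x,y)\,\dd x\dd y\ \ge\ c\,[w]_{H^{\sigma/2}(B_1)}^2$.
The mechanism you invoke does not deliver it. Lemma \ref{l:elementary-1D} applied to $f(\tau)=u(x+\tau\nu)$ gives the lower bound $K_u(x,x+t\nu)\ge c\,t^{-d-\sigma}$ only for $\nu$ in the cone $\Gamma(e,1/6)$ \emph{and} only for $t\le t_\nu$, where $t_\nu$ is the first time the ray accumulates a positive proportion of the set where $\nabla u$ leaves $B_r(e)$ (equivalently, where $v_k$ vanishes). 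That time depends on $x$ and $\nu$ and can be arbitrarily small; for $t>t_\nu$, and for the complementary cone of directions, $K_u(x,x+t\nu)$ has no lower bound whatsoever in this stage of the argument (the gradient of $u$ is only controlled near $e$ on the level set $A_k$ itself, not along whole segments). What you actually obtain is a lower bound on a \emph{direction- and scale-truncated} Gagliardo form, which does not dominate the full seminorm $[w]_{H^{\sigma/2}(B_1)}^2$, so the Sobolev embedding in your last stage has nothing to bite on. You also cannot fall back on Theorem \ref{Thm:ChakerLuis}, since its hypothesis \eqref{AssumptionA1} requires nondegeneracy of $K_u(x,\cdot)$ on a fixed fraction of \emph{every} ball, which is unavailable here. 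This is not an incidental difficulty: the paper states explicitly that no coercivity estimate is used anywhere in Section \ref{s:reduc_grad}, precisely because the kernel is too degenerate for one.

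The paper's proof of Lemma \ref{l:bound_Jr} avoids the embedding entirely and extracts the exponent $|A_k|^{-\sigma/d}$ by a direct geometric argument. Fix $x\in A_{k+1}$, so $v_k(x)\ge 2^{-k-1}\delta$. A Chebyshev argument in polar coordinates shows that for at least half of the directions $\nu\in\Gamma(e,1/6)$ the exit time satisfies $t_\nu\le C|A_k|^{1/d}$ (because $\supp v_k\subset A_k$, a ray cannot spend much length inside $A_k$ for too many directions). For such $\nu$ and $t\le t_\nu$ the kernel bound $K_u\ge c\,t^{-d-\sigma}$ holds, and on the subset $\A_\nu(t_\nu)\subset(0,t_\nu)$ of measure $\ge c_0t_\nu$ where $v_k(x+t\nu)=0$ the increment $(v_k(x)-v_k(x+t\nu))^2$ is at least $(2^{-k-1}\delta)^2$; integrating $t^{-1-\sigma}$ over that subset gives $c\,t_\nu^{-\sigma}\ge c\,|A_k|^{-\sigma/d}$, and integrating over $\nu\in L$ and then over $x\in A_{k+1}$ yields the claim. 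Note also a small inaccuracy in your normalization: $\|w\|_\infty\le C2^{k}$, not $\le C$, since $v_k\le\delta_k\approx\delta$ while you divide by $2^{-k-1}\delta$; this does not affect your final chain, but the coercivity step above is the real obstruction and as written the proof does not close.
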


\begin{proof}
The proof follows similar ideas as the proof of Lemma \ref{l:Bound_Jgap1}, but this time we track the influence of the set where $v_k \leq 0$ instead of the set where $\nabla u$ is far from $e$.

Consider any $x \in A_{k+1}$. In particular, we have $v_k(x) > \delta_k - \delta_{k+1} = 2^{-k-1} \delta$. For this value of $x$, we study the integral
\begin{align*}
    \int_{B_1} (v_k(x)-v_k(y))^2 K_u(x,y) \dd y &= \int_{S^{d-1}} \int_0^{1} (v_k(x)-v_k(x+t \nu))^2 K_u(x,x+t\nu) t^{d-1} \dd t \dd \nu.
\end{align*}
To obtain a good lower bound on this integral, we must identify directions $\nu$ which are not nearly perpendicular to $e$, and so that $u(x+t\nu)-u(x)$ is nearly equal to $t \nu \cdot e$. This is not true for all directions $\nu$ and all values of $t$, but we will see that it is true for a large enough set.

Mimicking the notation of Lemma \ref{l:Bound_Jgap1}, for each $\nu \in S^{d-1}$ and $t > 0$, we define
\[ \A_{\nu}(t) = \{ \tau \in (0,t) : v_k(x+\tau \nu) = 0 \}. \]
Note that $1-v_e(x+\tau \nu)< \delta_k$ for every $\tau \notin \A_{\nu}(t)$. In particular, this implies that $|\nabla u(x+\tau \nu)- e| \leq 2\sqrt{\delta_k}$ whenever $\tau \notin \A_{\nu}(t)$.
Recall that $\delta_k < 2\delta$ and $\delta$ is arbitrarily small.

For any direction $\nu$ such that $|\nu \cdot e| \geq 1/6$, we write
\[ t_\nu = \inf \{ t:|\A_\nu(t)| > c_0  t \}.\]
Here $c_0$ is the constant from Lemma \ref{l:elementary-1D}. We make $t_\nu = +\infty$ for those directions where $|\A_\nu(t)| \leq c_0 t$ for all $t > 0$.  When $t_\nu < +\infty$, we will always have $|\A_\nu(t_\nu)| \geq c_0  t_\nu$. Like in the proof of Lemma \ref{l:Bound_Jgap1} we set $f(t):=u(x+t \nu)$. Since
\[
|\{\tau  \in (0,t) \, :\, f'(t)>1/10  \}|\geq |\A_\nu(t)^c|>(1-c_0)t,
\]
by applying Lemma \ref{l:elementary-1D}, we deduce that for all $t \in (0,t_\nu)$, we have $|u(x+t\nu)-u(x)| \geq t/20$, and therefore
\[ 
K_u(x,x+t\nu) \geq c t^{p-2-(d+sp)}.
\]

Integrating over $t \in (0,t_\nu)$, we obtain
\begin{equation} \label{e1}
\begin{aligned}
\int_0^{t_\nu} \!\!
   \bigl(v_k(x)-v_k(x+t\nu)\bigr)^2 
   K_u(x,x+t\nu)\, t^{d-1}\,dt
&\;\;\geq\; 
   c \int_0^{t_\nu} 
      \delta^2 2^{-2k}\, 
      \one_{t \in A_\nu(t_\nu)}\,
      t^{\,p-2-(d+sp)} \, t^{d-1}\,dt \\[0.5em]
&\;\;\geq\; 
   c \,\delta^2 2^{-2k}
   \int_0^{t_\nu} 
      \one_{t \in A_\nu(t_\nu)}\, 
      t^{\,p(1-s)-3}\,dt \\[0.5em]
&\;\;\geq\; 
   c \,\delta^2 2^{-2k}\,
   t_\nu^{\,p(1-s)-2}.
\end{aligned}
\end{equation}

Recall that $p(1-s) -2 < 0$. Thus, the inequality still makes sense when $t_\nu = +\infty$.

We now define the following set of directions
\[
L:=\{\nu \in \Gamma(e,1/6)\,:\,t_\nu \leq C |A_k|^\frac{1}{d}\}.
\]
Then, by choosing $C$ sufficiently large, we can ensure that $|L|\geq |\Gamma(e,1/6)|/2$. Indeed, this can be verified expressing the measure of $A_k$ in polar coordinates.
\begin{align*}
    |A_k| &= \int_{S^{d-1}} \int_0^\infty \one_{\{x+t\nu \in A_k\}} \, t^{d-1} \dd t \dd \nu \\
    & \geq \frac 1d \int_{S^{d-1}} |\{t : x+t\nu \in A_k\}|^d \dd \nu.
\end{align*}
Chebyshev's inequality says that for any $m > 0$,
\[ |\{ \nu \in S^{d-1} : |\{t:x+t\nu \in A_k\}|^d > m \}| \leq \frac{d |A_k|} m.\]
We choose $m = C|A_k|$, so that the right-hand side is less than $|\Gamma(e,1/6)|/2$. Note that the choice of $C$ depends on dimension only. Taking the complement, we get
\[ |\{ \nu \in \Gamma(e,1/6) : |\{t:x+t\nu \in A_k\}|^d \leq C|A_k| \}| \geq \frac{|\Gamma(e,1/6)|} 2.\]
We observe that $|\{t:x+t\nu \in A_k\}|^d \leq C|A_k|$ implies that $t_\nu < (1+c_0) C^{1/d} |A_k|^{1/d}$. Therefore, adjusting the constant $C$ appropriately, $|L|\geq |\Gamma(e,1/6)|/2$ as claimed.

We proceed to integrate the inequality \eqref{e1} over all directions $\nu\in L$, obtaining for every $x \in A_{k+1}$ that
\begin{align*}
    \int_{B_1} (v_k(x)-v_k(y))^2 K_u(x,y) \dd y &\geq c \delta^2 2^{-2k} |A_{k}|^{-\frac{2-p(1-s)}{d}}.
\end{align*}
Integrating over $x \in A_{k+1}$, we obtain
\begin{align*}
    \iint_{B_1 \times B_1} (v_k(x)-v_k(y))^2 K_u(x,y) \dd y \dd x &\geq c \delta^2 2^{-2k} |A_{k}|^{-\frac{2-p(1-s)}{d}} \cdot |A_{k+1}|.
\end{align*}
This gives us the desired inequality.
\end{proof}

\subsection{Performing De Giorgi's iteration} \label{sub:degiorgi_it}

With the lemmas collected above, we are ready to estimate the measures of the sets $A_k$ defined in \eqref{def_levelsetsAk}.

Recall that in \eqref{eq:testvk} we showed that
\[    J_{Tail,k} + J_{\varphi_k} + J_{gap,k} + J_{r,k} \leq \eps_0 \int v_k \dd x. \]

The four terms on the left-hand side are estimated in lemmas \ref{l:bound_J_Tail}, \ref{l:bound_Jphi}, \ref{l:Bound_Jgap1} and \ref{l:bound_Jr}. We use these lemmas together with the inequality above to deduce estimates on $|A_k|$.

Recall that $J_{gap,k}$ and $J_{r,k}$ are both strictly positive.  Moreover, the lower bound on $J_{gap,k}$ given in Lemma \ref{l:Bound_Jgap1} absorbs the right-hand side provided that $\eps_0$ is sufficiently small (which is true if $\delta$ and $\eps_1$ are small according to Lemma \ref{l:ve-eq-small-rhs}), and also $J_{Tail,k}$ according to Lemma \ref{l:bound_J_Tail}. Thus, we end up with the simplified inequality
\begin{equation} \label{eq:3terms_leq_0}
    J_{\varphi_k}+ J_{r,k} + c(r,\mu) \int v_k \dd x \leq 0.
\end{equation}

To successfully carry out De Giorgi's iteration, we must prove that $|A_k| \to 0$ as $k \to \infty$. The first step is to show that at least one of these level sets has a very small measure (in our case, $A_2$), and then to obtain a recurrence relation between $|A_{k+1}|$ and $|A_k|$ that ensures convergence to zero as $k \to \infty$.

We start with an elementary observation about the integral of $v_k$.

\begin{lemma} \label{l:chevishev}
    The following inequalities hold
    \[ 2^{-k-2} \delta_k |A_{k+1}| \leq \int_{B_1} v_k \dd x \leq \delta_k |A_k|.\]
\end{lemma}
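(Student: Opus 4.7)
The proof of Lemma \ref{l:chevishev} should be an elementary bookkeeping argument based only on the pointwise bounds on $v_e$ and the nesting of the cutoffs $\eta_k$. Recall from the setup that $v_k = (v_e - (1-\delta_k\eta_k))_+$ in $B_1$ and vanishes outside, that $\delta_k - \delta_{k+1} = 2^{-k-1}\delta$, and that $\eta_k \equiv 1$ on $\operatorname{supp}\eta_{k+1}$. Also, condition (3) of Lemma \ref{l:iteration} applied with $n=0$ together with $\eta \in [0,1]$ yields the crucial pointwise bound $v_e \leq 1$ in $B_1$.

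For the upper bound, I would simply note that on $A_k$ we have $0 < v_k = v_e - 1 + \delta_k\eta_k \leq \delta_k\eta_k \leq \delta_k$ (using $v_e \leq 1$), and $v_k \equiv 0$ off $A_k$. Integrating gives $\int_{B_1} v_k \dd x \leq \delta_k |A_k|$.

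For the lower bound, the plan is to show first that $A_{k+1} \subset \{\eta_k = 1\}$. Indeed, if $x \in A_{k+1}$ and $\eta_{k+1}(x) = 0$, the defining inequality for $A_{k+1}$ would force $v_e(x) > 1$, contradicting $v_e \leq 1$. Hence $\eta_{k+1}(x) > 0$, which by the nesting property gives $\eta_k(x) = 1$. For such $x$, one computes
\[
v_k(x) = v_e(x) - 1 + \delta_k \eta_k(x) > (1 - \delta_{k+1}\eta_{k+1}(x)) - 1 + \delta_k \geq \delta_k - \delta_{k+1} = 2^{-k-1}\delta.
\]
Since $\delta_k \leq 2\delta$, this gives $v_k(x) \geq 2^{-k-2}\delta_k$ on $A_{k+1}$, and integrating yields the desired lower bound.

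There is no real obstacle here; the only subtlety worth flagging is the verification that $A_{k+1}$ is contained in the set where $\eta_k$ is identically $1$, since otherwise one could not convert the information $v_e > 1 - \delta_{k+1}\eta_{k+1}$ into a uniform lower bound on $v_k$. This is where the a priori bound $v_e \leq 1$ coming from the normalization $\|\nabla u\|_{L^\infty(B_1)} \leq 1$ plays an essential role.
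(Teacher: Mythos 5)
Your proof is correct and follows essentially the same route as the paper: the upper bound from $v_k \leq \delta_k$ on $\supp v_k \subset A_k$, and the lower bound via a Chebyshev-type argument, for which you explicitly verify that $v_k > \delta_k - \delta_{k+1} = 2^{-k-1}\delta \geq 2^{-k-2}\delta_k$ on $A_{k+1}$. The paper states the left inequality simply as "applying Chebyshev inequality"; your write-up supplies the implicit step that $A_{k+1}$ lies in $\{\eta_k = 1\}$ (via $v_e \leq 1$ and the nesting of the cutoffs), which is exactly what makes that application legitimate.
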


\begin{proof}
The right inequality follows from the fact that $v_k \leq \delta_k$ and $\supp v_k \subset A_k$. The left inequality is the result of applying Chebyshev inequality.
\end{proof}

The following lemma is used to ensure the initial step in De Giorgi's iteration.

\begin{lemma} \label{l:A2small}
By choosing $\delta$ small enough, we can ensure that $|A_2|$ is as small as we want.
\end{lemma}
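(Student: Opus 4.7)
The plan is to apply the energy-type inequality \eqref{eq:3terms_leq_0} for the first iteration index, namely $k=1$, and combine it with the bounds already available for $J_{\varphi_k}$, $J_{r,k}$, and the lower bound on $\int v_k$ from Lemma \ref{l:chevishev}. The key observation is that the right-hand side in our estimate for $|J_{\varphi_1}|$ carries a factor $\delta^2$, while the term $\int v_1 \dd x$ that we want to exploit only carries a single power of $\delta$; this one-power surplus of $\delta$ is precisely what will drive $|A_2|\to 0$ as $\delta\to 0$.

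More concretely, first I would invoke Lemma \ref{l:bound_Jphi} for $k=1$ and apply Young's inequality to obtain
\[
J_{\varphi_1} \;\geq\; -\tfrac{1}{2} J_{r,1} \;-\; C\,\delta_1^{\,2}\,|A_1|\,\|\eta_1\|_{Lip(B_1)}^{\,2},
\]
so that plugging this into \eqref{eq:3terms_leq_0} and absorbing $\tfrac{1}{2}J_{r,1}$ on the left yields
\[
\tfrac{1}{2} J_{r,1} \;+\; c(r,\mu)\!\int_{B_1}\! v_1 \dd x \;\leq\; C\,\delta_1^{\,2}\,|A_1|\,\|\eta_1\|_{Lip(B_1)}^{\,2}.
\]
Since $J_{r,1}\ge 0$, this in particular gives $c(r,\mu) \int v_1 \dd x \le C\delta^2 |A_1|$, where I have absorbed the fixed quantities $\|\eta_1\|_{Lip(B_1)}$ and the relation $\delta_1 \le 2\delta$ into the constant.

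Next, I would apply the left inequality in Lemma \ref{l:chevishev}, which reads $\int_{B_1} v_1 \dd x \ge 2^{-3}\delta_1 |A_2| \ge c\,\delta\,|A_2|$. Combining with the previous display and using the trivial bound $|A_1| \le |B_{1/8}(x_1)| \le C$, I would conclude
\[
|A_2| \;\leq\; C(r,\mu)\,\delta.
\]
This makes $|A_2|$ arbitrarily small upon taking $\delta$ sufficiently small, which is exactly the statement of the lemma.

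The only subtlety worth flagging is that \eqref{eq:3terms_leq_0} has already absorbed the tail term $J_{Tail,k}$ and the right-hand side $\eps_0\int v_k \dd x$ into the positive coefficient $c(r,\mu)$ in front of $\int v_k \dd x$; this absorption requires $\delta$ (and $\eps_1$, and $\eps_0$) to be small compared with the constant $cr^2\mu^2$ from Lemma \ref{l:Bound_Jgap1}. I would therefore state at the outset that $\delta$ is chosen small enough for \eqref{eq:3terms_leq_0} to be valid, and then take $\delta$ even smaller, depending only on $r$, $\mu$ and the target smallness of $|A_2|$, to conclude. No genuine obstacle arises in this step; the lemma is essentially the base case of the De Giorgi iteration and everything reduces to keeping careful track of the $\delta$ dependence.
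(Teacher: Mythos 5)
Your proposal is correct and follows essentially the same route as the paper: both apply \eqref{eq:3terms_leq_0} with $k=1$, use Lemma \ref{l:bound_Jphi} and absorb the cross term into $J_{r,1}$ (your explicit Young's inequality is exactly the paper's "use the second term to absorb part of the third term"), and then conclude $|A_2|\leq C(r,\mu)\,\delta$ via Lemma \ref{l:chevishev}. No gaps.
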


\begin{proof}
    We use the bound from Lemma \ref{l:bound_Jphi} in \eqref{eq:3terms_leq_0} with $k=1$. We get
    \begin{equation} \label{eq:iteration-ineq-simplified}
    \begin{aligned}
    0 \geq \,& c(r,\mu)  \left(\int_{B_1} v_k \dd x\right) + J_{r,k} - C \delta_k |A_k|^{1/2} \|\eta_k\|_{Lip(B_1)} \sqrt{J_{r,k}}.
    \end{aligned}    
    \end{equation}
    We use the second term to absorb part of the third term and we are left with
    \begin{align*}
    0 &\geq c(r,\mu) \left(\int_{B_1} v_1 \dd x\right) - C \delta_1^2 |A_1|.
    \end{align*}
    Applying Lemma \ref{l:chevishev}, we see that
    \[ |A_2| \leq C(r,\mu) \delta.\]
    Choosing $\delta$ small, the result follows.
\end{proof}

Lemma \ref{l:A2small} takes advantage of the lower bound for $J_{gap,k}$ given in Lemma \ref{l:Bound_Jgap1}. We now want to get a recurrence relationship to estimate $|A_{k+1}|$ in terms of $|A_k|$. For that, our main tool will be the lower bound on $J_{r,k}$ given in Lemma \ref{l:bound_Jr}.

\begin{lemma} \label{l:iteration-ineq-measures}
For $k=2,3,4,\dots$, we have
\begin{equation} \label{eq:iteration-ineq-measures}
    |A_{k+1}| \leq C 2^{6k} |A_k|^{1+\kappa},
\end{equation}
for $\kappa = \frac{2-p(1-s)}{d} > 0$ and some universal constant $C$, depending on $d$, $s$ and $p$.
\end{lemma}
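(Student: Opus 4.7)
The plan is to combine the three lemmas that have already been established in this subsection (Lemmas \ref{l:bound_Jphi}, \ref{l:Bound_Jgap1}, \ref{l:bound_Jr}) through the reduced inequality \eqref{eq:3terms_leq_0}, extract an upper bound on $J_{r,k}$ of the form $|A_k|$-times-something, and match it against the lower bound from Lemma \ref{l:bound_Jr} to solve for $|A_{k+1}|$.

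Concretely, I would first drop the strictly positive term $c(r,\mu)\int v_k\,\dd x$ from \eqref{eq:3terms_leq_0}, which yields $J_{r,k}\leq -J_{\varphi_k}\leq |J_{\varphi_k}|$. Plugging in the bound of Lemma \ref{l:bound_Jphi} gives
\[
J_{r,k} \leq C\,\delta_k\,|A_k|^{1/2}\,\|\eta_k\|_{\mathrm{Lip}(B_1)}\,\sqrt{J_{r,k}},
\]
so dividing by $\sqrt{J_{r,k}}$ (trivially if $J_{r,k}=0$) and squaring produces the upper bound
\[
J_{r,k} \leq C\,\delta_k^{\,2}\,\|\eta_k\|_{\mathrm{Lip}(B_1)}^{\,2}\,|A_k| \leq C\,\delta^{2}\,2^{4k}\,|A_k|,
\]
using $\delta_k\leq 2\delta$ and $\|\eta_k\|_{\mathrm{Lip}(B_1)}\leq C\,2^{2k}$.

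Next I would invoke the lower bound of Lemma \ref{l:bound_Jr}, namely
\[
J_{r,k}\geq c\,\delta^{2}\,2^{-2k}\,|A_{k+1}|\,|A_k|^{-\kappa},\qquad \kappa=\tfrac{2-p(1-s)}{d}>0,
\]
and chain the two estimates. The factors $\delta^2$ cancel, leaving
\[
c\,2^{-2k}\,|A_{k+1}|\,|A_k|^{-\kappa}\leq C\,2^{4k}\,|A_k|,
\]
i.e.\ $|A_{k+1}|\leq C\,2^{6k}\,|A_k|^{1+\kappa}$, which is the claim.

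The only thing that requires any care is confirming that we are allowed to absorb $J_{Tail,k}$ and the term with $c(r,\mu)$ into the positive terms to reach \eqref{eq:3terms_leq_0}; this has already been argued just before the lemma's statement, so it is not a real obstacle here. The $2^{6k}$ growth factor is harmless because $\kappa>0$: iterating the recurrence with an initial value $|A_2|$ made sufficiently small via Lemma \ref{l:A2small} will drive $|A_k|\to 0$ geometrically fast, which is what the subsequent De~Giorgi iteration exploits. Hence no genuine difficulty lies inside the proof of \eqref{eq:iteration-ineq-measures} itself; all the nonlocal work has been done in Lemmas \ref{l:Bound_Jgap1} and \ref{l:bound_Jr}.
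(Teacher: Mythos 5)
Your proposal is correct and follows essentially the same route as the paper: both drop the positive term $c(r,\mu)\int v_k\,\dd x$ from \eqref{eq:3terms_leq_0}, use Lemma \ref{l:bound_Jphi} with $\|\eta_k\|_{\mathrm{Lip}(B_1)}\leq C2^{2k}$ to get $J_{r,k}\leq C\delta^2 2^{4k}|A_k|$, and then match this against the lower bound of Lemma \ref{l:bound_Jr} to obtain the recurrence.
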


\begin{proof}
Focusing on first two terms in \eqref{eq:3terms_leq_0}, applying Lemma \ref{l:bound_Jphi} we obtain
\[ 0 \geq J_{r,k} - C \delta_k |A_k|^{1/2} \|\eta_k\|_{Lip(B_1)} \sqrt{J_{r,k}}. \]
Therefore,
\[
J_{r,k}  \leq C \delta^2 2^{4k}|A_k| .
\]
Using Lemma \ref{l:bound_Jr}, we deduce that 
\begin{align*}
    c 2^{-2k} \delta^2 |A_{k+1}| \cdot |A_k|^{-\frac{2-p(1-s)}{d}} &\leq C  \delta^2 2^{4k} |A_k|,
\end{align*}
from which the desired result follows.
\end{proof}

The inequality for the measures $|A_k|$ in Lemma \ref{l:iteration-ineq-measures} is the typical recurrence relationship in De Giorgi's iteration. Together with the smallness of $|A_2|$ given by Lemma \ref{l:A2small}, it implies that $|A_k| \to 0$ as $k \to \infty$. This is the result of an elementary convergence result for sequences, that we recall below.

\begin{lemma}\label{l:fast_convergence}
    Let $(Y_n)_{n}$ be a sequence of positive real numbers satisfying the recursive inequalities
    \[
        Y_{n+1}\leq Cb^n Y_n^{1+\delta},
    \]
    where $C,b>1$ and $\delta>0$ are given numbers. If
    \[
        Y_0\leq C^{-1/\delta}b^{-1/\delta^2},
    \]
    then $Y_n\to 0$ as $n\to \infty$.
\end{lemma}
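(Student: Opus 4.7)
The plan is to establish the stronger claim that $Y_n$ decays at least geometrically, namely that there exists $r \in (0,1)$ such that
\[
Y_n \leq Y_0 \, r^n \qquad \text{for all } n \geq 0,
\]
and then invoke $r<1$ to conclude $Y_n \to 0$. The key is to identify the right base $r$: I would set $r := b^{-1/\delta}$, which is strictly less than $1$ since $b>1$ and $\delta>0$.

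I would proceed by induction on $n$. The base case $n=0$ is trivial. For the inductive step, assuming $Y_n \leq Y_0 r^n$, the recursive inequality gives
\[
Y_{n+1} \leq C b^n Y_n^{1+\delta} \leq C b^n Y_0^{1+\delta} r^{n(1+\delta)} = \bigl( C Y_0^\delta \bigr) \cdot Y_0 r^{n+1} \cdot \bigl( b \, r^\delta \bigr)^n \cdot r^{-1}.
\]
The critical choice $r = b^{-1/\delta}$ makes the factor $(b r^\delta)^n$ equal to $1$, so the bound collapses to
\[
Y_{n+1} \leq C \, Y_0^\delta \, r^{-1} \cdot Y_0 r^{n+1}.
\]
The inductive step closes provided $C Y_0^\delta r^{-1} \leq 1$, i.e. $Y_0^\delta \leq r/C = C^{-1} b^{-1/\delta}$, which is exactly the hypothesis $Y_0 \leq C^{-1/\delta} b^{-1/\delta^2}$.

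Having $Y_n \leq Y_0 r^n$ with $r<1$ immediately yields $Y_n \to 0$. No step here is really an obstacle — the only subtle point is picking the correct value of $r$ so that the exponent of $b^n$ is absorbed exactly, and verifying that the smallness threshold on $Y_0$ imposed by the induction matches the one stated in the lemma. Everything else is a direct computation.
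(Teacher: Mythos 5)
Your proof is correct: the induction $Y_n \leq Y_0\, b^{-n/\delta}$ closes exactly under the stated smallness hypothesis on $Y_0$, and all the algebra checks out. The paper itself states this lemma without proof, as the classical De Giorgi fast-convergence lemma, and your argument is precisely the standard one.
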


The proof of Lemma \ref{l:insomeball} follows simply putting together our lemmas above.

\begin{proof} [Proof of Lemma \ref{l:insomeball}]
Lemma \ref{l:iteration-ineq-measures}, together with Lemma \ref{l:fast_convergence}, implies that the sequence $|A_k|$ is decreasing and converges to zero, provided that $|A_2|$ is small enough. We know that $|A_2|$ is small from Lemma \ref{l:A2small}. Therefore, we deduce if we choose $\delta>0$ sufficiently small, we must have $|\bigcap_{k\geq 2} A_k| = 0$, which means that $v_e \leq 1 - \delta$ in $B_{1/16}(x_1)  $.
\end{proof}

The proof of Lemma \ref{l:iteration} is deduced based on the same type of arguments.

\begin{proof} [Proof of Lemma \ref{l:iteration}]
The reason why the conclusion of Lemma \ref{l:insomeball} holds in $B_{1/16}(x_1)$ only is because it is only for $x \in B_{1/8}(x_1)$ that we can assure that $|\A_r \cap C_{\Gamma(e,1/6)}(x)| \geq c\mu$ for some $c>0$. We achieve it through Lemma \ref{l:pickaball}.

After we apply Lemma \ref{l:insomeball}, we may set $r_1: = \delta$ and observe that $B_{1/16}(x_1) \subset \A_{r_1}$. Now, we can argue that $C_{\Gamma(e,1/6)}(x)$ intersects with $\A_{r_1}$ for a much larger set of points $x$. The plan is to use this observation to derive an upper bound to the rest of the ball $B_{1/2}$.

Recall that the point $x_1$ that results from Lemma \ref{l:pickaball} is either $\frac 12 e$ or $-\frac 12 e$. Let us pick $x_2$ to be the other one of these two points. Consequently, it follows that for any $x \in B_{1/8}(x_2)$, the cone $C_{\Gamma(e,\frac{1}{6})}(x)$ contains $B_{1/16}(x_1)$ and
\begin{equation} \label{cond_x_2}
|\A_{r_1} \cap C_{\Gamma(e,\frac{1}{6})}(x)|\geq |B_{1/16}(x_1)| =:  \mu_1.
\end{equation}
Hence, we can repeat the argument in the proof of Lemma \ref{l:insomeball} with $r_1$, $\mu_1$ as above and replacing \eqref{cond} with \eqref{cond_x_2}. This implies that $v_e \leq 1-\delta_1$ in $B_{1/16}(x_2)$ for some $\delta_1 > 0$ smaller than $\delta$.

Let us set $r_2 := \delta_1$. Therefore $\A_{r_2} \supset B_{1/16}(x_1) \cup B_{1/16}(x_2)$. We observe that if $z$ is any point in $B_{7/16}$, for any $x \in B_{1/8}(z)$, the cone $C_{\Gamma(e,1/6)}(x)$ contains at least one of the two balls $B_{1/16}(x_1)$ or $B_{1/16}(x_2)$. Figure \ref{fig:Lemma4.1} illustrates the case when the cone is centered at a point $x$ farthest away from the origin in the direction perpendicular to $e$.

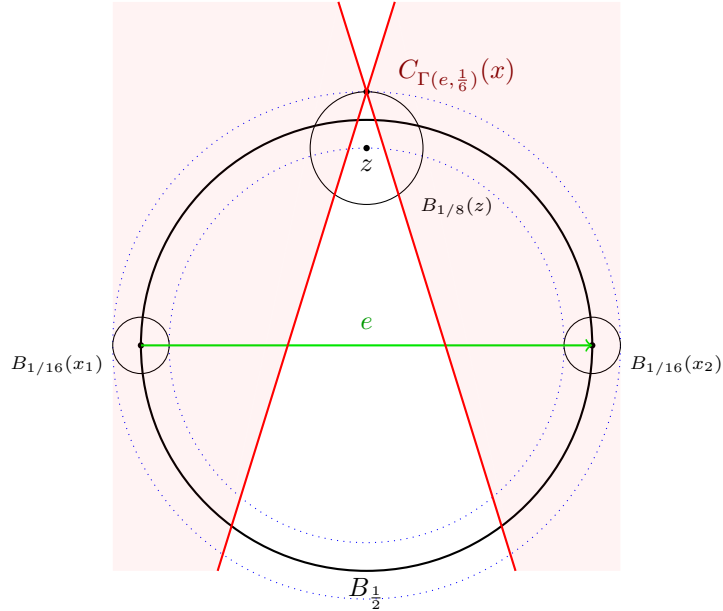
\begin{figure}[ht]
    \centering
    \begin{tikzpicture}[scale=6]

% Radii
\def\rA{7/16}
\def\rB{8/16}
\def\rC{9/16}
\def\rsmall{1/16}
\def\rZ{1/8}

% Main concentric circles (blue)
\draw[blue,dotted] (0,0) circle (\rA);
\node at (0,-0.55) {$B_{\frac{1}{2}}$};
\draw[black,thick] (0,0) circle (\rB);
\draw[blue,dotted] (0,0) circle (\rC);

% Points x1 and x2 on equator
\coordinate (x1) at (-\rB,0);
\coordinate (x2) at (\rB,0);

% Small balls around x1 and x2
\draw[black] (x1) circle (\rsmall);
\fill (x1) circle (0.2pt) node[below left,font=\scriptsize]  at (-\rC,0)  { $B_{1/16}(x_1)$};

\draw[black] (x2) circle (\rsmall);
\fill (x2) circle (0.2pt) node[below right,font=\scriptsize]  at (\rC,0) {$B_{1/16}(x_2)$};

% Point z at north pole of outer circle
\coordinate (z) at (0,\rA) node[] at (0,0.4) {$z$}; 
\fill (z) circle (0.2pt) node[below,font=\scriptsize] at (0.2,0.35) {$B_{1/8}(z)$};

% Ball centered at z with radius 1/8
\draw[black] (z) circle (\rZ);

% Point x at north pole of the ball centered at z
\coordinate (x) at (0,\rC);
\fill (x) circle (0.2pt);% node[right] {$x$};

%point C_{|gamma}
\draw[red!50!black] node at (0.2,0.6) {$C_{\Gamma(e,\frac{1}{6})}(x)$};

%\node[red!70!black] at (0,0.58) {$C_\Gamma$};

\draw[->,green!90!black,thick] (x1) -- (x2);
\draw[green!60!black] node at (0,0.05) {$e$}; 

% Red cone from x
\draw[red,thick] (x) -- (0.0625,195/256);
\draw[red,thick] (x) -- (-0.33,-0.5);
\draw[red,thick]  (-0.0625,195/256)--(x);
\draw[red,thick]  (0.33,-0.5) -- (x);

% Define cone base vertices
\coordinate (coneleft) at (-0.33,-0.5);
\coordinate (coneright) at (0.33,-0.5);

% Red shaded cone
\fill[red,opacity=0.05] (x) -- (0.0625,195/256) -- (0.33,-0.5) -- cycle;

% Red shaded cone
\fill[red,opacity=0.05] (0.0625,195/256) -- (9/16,195/256)  --  (9/16,-0.5) -- (0.33,-0.5) -- cycle;

% Red shaded cone
\fill[red,opacity=0.05] (x) -- (-0.0625,195/256) -- (-0.33,-0.5) -- cycle;

\fill[red,opacity=0.05] (-0.0625,195/256) -- (-9/16,195/256)  --  (-9/16,-0.5) -- (-0.33,-0.5) -- cycle;

\end{tikzpicture} 
    \caption{Setting of the proof of Lemma \ref{l:iteration}.}
    \label{fig:Lemma4.1}
\end{figure}

Consequently \eqref{cond_x_2} also holds for every $x \in B_{1/8}(z)$. We then proceed once again with the proof of Lemma \ref{l:insomeball}, and this time we get $v_e \leq 1-\delta_2$ in $B_{1/16}(z)$ for some $\delta_2>0$ (smaller than $\delta_1$). Since $z$ is an arbitrary point in $B_{7/16}$, we conclude that $v_e \leq 1-\delta_2$ in $B_{1/2}$. The proof follows up to renaming $\delta_2$ as $\delta$.

\end{proof}

\section{First stage: the iterative reduction of the norm of the gradient}

\label{s:stage1}

In this section we carry out the first stage toward the proof of Theorem \ref{t:main}, as described in Section \ref{s:strategy}.

Our starting point is a properly scaled solution $u$  to $(-\Delta_p)^s u = 0$, so that  assumptions \textit{(1)--(4)} of Lemma \ref{l:iteration} are satisfied.
The main result of the section is the following. 

\begin{lemma} \label{l:mainlemma5}
For any $r,\mu >0$ small, there exist $\delta, \eps_1>0$ (small) and $K_0>0$ (large) depending on $r,\,\mu, \, d,\, s,\, p$ such that if the following conditions hold
    \begin{enumerate}
    \item $u$ satisfies \eqref{eq:fracplap} in $B_{2^{K_0+1}}$,
    \item $u(0) = 0$,
    \item  $\|\nabla u\|_{L^{\infty}(B_{2^{n}})} \leq (1-\delta)^{-n} ,$ for $n=0,1,\dots,K_0$, 
    \item $\Tail_{p-1,sp+1}(u,2^{K_0}) \leq \eps_1$,
    \end{enumerate}
 then one of the following must hold
    \begin{itemize}
      \item 
      There is a nonnegative integer $\kmax$ such that  
      \[
      \|\nabla u\|_{L^\infty(B_{2^{-n}})} \leq (1-\delta)^n  \text{ for } n = 0,1,\dots,\kmax.
      \] 
      Moreover, there exists a unit vector $e \in S^{d-1}$ such that
       \[ |\{ x \in B_{2^{-\kmax}} : |\nabla u(x) - (1-\delta)^{-\kmax} e| \leq r  (1-\delta)^{-\kmax} \}| \geq (1-\mu) |B_{2^{-\kmax}}|.\]
    \item
    For $\alpha_0>0$ so that $2^{-\alpha_0}=(1-\delta)$ and  $C_0 = (1-\delta)^{-1}$, we have
    \[
    |\nabla u(x)| \leq C_0 |x|^{\alpha_0} \quad \text{ for }  x \in B_1.
    \]
\end{itemize}
    
\end{lemma}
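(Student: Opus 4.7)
The plan is to apply Lemma \ref{l:iteration} iteratively on the dyadic scales $2^{-k}$. At each step $k$, I would rescale $u$ to a function
\[
w_k(x) = (1-\delta)^{-k}\, 2^k\, u(2^{-k}x), \qquad k\geq 0,
\]
and apply Lemma \ref{l:iteration} to $w_k$. Either for every direction $e\in S^{d-1}$ hypothesis (5) of Lemma \ref{l:iteration} holds and the conclusion improves the gradient bound at the next scale, or some $e$ violates (5) and the first alternative of the present conclusion follows upon unscaling with $\kmax = k$. If the iteration never stops, the cumulative dyadic decay produces the polynomial bound of the second alternative.

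The first step is to verify the four hypotheses of Lemma \ref{l:iteration} for $w_k$. The $p$-fractional Laplacian is scale invariant and $(p-1)$-homogeneous, so $w_k$ solves \eqref{eq:fracplap} in $B_{2^{K_0+k+1}}\supseteq B_{2^{K_0+1}}$, giving (1); (2) is immediate from $u(0)=0$. For (3), under the induction hypothesis $\|\nabla u\|_{L^\infty(B_{2^{-j}})}\leq (1-\delta)^j$ for $j=0,\dots,k$, combined with (3) of the present statement, one verifies by direct scaling that $\|\nabla w_k\|_{L^\infty(B_{2^n})}\leq (1-\delta)^{-n}$ for $n=0,\dots,K_0+k$, covering the required range. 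The only delicate hypothesis is (4): the scaling rule \eqref{scale_tails} yields
\[
\Tail_{p-1,sp+1}(w_k,2^{K_0+k})^{p-1} = \bigl[(1-\delta)^{-(p-1)}\,2^{\,p(1-s)-2}\bigr]^k\,\Tail_{p-1,sp+1}(u,2^{K_0})^{p-1},
\]
which stays $\leq \eps_1^{p-1}$ uniformly in $k$ provided the bracket is strictly less than one. This is where the assumption $p<2/(1-s)$ enters critically, making $2^{p(1-s)-2}<1$, and it may force a further reduction of $\delta$ beyond the one supplied by Lemma \ref{l:iteration}; this is harmless since the conclusion of Lemma \ref{l:iteration} persists when $\delta$ is shrunk.

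With the hypotheses verified, the step $k$ dichotomy is as follows. If (5) fails for some unit vector $e$, i.e.\ $|\{x\in B_1: |\nabla w_k(x)-e|<r\}|\geq (1-\mu)|B_1|$, I set $\kmax = k$ and the change of variables $y=2^{-k}x$ rewrites this as the flatness statement of the first alternative. Otherwise (5) holds for every $e$, Lemma \ref{l:iteration} gives $e\cdot\nabla w_k\leq 1-\delta$ in $B_{1/2}$ for every $e$; applying this pointwise with $e=\nabla w_k(x)/|\nabla w_k(x)|$ gives $|\nabla w_k|\leq 1-\delta$ in $B_{1/2}$, equivalently $\|\nabla u\|_{L^\infty(B_{2^{-k-1}})}\leq (1-\delta)^{k+1}$, and the induction proceeds.

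If the dichotomy always selects the second branch, then $\|\nabla u\|_{L^\infty(B_{2^{-k}})}\leq (1-\delta)^k$ for all $k\geq 0$. For any $x\in B_1\setminus\{0\}$, picking $k$ with $2^{-k-1}\leq |x|<2^{-k}$ and using $2^{-\alpha_0}=(1-\delta)$ gives
\[
|\nabla u(x)|\leq (1-\delta)^k = (2^{-k})^{\alpha_0}\leq (2|x|)^{\alpha_0} = (1-\delta)^{-1}|x|^{\alpha_0},
\]
which is the second alternative with $C_0=(1-\delta)^{-1}$. The main obstacle is the uniform-in-$k$ tail control of $w_k$, and it is precisely the subcritical scaling described in Section \ref{subsec_Scaling} that makes the whole iteration possible.
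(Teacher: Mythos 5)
Your overall strategy is exactly the one the paper follows: rescale by $w_k(x)=(1-\delta)^{-k}2^k u(2^{-k}x)$, apply Lemma \ref{l:iteration} at each dyadic scale, stop at the first scale where hypothesis (5) fails for some direction, and read off the H\"older decay of $\nabla u$ at the origin when the iteration never stops. However, there is one concrete gap, and it sits precisely at the step you flag as "the only delicate hypothesis." Hypothesis (4) of Lemma \ref{l:iteration} requires $\Tail_{p-1,sp+1}(w_k,2^{K_0})\leq \eps_1$, i.e.\ the tail at the \emph{fixed} radius $2^{K_0}$, whereas your scaling computation controls $\Tail_{p-1,sp+1}(w_k,2^{K_0+k})$. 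Since the tail is nonincreasing in $R$, the quantity you bound is the smaller one, and it does not control the one you need: the difference is the integral of $|w_k(y)|^{p-1}|y|^{-d-sp-1}$ over the annulus $B_{2^{K_0+k}}\setminus B_{2^{K_0}}$, a region where you only have the growth bound $|w_k|\leq ((1-\delta)^{-1}2)^{n}$ on $B_{2^n}$ coming from the Lipschitz estimates, not any smallness. Summing over the dyadic annuli this contributes roughly $C\,\theta^{K_0}/(1-\theta)$ with $\theta=(1-\delta)^{-(p-1)}2^{p(1-s)-2}<1$, which is \emph{not} below $\eps_1^{p-1}$ unless $K_0$ is taken large depending on $\eps_1$. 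The paper's proof of Lemma \ref{l:iteration_n_0times} carries out exactly this annulus decomposition and then enlarges $K_0$ accordingly (this is legitimate because, as noted after Lemma \ref{l:iteration}, its conclusion persists for larger $K_0$). Your proposal mentions possibly shrinking $\delta$ further but never mentions enlarging $K_0$, and without that the verification of (4) fails for every $k\geq 1$.

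Everything else is sound: hypotheses (1)--(3) are checked correctly, the pointwise choice $e=\nabla w_k(x)/|\nabla w_k(x)|$ to convert the directional bound into $|\nabla w_k|\leq 1-\delta$ is what the paper does, and the derivation of the second alternative from the infinite iteration, including the constant $C_0=2^{\alpha_0}=(1-\delta)^{-1}$, is correct. Once you repair the tail estimate as above, the argument matches the paper's proof of Lemma \ref{l:mainlemma5} via Lemma \ref{l:iteration_n_0times}.
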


The strategy is to iteratively apply Lemma \ref{l:iteration} to rescaled versions of $u$. 
We recall that the main assumption which drives the first stage of the proof is that for some $e \in S^{d-1}$,
\begin{align}\label{eq:assumption_Ar_sec5}
    |\{ x \in B_1 \, : \, \nabla u(x) \notin B_{r}(e) \}| \geq \mu|B_1|.
\end{align}
Notice that if \eqref{eq:assumption_Ar_sec5} is satisfied in all the directions $e \in S^{d-1}$, then applying Lemma \ref{l:iteration}, we obtain an improvement of oscillation for $|\nabla u|$, namely
\[
|\nabla u |\leq 1-\delta \quad \text{ in }B_{1/2}.
\]

Hence, as long as condition \eqref{eq:assumption_Ar_sec5} holds for every $e \in S^{d-1}$ and for the rescaling of $u$ up to a certain scale, we can improve the upper bound of $|\nabla u|$ in dyadic balls, iterating Lemma \ref{l:iteration}. This is expressed in the next result.
\begin{lemma} \label{l:iteration_n_0times}
 For any $r,\mu >0$, there exist $\delta, \eps_1>0$ (small) and $K_0>0$ (large) depending on $r,\,\mu, \, d,\, s,\, p$ such that if the following conditions hold
    \begin{enumerate}
    \item $u$ satisfies \eqref{eq:fracplap} in $B_{2^{K_0+1}}$,
    \item $u(0) = 0$,
    \item  $\|\nabla u\|_{L^{\infty}(B_{2^{n}})} \leq (1-\delta)^{-n} ,$ for $n=0,\dots,K_0$, 
    \item $\Tail_{p-1,sp+1}(u,2^{K_0}) \leq \eps_1$,
    \end{enumerate}
    and, in addition, there exists a nonnegative integer $n_0$ such that
    \begin{equation} \label{cond:A_rupton_0}
        |\{ x \in B_{2^{-n}} : |\nabla u(x) - (1-\delta)^{n} e| \geq r(1-\delta)^n \}| \geq \mu |B_{2^{-n}}| \quad \text{ for all }e \in S^{d-1} \text{ and }n=0,\dots,n_0,  
    \end{equation}
    then 
  \begin{align}\label{eq:improvement_iteration_u}
    |\nabla u| \leq (1-\delta)^n \quad \text{ in } B_{2^{-n}} \text{ for }n=0,\dots, n_0+1.
\end{align}

\end{lemma}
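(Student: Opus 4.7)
The plan is induction on $n$, with Lemma~\ref{l:iteration} providing the elementary step. The base case $n=0$ is tautological: assumption (3) at $n=0$ gives $\|\nabla u\|_{L^\infty(B_1)}\leq 1=(1-\delta)^0$. For the inductive step, assume $\|\nabla u\|_{L^\infty(B_{2^{-m}})}\leq (1-\delta)^m$ for $m=0,\dots,n$ with $n\leq n_0$, and rescale by setting
\begin{equation*}
    v(x):=(1-\delta)^{-n}\,2^n\,u(2^{-n}x).
\end{equation*}
Since $(-\Delta_p)^s$ is $(p-1)$-homogeneous and scales covariantly under spatial dilations, $v$ solves \eqref{eq:fracplap} on a ball at least as large as $B_{2^{K_0+1}}$; trivially $v(0)=0$, and $\nabla v(x)=(1-\delta)^{-n}\nabla u(2^{-n}x)$.

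To verify hypothesis (3) of Lemma~\ref{l:iteration} for $v$ at each dyadic scale $2^k$ with $0\leq k\leq K_0$, I would split into the case $k\leq n$ (so $2^{-n}x\in B_{2^{-(n-k)}}$ and the inductive hypothesis yields $|\nabla u(2^{-n}x)|\leq (1-\delta)^{n-k}$) and the case $k>n$ (so $2^{-n}x\in B_{2^{k-n}}$ and assumption (3) of the present lemma applies). In either case one obtains $\|\nabla v\|_{L^\infty(B_{2^k})}\leq (1-\delta)^{-k}$. Hypothesis (5) of Lemma~\ref{l:iteration} for $v$ with an arbitrary $e\in S^{d-1}$ is, after the substitution $x=2^{-n}y$, equivalent to \eqref{cond:A_rupton_0} at level $n$ for the same $e$. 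Applying Lemma~\ref{l:iteration} to $v$ and varying $e\in S^{d-1}$ produces $e\cdot\nabla v\leq 1-\delta$ on $B_{1/2}$ for every unit vector, hence $|\nabla v|\leq 1-\delta$ on $B_{1/2}$; undoing the rescaling yields $|\nabla u|\leq (1-\delta)^{n+1}$ on $B_{2^{-n-1}}$, which closes the induction.

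The one point requiring care is hypothesis (4) for $v$. A direct change of variables gives
\begin{equation*}
    \Tail_{p-1,sp+1}(v,2^{K_0})^{p-1}=\theta^n\,\Tail_{p-1,sp+1}(u,2^{K_0-n})^{p-1},\qquad \theta:=(1-\delta)^{-(p-1)}\,2^{p(1-s)-2},
\end{equation*}
with $\theta<1$ provided $\delta$ is small, since $p(1-s)-2<0$ in our range. Splitting $\Tail_{p-1,sp+1}(u,2^{K_0-n})^{p-1}$ into dyadic annuli and using the gradient bounds together with $u(0)=0$ to estimate $|u|\leq (1-\delta)^{-k}2^k$ on each, exactly as in \eqref{eq:bound_tail_iter}, yields an upper bound of the form $C\theta^{K_0-n}/(1-\theta)+\eps_1^{p-1}$; multiplying by $\theta^n$ leaves $C\theta^{K_0}/(1-\theta)+\theta^n\eps_1^{p-1}$. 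The main obstacle is therefore calibration of constants: one must choose $\delta,\eps_1,K_0$ in this lemma so that at every step of the iteration $v$ satisfies the hypotheses of Lemma~\ref{l:iteration} with its own constants. This is achieved by taking $\delta$ no larger than the $\delta$ given by Lemma~\ref{l:iteration}, $K_0$ large enough that the geometric tail contribution $C\theta^{K_0}/(1-\theta)$ is absorbed, and $\eps_1$ correspondingly small—the subcriticality $\theta<1$ being precisely what makes the telescoping estimate summable.
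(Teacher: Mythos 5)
Your proposal is correct and follows essentially the same route as the paper: finite induction, rescaling $v(x)=(1-\delta)^{-n}2^n u(2^{-n}x)$, verification of hypotheses (1)--(5) of Lemma \ref{l:iteration} for $v$ (with the tail condition as the delicate point, handled via the subcritical factor $\theta<1$ and enlarging $K_0$), and then undoing the rescaling. The only cosmetic difference is that you split $\Tail_{p-1,sp+1}(u,2^{K_0-n})$ into annuli in the original variables whereas the paper splits $\Tail_{p-1,sp+1}(v,2^{K_0})$ in the rescaled variables; the computations are identical.
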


\begin{proof}
We prove this lemma using finite induction over $n_0$. When $n_0=0$, it follows directly from Lemma \ref{l:iteration} that $e \cdot \nabla u \leq 1-\delta$ in $B_{1/2}$ for all $e \in S^{d-1}$. This implies that $|\nabla u | \leq 1-\delta$ in $B_{1/2}$.

Assume by induction that \eqref{eq:improvement_iteration_u} holds for $n=0,\dots,n_0$ and we are going to prove that
\begin{equation} \label{eq_thesis1stage}
|\nabla u| \leq (1-\delta)^{n_0+1} \quad \text{ in } B_{2^{-n_0-1}}.
\end{equation}
Consider the rescaling of $u$ given by
\[
v(x) = (1-\delta)^{-n_0} 2^{n_0} u(2^{-n_0}x).
\]
We now check that $v$ satisfies the hypotheses of Lemma \ref{l:iteration}.

Conditions \textit{(1)}-\textit{(2)} are straightforward. Since \eqref{eq:improvement_iteration_u} holds for $n=0, \dots,n_0$, combining it with Assumption \textit{(3)} for $u$, we get, for $n = 0, \dots, K_0+n_0$,
\[
\|\nabla u\|_{L^\infty(B_{2^{K_0-n}})}\leq (1-\delta)^{n-K_0}.
\]
This, in terms of $v$ becomes
\begin{align*}
\|\nabla v\|_{L^{\infty}(B_{2^n})} = (1-\delta)^{-n_0} \| \nabla u\|_{L^{\infty}(B_{2^{n-n_0}})} \leq (1-\delta)^{-n} \quad \text{ for }n=0,\dots,n_0+K_0.
\end{align*}
Therefore, we also get
\begin{align*}
\|v\|_{L^\infty(B_{2^n})} & \leq  \left((1-\delta)^{-1}2\right)^{n} \quad \text{  for $n=0,\dots,K_0+n_0$}.
\end{align*}
The tail condition \textit{(4)} holds using \eqref{scale_tails}. Namely, for $\theta:=(1-\delta)^{-(p-1)}2^{p(1-s)-2} <1$, we have
\begin{align*}
    &{\Tail}_{p-1,sp+1}(v,2^{K_0+n_0})^{p-1} = \theta^{n_0}{\Tail}_{p-1,sp+1}(u,2^{K_0})^{p-1} \leq \theta^{n_0} \eps_1^{p-1}.
\end{align*}
Repeating the argument in \eqref{eq:bound_tail_iter}, we split ${\Tail}_{p-1,sp+1}(v,2^{K_0})^{p-1}$, in a sequence of annulus and using the estimates for $v$ above, we obtain 
\begin{align*}
    {\Tail}_{p-1,sp+1}(v,2^{K_0})^{p-1} &= \sum_{n=0}^{n_0-1} \int_{ B_{2^{n+K_0+1}} \setminus B_{2^{n+K_0}}} \frac{|v(y)|^{p-1}}{|y|^{d+sp+1}} \dd y + {\Tail}_{p-1,sp+1}(v,2^{K_0+n_0})^{p-1}  \\
    &\leq C \sum_{n=0}^{n_0-1} \left( (1-\delta)^{-(p-1)} 2^{p(1-s)-2} \right)^{K_0+n+1}  +\theta^{n_0} \eps_1^{p-1} \\
    &= C \theta^{K_0+1}\sum_{n=0}^{n_0-1} \theta^{n} +\theta^{n_0} \eps_1^{p-1} \\  
    &\leq C \frac{\theta^{K_0+1}}{1-\theta}  + \theta^{n_0}\eps_1^{p-1}\\
    &\leq \eps_1^{p-1}
\end{align*}
by possibly taking $K_0$ larger depending on $\eps_1$.
One can observe that condition \eqref{cond:A_rupton_0} for $n=n_0$, if expressed in terms of $v$, gives
\[
    |\{ x \in B_{1} : |\nabla v(x) - e| \geq r \}| \geq \mu |B_1| \quad \text{ for all }e \in S^{d-1}, 
\]
that is condition \textit{(5)}. Therefore, by Lemma \ref{l:iteration} we obtain
\[
e\cdot \nabla v \leq 1-\delta \quad \text{in }B_{1/2} \text{ for all }e \in S^{d-1}.
\]
This implies that $|\nabla v| \leq 1-\delta$ in $B_{1/2}$ which in terms of $u$ gives \eqref{eq_thesis1stage} and concludes the proof.
\end{proof}

We are now in position to prove Lemma \ref{l:mainlemma5}.
\begin{customproof}{Lemma \ref{l:mainlemma5}}
For any $r,\mu>0$ let $N$ be the minimum nonnegative integer such that \eqref{cond:A_rupton_0} does not hold.
If $\kmax$ is finite then we can apply Lemma \ref{l:iteration_n_0times} $\kmax-1$ times and obtain
\[
\|\nabla u\|_{L^\infty(B_{2^{-n}})} \leq (1-\delta)^n \quad \text{ for } n = 0,1,\dots,\kmax.
\]
The choice of $\kmax$ implies that there exists $e\in S^{d-1}$ for which
\[ |\{ x \in B_{2^{-\kmax}} : |\nabla u(x) - (1-\delta)^{-\kmax} e| \leq r  (1-\delta)^{-\kmax} \}| \geq (1-\mu) |B_{2^{-\kmax}}|. \]
If $N=\infty$ then we can apply indefinitely Lemma \ref{l:iteration_n_0times} and obtain, for all $n> 0$,
\[
\|\nabla u\|_{L^\infty(B_{2^{-n}})} \leq (1-\delta)^n ,
\]
which gives the desired result.
  
\end{customproof}

The second stage of the paper begins considering a rescaling of $u$ from Lemma \ref{l:mainlemma5}. For future reference, we collect its properties in the final result of this section.

\begin{corollary} \label{c:end_of_stage1}
    Let $u$ be a function as in Lemma \ref{l:mainlemma5} and assume $N<\infty$. Let us consider the rescaling
\[
\bar u(x) := 2^\kmax (1-\delta)^{-\kmax}  u(2^{-\kmax} x) .
\]
This function satisfies
\begin{enumerate}
  \item $\bar u$ solves \eqref{eq:fracplap} in $B_{2^{\kmax+K_0+1}}$,
  \item $\bar u(0) = 0$,
   \item $\|\bar u\|_{L^\infty(B_{2^k})} \leq ((1-\delta)^{-1}2)^k$ for $k =0, 1,2,\dots,K_0+\kmax$,
  \item $\|\nabla \bar u\|_{L^\infty(B_{2^k})} \leq (1-\delta)^{-k}$ for $k = 0,1,2,\dots, K_0+\kmax$,
  \item $\Tail_{p-1,sp+1}(\bar u,1)\leq C_1$ where $C_1$ is a constant independent of $\kmax$,
  \item $|\{ x\in B_1 \,:\,  \nabla \bar u \in B_r(e)\}|\geq (1-\mu)|B_1|$.
\end{enumerate}
\end{corollary}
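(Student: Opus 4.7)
The corollary is essentially a direct unpacking of Lemma \ref{l:mainlemma5} combined with the scaling identities from Subsection \ref{subsec_Scaling}. Properties (1) and (2) are immediate from the scaling invariance of $(-\Delta_p)^s u = 0$ under dilations and multiplicative rescalings, together with the hypothesis $u(0) = 0$. For property (4), the chain rule gives $\nabla \bar u(x) = (1-\delta)^{-\kmax} \nabla u(2^{-\kmax} x)$; combining the conclusion $\|\nabla u\|_{L^\infty(B_{2^{-n}})} \leq (1-\delta)^{n}$ of Lemma \ref{l:mainlemma5} (for $n = 0, \dots, \kmax$) with hypothesis (3) of that same lemma (for $n = 0, \dots, K_0$) yields $\|\nabla u\|_{L^\infty(B_{2^m})} \leq (1-\delta)^{-m}$ for $m \in \{-\kmax, \dots, K_0\}$, which translates into the desired $\|\nabla \bar u\|_{L^\infty(B_{2^k})} \leq (1-\delta)^{-k}$ for all $k \in \{0, \dots, K_0 + \kmax\}$. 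Property (3) then follows from (4) and $\bar u(0) = 0$ via the fundamental theorem of calculus.

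The main computation is the tail bound in (5). Applying the scaling identity \eqref{scale_tails} to the factorization $\bar u(x) = (1-\delta)^{-\kmax} u_{2^{-\kmax}}(x)$ one obtains
\begin{equation*}
    \Tail_{p-1,sp+1}(\bar u; 1)^{p-1} = \theta^{\kmax} \, \Tail_{p-1,sp+1}(u; 2^{-\kmax})^{p-1},
\end{equation*}
where $\theta = (1-\delta)^{-(p-1)} 2^{p(1-s)-2} < 1$ by the choice of $\delta$ in Lemma \ref{l:mainlemma5}. I would then bound the right-hand side by splitting into dyadic annuli $B_{2^{n+1}}\setminus B_{2^n}$ for $n = -\kmax, \dots, K_0 - 1$, plus the outer contribution $\Tail_{p-1,sp+1}(u;2^{K_0})^{p-1} \leq \eps_1^{p-1}$. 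On each annulus, $u(0) = 0$ and the gradient estimate from (4) give $|u(y)| \leq 2^{n+1} (1-\delta)^{-(n+1)}$, and a direct computation shows that the contribution of the annulus to the tail is bounded by a universal constant times $\theta^n$. Summing the geometric series produces $\Tail_{p-1,sp+1}(u;2^{-\kmax})^{p-1} \leq C \theta^{-\kmax}/(1-\theta) + \eps_1^{p-1}$, and multiplying back by $\theta^{\kmax}$ yields the $\kmax$-independent bound $C_1$. The essential point is the exact cancellation between the $\theta^{\kmax}$ from the rescaling and the $\theta^{-\kmax}$ from the summation, which is precisely the subcriticality of the tail under the scaling of the equation pointed out at the end of Subsection \ref{subsec_Scaling}.

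Property (6) is a direct translation of the defining property of $\kmax$. Since $\kmax$ is the smallest nonnegative integer for which condition \eqref{cond:A_rupton_0} fails, there exists $e \in S^{d-1}$ such that the set of $x \in B_{2^{-\kmax}}$ with $|\nabla u(x) - (1-\delta)^{\kmax} e| \leq r(1-\delta)^{\kmax}$ has measure at least $(1-\mu) |B_{2^{-\kmax}}|$. The change of variables $x = 2^{-\kmax} y$ together with the identity $\nabla \bar u(y) = (1-\delta)^{-\kmax} \nabla u(2^{-\kmax} y)$ rewrites this exactly as $|\{ y \in B_1 : \nabla \bar u(y) \in B_r(e)\}| \geq (1-\mu) |B_1|$. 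The only mildly delicate step in the whole proof is the careful bookkeeping of exponents in the tail estimate; everything else is routine.
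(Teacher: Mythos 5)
Your proposal is correct and follows essentially the same route as the paper, which dismisses items (1)--(4) and (6) as direct consequences of the scaling and of the definition of $\kmax$, and justifies the tail bound (5) by exactly the dyadic-annulus computation you carry out (the paper simply points to the computation in \eqref{eq:bound_tail_iter}, where the geometric series with ratio $\theta=(1-\delta)^{-(p-1)}2^{p(1-s)-2}<1$ cancels the factor $\theta^{\kmax}$ coming from the rescaling). Your bookkeeping of the exponents and the observation that this is precisely the subcriticality of the tail are both accurate.
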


Condition \textit{(5)} is justified by the computations in \eqref{eq:bound_tail_iter}.

This function $\bar u$, satisfying the conditions above, is the starting point of the stage two in the proof of our main theorem.

\section{Second stage: the Ishii-Lions method}
\label{s:stage2}

In this section we deal with the second stage of the proof. We aim at proving that, if $\nabla u$ is very close to a unit vector in most of the ball $B_1$, then $\nabla u$ is close to that vector for every point in $B_{1/2}$, resulting in some form of non-degeneracy for the linearized kernel $K_u$. The following lemma is the objective of this section.

\begin{lemma} \label{l:stage2}
    Let $L \in (0,1/4]$ and $C_1>0$ be arbitrary given parameters. There exist $\mu$ and $r$ sufficiently small, depending on $L$ and $C_1$, such that the following statement holds.
        
    Assume that $u$ is a solution of \eqref{eq:fracplap} in $B_2$ with $p \in [2, \frac{2}{1-s})$, with $\| u\|_{Lip(B_1)} \leq 1$ and  $\Tail_{p-1,sp+1}(u;1) \leq C_1$. Assume further that, for some $e \in S^{d-1}$,
    \[ 
    |\{x \in B_1 : |\nabla u(x) - e| \leq r\}| \geq (1-\mu) |B_1|.
    \]
     Then
     \[
     |\nabla u(x) - e| \leq L \quad \text{ for all } x\in B_{1/2}.
     \]
\end{lemma}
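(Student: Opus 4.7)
The plan is to apply the Ishii--Lions doubling-variables technique to the auxiliary function $w(x) := u(x) - e \cdot x$. Under the working regularity assumption in force, it suffices to prove the pointwise estimate $|w(x) - w(y)| \leq L|x-y|$ for all $x, y \in B_{3/4}$. Assuming toward contradiction that this fails, we introduce
\[
\Phi(x, y) = w(x) - w(y) - L\, \omega(|x-y|) - \sigma \bigl(|x-x_0|^2 + |y-x_0|^2\bigr),
\]
where $x_0 \in B_{3/4}$ is chosen near the offending pair, $\sigma > 0$ is a small localization parameter, and $\omega$ is a smooth strictly concave modulus with $\omega(0) = 0$, $\omega'(0^+) = 1$ (for instance $\omega(t) = t - \beta t^{1+\alpha}$ with $\alpha, \beta$ small). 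By construction, $\Phi$ attains a strictly positive maximum at some $(\bar x, \bar y) \in B_1 \times B_1$ with $\bar x \neq \bar y$, and the strict concavity of $\omega$ yields the usual Ishii--Lions gain when we translate the variables.

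Testing $(-\Delta_p)^s u = 0$ against the standard test functions derived from $\Phi$ at $(\bar x, \bar y)$ and subtracting the two resulting identities produces
\[
\int_{\R^d} \frac{J_p(u(\bar x) - u(\bar x + h)) - J_p(u(\bar y) - u(\bar y + h))}{|h|^{d+sp}}\, dh = 0,
\]
where $J_p(t) = |t|^{p-2} t$ as in Lemma \ref{l:appendix}. The maximum inequality for $\Phi$ applied to the translated pair $(\bar x + h, \bar y + h)$ forces this integrand to be one-signed up to errors of order $\sigma$ and the concavity deficit of $\omega$. The task is to isolate a set of directions $h$ on which this sign is strictly favorable and the linearized kernel is non-degenerate, providing enough margin to contradict the vanishing of the above integral.

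The non-degeneracy is harvested from the hypothesis on the good set $G := \{x \in B_1 : |\nabla u(x) - e| \leq r\}$, which has measure at least $(1-\mu)|B_1|$. For $\mu$ small, a Fubini/Chebyshev argument in the spirit of the construction of the good directions $L$ in Lemma \ref{l:Bound_Jgap1} shows that for most $\nu$ in the cone $\Gamma(e, 1/6)$, the segments $[\bar x, \bar x + t\nu]$ and $[\bar y, \bar y + t\nu]$ both lie in $G$ for a large fraction of $t \in (0, t_0)$, with $t_0$ comparable to $|\bar x - \bar y|$ or larger. Integrating $\nabla u \cdot \nu$ along such segments yields $u(\bar x + h) - u(\bar x) = (e \cdot h)(1 + O(r + \mu^{1/d}))$, and similarly at $\bar y$, hence $K_u(\bar x, \bar x + h) \asymp |h|^{-d - 2 + p(1-s)}$. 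Combined with the translated maximum inequality, this produces a contribution $\gtrsim c\, |h|^{p-2} (\bar x - \bar y) \cdot e / |\bar x - \bar y|$ to the integrand over these good directions. The remaining pieces split into: (i) directions $h$ not in the good cone, where the concavity deficit of $\omega$ supplies a controlled loss; (ii) pairs in which $\bar x + h$ or $\bar y + h$ lies in the bad set $G^c$, of total weighted mass $O(\mu)$; and (iii) the far tail $|h|$ large, controlled by $\Tail_{p-1,sp+1}(u;1) \leq C_1$. The main obstacle is the calibration: $r$ and $\mu$ must be chosen small enough (depending on $L$, $C_1$, $d$, $s$, $p$) so that the good-direction gain strictly dominates the three error contributions, yielding the desired contradiction.
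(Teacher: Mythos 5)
Your overall strategy (Ishii--Lions applied to $w=u-e\cdot x$) is the same as the paper's, but the proposal has a genuine gap: you never convert the measure hypothesis into a \emph{small oscillation} bound for $w$, and without that the contradiction cannot close. The paper first proves (Lemma \ref{l:eps_osc}, by a compactness argument) that the hypotheses with $r,\mu$ small force $\osc_{B_1}(u-e\cdot x)\leq \eps_0$ for any prescribed $\eps_0$; at the maximum of $\Phi$ this gives $L\,\omega(|\bar x-\bar y|)\leq \eps_0$, hence $|\bar a|=|\bar x-\bar y|$ is as small as we wish. This smallness is what makes the favorable cone term $I_1\gtrsim L\,\delta_0^{d+p-1}|\bar a|^{p(1-s)-1}$ dominate the error terms $I_3\gtrsim -C|\bar a|^{p(1-s)/2}$ and $I_4\gtrsim -C|\bar a|$, since $p(1-s)-1<\min\{p(1-s)/2,\,1\}$: the comparison is a comparison of powers of $|\bar a|$ and only wins as $|\bar a|\to 0$. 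In your setup, $\Phi(\bar x,\bar y)>0$ only yields $L\,\omega(|\bar a|)\leq w(\bar x)-w(\bar y)\leq 2|\bar a|$, which is vacuous for $L\leq 1/4$, so $|\bar a|$ may be of order one and the tail contribution (of size $O(1)$ with a constant depending on $C_1$) cannot be absorbed by a gain of size $O(L)$. You explicitly flag the calibration as ``the main obstacle'' without resolving it; the resolution is precisely the missing oscillation lemma.

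A second, independent problem is where you source the non-degeneracy of the kernel. You propose to show $K_u(\bar x,\bar x+h)\asymp |h|^{-d-2+p(1-s)}$ by arguing that segments from $\bar x$ spend most of their time in the good set $G$. But $\bar x$ is an arbitrary maximum point of $\Phi$ and may itself lie in $G^c$; indeed $G^c$ could contain a whole small ball around $\bar x$ (its measure bound $\mu|B_1|$ is weighted by $t^{d-1}$ in polar coordinates, so it says nothing about one-dimensional densities near $t=0$), in which case $|u(\bar x+t\nu)-u(\bar x)|\gtrsim t$ fails exactly at the small scales where the kernel singularity lives. The paper avoids this entirely: since $u$ is touched at $\bar x$ by the test function $\phi(x,y)=e\cdot(x-y)+L\omega(|x-y|)+\kappa\eps_0\psi(x)$ and $J_p$ is monotone, one may replace increments of $u$ by increments of $\phi$, and $\nabla_x\phi(\bar x,\bar y)$ is automatically within $1/4+o(1)$ of the unit vector $e$ (because the linear part is $e$ and $L\leq 1/4$). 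The lower bound $|\ell(z)|\geq \tfrac{\delta_0}{4}|z|$ on a sub-cone then comes for free from the test function, not from the measure hypothesis, which is used only in the oscillation step. I would recommend restructuring along these lines; the power-type modulus $\omega(t)=t-\beta t^{1+\alpha}$ is a minor issue by comparison and could likely be made to work once $|\bar a|$ is small.
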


We start with the following elementary lemma.
\begin{lemma} \label{l:eps_osc}
    Given any $\eps>0$, there are small values for $r>0$ and $\mu>0$ so that the following conclusion holds. Assume that $|\nabla u| \leq 1 $ in $B_1$ and for some $ e \in S^{d-1}$ we have 
    \[
        |\{x\in B_1\, : \, \nabla u(x) \in B_r(e)\}| \geq (1-\mu) |B_1|.
    \] 
Then,
\begin{align}\label{eq:Morrey12}
    \underset{x \in B_{1}}{\osc}\,(u(x) -  e \cdot x)\leq \eps.
\end{align}
\end{lemma}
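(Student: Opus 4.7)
The plan is to let $w(x):=u(x)-e\cdot x$, notice that $\nabla w=\nabla u-e$ is bounded by $2$ in $B_1$ and by $r$ on the good set $G:=\{x\in B_1:\nabla u(x)\in B_r(e)\}$, whose complement has measure at most $\mu|B_1|$, and then deduce the oscillation bound from a Poincar\'e--Chebyshev argument combined with the Lipschitz regularity of $w$.

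First I would apply the Poincar\'e--Wirtinger inequality in the convex domain $B_1$: with $\bar w=\fint_{B_1}w$,
\[
\int_{B_1}|w-\bar w|\,\dd x\leq C_d\int_{B_1}|\nabla w|\,\dd x\leq C_d\bigl(r|G|+2|B_1\setminus G|\bigr)\leq C_d(r+2\mu)|B_1|.
\]
Chebyshev's inequality then gives, for any threshold $\eta>0$,
\[
\Bigl|\{x\in B_1:|w(x)-\bar w|>\eta\}\Bigr|\leq \frac{C_d(r+2\mu)|B_1|}{\eta}.
\]

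Next I would turn the ``smallness in measure'' into a pointwise bound by using that $w$ is $2$-Lipschitz on $B_1$ (since $B_1$ is convex and $|\nabla w|\leq 2$). Set $\eta:=\eps/4$ and $\rho:=\eps/8$. For every $y\in B_1$ one has $|B_\rho(y)\cap B_1|\geq c_d\rho^d$. If one chooses $r,\mu$ small enough that
\[
\frac{4C_d(r+2\mu)|B_1|}{\eps}<c_d\rho^d,
\]
then the bad set $\{|w-\bar w|>\eta\}$ cannot cover $B_\rho(y)\cap B_1$ for any $y\in B_1$; consequently, for each $y\in B_1$ there exists $y'\in B_\rho(y)\cap B_1$ with $|w(y')-\bar w|\leq \eta$. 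By convexity the segment from $y$ to $y'$ stays in $B_1$, so the Lipschitz bound yields $|w(y)-w(y')|\leq 2\rho=\eps/4$, hence $|w(y)-\bar w|\leq \eps/2$ for every $y\in B_1$. Taking the difference of this estimate at two arbitrary points of $B_1$ gives $\osc_{B_1}w\leq \eps$, which is \eqref{eq:Morrey12}.

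There is no real obstacle here; the only point that requires a bit of care is that the Poincar\'e inequality is being applied to a Lipschitz function on $B_1$ (fine, since $w\in W^{1,\infty}(B_1)$), and that the density argument works uniformly up to the boundary of $B_1$, which holds because $|B_\rho(y)\cap B_1|\geq c_d\rho^d$ for all $y\in B_1$ with a dimensional constant $c_d$ independent of $y$. The choice of $r$ and $\mu$ depending on $\eps$ and $d$ is then dictated purely by the inequality displayed above.
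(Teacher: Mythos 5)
Your proof is correct, but it takes a genuinely different route from the paper. The paper argues by compactness and contradiction: if the conclusion failed along sequences $r_k,\mu_k\to 0$, the corresponding uniformly Lipschitz functions $u_k$ would converge (after subtracting constants, via Arzel\`a--Ascoli) uniformly, while hypothesis (2) forces the $W^{1,1}$ limit of $\nabla u_k$ to be $e$, so the limit is $e\cdot x$ plus a constant, contradicting the lower bound on the oscillation. You instead give a direct quantitative argument: Poincar\'e--Wirtinger bounds $\int_{B_1}|w-\bar w|$ by $C_d(r+2\mu)|B_1|$ for $w=u-e\cdot x$, Chebyshev makes the superlevel set $\{|w-\bar w|>\eps/4\}$ small in measure, and the $2$-Lipschitz bound together with the uniform density estimate $|B_\rho(y)\cap B_1|\geq c_d\rho^d$ upgrades this to the pointwise bound $|w-\bar w|\leq \eps/2$ everywhere, hence the oscillation bound. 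Both arguments are sound; yours has the advantage of producing an explicit admissible choice of $r$ and $\mu$ in terms of $\eps$ and $d$ (the paper's compactness argument is non-constructive), at the cost of being slightly longer. The only hypotheses you rely on beyond the statement are that $u\in W^{1,\infty}(B_1)$ so that Poincar\'e applies and the gradient bound integrates along segments, which is exactly the setting in which the lemma is invoked.
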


\begin{proof}
    Assume the result is not true. That is, there is an $\eps > 0$ and two sequences $\mu_k \to 0$ and $r_k \to 0$, so that there are functions $u_k$ satisfying
    \begin{enumerate}
    \item $\|\nabla u_k\|_{L^\infty(B_1)} \leq 1$.
    \item $|\{x\in B_1\, : \, \nabla u_k(x)\in B_{r_k}(e)\}| \geq (1-\mu_k) |B_1|$,
    \end{enumerate}
    yet such that $\osc_{B_1} (u_k(x) - e\cdot x) > \eps$.

    Condition (1) tells us that the sequence is uniformly Lipschitz. By subtracting a constant if necessary, we can apply the Arzelà–Ascoli theorem to extract a uniformly convergent subsequence. On the other hand, (2) tells us that this sequence converges in $W^{1,1}$ to $e \cdot x$ plus a constant. Therefore, $u_k(x) - u_k(0)$ converges uniformly to $e\cdot x$ contradicting the bound on the oscillation.
\end{proof}

Lemma \ref{l:stage2} follows by combining Lemma \ref{l:eps_osc} with the following result.

\begin{lemma}\label{l:IL}
Let $0<L\leq 1/4$, and $C_1 > 0$. There exists some $\eps_0>0$ small enough so that the following statement is true.
Assume $u$ is a smooth solution of $(-\Delta_p)^s u = 0$  in $B_{2}$ for $p \in [2, \frac{2}{1-s})$,  $ \|u\|_{Lip(B_1)} \leq 1 $ and ${\Tail}_{p-1,sp+1}(u;1) \leq C_1$. Furthermore for some $e \in S^{d-1}$ we have 
    \begin{equation} \label{eq:osc_small}
        \underset{x \in B_{1}}{\osc}\,(u(x) -  e \cdot x)\leq \eps_0.
    \end{equation} 
Then, it holds that
\begin{equation} \label{eq:unifelli}
    |\nabla u - e| < L \quad \text{in }B_{1/2}.
\end{equation}
Here $\eps_0$ depends only on $L$, $C_1$, $d$, $s$ and $p$.
\end{lemma}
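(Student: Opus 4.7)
The strategy is to carry out an Ishii--Lions doubling-variables argument, in the spirit of \cite{biswas2025lipschitz,biswas2025improved}, applied to $\tilde u(x) := u(x) - e\cdot x$. The hypothesis $\osc_{B_1}\tilde u \leq \eps_0$ together with the goal $|\nabla\tilde u| < L$ on $B_{1/2}$ suggest rephrasing the conclusion as a quantitative Lipschitz estimate on $\tilde u$. Argue by contradiction: suppose there exist $x^*, y^* \in B_{1/2}$ with $\tilde u(x^*) - \tilde u(y^*) > L\,|x^* - y^*|$.

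Introduce the functional
\begin{equation*}
\Phi(x,y) \;=\; \tilde u(x) - \tilde u(y) \;-\; L_1\,\omega(|x-y|) \;-\; \sigma\bigl(|x-z|^2 + |y-z|^2\bigr)
\end{equation*}
on $\overline{B_{3/4}}\times\overline{B_{3/4}}$, with $L_1 \in (L/2,L)$, a strictly concave smooth modulus $\omega(r) = r - \kappa r^{1+\tau}$ ($\kappa, \tau$ small), a small localization $\sigma > 0$, and $z$ near $x^*$. This choice forces $\sup\Phi > 0$, attained at an interior $(\bar x, \bar y)$ with $\bar x \neq \bar y$. The hypothesis $\osc\tilde u \leq \eps_0$ and positivity of $\Phi$ force $L_1\omega(r) \leq \eps_0$ for $r := |\bar x - \bar y|$, so $r$ is small. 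First-order criticality then yields
\begin{equation*}
\nabla u(\bar x),\ \nabla u(\bar y) \;=\; \xi + O(\sigma), \qquad \xi := e + L_1\,\omega'(r)\,\hat n, \qquad \hat n := \frac{\bar x - \bar y}{r},
\end{equation*}
so both gradients are close to a common vector $\xi$ satisfying $|\xi| \geq 1 - L_1 > 0$. This non-degeneracy is crucial: the linearized kernel of $(-\Delta_p)^s$ behaves like $|\xi\cdot h|^{p-2}|h|^{-d-sp}$ near $h = 0$, and is bounded below by $c\,|h|^{p-2-d-sp}$ on the cone $\{h : |\xi\cdot h| \gtrsim |h|\}$, which has a definite solid angle.

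Subtract the equations $(-\Delta_p)^s u(\bar x) = (-\Delta_p)^s u(\bar y) = 0$. With $a(h) := u(\bar x)-u(\bar x+h)$ and $b(h) := u(\bar y)-u(\bar y+h)$, Lemma \ref{l:appendix} gives
\begin{equation*}
0 \;=\; \int_{\R^d} \mathcal{K}(h)\bigl(a(h) - b(h)\bigr)\,dh, \qquad \mathcal{K}(h) \;=\; (p-1)\!\int_0^1 |b + t(a-b)|^{p-2}\,dt\,\;|h|^{-d-sp}.
\end{equation*}
The maximum-point inequalities at $(\bar x+h, \bar y+h)$, $(\bar x+h,\bar y)$, and $(\bar x,\bar y+h)$ produce, after expanding in the decomposition $h = h_\parallel\hat n + h_\perp$,
\begin{equation*}
a(h) - b(h) \;\geq\; -L_1\,\omega''(r)\,h_\parallel^2 \;-\; \tfrac{L_1\,\omega'(r)}{r}|h_\perp|^2 \;+\; O(\sigma|h|^2) \;+\; O(|h|^3),
\end{equation*}
whose radial term is strictly positive by strict concavity of $\omega$. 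Integrating against $\mathcal{K}$, the radial cone provides a positive principal term, while the large-$|h|$ tail is controlled by $\Tail_{p-1,sp+1}(u;1) \leq C_1$ together with $\|u\|_{Lip(B_1)}\leq 1$. Calibrating $(L_1,\kappa,\tau,\sigma)$ in terms of $L$ and $C_1$, the positive radial contribution dominates, yielding $\int\mathcal{K}(a-b)\,dh > 0$, which contradicts the vanishing identity.

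The main obstacle is the balance between the positive radial contribution (of order $L_1|\omega''(r)|\,r^{p(1-s)+\tau-1}$ after integration) and the negative tangential contribution (of order $L_1 r^{p(1-s)-1}$), which naively dominates by a factor $1/(\kappa\tau\, r^\tau)$. The resolution requires exploiting the \emph{second-order} matrix inequalities derived from $D^2\Phi(\bar x,\bar y) \leq 0$: pairing with diagonal directions $(h,h)$ shows $D^2 u(\bar x) - D^2 u(\bar y) = O(\sigma)$, so that the local Hessian parts of $a$ and $b$ cancel to leading order in $a - b$, and only the concavity of $\omega$ along $\hat n$ together with the tail terms survive. The assumption $p < 2/(1-s)$ enters critically here, ensuring $2 - p(1-s) > 0$ so that the linearized operator retains a positive fractional order and the singular integral converges with the expected ellipticity.
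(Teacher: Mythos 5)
Your setup is the right one and matches the paper's in spirit: double variables, a strictly concave modulus, a localization term, the observation that at the maximum point $\nabla_x\phi(\bar x,\bar y)$ is an $O(L)$ perturbation of $e$ and hence bounded away from zero (this is exactly how the paper obtains nondegeneracy of the kernel on a cone), and the splitting of $(-\Delta_p)^su(\bar x)-(-\Delta_p)^su(\bar y)$ into a positive cone contribution plus controllable remainders. You have also correctly identified the central difficulty — the tangential second increment of $\omega(|x-y|)$, of size $\omega'(r)|h_\perp|^2/r$, naively swamps the radial gain $|\omega''(r)|\,|h_\parallel|^2$. The problem is that your proposed resolution of this difficulty does not work. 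Testing $D^2\Phi(\bar x,\bar y)\leq 0$ against diagonal directions $(h,h)$ kills the $\omega$-block entirely and yields only the \emph{one-sided} inequality $h^T\bigl(D^2u(\bar x)-D^2u(\bar y)\bigr)h\leq C\sigma|h|^2$; the reverse inequality is simply not available from the maximum condition (testing against $(h,-h)$ controls the \emph{sum} of the Hessians, not the difference), so the claim $D^2u(\bar x)-D^2u(\bar y)=O(\sigma)$ is unjustified. Worse, even if such a two-sided cancellation held, it would give $a(h)-b(h)=O(\sigma|h|^2)+O(|h|^3)$, which destroys the strictly positive radial term you are counting on for the contradiction; the argument would then produce no sign at all. (The $O(|h|^3)$ Taylor remainders would also require quantitative $C^{2,1}$ bounds on $u$ that are not uniform.)

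The mechanism that actually closes the argument — both in the paper and in \cite{biswas2025lipschitz,biswas2025improved} — is different: one takes the logarithmic modulus $\omega(r)=r+\tfrac{r}{20\log(r/4)}$, so that $|\omega''(r)|\approx \bigl(r\log^2 r\bigr)^{-1}$, and claims the positive contribution only on the cone $\C(\bar a)$ of aperture $\delta_0(\bar a)\sim 1/|\log|\bar a||$ around $\bar a$. On that cone the tangential error $|h_\perp|^2/r\lesssim \delta_0^2|h|^2/r$ is comparable to the radial gain $|h|^2/(r\log^2 r)$, which is the content of Lemma \ref{l:2nd-increment-omega}, and intersecting with the set where $|\nabla_x\phi\cdot z|\gtrsim\delta_0|z|$ yields the lower bound $I_1\geq cL\,\delta_0^{d+p-1}|\bar a|^{p(1-s)-1}$. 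The annulus $B_{\delta_1|\bar a|}\setminus\C$ is then handled with the crude bound $|\delta^2\phi|\leq CL|z|^2/|\bar a|$, giving $-I_2\leq CL\,\delta_1^{p(1-s)}|\bar a|^{p(1-s)-1}$ with $\delta_1\sim|\log|\bar a||^{-q}$, and choosing $q\,p(1-s)>d+p-1$ makes $-I_2\ll I_1$; the remaining pieces are of strictly higher order in $|\bar a|$. All competing terms carry the same power $|\bar a|^{p(1-s)-1}$ and the comparison is won purely on logarithmic factors — something your power-type modulus $\omega(r)=r-\kappa r^{1+\tau}$ cannot replicate, since a cone narrow enough to tame the tangential term would cost extra powers $r^{\tau(d+p-3)/2}$ in the positive term. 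As written, your proof has a genuine gap at its decisive step.
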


The purpose of the rest of this section is to prove Lemma \ref{l:IL}. We use the Ishii-Lions method, as described in Section \ref{s:strategy}. The idea is to apply the Ishii-Lions method to the function $v(x) = u(x) - e \cdot x$, which has a small oscillation in $B_1$ by assumption. The method was successfully applied to obtain Lipschitz estimates for fractional $p$-Laplacian equations in \cite{biswas2025lipschitz,biswas2025improved}. Our setting is similar, but we have to note that while $u$ is the function that solves the equation \eqref{eq:fracplap}, $v$ is the function with a small oscillation. The equation \eqref{eq:fracplap} is nondegenerate in the direction of $\nabla u$, which will be close to $e$, and will be unrelated to the derivative of the modulus of continuity like in a standard application of the Ishii-Lions method. This discrepancy with the analysis in \cite{biswas2025lipschitz,biswas2025improved} affects the computation of several estimates in this section, although many of the key ideas remain the same.

\subsection{Setting up the Ishii-Lions method}

Our objective is to obtain a Lipschitz estimate for the function $v = u - e \cdot x$ in $B_{1/2}$, with an arbitrarily small Lipschitz constant $L$.
\begin{equation} \label{eq:thesis2}
    \|v\|_{Lip(B_{1/2})} \leq L.
\end{equation} 

We set up the Ishii-Lions method. Here, $u$ is the function satisfying the hypothesis of Lemma \ref{l:IL}. We define the function $\Phi(x,y)$ in double variables as
\[
\Phi(x, y) = u(x) - u(y) - e\cdot (x-y) - L\omega(|x - y|) - \kappa \eps_0 \,\psi(x) \quad x, y \in B_{1}.
\]

The function $\omega$ is a Lipschitz strictly-concave modulus of continuity and $\kappa>0$ is a parameter to be chosen soon. The function $\psi$ satisfies $\psi(x) = \psi_0(x)^m$, where $\psi_0$ is a nonnegative smooth function that is equal to zero in $\overline{{B}}_{1/2}$ and is strictly positive in $\overline{B}_1 \setminus \overline{{B}}_{1/2}$.  The exponent $m \in \mathbb{N}$ is a large integer to be chosen later. The parameter $\eps_0$ is the one from \eqref{eq:osc_small}.

The purpose of the choice $\psi = \psi_0^m$ is so that we can easily verify the following inequality, which will be useful later. 
There exists some constant $C$ so that
\begin{equation} \label{eq:grad_eta}
    |\nabla \psi(x)| \leq C \psi(x)^{\frac{m-1}{m}} \quad \text{ for all } x \in B_1.
\end{equation}
The inequality \eqref{eq:grad_eta} follows readily by setting $C = m \max |\nabla \psi_0|$ and noticing that $\nabla \psi_0 = \frac 1m \psi^{1/m-1} \,\nabla \psi$.

Let us describe the modulus of continuity $\omega$. We take $\omega : [0,2] \to [0,\infty)$ defined as
\begin{equation} \label{def:omega}
    \omega(r) = r + \frac{r}{20 \log (r/4)}.
\end{equation}

Note that
\begin{align*}
    \omega'(r) &= 1 + \frac{1}{20 \log(r/4)} - \frac{1}{20 (\log^2(r/4))} \in (1/4,1) \quad \text{ for } r \in (0,2], \\
    \omega''(r) &= -\frac{1}{20 r (\log^2(r/4))} + \frac{1}{10 r (\log^3(r/4))}.
\end{align*}

For small values of $r$, for example $r \in (0,1/2)$, we may use the simpler expression $\omega''(r) \approx -1 / (r \log^2(r))$. By this, we mean that there are universal constants $c, C > 0$ such that
\begin{equation} \label{eq:omega-2nd-derivative}
-\frac{C}{r (\log(r))^2} \leq \omega''(r) \leq -\frac{c}{r (\log(r))^2} \quad \text{ for } r \in (0,1/2].
\end{equation}

Our objective is to show that $v$ has $L\omega$ as a modulus of continuity in $B_{1/2}$. To achieve that, we use the function $\Phi$ defined above. We will prove that $\Phi(x,y) \leq 0$ for all $x,y \in B_1$. Since $\psi$ is zero in $B_{1/2}$, it implies the desired modulus of continuity in $B_{1/2}$.

We argue by contradiction, assuming that there are $\bar x$, $\bar y$  such that
\begin{equation} \label{contrad1}
\Phi(\bar x, \bar y) = \sup_{(x,y) \in B_{1} \times B_{1}} \Phi(x,y) >0,
\end{equation}
i.e. 
\[
u(\bar x)-u(\bar y) - e\cdot (\bar x-\bar y) > L\omega(|\bar x-\bar y|)+\kappa\eps_0 \,\psi(|\bar x|).
\]
From the assumption \eqref{eq:osc_small}, we know that the left-hand side is bounded by $\eps_0$. The two terms on the right-hand side are nonnegative.  We will choose $\kappa$ large enough so that the second term forces $\bar x$ to be in $B_{5/8}$. We also have $L \omega(|\bar x - \bar y|) \leq \eps_0$, which will force $|\bar x - \bar y|$ to be small if we take $\eps_0$ small enough depending on $L$. For $\eps_0>0$ sufficiently small, we can ensure that $|\bar x - \bar y| < 1/8$, and thus $\bar x$ and $\bar y$ both belong to $B_{3/4}$.

Let us call $\bar a = \bar x - \bar y$. We have $L \omega(|\bar a|) \leq \eps_0$. As we said, by taking $\eps_0$ sufficiently small depending on $L$, we can make $|\bar a|$ arbitrarily small. The small values of $|\bar a|$ are the focus of the upcoming estimates below.

The idea is to apply the Ishii-Lions argument to $v$, with $L\in (0,1/4]$ arbitrarily small but fixed, and obtain a contradiction when \eqref{contrad1} holds and $\eps_0$ is sufficiently small in \eqref{eq:osc_small}. For this construction, we borrow some ideas from \cite{biswas2025lipschitz,biswas2025improved}, particularly the decomposition of the domain of integration.  
 
We define the following test function.
\begin{equation} \label{eq:def_phi1}
    \phi(x, y) = e\cdot (x-y)+ L \omega(|x - y|) + \kappa \eps_0 \psi(x).
\end{equation} 

We will analyze the values of $(-\Delta_p)^s u$ at $\bar x$ and $\bar y$. We use a decomposition of the domain of integration in a similar manner as in \cite{biswas2025lipschitz,biswas2025improved}.

\subsection{The cone of concavity}

For any given $a \in B_{1/2}$, let $\delta_0(a)$ be given by
\begin{equation} \label{def:delta0}
\delta_0(a) = -c_0 / \log(|a|)
\end{equation}
for some small universal constant $c_0$ to be determined. We define the cone $\C(a)$ as
\begin{equation} \label{def:cone}
\C(a)=\left\{ z \in B_{|a|/2} : |\langle a, z \rangle| \geq \sqrt{1 - \delta_0(a)^2} \, |a|\,|z| \right\}.
\end{equation}
Note that $z \in \C(a)$ whenever the component of $z$ that is orthogonal to $a$ has norm at most $\delta_0|z|$. The meaning of this cone is that in this region we have a good control of the second derivative of $\omega$, and consequently also of $\phi$ in \eqref{eq:def_phi1}. This is made precise in the next lemma.

We use the notation $\delta^2 f$ to denote the second increment of a generic function $f$ as
\[
\delta^2 f(x, z) = f(x) + \nabla f(x) \cdot z - f(x + z).
\]
In particular, when we write $\delta^2 \omega(|\cdot|)(a,z)$, in the next lemma, we mean
\[ \delta^2 \omega(|\cdot|)(a,z) = \omega(|a|) + \omega'(|a|) \, \frac {a}{|a|} \cdot z - \omega(|a+z|). \]
\begin{lemma} \label{l:2nd-increment-omega}
Let $a \in B_{1/2}$ and $z \in \C(a)$. Assume that $c_0$ is some small universal constant. The following estimate holds
\begin{equation} \label{eq:2nd-increment-omega} \frac{c}{|a| \log^2 |a|} |z|^2 \leq \delta^2 \omega(|\cdot|)(a,z) \leq \frac{C}{|a| \log^2 |a|} |z|^2,
\end{equation}
where $c$ and $C$ are universal constants.
\end{lemma}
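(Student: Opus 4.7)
The plan is to apply the integral form of Taylor's theorem to $f(x) := \omega(|x|)$ along the segment $\xi_t := a + tz$, $t \in [0,1]$. A direct polar-coordinate computation gives
\[ w^T D^2 f(x) w = \omega''(|x|)\, (w \cdot \hat x)^2 + \frac{\omega'(|x|)}{|x|}\, \bigl(|w|^2 - (w \cdot \hat x)^2\bigr), \]
where $\hat x = x/|x|$. By \eqref{eq:omega-2nd-derivative} the radial coefficient is negative of size $1/(|x|\log^2|x|)$, while (since $\omega'(r) \in (1/4,1)$) the tangential coefficient is positive of size $1/|x|$, larger by a factor $\log^2|x|$. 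This discrepancy is exactly what dictates the choice $\delta_0(a) \sim 1/|\log|a||$: the opening of $\C(a)$ must be small enough that $\delta_0^2/|a|$ is beaten by $1/(|a|\log^2|a|)$.

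Next I would write
\[
\delta^2 \omega(|\cdot|)(a,z) = -\int_0^1 (1-t)\,\frac{d^2}{dt^2}\omega(|\xi_t|)\, dt = |z|^2 \int_0^1 (1-t)\left[|\omega''(|\xi_t|)|\cos^2\beta_t - \frac{\omega'(|\xi_t|)}{|\xi_t|}\sin^2\beta_t\right] dt,
\]
where $\beta_t$ is the angle between $z$ and $\xi_t$. The Gram identity $|z|^2|\xi_t|^2 \sin^2\beta_t = |z|^2|a|^2 - (z \cdot a)^2$ (valid in any dimension because the bivector $\xi_t\wedge z$ equals $a\wedge z$) simplifies to
\[ \sin\beta_t = \sin\alpha \cdot \frac{|a|}{|\xi_t|}, \]
where $\alpha$ is the angle between $z$ and $a$. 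The cone condition $z \in \C(a)$ gives $\sin\alpha \leq \delta_0$, while the containment $z \in B_{|a|/2}$ forces $|\xi_t| \in [|a|/2, 3|a|/2]$. Together these yield $\sin^2\beta_t \leq 4\delta_0^2$, $\cos^2\beta_t \geq 1-4\delta_0^2$, and $\log^2|\xi_t|$ comparable to $\log^2|a|$ up to universal constants.

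Substituting into the integral and using \eqref{eq:omega-2nd-derivative} pointwise at $\xi_t$ produces a lower bound of the form
\[ \delta^2 \omega(|\cdot|)(a,z) \geq \frac{|z|^2}{|a|\log^2|a|}\Bigl(c\,(1-4\delta_0^2) - 4C\delta_0^2\log^2|a|\Bigr) = \frac{|z|^2}{|a|\log^2|a|}\bigl(c - O(c_0^2)\bigr), \]
after setting $\delta_0^2 = c_0^2/\log^2|a|$. Choosing $c_0$ a small enough universal constant makes the bracket into a positive universal constant, which gives the lower bound. The upper bound is immediate from the same integrand: discard the nonpositive tangential term and bound $|\omega''(|\xi_t|)|\cos^2\beta_t \leq C/(|a|\log^2|a|)$. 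The main (modest) point requiring care is the uniform control of $\beta_t$ and $|\xi_t|$ along the entire segment, and this is exactly what the built-in restriction $z \in B_{|a|/2}$ in the definition of $\C(a)$ is designed to enforce; without that restriction, $|\xi_t|$ could approach $0$ and the asymptotic \eqref{eq:omega-2nd-derivative} would fail to apply with comparable constants.
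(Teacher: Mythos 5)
Your proof is correct and rests on the same idea as the paper's: the radial curvature $\omega''\sim -1/(r\log^2 r)$ supplies the main term, while the positive tangential curvature $\omega'/r$ is larger by a factor $\log^2 r$ and must be absorbed using the cone opening $\delta_0(a)\sim 1/|\log|a||$ together with a small choice of $c_0$. The paper packages this as a two-step increment (collinear part $z_1$ by Taylor, then the orthogonal part $z_2$ using that $\nabla_z\omega(|a+z|)\perp z_2$ at $z=z_1$), whereas you integrate the full Hessian along the segment and control the angle $\beta_t$ via the Gram identity; the two computations are equivalent, and yours is, if anything, slightly more explicit about the uniformity along the segment.
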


\begin{proof}
When $a$ and $z$ are collinear, the inequality \eqref{eq:2nd-increment-omega} follows from \eqref{eq:omega-2nd-derivative} and Taylor's expansion. Note that the values of $\omega''(r)$ are all comparable in the range $r \in (|a|/2,3|a|/2)$, which is the range of $|a+z|$ and $|a-z|$ when $|z| \leq |a|/2$.

For the general case, we write $z = z_1 + z_2$ where $z_1$ is collinear to $a$ and $z_2$ is orthogonal to $a$. We note that $\nabla_z \omega(|a+z|)$ is perpendicular to $z_2$ at $z = z_1$. Therefore,
\[ 0 \leq \omega(|a+z_1+z_2|) - \omega(|a+z_1|) \leq \frac{C}{|a|} |z_2|^2,\] 
where $C$ depends on $|\nabla \omega(|a+z_1|)| \approx 1$. Thus, the inequality \eqref{eq:2nd-increment-omega} holds in this case as well provided that $|z_2| \lesssim \delta_0(a) |z|$. This condition characterizes the cone $\C(a)$.
\end{proof}

The statement of the next corollary is similar to \cite[Lemma 2.2]{biswas2025lipschitz}.

\begin{corollary} \label{cor:2nd-increment-phi}
Let $\phi$ be the function from \eqref{eq:def_phi1}. Let $x,y \in B_{3/4}$ and $z \in \C(x-y)$. Assume that $|a| := |x-y|$ is sufficiently small. Then, the following estimates hold for $z \in \C(a)$, 
\begin{equation} \label{eq:2nd-increment-phi} 
    \begin{aligned}
    \frac{cL}{|a| \log^2 |a|} |z|^2 &\leq \delta^2 \phi(x,\cdot)(y,z) \leq \frac{CL}{|a| \log^2 |a|} |z|^2,\\
    \frac{cL}{|a| \log^2 |a|} |z|^2 &\leq \delta^2 \phi(\cdot,y)(x,z) \leq \frac{CL}{|a| \log^2 |a|} |z|^2,
    \end{aligned}
\end{equation}
where $c$ and $C$ are universal constants.
\end{corollary}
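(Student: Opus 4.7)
\medskip

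My plan is to reduce both estimates in \eqref{eq:2nd-increment-phi} to the already-established bound \eqref{eq:2nd-increment-omega} for $\omega(|\cdot|)$, since the remaining pieces of $\phi$ are either linear or negligible.

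First I would compute $\delta^2 \phi(x,\cdot)(y,z)$. Writing $a = x - y$, the linear term $e \cdot (x-y)$ vanishes under the second increment, and the $\kappa \eps_0 \psi(x)$ term is independent of $y$, so it vanishes as well. Only the $L\omega(|x-y|)$ term survives, and a direct expansion shows
\[
\delta^2 \phi(x,\cdot)(y,z) \;=\; L\bigl[\omega(|a|) - \omega'(|a|)\tfrac{a}{|a|}\cdot z - \omega(|a-z|)\bigr] \;=\; L\,\delta^2\omega(|\cdot|)(a,-z).
\]
Since the cone $\C(a)$ is symmetric under $z \mapsto -z$ (its defining inequality involves $|\langle a,z\rangle|$), the point $-z$ lies in $\C(a)$ whenever $z$ does. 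Lemma \ref{l:2nd-increment-omega} then delivers the two-sided bound claimed in the first line of \eqref{eq:2nd-increment-phi}.

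For $\delta^2 \phi(\cdot,y)(x,z)$, the linear term again vanishes and the $\omega$-term contributes exactly $L\,\delta^2\omega(|\cdot|)(a,z)$, for which Lemma \ref{l:2nd-increment-omega} gives the desired $|z|^2 /(|a|\log^2 |a|)$ behavior. The only new task is to control the $\psi$-contribution $\kappa \eps_0 \,\delta^2 \psi(x,z)$. Since $\psi = \psi_0^m$ with $\psi_0$ smooth, we have $\|\psi\|_{C^2(B_1)} \leq C_\psi$ for some constant depending only on $m$ and $\psi_0$, so by a Taylor expansion
\[
\bigl|\kappa \eps_0 \,\delta^2 \psi(x,z)\bigr| \;\leq\; C_\psi \,\kappa \eps_0\, |z|^2.
\]
Because $|a|\log^2|a| \to 0$ as $|a| \to 0$, the ratio $(C_\psi \kappa \eps_0)\big/\bigl(L/(|a|\log^2|a|)\bigr)$ tends to zero, and hence for $|a|$ sufficiently small (depending on $L$, $\kappa\eps_0$ and $C_\psi$) this perturbation is absorbed into the main term, leaving the two-sided bound intact.

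The only subtlety, and the place I would pay closest attention, is the cone symmetry used in the $y$-direction estimate: one has to verify that the Taylor-type expansion for $\omega(|a-z|)$ really gives the same lower bound as $\omega(|a+z|)$. This is automatic once one notes that the proof of Lemma \ref{l:2nd-increment-omega} uses only the size of $\omega''$ on the interval $(|a|/2, 3|a|/2)$, which is the same interval of radii traversed by $a+z$ and $a-z$ when $|z| \leq |a|/2$, so the constants transfer without loss.
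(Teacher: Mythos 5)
Your proof is correct and follows essentially the same route as the paper: the linear term drops out, the $\omega$-term is handled by Lemma \ref{l:2nd-increment-omega}, and the smooth $\psi$-perturbation, bounded by $C\kappa\eps_0|z|^2$, is absorbed into the main term $\frac{cL}{|a|\log^2|a|}|z|^2$ for $|a|$ small. Your explicit check of the sign flip $\delta^2\phi(x,\cdot)(y,z)=L\,\delta^2\omega(|\cdot|)(a,-z)$ together with the symmetry of $\C(a)$ under $z\mapsto -z$ is a detail the paper leaves implicit, and it is handled correctly.
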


\begin{proof}
The function $\phi$ has three terms. Let us evaluate the second increment of each of them. The first term is linear, so its second increment is zero. The second term is $L \omega(|x-y|)$, and we can apply Lemma \ref{l:2nd-increment-omega} to obtain the desired estimate. The third term is $\bar\kappa \eps_0 \psi(x)$, which only affects the second increment in $x$.

The function $\psi$ is smooth, so its second increment is bounded by the maximum of its second derivative. In this case, it gives us
\[ \left| \eps_0 \kappa\, \psi(x) + \eps_0 \kappa \nabla \psi(x) \cdot z - \eps_0 \kappa \,\psi(x+z) \right| \leq C \eps_0 \kappa |z|^2.\]
Recall that $|z| < |a|/2$ because $z \in \C(a)$ and $\eps_0$ is small. Then, for $\eps_0$ sufficiently small (and thus also $|a|$), we can absorb this term into the estimate for the second increment of $L \omega(|x-y|)$.
\end{proof}

Corollary \ref{cor:2nd-increment-phi} gives us a positive bound for the second increment of $\phi$ in the cone $\C(a)$, which is the set of favorable directions. If we consider second order increments in arbitrary directions $z \in B_{|a|/2}$, we cannot expect a positive lower bound. The most we are going to estimate is based on the gross upper bound $|D^2 \phi(x,y)| \leq CL / |a|$. Thus, for $|z| < |a|/2$, we have
\begin{equation} \label{eq:2nd-increment-phi-rough}
    |\delta^2 \phi(x,\cdot)(y,z)| \leq C L \frac{|z|^2}{|a|}.
\end{equation}

\subsection{Decomposing the domain of integration}

Let $\bar x$ and $\bar y$ be the points where the maximum in \eqref{contrad1} is achieved.  We have $\bar x, \bar y \in B_{3/4}$ and $|\bar a| = |\bar x - \bar y|$ is small if we take $\eps_0$ sufficiently small depending on $L$. We have
\begin{equation} \label{eq:splapforu}
    (-\Delta_p)^s u (\bar x) - (-\Delta_p)^s u (\bar y) = 0.
\end{equation}

We split the domain of integration in \eqref{eq:splapforu}. Given $D \subset \R^d$, we introduce the notation  
\begin{equation}\label{eq:notation_Integral}
    \begin{aligned}
        \mathcal{L}[D] w(x) :=& \, \int_{D}J_p(w(x)-w(x+z))|z|^{-d-sp}  \dd  z \\ 
    =& \,\int_{D} |w(x)-w(x+z)|^{p-2} (w(x)-w(x+z)) |z|^{-d-sp}  \dd  z.
    \end{aligned}
\end{equation}

Let $\C = \C(\bar a)$ be the cone of directions defined in \eqref{def:cone}. Let also
\[
\mathcal{D}_1=B_{\delta_1(\bar a) \, |\bar a|} \cap \mathcal{C}^c, \qquad \mathcal{D}_2 = B_{\frac{1}{16}} \setminus \left( \mathcal{D}_1 \cup \mathcal{C} \right).
\]
Here, $\delta_1(\bar a) = c_1 |\log |\bar a||^{-q}$ for some appropriate exponent $q$ to be chosen later and $c_1$ is a small universal constant.

Using the notation above we write \eqref{eq:splapforu} as 
\begin{equation} \label{contrad2}
\begin{aligned}
   &\underbrace{\mathcal{L}[\mathcal{C}] u(\bar x) -\mathcal{L}[\mathcal{C}] u(\bar y)}_{I_1} +  \underbrace{\mathcal{L}[\mathcal{D}_1] u(\bar x) -\mathcal{L}[\mathcal{D}_1] u(\bar y)}_{I_2} \\
   +& \underbrace{\mathcal{L}[\mathcal{D}_2] u(\bar x) -\mathcal{L}[\mathcal{D}_2] u(\bar y)}_{I_3}+ \underbrace{\mathcal{L}[B_{\frac{1}{16}}^c] u(\bar x) -\mathcal{L}[B_{\frac{1}{16}}^c] u(\bar y)}_{I_4} = 0 .
\end{aligned}
\end{equation}
We aim at proving that $I_1$ must be very positive while  $I_2, I_3, I_4$ cannot be too negative and derive a contradiction from \eqref{contrad2}. For this purpose, we present some technical estimates which will lead us to the desired conclusion.

We start with the following estimate on $\C$. The following lemma is similar to \cite[Lemma 3.1]{biswas2025lipschitz}.

\begin{lemma}[Estimate on $\mathcal{C}$]\label{l:cone}
For every $L \in (0,1/4]$  there exists $\eps_0>0$ sufficiently small and a positive constant $c$ such that 
if $\bar x$ and $\bar y$ satisfy~\eqref{contrad1} and that $\delta_0 = \delta_0(a)$ is the one from \eqref{def:delta0}, then 
\begin{align}\label{eq:IL-I1}
     I_1 \geq c L \, \delta_0^{d+p-1} \, |\bar a|^{p(1-s)-1}.
\end{align}
Here, $c$ depends on $d$, $s$ and $p$.
\end{lemma}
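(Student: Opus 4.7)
The proof follows the Ishii--Lions cone estimate for nonlocal operators adapted to our affine-corrected setting, along the lines of~\cite{biswas2025lipschitz, biswas2025improved}. The core idea is to leverage the strict concavity of $L\omega$ inside $\C$, already captured by Corollary~\ref{cor:2nd-increment-phi}, to force the linearized kernel to be quantitatively nondegenerate in the cone.

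First I would derive the pointwise inequalities from the maximality of $\Phi$ at $(\bar x,\bar y)$. Testing $\Phi(\bar x + z, \bar y) \leq \Phi(\bar x,\bar y)$ and $\Phi(\bar x, \bar y + z) \leq \Phi(\bar x,\bar y)$ gives
\begin{align*}
u(\bar x + z) - u(\bar x) &\leq e \cdot z + L[\omega(|\bar a + z|) - \omega(|\bar a|)] + \kappa \eps_0[\psi(\bar x + z) - \psi(\bar x)],\\
u(\bar y + z) - u(\bar y) &\geq e \cdot z + L[\omega(|\bar a - z|) - \omega(|\bar a|)].
\end{align*}
Pairing these with their $z \to -z$ counterparts yields symmetric second-increment inequalities for $u$ at $\bar x$ and $\bar y$, both controlled by $LW(z) := L[\omega(|\bar a+z|)+\omega(|\bar a-z|)-2\omega(|\bar a|)]$, which by Lemma~\ref{l:2nd-increment-omega} satisfies $-W(z) \geq c |z|^2/(|\bar a|\log^2|\bar a|)$ on $\C$. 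Moreover, the first-order optimality at the maximum combined with $\|\nabla u\|_{L^\infty} \leq 1$ forces $-e \cdot \bar a/|\bar a| \gtrsim L\omega'(|\bar a|) \gtrsim L$, so that for $\delta_0$ small compared to $L$ (which follows from our choice $\delta_0 = -c_0/\log|\bar a|$ once $|\bar a|$ is small, absorbing $c_0$ into $L$), we have $|e \cdot z| \gtrsim L|z|$ throughout $\C$.

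Next I would apply Lemma~\ref{l:appendix} to rewrite, setting $a(z) = u(\bar x)-u(\bar x+z)$, $b(z) = u(\bar y)-u(\bar y+z)$ and $\xi_t(z) = b(z)+t(a(z)-b(z))$,
\[
J_p(a(z)) - J_p(b(z)) = (p-1)(a(z) - b(z)) \int_0^1 |\xi_t(z)|^{p-2} \dd t.
\]
Exploiting that $\C$ is symmetric, I would symmetrize the integrand by averaging with the change of variables $z \mapsto -z$. Combined with the inequalities above, this produces an effective positive weight of order $-LW(z) \gtrsim L|z|^2/(|\bar a|\log^2|\bar a|)$ driving the integrand. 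For the kernel factor, using the Lipschitz bound $\|u\|_{Lip(B_1)} \leq 1$ and the smallness of $\eps_0$ in \eqref{eq:osc_small}, I would show that $\xi_t(z)$ is close to $-e\cdot z$ at scale $|z|$, so that $|\xi_t(z)| \gtrsim \delta_0|z|$ in $\C$, and hence $|\xi_t(z)|^{p-2} \gtrsim \delta_0^{p-2}|z|^{p-2}$.

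Integrating over $\C$ in polar coordinates---the solid angle is of order $\delta_0^{d-1}$, and our choice of $\delta_0$ gives $1/\log^2|\bar a| \sim \delta_0^2/c_0^2$---yields
\[
I_1 \gtrsim \delta_0^{d-1} \cdot \delta_0^{p-2} \cdot \frac{L\delta_0^2}{|\bar a|} \int_0^{|\bar a|/2} r^{p-1-sp}\, \dd r \sim c L \delta_0^{d+p-1} |\bar a|^{p(1-s)-1},
\]
as claimed. The main obstacle I anticipate is the pointwise lower bound $|\xi_t(z)| \gtrsim \delta_0|z|$ uniformly in $t \in [0,1]$ and $z \in \C$. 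Since $a(z)$ and $b(z)$ can change sign across $\C$, the Lipschitz bound alone gives only $|\xi_t| \leq |z|$; coordinating the parameters $c_0$, $\kappa$, $\eps_0$ (depending on $L$) so that the linear contribution $-e\cdot z$ dominates both the concavity correction $L[\omega(|\bar a \pm z|) - \omega(|\bar a|)]$ and the oscillation of $v = u - e\cdot x$ throughout $\C$ is the most delicate step and will govern how small $\eps_0$ must be taken.
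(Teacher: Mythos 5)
Your setup (maximality inequalities, the role of Corollary \ref{cor:2nd-increment-phi}, the cone geometry, and the final exponent bookkeeping $\delta_0^{d-1}\cdot\delta_0^{p-2}\cdot\delta_0^{2}\cdot|\bar a|^{p(1-s)-1}$) is right, but the central step has a genuine gap. You apply the identity of Lemma \ref{l:appendix} across the two base points, writing $J_p(a(z))-J_p(b(z))=(p-1)(a(z)-b(z))\int_0^1|\xi_t(z)|^{p-2}\dd t$ with $a(z)=u(\bar x)-u(\bar x+z)$, $b(z)=u(\bar y)-u(\bar y+z)$. The prefactor $a(z)-b(z)$ is \emph{not} the second-increment quantity $-LW(z)$: the maximality conditions give $a(z)\geq \ell(z)+\delta^2\phi(\cdot,\bar y)(\bar x,z)$ and $b(z)\leq-\ell(z)-\delta^2\phi(\bar x,\cdot)(\bar y,z)+O(\kappa\eps_0|z|)$ with $\ell(z)=-\nabla_x\phi(\bar x,\bar y)\cdot z$, so $a(z)-b(z)$ contains the odd linear term $2\ell(z)$ of size $\sim|z|$, which changes sign across $\C$ and dwarfs the favorable contribution $\sim L|z|^2/(|\bar a|\log^2|\bar a|)$. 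Your proposed $z\mapsto -z$ symmetrization does not remove it: the weight $\int_0^1|\xi_t(z)|^{p-2}\dd t$ depends on the actual values of $u$ and is not even in $z$, so the cross term $\ell(z)\bigl[\int_0^1|\xi_t(z)|^{p-2}\dd t-\int_0^1|\xi_t(-z)|^{p-2}\dd t\bigr]$ survives, is of order $|z|^{p-1}$, and has no sign. Moreover the pointwise claim $|\xi_t(z)|\gtrsim\delta_0|z|$ uniformly in $t\in[0,1]$ is false: on the half of $\C$ where $\ell(z)>0$ one has $a(z)>0>b(z)$, so $\xi_t$ crosses zero for some $t\in(0,1)$ (this particular issue is repairable via the averaged lower bound $(|b|+|a-b|)^{p-2}$ in Lemma \ref{l:appendix}, but the sign problem in the prefactor is not). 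Note also that Lipschitz regularity of $u$ gives no quantitative closeness of $a(z)$ to $-e\cdot z$ at scale $|z|$, since the deviation from linearity is itself of order $|z|$.

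The paper's proof circumvents exactly this: it bounds $\mathcal{L}[\C]u(\bar x)$ and $-\mathcal{L}[\C]u(\bar y)$ \emph{separately}. By monotonicity of $J_p$ and the one-sided maximality inequality, $\mathcal{L}[\C]u(\bar x)\geq\mathcal{L}[\C]\phi(\cdot,\bar y)(\bar x)$; then, since the linear function $\ell$ satisfies $\mathcal{L}[\C]\ell(\bar x)=0$ exactly (it is a fixed odd function integrated over the symmetric cone), one applies the fundamental-theorem identity to $J_p(\ell(z)+\delta^2\phi)-J_p(\ell(z))$, where the ``difference'' is now the controlled positive quantity $\delta^2\phi\geq cL|z|^2/(|\bar a|\log^2|\bar a|)$ and the weight is bounded below by $|\ell(z)|^{p-2}\gtrsim\delta_0^{p-2}|z|^{p-2}$ on the subcone $\C\cap\C_1$ where $\nabla_x\phi$ (which has norm comparable to $1$) is nondegenerate against $z$. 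If you want to keep a symmetrization flavor, the correct pairing is $J_p(a(z))+J_p(a(-z))$ at the \emph{single} base point $\bar x$, whose FTC representation is driven by $\delta^2u(\bar x,z)\geq\delta^2\phi(z)+\delta^2\phi(-z)>0$; pairing increments at $\bar x$ with increments at $\bar y$ cannot work.
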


\begin{proof}  
We estimate the two terms $\mathcal{L}[\mathcal{C}] u(\bar x)$ and $-\mathcal{L}[\mathcal{C}] u(\bar y)$, obtaining similar bounds for both. Let us focus on the analysis for $\mathcal{L}[\mathcal{C}] u(\bar x)$.

Recall that we use the notation $\delta^2$ to denote the second-order incremental quotient applied to a function. 

Let us define  $\ell(z)= - \nabla_x \phi (\bar x, \bar y) \cdot z$ and note that $\mathcal{L}[\mathcal{C}] \ell(\bar x)=0$ since $\mathcal{C}$ is symmetric.

From \eqref{contrad1}, we know that 
\[
\Phi(\bar x+z, \bar y ) \leq \Phi (\bar x, \bar y) \quad \text{for }z \in \mathcal{C},
\]
which implies that
\[
u(\bar x+z) - u(\bar x) \leq \phi(\bar x + z, \bar y) - \phi(\bar x, \bar y) \quad \text{for }z \in \mathcal{C},
\]
according to the definition of $\phi$ in \eqref{eq:def_phi1}. From the monotonicity of $J_p$, we obtain
\begin{align*}
    \mathcal{L}[\mathcal{C}] u(\bar x) &\geq \mathcal{L}[\mathcal{C}] \phi(\cdot ,\bar y)(\bar x)\\
    & =  \mathcal{L}[\mathcal{C}] \phi(\cdot ,\bar y)(\bar x)- \mathcal{L}[\mathcal{C}] \ell (\bar x) \\
    & = (p-1)\int_{\mathcal{C}} \int_0^1 |\ell(z)+t\delta^2 \phi(\cdot,\bar y)(\bar x,z)|^{p-2} \delta^2 \phi(\cdot,\bar y)(\bar x,z)  \dd  t |z|^{-d-sp}  \dd  z,
\end{align*}
where we used Lemma \ref{l:appendix}.

Let us now introduce the set where $|\ell(z)| \geq \frac{\delta_0}4 |z|$.
\[
\C_1=\left\{ z \in \R^d :   |\langle \nabla_x \phi(\bar x, \bar y), z \rangle| \geq \frac{\delta_0}4 \, |z| \right\}.
\]
The important observation to make here is that $\nabla_x \phi(\bar x, \bar y)$ is not too far from $e$. Indeed, differentiating \eqref{eq:def_phi1} term by term, we get
\[ \nabla_x \phi(x, y) = e + L \omega'(|x-y|) \frac{x-y}{|x-y|} + \kappa \eps_0 \nabla \psi(x). \]
For $\eps_0$ sufficiently small, we have $|x-y|$ small as well. The value of $\omega'(|x-y|)$ is approximately equal to $\omega'(0)=1$. The value of $L$ is chosen in the interval $(0,1/4]$. Therefore $|\nabla_x \phi(x, y) - e|$ cannot be larger than $1/4$ plus an arbitrarily small contribution of the other terms. In particular, choosing $\eps_0$ small, we can guarantee that $|\nabla_x \phi(x, y)| \in (1/2,3/2)$. Thus, the set $\C_1$ is a cone whose complement has width at most $\delta_0/2$. It must intersect with at least half of the cone $\C(\bar a)$.

Using the strict positivity of  $\delta^2 \phi(\cdot,\bar y)(\bar x,z)$ for $z \in \mathcal{C}$, given by Corollary \ref{cor:2nd-increment-phi}, together with Lemma \ref{l:appendix} we can further estimate $ \mathcal{L}[\mathcal{C}] u(\bar x)$ as
\begin{align*}
     \mathcal{L}[\C] u(\bar x) \geq c \int_{\C \cap \C_1} |\ell(z)|^{p-2}\delta^2 \phi(\cdot,\bar y)(\bar x,z) |z|^{-d-sp}  \dd  z.
\end{align*}
From the definition of $\C_1$, we have $|\ell (z)|  \geq \frac{\delta_0}{4} |z|$. Hence, we proved
\begin{align} \label{eq:I_1bound1}
\mathcal{L}[\mathcal{C}] u(\bar x) &\geq  c \,   \delta_0^{p-2}\int_{\C \cap \C_1} |z|^{p-2 - d- sp} \delta^2 \phi(\cdot, \bar y)(\bar x, z)  \dd  z,
\end{align}
for some $c >0$ depending only on $d,s,p$. We use the the bounds for $\delta^2 \phi(\cdot, \bar y)(\bar x, z)$ of Corollary \ref{cor:2nd-increment-phi} and get
    \begin{align*}
    I_1 &\geq c \, L(|\bar a| \log^2|\bar a|)^{-1} \delta_0^{p-2} \int_{\C \cap \C_1} |z|^{p-d-sp}  \dd  z \\
    &\geq c \, L(|\bar a| \log^2|\bar a|)^{-1} \delta_0^{p-2} \delta_0^{d-1} |\bar a|^{p(1-s)} \\
    &= c L \, \delta_0^{d+p-1} \, |\bar a|^{p(1-s)-1},
    \end{align*}
    as intended. An analogous estimate holds for $-\mathcal{L}[\mathcal{C}] u(\bar y)$.

\end{proof}

We now proceed to estimate $I_2 = \mathcal{L}[\mathcal{D}_1] u(\bar x) -\mathcal{L}[\mathcal{D}_1] u(\bar y)$. Recall that $\delta_1$ is the parameter in the definition of $\mathcal D_1$. The following lemma is similar to \cite[Lemma 3.2]{biswas2025improved}.

\begin{lemma}[Estimate on $\mathcal {D}_1$]\label{l:D1}
 Assume $(\bar x, \bar y)$ satisfies~\eqref{contrad1}. Then, there exists $C > 0$ such that
\begin{align*}
I_2 \geq & -C L \delta_1^{p(1 - s)}  |\bar a|^{p(1 - s) - 1},
\end{align*}
for $L \in (0,1/4]$ and $\eps_0$ sufficiently small.
The constant $C $ depends on $d, \,s, \,p$, but not on $\bar a$.
\end{lemma}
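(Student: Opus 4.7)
Because $\C(\bar a)$ and $B_{\delta_1|\bar a|}$ are symmetric about the origin, $-\mathcal D_1=\mathcal D_1$. This symmetry is the heart of the proof: the naive Lipschitz bound $|A(z)|,|B(z)|\le|z|$ (writing $A(z):=u(\bar x)-u(\bar x+z)$ and $B(z):=u(\bar y)-u(\bar y+z)$) yields an integrand of size $|z|^{p-1}|z|^{-d-sp}=|z|^{p(1-s)-1-d}$, which fails to be integrable near the origin when $p\le 1/(1-s)$. Averaging over $z$ and $-z$ cancels the linear part of $A$ and $B$, leaving a quadratic residual controlled by the maximality of $\Phi$, with the integrable exponent $|z|^{p(1-s)-d}$.

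\textbf{Symmetrization and pointwise reduction.} Using $-\mathcal D_1=\mathcal D_1$ together with the oddness $J_p(-t)=-J_p(t)$, I rewrite
\[
I_2=\frac{1}{2}\int_{\mathcal D_1}\Bigl[\bigl(J_p(A(z))-J_p(-A(-z))\bigr)-\bigl(J_p(B(z))-J_p(-B(-z))\bigr)\Bigr]|z|^{-d-sp}\,\dd z.
\]
Since $p\ge 2$ and $|A(\pm z)|,|B(\pm z)|\le|z|$, Lemma~\ref{l:appendix} gives
\[
\bigl|J_p(A(z))-J_p(-A(-z))\bigr|\le C|z|^{p-2}\,\bigl|A(z)+A(-z)\bigr|,
\]
and analogously for $B$. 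Monotonicity of $J_p$ shows each bracket has the same sign as the corresponding sum, so the only contributions that can push $I_2$ downward come from the negative part of $A(z)+A(-z)$ and the positive part of $B(z)+B(-z)$.

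\textbf{Control from maximality and integration.} Adding the two inequalities $\Phi(\bar x\pm z,\bar y)\le\Phi(\bar x,\bar y)$ makes the linear terms in $z$ cancel and produces
\[
A(z)+A(-z)\ge\delta^{2}\phi(\cdot,\bar y)(\bar x,z)+\delta^{2}\phi(\cdot,\bar y)(\bar x,-z);
\]
a mirror argument with $\Phi(\bar x,\bar y\pm z)\le\Phi(\bar x,\bar y)$ yields the reverse upper bound for $B(z)+B(-z)$ in terms of the second increments of $\phi(\bar x,\cdot)$. On $\mathcal D_1\subset B_{|\bar a|/2}$, the rough estimate \eqref{eq:2nd-increment-phi-rough} bounds each of these second increments in absolute value by $CL|z|^2/|\bar a|$ (the $\kappa\eps_0\psi$ piece contributes only $C\kappa\eps_0|z|^2$, which, using $|\bar a|\le C\eps_0/L$ from \eqref{eq:osc_small}, is absorbed into the main term for $\eps_0$ small). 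Hence $(A(z)+A(-z))_{-}+(B(z)+B(-z))_{+}\le CL|z|^2/|\bar a|$, and combining all estimates yields
\[
I_2\ge -C\frac{L}{|\bar a|}\int_{B_{\delta_1|\bar a|}}|z|^{p(1-s)-d}\,\dd z\ge -CL\,\delta_1^{\,p(1-s)}\,|\bar a|^{\,p(1-s)-1},
\]
since $p(1-s)>0$ makes the integral convergent.

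\textbf{Main obstacle.} The delicate point is that applying maximality only in one variable (or at only one of $z$, $-z$) leaves uncancelled a term linear in $z$, producing the non-integrable exponent $|z|^{p(1-s)-1-d}$ whenever $p\le 1/(1-s)$. The symmetric pairing, together with the fact that the $\psi$-perturbation acts only on the $x$ variable and therefore enters the paired inequality as a genuinely second-order quantity, is exactly what keeps the integrand integrable and makes the final bound match the announced order $|\bar a|^{p(1-s)-1}$.
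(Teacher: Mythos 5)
Your proof is correct, and it rests on the same two pillars as the paper's: the symmetry of $\mathcal D_1$ under $z\mapsto -z$, which cancels the first\mbox{-}order (odd) part that would otherwise produce the non\mbox{-}integrable exponent $p(1-s)-1-d$, and the crude second\mbox{-}increment bound \eqref{eq:2nd-increment-phi-rough} coming from maximality of $\Phi$, which leaves the integrable exponent $p(1-s)-d$ over $B_{\delta_1|\bar a|}$. The arrangement differs: the paper first uses maximality and the monotonicity of $J_p$ to replace $u$ by $\phi$ in each of $\mathcal{L}[\mathcal D_1]u(\bar x)$ and $-\mathcal{L}[\mathcal D_1]u(\bar y)$, then subtracts the linear function $\ell(z)=-\nabla_x\phi(\bar x,\bar y)\cdot z$ (whose integral vanishes by symmetry) and applies Lemma \ref{l:appendix} to the explicit quantity $J_p(\ell+\delta^2\phi)-J_p(\ell)$; you instead symmetrize at the level of $u$ itself, reduce via Lemma \ref{l:appendix} and monotonicity to the one\mbox{-}sided quantities $(A(z)+A(-z))_-$ and $(B(z)+B(-z))_+$, and only then invoke maximality at the paired points $\pm z$. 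Both routes give the identical final bound; yours has the minor merit of making explicit the absorption of the $\kappa\eps_0\,D^2\psi$ contribution into $CL|z|^2/|\bar a|$ (using $|\bar a|\lesssim\eps_0/L$), a point the paper handles implicitly when it quotes the bound $|D^2\phi|\leq CL/|\bar a|$ for the $x$-increment even though \eqref{eq:2nd-increment-phi-rough} is displayed only for the $y$-increment.
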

\begin{proof}
As before, from \eqref{contrad1} and the monotonicity of $J_p$, we have
\[
\mathcal{L}[\mathcal{D}_1] u (\bar x) \geq (p-1) \int_{\mathcal{D}_1} \int_0^1 |\ell(z)+t\delta^2 \phi(\,\cdot\,,\bar y)(\bar x,z)|^{p-2} \delta^2 \phi(\cdot,\bar y)(\bar x,z)  \dd  t |z|^{-d-sp}  \dd  z.
\]

To estimate $\delta^2 \phi(\cdot,\bar y)(\bar x,z)$, we use the crude upper bound \eqref{eq:2nd-increment-phi-rough}. Note also that for small enough $\eps_0$ and $t\in [0,1]$,
\[ |\ell(z)+t\delta^2 \phi(\,\cdot\,,\bar y)(\bar x,z)| \leq |\ell(z)| + |z| \leq 2|z|. \]
Therefore,
\begin{align*}
\mathcal{L}[\mathcal{D}_1] u (\bar x) &\geq -C \int_{\mathcal{D}_1} L |z|^{p-d-sp} \frac 1 {|\bar a|} \dd z \\
&= -C L \delta_1^{p(1-s)} |\bar a|^{p(1-s)-1}.
\end{align*}
An analogous estimate holds for $-\mathcal{L}[\mathcal{D}_1] u (\bar y)$.
\end{proof}

We now proceed to estimate $I_3 = \mathcal{L}[\mathcal{D}_2] u(\bar x) -\mathcal{L}[\mathcal{D}_2] u(\bar y)$. 

\begin{lemma}[Estimate on $\mathcal{D}_2$]\label{l:D2}
 Assume $(\bar x, \bar y)$ satisfies~\eqref{contrad1}. Then for every $L \in (0,1/4]$, there exists a constant $C > 0$ such that
\[
I_3\geq -C |\bar a|^{p(1-s)/2},
\]
where $C$ depends on $d$, $s$,  $p$ and the choice of the exponent $q$ in the definition of $\delta_1$. The constant $C$ does not depend on $\bar a$.
\end{lemma}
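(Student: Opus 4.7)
The region $\mathcal{D}_2$ consists of intermediate increments $z$ with $\delta_1|\bar a| \leq |z| \leq 1/16$ that lie outside the radial cone $\mathcal{C}(\bar a)$. On this set the test function $\phi$ does not enjoy the favorable concavity exploited in Lemma \ref{l:cone}, so the technique used on $\mathcal{C}$ is not available; on the other hand, $|z|$ stays bounded away from zero, so the kernel factor $|z|^{-d-sp}$ is not singular. The plan is to bound $J_p(u(\bar x)-u(\bar x+z))-J_p(u(\bar y)-u(\bar y+z))$ pointwise using the one-sided information produced by the maximum point condition applied separately in each slot, then integrate against the kernel in polar coordinates.

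The one-sided bounds come from $\Phi(\bar x + z, \bar y) \leq \Phi(\bar x, \bar y)$ and $\Phi(\bar x, \bar y+z) \leq \Phi(\bar x, \bar y)$, which yield $A(z) := u(\bar x) - u(\bar x+z) \geq A_\ast(z)$ and $B(z) := u(\bar y) - u(\bar y+z) \leq B^\ast(z)$, where $A_\ast$ and $B^\ast$ are explicit expressions involving $e\cdot z$, the two one-sided increments of $\omega(|\bar a \pm z|)$, and the increment of $\psi$. Since $J_p$ is monotone increasing, $J_p(A) - J_p(B) \geq J_p(A_\ast) - J_p(B^\ast)$, and applying Lemma \ref{l:appendix} gives
\[
\bigl|J_p(A_\ast) - J_p(B^\ast)\bigr| \leq C_p\, \bigl|A_\ast - B^\ast\bigr| \cdot \bigl(|A_\ast| + |B^\ast|\bigr)^{p-2},
\]
so that it remains to integrate this pointwise bound against $|z|^{-d-sp}$ over $\mathcal{D}_2$.

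The key quantity $A_\ast - B^\ast$ is, up to the contribution of $\psi$, a multiple of the second difference $w(z):=\omega(|\bar a + z|) + \omega(|\bar a - z|) - 2\omega(|\bar a|)$. Expanding $\omega(|\cdot|)$ via its Hessian decomposition into radial and tangential parts, and using $\omega''(r) \sim -1/(r\log^2 r)$ together with $\omega'(r)/r \sim 1/r$, one obtains quantitative upper bounds on $|w(z)|$ of order $|z_\perp|^2/|\bar a|$ for $|z|\lesssim |\bar a|$, improving to $C\omega(|z|)$ for $|z|$ comparable to $1$. The increment of $\psi$ is controlled through the gradient estimate \eqref{eq:grad_eta}. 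Combined with the crude bound $(|A_\ast|+|B^\ast|)^{p-2}\leq C|z|^{p-2}$, this delivers a pointwise estimate on the integrand that combines multiplicatively the Hessian or Lipschitz size of $w$ and the Lipschitz size of the one-sided expressions.

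The final step is to integrate this pointwise bound over $\mathcal{D}_2$ in polar coordinates, splitting the radial integration at an intermediate scale $|z| \sim |\bar a|^{1/2}$ chosen so that the inner-scale contribution (using the Hessian bound on $w$) and the outer-scale contribution (using the Lipschitz bound on $w$) produce integrals of the same order. Tracking the logarithmic factors coming from $\delta_0$ and $\delta_1$ (which is the reason $C$ is allowed to depend on the exponent $q$ in the definition of $\delta_1$), the two pieces balance at a quantity of order $|\bar a|^{p(1-s)/2}$, giving the claimed bound. The main technical obstacle is selecting the splitting scale so that the exponent is exactly $p(1-s)/2$ in both the sub- and super-critical regimes $p(1-s) \lessgtr 1$ and verifying that the logarithmic factors can indeed be absorbed into the constant $C$.
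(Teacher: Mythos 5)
There is a genuine gap, and it lies at the heart of your strategy. On $\mathcal{D}_2$ you propose to use the two separate one-sided inequalities $\Phi(\bar x+z,\bar y)\leq\Phi(\bar x,\bar y)$ and $\Phi(\bar x,\bar y+z)\leq\Phi(\bar x,\bar y)$, which replace $A=\delta^1u(\bar x,z)$ and $B=\delta^1u(\bar y,z)$ by barriers $A_\ast, B^\ast$ whose difference is $-L\,w(z)-\kappa\eps_0\delta^1\psi$ with $w(z)=\omega(|\bar a+z|)+\omega(|\bar a-z|)-2\omega(|\bar a|)$. The monotonicity step $J_p(A)-J_p(B)\geq J_p(A_\ast)-J_p(B^\ast)$ is correct, but the resulting lower bound is far too lossy, because outside the cone $\C(\bar a)$ the second difference $w(z)$ is \emph{positive} and large: it is of order $|z_\perp|^2/|\bar a|$ for $|z|\lesssim|\bar a|$ and of order $|z|$ (not $|\bar a|$) once $|z|\gg|\bar a|$. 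Plugging this into $|J_p(A_\ast)-J_p(B^\ast)|\lesssim|A_\ast-B^\ast|\,|z|^{p-2}$ and integrating: on the annulus $\delta_1|\bar a|\leq|z|\leq|\bar a|$ you get a contribution of order $L|\bar a|^{p(1-s)-1}$ with no small factor, which is \emph{not} dominated by $I_1\gtrsim L\,\delta_0^{d+p-1}|\bar a|^{p(1-s)-1}$ (recall $\delta_0\to0$); and on $|\bar a|^{1/2}\leq|z|\leq 1/16$ you get $\int r^{p(1-s)-2}\dd r$, which is $\sim|\bar a|^{(p(1-s)-1)/2}\gg|\bar a|^{p(1-s)/2}$ when $p(1-s)<1$ and is a constant of order one when $p(1-s)\geq1$. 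In the latter range this destroys the whole contradiction argument, since $I_1\to0$ as $|\bar a|\to0$. The separate-slot inequalities are precisely the right tool on $\C$ and on $\mathcal{D}_1$ (where either $w(z)<0$ or $|z|\leq\delta_1|\bar a|$ buys the factor $\delta_1^{p(1-s)}$), but not on $\mathcal{D}_2$.

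The missing idea is the \emph{diagonal} comparison $\Phi(\bar x+z,\bar y+z)\leq\Phi(\bar x,\bar y)$: since $e\cdot(x-y)$ and $L\omega(|x-y|)$ are invariant under simultaneous translation, this yields $\delta^1u(\bar x,z)-\delta^1u(\bar y,z)\geq-\kappa\eps_0\,\delta^1\psi(\bar x,z)$, so the $\omega$-terms cancel exactly and only the smooth $\psi$-increment survives; it is then controlled through $\psi(\bar x)\leq 2|\bar a|/(\kappa\eps_0)$ (a consequence of \eqref{contrad1} and \eqref{eq:osc_small}) together with \eqref{eq:grad_eta}. For $|z|\geq|\bar a|^{1/2}$ one does not use the test function at all, but rather the Lipschitz continuity of $u$, which gives $|\delta^1u(\bar x,z)-\delta^1u(\bar y,z)|\leq 2|\bar x-\bar y|=2|\bar a|$ — an $O(|\bar a|)$ bound uniform in $z$, as opposed to the $O(|z|)$ bound your barriers provide. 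These two ingredients, combined with Lemma \ref{l:appendix} and the splitting at $\hat\delta=|\bar a|^{1/2}$ (which you did anticipate), are what produce the exponent $p(1-s)/2$; without them the estimate cannot be salvaged.
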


\begin{proof}
Let $ \hat \delta := |\bar a|^{1/2} $. Since $|\bar a|$ is small, we have $\delta_1|\bar a| < \hat \delta < \frac{1}{16}$. We divide $I_3 $ in two parts as follows
\[
I_3 = \underbrace{\mathcal{L}[\mathcal{D}_2 \cap B_{\hat \delta}] u(\bar x) -\mathcal{L}[\mathcal{D}_2  \cap B_{\hat \delta}] u(\bar y)}_{I_{1,3}} +\underbrace{\mathcal{L}[\mathcal{D}_2  \cap B_{\hat \delta}^c] u(\bar x) -\mathcal{L}[\mathcal{D}_2  \cap B_{\hat \delta}^c] u(\bar y)}_{I_{2,3}}.
\]

Define $\delta^1 f(x,z)= f(x)- f(x+z)$. Using Lemma \ref{l:appendix} we can write $I_{1,3}$ as follows
\begin{align*}
I_{1,3} &= (p-1) \int_{\mathcal{D}_2 \cap B_{\hat \delta}} \int_{0}^1 |\delta^1 u(\bar y,z) + t(\delta^1 u(\bar x,z) -\delta^1 u(\bar y,z) )|^{p-2} (\delta^1 u(\bar x,z) -\delta^1 u(\bar y,z) )  \dd  t |z|^{-d-sp}  \dd  z.
\end{align*}

Recall that $\bar x$ and $\bar y$ are the points where the maximum in \eqref{contrad1} is achieved. Thus, $\Phi(\bar x , \bar y) \geq \Phi(\bar x +z, \bar y+z)$ and
\[
\delta^1 u(\bar x,z) -\delta^1 u(\bar y,z) \geq - \bar\kappa \eps_0 \delta^1 \psi(\bar x,z),
\]
which implies
\[
I_{1,3} \geq -  C \eps \int_{\mathcal{D}_2 \cap B_{\hat \delta}} \int_{0}^1 |\delta^1 u(\bar y,z) + t(\delta^1 u(\bar x,z) -\delta^1 u(\bar y,z) )|^{p-2} \delta^1 \psi(\bar x,z)  \dd  t  |z|^{-d-sp}  \dd  z.
\]
Using the fact that $[u]_{Lip(B_1)}\leq 1$, we get
\[
 |\delta^1 u(\bar y,z) + t(\delta^1 u(\bar x,z) -\delta^1 u(\bar y,z) )| \leq 3 |z|.
\]
Now, writing $|\delta^1 \psi(\bar x,z)| \leq C \left( |z|^2 + |\nabla \psi(\bar x)||z| \right)$, we can further estimate  $I_{1,3}$ by
\begin{align*}
    I_{1,3} &\geq - C \eps_0 \left[\int_{\mathcal{D}_2\cap B_{\hat \delta}} |z|^{p-d-sp}  \dd  z
    + |\nabla \psi(\bar x)| \int_{\mathcal{D}_2 \cap B_{\hat \delta}} |z|^{p-1-d-sp}  \dd  z   \right]\\
 &\geq  - C\eps_0 \left[\int_{\delta_1|\bar a|}^{|\bar a|^{1/2}} r^{p(1-s)-1}  \dd  r
    + |\nabla \psi(\bar x)| \int_{\delta_1|\bar a|}^{|\bar a|^{1/2}}  r^{p-2-sp}  \dd  r  \right].
\end{align*}

Notice that from \eqref{contrad1} we also have that 
\[
\psi(\bar x) < \frac{1}{\kappa\eps_0} (u(\bar x)- u(\bar y) - e\cdot(\bar x - \bar y)) \leq  \frac{2|\bar a|}{\kappa \eps_0}.
\]

Recall from \eqref{eq:grad_eta} that $|\nabla \psi(\bar x)| \leq C \psi(\bar x)^{\frac{m-1}m}$. Therefore, $|\nabla \psi(\bar x)| \leq C (|\bar a|/\eps_0)^{\frac{m-1}m}$.

Continuing the estimate for $I_{1,3}$, we get
\begin{align*}
    I_{1,3} &\geq - C\eps_0 \left[ \int_{\delta_1|\bar a|}^{|\bar a|^{1/2}} r^{p(1-s)-1}  \dd  r
    + \eps_0^{-\frac{m-1}{m}} |\bar a|^{\frac{m-1}{m}} \int_{\delta_1|\bar a|}^{|\bar a|^{1/2}}  r^{p-2-sp}  \dd  r \right] \\
    &\geq - C \left[ \eps_0 |\bar a|^{p(1-s)/2} + \eps_0^{\frac{1}{m}} |\bar a|^{\frac{m-1}{m}} \int_{\delta_1|\bar a|}^{|\bar a|^{1/2}}  r^{p-2-sp}  \dd  r  \right].
    \end{align*}

For $I_{2,3}$, we proceed using Lemma \ref{l:appendix} as before, but instead use the Lipchitz continuity of $u$ to get $|\delta^1 u(\bar x,z) -\delta^1 u(\bar y,z)| \leq 2|\bar a|$. Therefore,
\begin{align*}
    I_{2,3} &\geq - C \int_{\mathcal{D}_2 \cap B_{\hat \delta}^c} \int_{0}^1 |\delta^1 u(\bar y,z) + t(\delta^1 u(\bar x,z) -\delta^1 u(\bar y,z) )|^{p-2} |\delta^1 u(\bar x,z) -\delta^1 u(\bar y,z) | \dd t |z|^{-d-sp}  \dd  z  \\
    &\geq - C |\bar a|\int_{\mathcal{D}_2 \cap B_{\hat \delta}^c} |z|^{p-2-d-sp}  \dd  z \\
    &\geq -  C|\bar a| \int_{\hat \delta}^{\frac{1}{16}} r^{p-3-sp}  \dd  r
    \\
    &\geq -  C\frac{|\bar a|^{1+\frac 12(p-2-sp)}}{sp-p+2} = -C |\bar a|^{p(1-s)/2}. 
\end{align*}
Combining the estimates, we get
\begin{equation}\label{eq:I3_three_terms}
    I_3 \geq - C \left[\eps_0^{\frac{1}{m}} |\bar a|^{\frac{m-1}{m}} \int_{\delta_1|\bar a|}^{|\bar a|^{1/2}} r^{p-2-sp}  \dd  r + |\bar a|^{p(1-s)/2} \right].
\end{equation}

To estimate the integral inside the bracketed expression, we split the proof into three cases.

\vspace{0.1in}

\fbox{\textit{Case 1:} $p\in [2, \frac{1}{1-s})$}
\medskip

Let us choose $m$ sufficiently large so that $1/m < p(1-s)/2$. Then, using that $p-2-sp<-1$, we compute
\[
|\bar a|^\frac{m-1}{m}\int_{\delta_1|\bar a|}^{|\bar a|^{1/2}}r^{p-2-sp} \dd r\leq C \delta_1^{p(1-s)-1}|\bar a|^{\frac{m-1}{m}+p(1-s)-1} \leq C |\bar a|^{p(1-s)/2}.
\]

We observe that both terms inside the parenthesis in \eqref{eq:I3_three_terms} are bounded by $C |\bar a|^{p(1-s)/2}$ for some constant $C$. Thus, we finally get,
\begin{align*}
    I_3\geq &\, -C |\bar a|^{p(1-s)/2}.
\end{align*}

\vspace{0.1in}

\fbox{\textit{Case 2:} $p = \frac{1}{1-s}$}
\medskip

In this case, $p-2-sp = -1$ and we have
\[ |\bar a|^\frac{m-1}{m}\int_{\delta_1|\bar a|}^{|\bar a|^{1/2}}r^{p-2-sp} \dd r = |\bar a|^{(m-1)/m} \left( -\frac 12 \log |\bar a| - \log \delta_1 \right) \leq C |\bar a|^{\frac{m-2}m}.\]

Let us pick $m$ sufficiently large so that $(m-2)/m > p(1-s)/2$ and we conclude the desired inequality.

\vspace{0.1in}

\fbox{\textit{Case 3:} $p\in (\frac{1}{1-s},\frac{2}{1-s})$}
\medskip

In this case, $p-2-sp > -1$ and we get from \eqref{eq:I3_three_terms} that
\[ I_3 \geq -C \left( \eps_0^{\frac{1}{m}} |\bar a|^{\frac{m-1}{m}+ (p-1-sp)/2} + |\bar a|^{p(1-s)/2} \right). \]

Then, pick $m > 2$ so that $(m-1)/m > 1/2$ and $\frac{m-1}{m}+ \frac{1}{2}(p-1-sp) >  \frac{1}{2}p(1-s)$. We conclude that

\[ I_3 \geq -C |\bar a|^{p(1-s)/2}.\]
\end{proof}

We now bound the term concerning the tail. We state the result in a slight more general fashion since it will be useful later in the paper.

\begin{lemma} \label{l:est_tail_difference}
Let $0<r<R$ and assume $u \in L_{sp}^{p-1}(\R^d)$ to be bounded in $B_{R+r}$ and Lipschitz in $B_r$. Let $F(x)$ be given by
\[ F(x) := \int_{B^c_{R}(x)}\frac{|u(x)-u(z)|^{p-2}}{|x-z|^{d+sp}}(u(x)-u(z))\dd z \]
Then $F$ is Lipschitz in $B_r$ with $[F]_{Lip(B_r)} \leq C$ for some constant $C$ that depends on $R,\, r, \, {\Tail}_{p-1,sp+1}(u,R),$ $ \, \|u\|_{L^{\infty}(B_{R+r})}, \, [u]_{Lip(B_r)}, \,d, \, s$ and $p$.
\end{lemma}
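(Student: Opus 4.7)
The plan is to show $|F(x_1) - F(x_2)| \leq C |x_1-x_2|$ for all $x_1, x_2 \in B_r$. By the convexity of $B_r$ and a standard chaining argument along subsegments of $[x_1,x_2]$ of length less than $R/4$, it suffices to treat the case $|x_1-x_2| < R/4$. Decompose
\[
F(x_1)-F(x_2) = I_{sym} + I_{int},
\]
where $I_{sym}$ collects the integrals over the symmetric difference $B_R^c(x_1) \triangle B_R^c(x_2)$ and $I_{int}$ is the integral of the difference of the two integrands over the intersection $B_R^c(x_1) \cap B_R^c(x_2)$. The symmetric difference is contained in $B_R(x_1) \cup B_R(x_2) \subset B_{R+r}$, where $u$ is bounded by assumption, and has measure at most $C R^{d-1}|x_1-x_2|$. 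Combined with $|z-x_i| \geq R$ on these sets, this yields $|I_{sym}| \leq C \|u\|_{L^\infty(B_{R+r})}^{p-1} R^{-1-sp} |x_1-x_2|$.

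For $I_{int}$, adding and subtracting $J_p(u(x_2)-u(z))/|x_1-z|^{d+sp}$ splits the difference of integrands into a \emph{numerator-difference} term (same denominator) and a \emph{denominator-difference} term (same numerator). The numerator difference is controlled by Lemma \ref{l:appendix}:
\[
|J_p(u(x_1)-u(z)) - J_p(u(x_2)-u(z))| \leq C \bigl( \|u\|_{L^\infty(B_r)} + |u(z)|\bigr)^{p-2} [u]_{Lip(B_r)}\,|x_1-x_2|.
\]
Splitting the resulting integral into $z \in B_{R+r}$, bounded using the $L^\infty$-hypothesis on $u$, and $z \in B_{R+r}^c$, handled by H\"older's inequality with exponents $(p-1)/(p-2)$ and $p-1$, the first estimate in \eqref{Tail_functions} (which crucially requires $p < 2/(1-s)$) converts the naturally appearing $\Tail_{p-2,sp}$ quantity into $\Tail_{p-1,sp+1}$. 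The center of the tail is then transferred from $x_1$ to the origin via Lemma \ref{l:move_center_tail}, at the cost of an extra $\|u\|_{L^\infty(B_{R+r})}$ contribution.

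For the denominator difference, since $|x_1-x_2| < R/4 \leq |x_1-z|/4$ for $z \in B_R^c(x_1)$, the mean value theorem yields
\[
\left|\frac{1}{|x_1-z|^{d+sp}}-\frac{1}{|x_2-z|^{d+sp}}\right| \leq \frac{C\,|x_1-x_2|}{|x_1-z|^{d+sp+1}}.
\]
Combined with $|J_p(u(x_2)-u(z))| \leq C (\|u\|_{L^\infty(B_r)}^{p-1}+|u(z)|^{p-1})$ and the usual split into local and far-field parts, the tail contribution is directly expressed as $\Tail_{p-1,sp+1}(u;x_1,R)^{p-1}$; Lemma \ref{l:move_center_tail} once again moves the center to the origin.

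The main obstacle is the numerator-difference term: the tail quantity that arises naturally after bounding the integrand by Lemma \ref{l:appendix} is $\Tail_{p-2,sp}$, which must be converted to the stated $\Tail_{p-1,sp+1}$ via H\"older's inequality, and this conversion succeeds only under the subcriticality condition $p < 2/(1-s)$. Everything else is a careful bookkeeping of constants depending on $R$, $r$, $d$, $s$, $p$, $[u]_{Lip(B_r)}$, $\|u\|_{L^\infty(B_{R+r})}$, and $\Tail_{p-1,sp+1}(u,R)$.
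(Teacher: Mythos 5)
Your proposal is correct and follows essentially the same route as the paper: the same three-way decomposition (domain symmetric difference, kernel difference controlled by the mean value theorem, and numerator difference controlled by Lemma \ref{l:appendix}), the same conversion of $\Tail_{p-2,sp}$ into $\Tail_{p-1,sp+1}$ via \eqref{Tail_functions}, the same recentering via Lemma \ref{l:move_center_tail}, and the same chaining argument to remove the smallness restriction on $|x_1-x_2|$.
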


\begin{proof}
    Let us first assume that $|x-y| < R/2$. We later extend the estimate to arbitrary pairs of points $x,y \in B_r$ by connecting them with intermediate points that are close to each other.

    We estimate $|F(x)-F(y)|$ by splitting it into three parts. Using of the notation $J_p(t)=|t|^{p-2}t$, we have
    \begin{align*}
        |F(x)-F(y)| =& \left| \int_{B^c_{R}(x)}J_p(u(x)-u(z))|x-z|^{-d-sp}\dd z - \int_{B^c_{R}(y)}J_p(u(y)-u(z))|y-z|^{-d-sp}\dd z \right| \\
        \leq &\,\left|\int_{B^c_{R}(x)}J_p(u(x)-u(z))\left( |x-z|^{-d-sp}-|y-z|^{-d-sp}\right)\dd z\right|\\
        +&\,\left|\int_{B^c_{R}( x)}J_p(u(x)-u(z))|y-z|^{-d-sp}\dd z-\int_{B^c_{R}(y)}J_p(u(x)-u(z))|y-z|^{-d-sp}\dd z\right|\\
        +&\, \left|\int_{B^c_{R}(y)}\big(J_p(u(x)-u(z))-J_p(u(y)-u(z))\big)|y-z|^{-d-sp}\dd z \right|\\
        =&:\mathcal{J}_1+\mathcal{J}_2+\mathcal{J}_3.
    \end{align*}
     When $|x-z| > R$ and $|x-y|<r$, for some constant $C=C(r,R)$, we have
    \[
    \left||x-z|^{-d-sp}-|y-z|^{-d-sp}\right|\leq C|x-z|^{-d-sp-1}|x-y|.
    \]
    We bound the first term as
\[
\mathcal{J}_1 \leq C|x-y| \left( R^{-sp-1} \| u \|_{L^{\infty}(B_r)}^{p-1} + {\Tail}_{p-1,sp+1}(u;x,R)^{p-1}\right).
\]

To bound $\mathcal{J}_2$, note that we need only to integrate in the symmetric difference of the balls $B_R(x)$ and $B_R(y)$, which we denote by $B_R(x) \,\Delta \, B_{R}(y)$. Since $|B_R(x)\, \Delta \, B_{R}(y)| \leq |x-y||\p B_{R}|$ and for $z \in B_R(x) \Delta B_{R}(y)$, we have $|y-z| \geq R-|x-y| \geq R/2$, we estimate $\mathcal{J}_2$ as follows
\[
\mathcal{J}_2 \leq C\frac{R^{d-1}}{(R/2)^{d+sp}}\|u\|_{L^{\infty}(B_{R+r})}^{p-1}|x-y|.
\]

 Regarding $\mathcal{J}_3$, combining Lipschitz continuity in $B_r$ with the elementary estimate 
$J_p(a+b)-J_p(a) \leq (p-1)|b| \left(|a| + |b| \right)^{p-2}$ we obtain 
\begin{align*}
\left| J_p\left(u(x) - u(z)\right) - J_p\left(u(y) - u(z)\right) \right| 
&\le C \left( \|u \|_{L^{\infty}(B_r)}^{p-2} + |u(z)|^{p-2}\right) \left| u(x) - u(y) \right| \\
&\le C  [u]_{Lip(B_r)}\left( \|u \|_{L^{\infty}(B_r)}^{p-2} + |u(z)|^{p-2}\right)^{p-2} |x-y|.
\end{align*}
This implies the following bound on  $\mathcal{J}_3$
\begin{align*}
\mathcal{J}_3 &\leq C  [u]_{Lip(B_r)}|x-y| \left(R^{-sp}\|u \|_{L^{\infty}(B_r)}^{p-2}  + {\Tail}_{p-2,sp}(u;y,R)^{p-2} \right) \\&\leq 
C  [u]_{Lip(B_r)}|x-y| \left(R^{-sp}\|u \|_{L^{\infty}(B_r)}^{p-2} + R^{\frac{p(1-s)-2}{p-1}}{\Tail}_{p-1,sp+1}(u;y,R)^{p-2} \right),
\end{align*}
where in the last inequality we used \eqref{Tail_functions}.
Combining the esimates for $\mathcal{J}_1,\mathcal{J}_2$ and $\mathcal{J}_3$ we get, for $|x-y| < R/4$,
\begin{align*}
    |F(x)-F(y)| &\leq C\left( R^{-sp-1} \| u \|_{L^{\infty}(B_r)}^{p-1} + {\Tail}_{p-1,sp+1}(u;x,R)^{p-1} + R^{-1-sp}\|u\|_{L^{\infty}(B_{R+r})}^{p-1} \right. \\ & \quad \left. + [u]_{Lip(B_r)}\left( R^{-sp}\|u \|_{L^{\infty}(B_r)}^{p-2} + R^{\frac{p(1-s)-2}{p-1}}{\Tail}_{p-1,sp+1}(u;y,R)^{p-2} \right) \right)|x-y|
    \intertext{and using Lemma \ref{l:move_center_tail},}
    &\leq C\left(r,R,\| u\|_{L^{\infty}(B_{R+r})},[u]_{Lip(B_r)},{\Tail}_{p-1,sp+1}(u;R) \right) |x-y|.
\end{align*}

If $x,y \in B_r$ are two points so that $|x-y|>R/2$, since we know $|x-y|<2R$, we connect them with three intermediate points $z_1$, $z_2$ up to $z_3$ in $B_r$ so that $|x-z_1|<R/2$, $|z_i - z_{i+1}|<R/2$ and $|z_3 - y|<R/2$. For example, we may take for $i=1,2,3$
\begin{align*}
z_i &= \frac{4-i}4 x + \frac i4 y.
\end{align*}

The estimate above applies to $|u(x)-u(z_1)|$, $|u(z_{i+1})-u(z_i)|$ and $|u(y)-u(z_3)|$. We conclude the proof applying the triangle inequality.
\end{proof}

The estimate for the term $I_4$ in \eqref{contrad2} is a consequence from the previous result since $ \|u\|_{Lip(B_1)} \leq 1 $ and ${\Tail}_{p-1,sp+1}(u;1) \leq C_1$.

\begin{corollary}[Estimate on $B_{1/16}^c$]\label{c:I4}
 We have
\[
|I_4| \leq C  |\bar a|,
\]
where the constant $C$ depends on $C_1, \,d, \, s,\, p$.
\end{corollary}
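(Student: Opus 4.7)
The plan is to apply Lemma \ref{l:est_tail_difference} essentially verbatim. By the change of variables $w = x+z$, we observe that
\[
\mathcal{L}[B_{1/16}^c] u(x) = \int_{B_{1/16}^c} J_p(u(x)-u(x+z)) |z|^{-d-sp} \dd z = \int_{B_{1/16}^c(x)} J_p(u(x)-u(w)) |x-w|^{-d-sp} \dd w = F(x),
\]
where $F$ is precisely the function appearing in Lemma \ref{l:est_tail_difference} with $R = 1/16$. Hence $I_4 = F(\bar x) - F(\bar y)$.

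Next I will verify the hypotheses of Lemma \ref{l:est_tail_difference}. From the discussion following \eqref{contrad1}, we know that $\bar x, \bar y \in B_{3/4}$, so take $r = 3/4$. Then $B_{R+r} = B_{13/16} \subset B_1$, and the assumption $\|u\|_{Lip(B_1)} \leq 1$ gives both the Lipschitz bound $[u]_{Lip(B_r)} \leq 1$ and the $L^\infty$ bound $\|u\|_{L^\infty(B_{R+r})} \leq C$. To control the tail at scale $R = 1/16$, I apply Lemma \ref{l:move_center_tail}: for any $x \in B_{3/4}$,
\[
{\Tail}_{p-1,sp+1}(u;x,1/16)^{p-1} \leq C\, {\Tail}_{p-1,sp+1}(u;0,1)^{p-1} + C \|u\|_{L^\infty(B_1)}^{p-1} \leq C(C_1,d,s,p).
\]

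With all hypotheses satisfied, Lemma \ref{l:est_tail_difference} yields $[F]_{Lip(B_{3/4})} \leq C$, where $C$ depends only on $C_1, d, s, p$. Therefore
\[
|I_4| = |F(\bar x) - F(\bar y)| \leq C\, |\bar x - \bar y| = C |\bar a|,
\]
as required. There is no substantial obstacle in this proof, since the previous lemma does all the heavy lifting; the only slightly non-obvious point is the conversion of the tail condition between centers $\bar x$ and the origin, which is a routine application of Lemma \ref{l:move_center_tail}.
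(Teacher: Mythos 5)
Your proof is correct and is exactly the paper's argument: the paper justifies Corollary \ref{c:I4} in a single sentence as a direct consequence of Lemma \ref{l:est_tail_difference} under the assumptions $\|u\|_{Lip(B_1)}\leq 1$ and ${\Tail}_{p-1,sp+1}(u;1)\leq C_1$, and you have filled in the same details, including the routine recentering of the tail via Lemma \ref{l:move_center_tail}. One small caveat: Lemma \ref{l:est_tail_difference} is stated for $0<r<R$, so your choice $r=3/4$, $R=1/16$ violates its literal hypothesis; this is harmless here because $|\bar a|=|\bar x-\bar y|$ is arbitrarily small (in particular $|\bar a|<R/2$), so only the first, local part of that lemma's proof is needed, which never uses $r<R$ --- but you should either say this, or recenter at $\bar x$ and apply the lemma on a small ball $B_{r}(\bar x)$ with $r<1/16$ containing $\bar y$.
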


\subsection{Conclusion of the proof of Lemma~\ref{l:IL}}

Finally, we collect all the inequalities and obtain the contradiction. We finish with the proof of the main lemma in this section.

\begin{customproof}{Lemma~\ref{l:IL}}
Recall that by following Ishii-Lions method, we need to reach a contradiction in \eqref{contrad2} assuming \eqref{contrad1} and taking $\eps_0$ sufficiently small.

The identity \eqref{contrad2} tells us that $I_1+I_2+I_3+I_4=0$. Let us recall the estimates we have obtained for each term in Lemmas \ref{l:cone}, \ref{l:D1}, \ref{l:D2} and Corollary \ref{c:I4}.
\begin{align*}
I_1 &\geq c L \, \delta_0^{d+p-1} \, |\bar a|^{p(1-s)-1}, \\
I_2 &\geq -C L \delta_1^{p(1 -s)}  |\bar a|^{p(1 - s) - 1}, \\
I_3 &\geq -C |\bar a|^{p(1-s)/2}, \\
I_4 &\geq -C |\bar a|.
\end{align*}
The term $I_1$ is strictly positive. We see that for a good choice of the exponent $q$ in the definition of $\delta_1$, and taking $\eps_0$ sufficiently small, the terms $I_2$, $I_3$ and $I_4$ are small compared to $I_1$ implying that $I_1+I_2+I_3+I_4>0$, which is a contradiction with \eqref{contrad2}.

We choose $q$ sufficiently large so that $q \, p (1-s) > d+p-1$. This makes $-I_2 \ll I_1$ for $|\bar a|$ sufficiently small.

We note that since we assume $p < 2/(1-s)$, we have $p(1-s) - 1 < p(1-s)/2$. Thus, $-I_3 \ll I_1$ for $|\bar a|$ sufficiently small.

Finally, since $p(1-s)-1 < 1$, we have $-I_4 \ll I_1$ for $|\bar a|$ sufficiently small.

Therefore, we get that $I_1+I_2+I_3+I_4>0$, which is a contradiction with \eqref{contrad2}.

Recall that $|\bar a|$ can be made arbitrarily small by taking $\eps_0$ sufficiently small. This concludes the proof.
\end{customproof}

At this point, we may combine the results of Section \ref{s:stage1} with Section \ref{s:stage2} to obtain the following.

\begin{corollary} \label{c:stages1and2}
 Let $u$ satisfy the conditions of Lemma \ref{l:mainlemma5}. Then, there exists a small constant $\delta>0$  so that one of the following must hold.
 \begin{itemize}
    \item  There is a positive integer $N$ such that $|\nabla u(x)| < (1-\delta)^k$ for all $x \in B_{2^{-k}}$, $k=1,2,\dots,\kmax$. Moreover, there exists a unit vector $e \in S^{d-1}$ such that 
    \[
    |\nabla u(x) - (1-\delta)^{\kmax} e| < (1-\delta)^{\kmax}/4 \quad \text{for $x \in B_{2^{-{\kmax-1}}}$}.
    \]
     \item  Let $\alpha_0$ be a small number satisfying $2^{-\alpha_0} = (1-\delta)$ and let $C_0 = (1-\delta)^{-1}$. Then
    \[
    |\nabla u(x)| \leq C_0 |x|^{\alpha_0} \quad \text{ for } x \in B_1.
    \]
 \end{itemize}
\end{corollary}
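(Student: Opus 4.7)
The strategy is to chain Lemma \ref{l:mainlemma5} (Stage 1) with Lemma \ref{l:stage2} (Stage 2) by first fixing the parameters from the outside in. I would begin by fixing $L = 1/4$ and taking $C_1$ as the universal constant coming from the tail estimate \eqref{eq:bound_tail_iter}, which is what controls the tail of the rescaled function $\bar u$ produced in Corollary \ref{c:end_of_stage1}. Lemma \ref{l:stage2} then selects small parameters $r, \mu > 0$ tailored to these values of $L$ and $C_1$. With $r$ and $\mu$ now fixed, I feed them into Lemma \ref{l:mainlemma5}, which in turn determines $\delta, \eps_1 > 0$ small and $K_0$ large. These parameters are also what drive the statement of the corollary.

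Once the parameters are fixed, I apply Lemma \ref{l:mainlemma5} to $u$. The lemma presents a dichotomy:
\begin{itemize}
\item If the iteration continues indefinitely (the case $N = \infty$), the conclusion
\[ \|\nabla u\|_{L^\infty(B_{2^{-n}})} \leq (1-\delta)^n \quad \text{for all } n \geq 0 \]
directly yields $|\nabla u(x)| \leq C_0 |x|^{\alpha_0}$ with $C_0 = (1-\delta)^{-1}$ and $\alpha_0$ determined by $2^{-\alpha_0} = (1-\delta)$. This matches the second alternative of the corollary.
\item If the iteration stops at some finite $N$, then $\|\nabla u\|_{L^\infty(B_{2^{-k}})} \leq (1-\delta)^k$ for $k = 0,\dots,N$, and moreover there is a unit vector $e$ such that $\nabla u \in B_{r(1-\delta)^N}((1-\delta)^N e)$ on a set of measure at least $(1-\mu)|B_{2^{-N}}|$ inside $B_{2^{-N}}$.
\end{itemize}

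In the second case, the plan is to invoke Corollary \ref{c:end_of_stage1} to rescale to $\bar u(x) = 2^N (1-\delta)^{-N} u(2^{-N} x)$. The corollary verifies that $\bar u$ satisfies all the hypotheses of Lemma \ref{l:stage2}: it is Lipschitz with $\|\nabla \bar u\|_{L^\infty(B_1)} \leq 1$, its tail is bounded by $C_1$, and $|\{x \in B_1 : |\nabla \bar u - e| \leq r\}| \geq (1-\mu)|B_1|$. Applying Lemma \ref{l:stage2} to $\bar u$ with $L = 1/4$ yields $|\nabla \bar u(x) - e| < 1/4$ for all $x \in B_{1/2}$. Unwinding the scaling via $\nabla \bar u(x) = (1-\delta)^{-N} \nabla u(2^{-N} x)$ and substituting $y = 2^{-N} x$ gives
\[ |\nabla u(y) - (1-\delta)^N e| < (1-\delta)^N /4 \quad \text{for } y \in B_{2^{-N-1}}, \]
which is precisely the first alternative of the corollary.

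The main content is almost entirely bookkeeping — matching the parameter dependencies between Lemma \ref{l:stage2} and Lemma \ref{l:mainlemma5} and ensuring that the rescaled function in the stopped case satisfies the hypotheses of Stage 2. No additional analytic work is needed beyond the two lemmas and the rescaling recorded in Corollary \ref{c:end_of_stage1}. The only slightly delicate point is to verify that the value of $C_1$ required by Lemma \ref{l:stage2} can indeed be taken independent of $N$, which is exactly what Corollary \ref{c:end_of_stage1}(5) supplies.
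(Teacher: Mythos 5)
Your proposal is correct and follows essentially the same route as the paper: fix $L=1/4$ and the universal tail bound $C_1$, let Lemma \ref{l:stage2} determine $r,\mu$, feed these into Lemma \ref{l:mainlemma5} to fix $\delta$, and in the finite-$N$ case apply Lemma \ref{l:stage2} to the rescaled function $\bar u$ of Corollary \ref{c:end_of_stage1} before scaling back. The one point you flag as delicate --- that $C_1$ must be independent of $N$, supplied by Corollary \ref{c:end_of_stage1}(5) --- is indeed the only nontrivial compatibility check, and you handle it as the paper does.
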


\begin{proof}
We start by taking $r$ and $\mu$ so small that Lemma \ref{l:stage2} holds for $L=1/4$, and we apply Lemma \ref{l:mainlemma5} which in turn fixes $\delta$ small. Either the second alternative holds directly, or there exists a nonnegative integer $\kmax$ such that
\[
|\nabla u(x)| < (1-\delta)^k \quad \text{ for all } x \in B_{2^{-k}}, \quad k=1,2,\dots,\kmax
\]
and a unit vector $e$ for which
\[ 
|\{ x \in B_{2^{-\kmax}} : |\nabla u(x) - (1-\delta)^{-\kmax} e| \leq r  (1-\delta)^{-\kmax} \}| \geq (1-\mu) |B_{2^{-\kmax}}|.
\] 
In the second case, we apply Lemma \ref{l:stage2} to the function $\bar u$ from Corollary \ref{c:end_of_stage1} for this choice of parameters. Corollary \ref{c:stages1and2} summarizes the results properly scaled back to $u$.

\end{proof}

The first alternative of Corollary \ref{c:stages1and2} sets up the third stage of the proof. Recall the rescaling $\bar u$ defined in Corollary \ref{c:end_of_stage1}. Then, in addition to the assumptions there, $\bar u$ further satisfies
\[
 |\nabla \bar u(x) - e|<1/4, \text{ for every } x\in B_{1/2}.
\]
This function $\bar u$, satisfying the condition above as well as \textit{(1)--(5)} in Corollary \ref{c:end_of_stage1}, is the starting point of the third stage of the paper.

\section{Third stage: the nondegenerate case} \label{s:stage3}
To conclude the third stage of the proof, we analyze the linearized equation establishing a H\"older modulus of continuity for $|\nabla u|$. The nondegeneracy of the gradient obtained in Section \ref{s:stage2} allows us to work in a regime where the linearized kernel is mildly uniform elliptic, which implies coercivity and opens the way for regularity results using more standard techniques. The main result in this section is the following.

\begin{lemma} \label{l:stage3}
      Let $u$ be a solution of \eqref{eq:fracplap} in $B_2$ with $p \in [2, \frac{2}{1-s})$, $\|\nabla  u\|_{L^{\infty}(B_1)}\leq 1$ and $ \Tail_{p-1,sp+1}(u,1) \leq C_1$. Suppose further that there is a vector $ e \in S^{d-1}$ such that
      \[
      |\nabla u(x) -e| < 1/4 \quad \text{ for } x \in B_{1}.
      \]
    Then there exist positive constants $\alpha$ depending on $d$, $s$ and $p$, and $C$ depending on $d$, $s$, $p$ and $C_1$ such that $\nabla u \in C^{\alpha}(B_{1/16})$ with the estimate 
    \[
    \|\nabla u\|_{C^{\alpha}(B_{1/16})} \leq C.
    \]
\end{lemma}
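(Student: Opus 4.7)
\emph{Proof proposal.} The hypothesis $|\nabla u - e| < 1/4$ throughout $B_1$ places the linearized operator $\LL_u$ squarely in the setting of mildly uniformly elliptic nonlocal operators, as in \cite{Kassman-Schwab}. The plan is to apply their H\"older regularity theory to each directional derivative $e' \cdot \nabla u$, $e' \in S^{d-1}$, after localizing it through the functions $v_{e'}$ constructed in~\eqref{def_ve}.

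Fix an arbitrary $e' \in S^{d-1}$ and take, for instance, $R = 3/4$ in~\eqref{def_ve}, so that $v_{e'} \equiv e' \cdot \nabla u$ on $B_{3/4}$, $\supp v_{e'} \subset B_{3/2}$, and $|v_{e'}| \leq 1$. Lemma~\ref{l:eqforv_e_scaled}, combined with the bounds $\|\nabla u\|_{L^\infty(B_1)}\leq 1$ and $\Tail_{p-1,sp+1}(u,1) \leq C_1$, yields
\[
|\LL_u v_{e'}(x)| \leq C(d,s,p,C_1) \qquad \text{for } x \in B_{3/4}.
\]
Next I would establish the mild uniform ellipticity of $K_u$. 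For $x \in B_{1/2}$ and $y \in B_{1/4}(x) \subset B_1$, the segment $[x,y]$ lies inside $B_1$ and integrating $\nabla u - e$ along it yields
\[
|u(x)-u(y) - e\cdot(x-y)| \leq \tfrac14 |x-y|.
\]
Hence $|u(x)-u(y)| \leq \tfrac54 |x-y|$ always, while for $(y-x)/|y-x| \in \Gamma(e,1/2)$ one additionally has $|u(x)-u(y)| \geq \tfrac14 |x-y|$. Because $p \geq 2$, these two inequalities translate directly into
\[
\lambda\,|x-y|^{-d-\sigma} \leq K_u(x,y) \leq \Lambda\,|x-y|^{-d-\sigma}, \qquad \sigma := 2 - p(1-s) \in (0,2),
\]
valid for all $0 < |x-y| < 1/4$, with the lower bound restricted to $(y-x)/|y-x| \in \Gamma(e,1/2)$ and $\lambda, \Lambda$ depending only on $d, s, p$. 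The contribution of $|y-x| \geq 1/4$ is an integrable tail term controlled by $C_1$ through~\eqref{Tail_functions} and Lemma~\ref{l:move_center_tail}.

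This is precisely the framework of mildly uniformly elliptic integro-differential operators of order $\sigma$ studied in \cite{Kassman-Schwab}, refined in \cite{imbert2019weak}: $K_u$ is bounded above by an $\sigma$-elliptic kernel everywhere (plus an integrable tail) and bounded below by one on a fixed cone of directions of positive measure. Applying their H\"older estimate to $\LL_u v_{e'} = f$ with $\|f\|_{L^\infty(B_{3/4})} \leq C$ gives, for some $\alpha = \alpha(d,s,p) > 0$,
\[
\|v_{e'}\|_{C^\alpha(B_{1/16})} \leq C(d,s,p,C_1),
\]
uniformly in $e' \in S^{d-1}$. Since $v_{e'} = e' \cdot \nabla u$ on $B_{1/2} \supset B_{1/16}$, taking the supremum over $e' \in S^{d-1}$ finishes the proof.

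The main obstacle will be matching the structural assumptions of the general regularity results of \cite{Kassman-Schwab,imbert2019weak} to the specific kernel $K_u$ constructed here. Two points require care: the lower bound on $K_u$ is localized both in space (only for $|x-y| < 1/4$) and in direction (only on the cone $\Gamma(e,1/2)$), while the upper bound in the far field is only integrable and is fed by the tail of $u$ via~\eqref{Tail_functions}. Verifying the quantitative hypotheses of those H\"older estimates, and in particular ensuring that $\alpha$ depends only on $d,s,p$ and that $C$ depends on $d, s, p, C_1$ alone, is the delicate part of this final stage.
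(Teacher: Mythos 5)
Your proposal follows the same route as the paper: localize the directional derivative, obtain a bounded right-hand side for the linearized equation from Lemma \ref{l:eqforv_e_scaled}, deduce a cone lower bound on $K_u$ from $|\nabla u - e|<1/4$, and conclude via the H\"older theory of \cite{Kassman-Schwab,imbert2019weak} together with the coercivity result of \cite{chaker2020coercivity} (Theorems \ref{Thm:ChakerLuis} and \ref{Thm:Kassman&co}). Two issues remain.

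First, a parameter error: with $R=3/4$ in \eqref{def_ve} the support of $v_{e'}$ is contained in $B_{21/16}$, which reaches outside $B_1$. The hypotheses of the lemma control $\nabla u$ only in $B_1$, so the claim $|v_{e'}|\le 1$ --- and hence the $\|v\|_{L^\infty(\R^d)}$ term in the conclusion of Theorem \ref{Thm:Kassman&co} --- is not justified. The paper takes the cutoff at scale $1/12$ precisely so that the support stays inside $B_{1/6}\subset B_1$.

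Second, and more substantively, the step you flag as ``the delicate part'' is the actual content of the proof and is missing. The cone lower bound \eqref{eq:K_u_A3} holds only for $x+z\in B_{1/2}$, whereas Theorem \ref{Thm:ChakerLuis} requires hypothesis \eqref{AssumptionA1} for \emph{every} ball $B\subset\R^d$, and Theorem \ref{Thm:Kassman&co} requires a globally defined kernel satisfying \textit{(A1)}--\textit{(A3)}. The paper resolves this by surgery on the kernel: it replaces $K_u(x,x+z)$ by the pure power kernel $\widetilde K(z)=c|z|^{-d+p(1-s)-2}$ for $|z|\ge 1/2$ and absorbs the resulting discrepancy into the right-hand side. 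That discrepancy is bounded because $v_\sigma(x+z)=0$ for the relevant $x$ and $|z|\ge 1/2$ (compact support of the localization), and because $\int_{B_{1/2}^c}K_u(x,x+z)\,\dd z\le C$ by the tail hypothesis and \eqref{Tail_functions}. Only after this modification can one verify \eqref{AssumptionA1} for arbitrary balls (splitting according to whether $|B\cap B_{1/2}|$ is a large or small fraction of $|B|$), obtain \textit{(A3)} from Theorem \ref{Thm:ChakerLuis}, and then apply Theorem \ref{Thm:Kassman&co}. Without this step your argument does not match the structural assumptions of the theorems you cite, as you yourself anticipate.
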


    We recall the following results for nonlocal operators in divergence form in the literature, restated in a convenient way. The first one was obtained in   \cite{chaker2020coercivity} and provides coercivity for our linearized kernel $K_u$ under the nondegeneracy condition.

\begin{theorem} \label{Thm:ChakerLuis}
    Assume there exist $\mu >0$ and $\lambda >0$ such that for every ball $B \subset \R^d$ and $x \in B$ the kernel $K$ satisfies the following hypothesis
     \begin{equation} \label{AssumptionA1}
        |\{ y \in B \, : \, K(x,y) \geq \lambda |x-y|^{-d-2\gamma}\}| \geq \mu |B|,
    \end{equation}
    for some $\gamma >0$.
    Then there exists $c$ depending only on $\mu$ and $d$ such that 
\begin{equation}\label{eq:Eq:Chaker_Luis}
        \int_{B_2} \int_{B_2}  K(x,y) (v(x)-v(y))^2\dd y \dd x \geq c \lambda [ v ]_{W^{\gamma,2}(B_1)}^2.
    \end{equation}
\end{theorem}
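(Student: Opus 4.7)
The strategy is a three-point averaging argument that uses the nondegeneracy hypothesis at every point and scale to convert the fractional Sobolev seminorm into the desired kernel bilinear form. Throughout, I denote the left side of the target inequality by $E(v) := \iint_{B_2 \times B_2} K(x,y)(v(x)-v(y))^2\, dx\, dy$.

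For each pair $(x,y) \in B_1 \times B_1$ with $r := |x-y|$, the ball $B(x,2r)$ is contained in $B_2$. The hypothesis applied at $x$ yields a set
$A_{x,r} := \{z \in B(x,2r) : K(x,z) \geq \lambda|x-z|^{-d-2\gamma}\}$
of Lebesgue measure at least $\mu|B(x,2r)| \gtrsim \mu r^d$. For $z \in A_{x,r}$, since $|x-z| \leq 2r$, we have the pointwise comparison $(v(x)-v(z))^2 \leq C\lambda^{-1}r^{d+2\gamma}K(x,z)(v(x)-v(z))^2$. Combining this with the elementary inequality $(v(x)-v(y))^2 \leq 2(v(x)-v(z))^2 + 2(v(z)-v(y))^2$ and averaging over $z \in A_{x,r}$ produces the pointwise bound
\begin{equation*}
\frac{(v(x)-v(y))^2}{|x-y|^{d+2\gamma}} \leq \frac{C}{\lambda\mu r^d}\int_{A_{x,r}}K(x,z)(v(x)-v(z))^2\, dz + \frac{C}{\mu r^{2d+2\gamma}}\int_{A_{x,r}}(v(z)-v(y))^2\, dz.
\end{equation*}

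Integrating over $(x,y) \in B_1 \times B_1$, the left side becomes (a constant multiple of) $[v]^2_{W^{\gamma,2}(B_1)}$. For the first term on the right, Fubini's theorem allows me to swap order: for fixed $x$ and $z$ with $K(x,z) \geq \lambda|x-z|^{-d-2\gamma}$, the set of admissible $y$ is contained in $\{|x-y| \geq |x-z|/2\} \cap B_1$, and the $y$-integral of $|x-y|^{-d}$ over this set is bounded (up to a logarithmic factor that can be removed by restricting the averaging to $z$ in the annulus $r/2 \leq |x-z| \leq r$). This yields a contribution of at most $C(\lambda\mu)^{-1} E(v)$.

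The main obstacle is the second term, which carries no kernel weight. I propose controlling it by iterating the three-point trick with $y$ as the base point: since $z \in A_{x,r} \subset B(y,3r) \subset B_2$, the hypothesis applied at $y$ on $B(y,3r)$ yields an analogous decomposition of $(v(z)-v(y))^2$ into a kerneled contribution bounded by a multiple of $E(v)/\lambda$ plus a smaller un-kerneled remainder of the same form but at comparable scale. A geometric-series argument on dyadic scales then absorbs all residual terms into $C(\mu,d)\lambda^{-1}E(v)$. The most delicate technical point is verifying that this iteration converges with a constant depending only on $\mu$ and $d$ (and not on $\lambda$), which relies crucially on the fact that the hypothesis supplies nondegeneracy uniformly at every base point and every scale.
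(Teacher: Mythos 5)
The paper offers no proof of this statement: it is imported directly from Chaker--Silvestre \cite{chaker2020coercivity}, so the only question is whether your argument stands on its own. It does not; the treatment of the un-kerneled term is a genuine gap, and it is precisely the central difficulty of the theorem. After your first three-point step you are left with
\[
\int_{B_1}\int_{B_1}\frac{2}{|A_{x,r}|\,r^{d+2\gamma}}\int_{A_{x,r}}(v(z)-v(y))^2\,\dd z\,\dd y\,\dd x,
\qquad r=|x-y|.
\]
Running Fubini here (fix $z,y$; admissible $x$ satisfy $|x-y|\geq |z-y|/3$, and $\int |A_{x,r}|^{-1}|x-y|^{-d-2\gamma}\dd x\lesssim \mu^{-1}|z-y|^{-d-2\gamma}$) shows this term is comparable to $\tfrac{C}{\mu}\,[v]^2_{W^{\gamma,2}}$ over a slightly larger ball --- that is, a constant \emph{larger than one} times the quantity you are trying to estimate; even in the most favorable case $\mu=1$ the constant exceeds $1$ because of the factor $2$ in $(a-b)^2\leq 2(a-c)^2+2(c-b)^2$ and the enlargement from scale $|x-y|$ to scale $|z-y|\leq 3|x-y|$. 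Iterating does not help: each further three-point step costs another factor of at least $2$ (and another $\mu^{-1}$ from replacing an average over the good set by an average over the full ball), while the residual is an average of squared increments of $v$ between generic points at a comparable or larger scale --- for $v$ linear the residual at each stage is bounded below by a constant times the term it replaces. There is no contraction, so the ``geometric-series argument on dyadic scales'' you invoke does not converge; the estimate has been reduced to itself with constant $>1$. The missing idea is a mechanism to connect the good sets of $x$ and of $y$, which may be disjoint whenever $\mu\leq 1/2$; supplying that mechanism is the actual content of \cite{chaker2020coercivity}.

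Two smaller problems. First, $B(x,2r)\subset B_2$ fails for $x\in B_1$ once $r=|x-y|>1/2$, so the far-off-diagonal pairs need a separate argument. Second, the hypothesis controls the measure of the good set in a full ball, not in an annulus: for $\mu<4^{-d}$ the good set in $B(x,2r)$ may lie entirely inside $B(x,r/2)$, so restricting the averaging to $r/2\leq|x-z|\leq r$, as you propose in order to kill the logarithm, may leave nothing to average over. This one is repairable --- take inner radius $(\mu/2)^{1/d}\cdot 2r$, which still leaves a good set of measure at least $\tfrac{\mu}{2}|B_{2r}|$ and turns the logarithm into a constant depending only on $\mu$ and $d$ --- but as written the step is not justified.
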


The following result is essentially from \cite{Kassman-Schwab}. 
    
\begin{theorem} \label{Thm:Kassman&co}
    Let $v: \R^d \to \R$ be a solution of 
      \[
     \int_{\mathbb{R}^d} K(x,y)\big( v(x) - v(y) \big)   \dd y  = f(x) \quad \text{ for } x\in B_1,
    \]
    where $f$ is a bounded function in $B_1$.
    Assume that there are $\gamma>0, \, \Lambda >1$ such that for every $x_0 \in B_1$, the following assumptions on the kernel are satisfied.
\begin{align*}
  \textit{(A1)} &\quad K(x,y)   =  K(y,x),  \\
  \textit{(A2)} &\quad \rho^{-2} \int_{B_\rho(x_0)} |x_0-y|^2 K(x_0,y) \dd y + \int_{B_\rho^{c}(x_0)} K(x_0,y) \dd y \leq \Lambda \rho^{-2\gamma}, \\
    \textit{(A3)} &\qquad \text{Any time $B_\rho(x_0) \subset B_1$ and $v \in H^{\gamma}(B_{\rho}(x_0))$, }  \\
    &\qquad \qquad \int_{B_{\rho}(x_0)} \int_{B_{\rho}(x_0)} K(x,y) \left(v(x)-v(y)\right)^2 \dd y \dd x   \geq \Lambda^{-1} [ v ]_{W^{\gamma,2}(B_{\rho/2}(x_0))}^2.
\end{align*}
    Then there exist positive constants $\alpha$ and $C$ depending on $\Lambda, \, \gamma$ and $d$ such that
    \[
    [v]_{C^{\alpha}(B_{1/2}) } \leq C \left( \|v \|_{L^{\infty}(\R^d)} +  \|f\|_{L^{\infty}(\R^d)} \right).
    \]
\end{theorem}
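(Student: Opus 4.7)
The plan is to apply the general nonlocal Hölder estimate Theorem~\ref{Thm:Kassman&co} to the equation satisfied by the localized directional derivatives of $u$. The nondegeneracy assumption $|\nabla u - e| < 1/4$ in $B_1$ puts the linearized operator $\mathcal{L}_u$ in the mildly uniformly elliptic regime, where the kernel $K_u$ is nondegenerate in a cone of directions around $e$. The coercivity hypothesis of Theorem~\ref{Thm:Kassman&co} will be supplied by Theorem~\ref{Thm:ChakerLuis}. Since the three kernel hypotheses depend only on $u$ and not on the direction considered, I can then apply the Hölder estimate separately to $v_{e'}$ for $e'$ ranging over a basis of $\R^d$ to recover the regularity of the full gradient.

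Concretely, fix $R = 1/8$. For each unit vector $e' \in S^{d-1}$ define $v_{e'}(x) = \eta(x/R)(e' \cdot \nabla u)$ as in \eqref{def_ve}. Lemma~\ref{l:eqforv_e_scaled}, together with the bounds $\|\nabla u\|_{L^\infty(B_1)} \leq 1$ (which yield $\|u\|_{L^\infty(B_R)} \leq C$ after subtracting $u(0)$) and $\Tail_{p-1,sp+1}(u,1) \leq C_1$, gives
\[ |\mathcal{L}_u v_{e'}(x)| \leq C(d,s,p,C_1) \quad \text{for } x \in B_R. \]
The function $v_{e'}$ is globally bounded with $\|v_{e'}\|_{L^\infty(\R^d)} \leq 1$ and compactly supported. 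After rescaling $x \mapsto Rx$, this is exactly of the form required by Theorem~\ref{Thm:Kassman&co}, with the rescaled version of the kernel $K_u$.

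The crucial step is to verify the three kernel hypotheses with $\gamma = 1 - p(1-s)/2 > 0$ (positive since $p < 2/(1-s)$). Symmetry~(A1) is immediate. For~(A2), the Lipschitz bound $|\nabla u| \leq 5/4$ in $B_1$ (from $|\nabla u - e| < 1/4$) gives $|u(x) - u(y)| \leq C|x - y|$ for $x, y \in B_{1/2}$, hence
\[ K_u(x,y) \leq C|x-y|^{p-2-d-sp} = C|x-y|^{-d-2\gamma}. \]
Integrating against $|x_0-y|^2 \dd y$ over $B_\rho(x_0)$ and dividing by $\rho^2$ produces the required $C\rho^{-2\gamma}$ bound. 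The tail contribution splits into the piece $B_1 \setminus B_\rho(x_0)$, handled by the same pointwise bound, and the piece $B_1^c$, where the crude inequality $|u(x_0)-u(y)|^{p-2} \leq C(1 + |u(y)|^{p-2})$ combined with the estimates of Section~\ref{subsection_tail} yields a constant depending on $C_1$. The hardest point is coercivity~(A3). For any $x_0 \in B_R$, any $\rho$ such that $B_\rho(x_0) \subset B_R$, and any $x \in B_\rho(x_0)$, consider the double cone $\Gamma = \{\nu \in S^{d-1} : |\nu \cdot e| \geq 3/4\}$. Along $\nu \in \Gamma$ with $\nu \cdot e \geq 3/4$, the assumption $|\nabla u - e| < 1/4$ yields $\nu \cdot \nabla u \geq 1/2$ pointwise, so $|u(x + t\nu) - u(x)| \geq t/2$ for all admissible $t > 0$. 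Consequently $K_u(x, x+t\nu) \geq c\, t^{p(1-s)-2-d} = c|x-y|^{-d-2\gamma}$ along these directions. An elementary geometric computation shows that the set of $y \in B_\rho(x_0)$ with $(y-x)/|y-x| \in \Gamma$ has measure at least $\mu|B_\rho(x_0)|$ for some universal $\mu>0$ depending only on the aperture of $\Gamma$. This is precisely the hypothesis of Theorem~\ref{Thm:ChakerLuis} at scale $\rho$, and a rescaling argument delivers (A3) with a universal $\Lambda$.

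With all three hypotheses verified at order $\gamma$, Theorem~\ref{Thm:Kassman&co} applied to $v_{e'}$ (after rescaling to $B_1$) gives
\[ [v_{e'}]_{C^\alpha(B_{R/2})} \leq C\bigl(\|v_{e'}\|_{L^\infty(\R^d)} + \|\mathcal{L}_u v_{e'}\|_{L^\infty(B_R)}\bigr) \leq C(d,s,p,C_1), \]
for some exponent $\alpha = \alpha(d,s,p) > 0$. Taking $e' = e_1, \ldots, e_d$ the standard basis (the argument does not use that $e'$ is aligned with $e$; only the kernel verification uses the hypothesis on $\nabla u$) yields $\|\nabla u\|_{C^\alpha(B_{R/2})} \leq C$, and since $R/2 = 1/16$ this is the desired estimate. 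The main obstacle is the careful verification of coercivity~(A3): Theorem~\ref{Thm:ChakerLuis} reduces it to a measure-theoretic condition on the kernel, but one must track the cone construction uniformly in $x \in B_\rho(x_0)$ and at every scale $\rho$ simultaneously.
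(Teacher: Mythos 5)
Your proposal does not prove the statement it is meant to prove. The statement under review is Theorem \ref{Thm:Kassman&co} itself --- the H\"older estimate for weak solutions of a nonlocal equation with a symmetric, merely measurable kernel satisfying the upper bound \textit{(A2)} and the coercivity \textit{(A3)}. What you have written is an argument for Lemma \ref{l:stage3} (the third stage of the main proof): you localize the directional derivatives, verify \textit{(A1)}--\textit{(A3)} for the linearized kernel $K_u$ using the nondegeneracy $|\nabla u - e|<1/4$ and Theorem \ref{Thm:ChakerLuis}, and then \emph{invoke} Theorem \ref{Thm:Kassman&co} as a black box to conclude. That is circular as a proof of Theorem \ref{Thm:Kassman&co}: the theorem you are asked to establish appears as the key ingredient of your argument, and nothing in your write-up addresses how one gets from the hypotheses \textit{(A1)}--\textit{(A3)} to the conclusion $[v]_{C^\alpha(B_{1/2})} \leq C(\|v\|_{L^\infty} + \|f\|_{L^\infty})$.

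A genuine proof of Theorem \ref{Thm:Kassman&co} is a substantial piece of De Giorgi--Nash--Moser theory for nonlocal operators and is not reproduced in this paper either; it is imported from \cite{Kassman-Schwab} (and its extension with a right-hand side in \cite{imbert2019weak}). The missing content is, schematically: a Caccioppoli-type energy estimate for truncations using the symmetry \textit{(A1)} and the boundedness implied by \textit{(A2)}; a measure-to-pointwise (weak Harnack or growth) lemma obtained by combining that energy estimate with the coercivity \textit{(A3)} and a fractional Sobolev or isoperimetric inequality, together with a careful treatment of the nonlocal tail of the truncated function; and finally an iteration over dyadic scales, using the scale invariance of \textit{(A2)}--\textit{(A3)}, to produce geometric decay of the oscillation and hence the H\"older exponent $\alpha$. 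If your intent was to prove Lemma \ref{l:stage3}, your outline is essentially the paper's own argument for that lemma (including the kernel modification outside $B_{1/2}$ that the paper performs and you gloss over); but as a proof of Theorem \ref{Thm:Kassman&co} it contains none of the required ideas.
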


In \cite{imbert2019weak}, the authors extended this result to the more general \emph{kinetic} context, allowing also for more general assumptions on the kernel $K$. The original result in \cite{Kassman-Schwab} is without a right-hand side ($f=0$). The result in \cite{imbert2019weak} includes a nonzero right-hand side. The assumption \textit{(A2)} above implies other conditions on the kernel $K$ that are sometimes listed as assumptions. It implies that the bilinear form associated to $K$ is bounded in $H^\gamma$ (See \cite[Theorem 4.1]{imbert2019weak}), which corresponds to the following inequality.
\[ \int_{B_{\rho}(x_0)} \int_{B_{\rho}(x_0)} K(x,y) \left(v(x)-v(y)\right)^2 \dd y \dd x   \leq \Lambda [ v ]_{W^{\gamma,2}(B_\rho(x_0))}^2. \]
Note that the inequality in \cite[Theorem 4.1]{imbert2019weak} is stated in terms of the full norm $\|v\|_{H^\gamma}$ due to the potential contribution of the nonsymmetric part of the kernel in the setting of that paper. The assumption (A3) in \cite{Kassman-Schwab} is stated with the $H^\gamma$ norm in the full ball $B_\rho(x_0)$ in the right hand side instead of the half ball, but that condition is also relaxed in \cite{imbert2019weak}.

We will see that when $\nabla u$ stays sufficiently close to some unit vector $e$, the linearized kernel $K_u$ is nondegenerate in a cone of directions. This allows us to verify \eqref{AssumptionA1} and get the coercivity bound through Theorem \ref{Thm:ChakerLuis}. As a consequence, we can apply Theorem \ref{Thm:Kassman&co} to get a H\"older modulus of continuity. We write the details in the following proof.

\begin{customproof}{Lemma \ref{l:stage3}}
     Consider a direction $\sigma \in S^{d-1}$ and define $v_\sigma(x) = \eta(12x) (\sigma \cdot \nabla u(x))$ where $\eta$ is a cutoff function as in \eqref{def_ve}, so that $\eta(12x)>0$ for $|x|<7/48<1/6$  and $\eta(12x)=1$ for $|x|\leq 1/8$. From Lemma \ref{l:eqforv_e_scaled},  we get 
     \begin{align}\label{eq:ve_RHS}
         |\LL_u v_\sigma(x)| \leq C,
     \end{align}
     for any $x \in B_{1/8}$.
The condition $|\nabla u - e| < 1/4$ in $B_{1}$ directly leads to 
\begin{equation}
    |\nabla u (x) \cdot \tau| >\frac{1}{4} \quad \text{for every }x \in B_{1} \text{ and }|\tau\cdot e| >1/2,
\end{equation}
 which implies that for every $x \in B_{1/8}$, the kernel $K_u(x,y)$ is uniformly elliptic for $y$ in the cone
 \[
 \mathcal{C}(x):=\{x+z\in B_{1/2}\,:  \, |z \cdot e| > |z|/2  \}.
 \]
 Indeed, writing $y=x+t\nu$ for $t \in (0,1/2)$ and $|\nu \cdot e| > 1/2$, it holds that 
\begin{equation*}
|u(x+t\nu)- u(x)|= \left|\int_{0}^{t} \nabla u (x+s\nu) \cdot \nu \dd s\right| > \frac{1}{4}t,
\end{equation*}
which leads to
    \begin{equation} \label{eq:K_u_A3}
\begin{aligned}
    K_{u}(x,x+t\nu)& = (p-1) \frac{|u(x)-u(x+t\nu)|^{p-2}}{t^{d+sp}}  \\ &\geq c \, t^{-d-(sp-p+2)} =  c \, t^{-d+p(1-s)-2}. 
\end{aligned}
\end{equation}
Since this condition only holds locally near the origin, we must modify the kernel outside a small ball while maintaining this growth condition. In doing so, we modify the right-hand side of the equation, but using the tail condition of $u$ as well as the fact that $v_\sigma$ has compact support, we can prove that it remains bounded and thus we can apply Theorem \ref{Thm:Kassman&co}.

More precisely, it follows from \eqref{eq:ve_RHS} that
\[
\int_{B_{1/2}} K_u(x,x+z) (v_\sigma(x) - v_\sigma(x+z)) \dd h  \leq C-v_\sigma(x)\int_{B_{1/2}^c} K_u(x,x+z)\dd z.
\]
We used the fact that $x+h\notin B_{1/6}$ for $x\in B_{1/8}$ and $z\notin B_{1/2}$ and thus $v_\sigma(x+z)=0$.

Note that the bounds on $\|\nabla u\|_{L^{\infty}(B_{1})} $ and $\Tail_{p-1,sp+1}(u,1)$ together with \eqref{Tail_functions} imply that 
\[
\left|\int_{B_{1/2}^c} K_u(x,x+z)\dd z\right|\leq C.
\]
Furthermore, $v_\sigma(x)=\sigma \cdot \nabla  u(x)$ for $x \in B_{1/12}$.

Consider now the following kernel
\[
\widetilde K(z)=c |z|^{-d+p(1-s)-2},
\]
where the constant $c$ is the same as in \eqref{eq:K_u_A3}.

Define the new kernel as
\[
K(x,x+z)=K_u(x,x+z)\one_{B_{1/2}}(z)+\widetilde K(z)\one_{B_{1/2}^c}(z).
\]
Then
\[
\int_{\R^d} K(x,x+z) (v_\sigma(x) - v_\sigma(x+z)) \dd z  \leq C+v_\sigma(x)\int_{B_{1/2}^c} \widetilde K(z)\dd z\leq C.
\]
Similarly we get the lower bound.

We now check that $K$ satisfies the assumptions of Theorem \ref{Thm:Kassman&co} properly scaled, with $x_0\in B_{1/8}$ and $0<\rho<1/8$. \textit{(A1)} is readily verified. Regarding \textit{(A2)}, since $B_{2\rho}(x_0)\subset B_{1/2}$, we get from Lipschitz continuity of $u$
\[
\rho^{-2}\int_{B_\rho}|z|^2K(x_0,x_0+z)\dd z\leq C\rho^{p(1-s)-2}.
\]
Regarding the other term, we split $B_\rho^c$ into the annulus $B_{1/2}\setminus B_\rho$, where we use again Lipschitz continuity of $u$, and $B_{1/2}^c$ where the bound follows trivially. Therefore, we get
\begin{align*}
    \int_{B^c_\rho}K(x_0,x_0+z)\dd z    \leq C\rho^{p(1-s)-2}.
\end{align*}

In order to verify Assumption \textit{(A3)} we only need to observe that the assumption \eqref{AssumptionA1} of Theorem \ref{Thm:ChakerLuis} is satisfied. This is a consequence of the lower bound \eqref{eq:K_u_A3} that holds in a cone of directions. In fact, if $|B\cap  B_{1/2}|\leq \frac{9}{10} |B|$ then it is straightforward from the choice of $\widetilde K $ that \eqref{AssumptionA1} holds for $\sigma=1/10$. On the other hand, if $|B\cap  B_{1/2}|> \frac{9}{10}|B|$ then by \eqref{eq:K_u_A3} we have for every $x\in B$
\[ 
|\{ y \in B\cap  B_{1/2} \, : \, K(x,y) \geq c |x-y|^{-d+p(1-s)-2}\}|\geq |\mathcal{C}(x) \cap B\cap  B_{1/2}|\geq c_1|B|
\]
where $c_1>0$ depends only on $d$. Therefore, we can apply Theorem \ref{Thm:ChakerLuis} with $\mu=\min\{1/10,c_1\}$ and get Assumption \textit{(A3)}.

 We are in conditions to apply Theorem \ref{Thm:Kassman&co} to $v_\sigma$ and get
\[
[v_\sigma]_{C^{\alpha}(B_{1/16})} \leq C(\|v_\sigma\|_{L^{\infty}(\R^d)} + C) \leq C,
\]
since $\|v_\sigma\|_{L^{\infty}(\R^d)} \leq 1$. Since $\sigma$ is an arbitrary unit vector, the result is obtained.
\end{customproof}

\section{Proof of the main theorem} \label{s:main_proof}

In this section we prove interior $C^{1,\alpha}$ regularity of solutions to \eqref{eq:fracplap}. We start by rescaling the solution to place it in the assumptions of Lemma \ref{l:iteration}. 
By the iteration argument in Section~\ref{s:stage1}, together with Corollary~\ref{c:stages1and2}, we obtain two possible outcomes: if the gradient vanishes at the point under consideration, we directly deduce $C^{1,\alpha}$ regularity there; otherwise, we obtain a bound for the linearized kernel, which brings us into the framework of Lemma \ref{l:stage3}, which gives local $C^{1,\alpha}$ in small balls with radius depending on the point. To conclude the main proof, it only remains to patch these estimates together in a way that is independent of the point.

\begin{proof}[Proof of Theorem \ref{t:main}]
Let $u$ be a solution to \eqref{eq:fracplap} in $B_2$. Applying Theorem \ref{t:biswas&co} to $u$, we know that $u \in C_{loc}^{0,1}(B_{2})$ with an estimate. Without loss of generality, we can assume that $u(0) = 0$ by subtracting $u(0)$ from $u$. Moreover, we can also assume that $\Tail_{p-1,sp+1}(u,1) + \| u\|_{L^{\infty}(B_1)}\leq 1$ and $|\nabla u| \leq 1$ in $B_1$.
Otherwise, we consider the scaled function $\tilde u(x) = (\Tail_{p-1,sp+1}(u,1) + \| u\|_{Lip(B_{1})})^{-1} u(x)$ and apply the analysis below to $\tilde u$, to obtain
\[
\|\tilde u\|_{C^{1,\alpha}(B_1)}\leq C,
\]
which in terms of $u$ becomes
\begin{align*}
    \| u\|_{C^{1,\alpha}(B_1)}\leq&\, C ({\Tail}_{p-1,sp+1}(u,1) + \| u\|_{Lip(B_{1})})\\
    \leq&\, C({\Tail}_{p-1,sp}(u,2) + \| u\|_{L^\infty(B_2)}),
\end{align*}
using Theorem \ref{t:biswas&co} and the properties of tail functions.

Now perform a rescaling to $u$ so that the rescaled function satisfies the assumptions \textit{(1)}--\textit{(4)} of Lemma \ref{l:iteration}. This can be done considering $u_{1}(x):=2^{K_0}u(2^{-K_0}x)$ for $K_0$ large enough, see \eqref{scale_tails}. For clarity, we denote the rescaled function $u_{1}$ simply by $u$.

From Corollary \ref{c:stages1and2}, either
\begin{equation} \label{eq:nabla=0_est}
    |\nabla u (x)| \leq C |x|^{\alpha_0} \quad \text{for }x \in B_{1/2}
\end{equation}
or there exists an integer $\kmax$ such that 
\begin{equation} \label{eq:mesoscale}
\|\nabla u\|_{L^\infty(B_{2^{-n}})} \leq (1-\delta)^n \qquad \text{ for } n=0,1,2,\dots,\kmax
\end{equation}
and moreover, using Lemma \ref{l:stage3}, the rescaled function
\[ 
u_2(x) = \frac{2^{\kmax}}{(1-\delta)^{\kmax}} u(2^{-\kmax}x),
\]
satisfies $[u_2]_{C^{1,\alpha}(B_{1/16})}\leq C$.  This in terms of $u$ becomes
\[
[\nabla u]_{C^{\alpha}(B_{2^{-N-4}})}\leq C(1-\delta)^N 2^{\alpha N}.
\]
Taking $\alpha<\alpha_0$, we get
\begin{equation} \label{eq:thesmallerball}
[\nabla u]_{C^{\alpha}(B_{2^{-N-4}})}\leq C.
\end{equation}
Note that we can translate this argument to every point $ x\in B_{1/2}$, obtaining the same dichotomy, where $N=N(x)$.

Take any other point $y \in B_{1/2}$. We want to prove that for some constant $C$ independent of $\kmax$, we have
\[
|\nabla u(x)-\nabla u(y)|\leq C|x-y|^\alpha.
\]

There are two possibilities. Either $y \in B_{2^{-N-4}}(x)$ or $y \notin B_{2^{-N-4}}(x)$. In the first case, we use \eqref{eq:thesmallerball} and reach the conclusion right away. In the second case, we pick $n \in \{0,1,\dots,\kmax+3\}$ so that $2^{-n-1} \leq |x-y| < 2^{-n}$. We observe that $y \in B_{2^{-n}}(x)$. If $n \leq \kmax$, we use \eqref{eq:mesoscale} to conclude $|\nabla u(y) - \nabla u(x)| \leq 2 (1-\delta)^n \leq 2(1-\delta)^{-1} |x-y|^{\alpha_0}$. In the special cases $n \in \{\kmax+1,\kmax+2,\kmax+3\}$, we still have $y \in B_{2^{-\kmax}}(x)$ and get $|\nabla u(y) - \nabla u(x)| \leq 2 (1-\delta)^\kmax \leq 2(1-\delta)^{-4} |x-y|^{\alpha_0}$. This concludes the proof in all cases.
\end{proof}

\section{Working with weak solutions}\label{s:weak-solutions}

In this section we describe the technical details needed to apply the ideas of the previous sections to prove Theorem \ref{t:main} for weak solutions.

There are two natural notions of non-classical solutions for the equation \eqref{eq:fracplap}: weak solutions (in the sense of distributions) and viscosity solutions. In \cite{korvenpaa2019equivalence}, it is shown that these two notions are equivalent. That is, every function $u \in W_{loc}^{s,p}(\Omega)$ that solves \eqref{eq:fracplap} in the weak sense is also a continuous viscosity solution, and every continuous viscosity solution is also in $W^{s,p}_{loc}(\Omega)$ and solves \eqref{eq:fracplap} in the weak sense.

Some methods can be applied naturally to weak solutions. Others adapt more easily to the context of viscosity solutions. In this paper, we have some of both. Moreover, some of our arguments in the first state of our proof rely on the continuity of $\nabla u$ in a way that is more difficult to handle in either setting, so we resort to an approximation by a more regular equation.

Let us describe our argument for each of the three stages of the proof of Theorem \ref{t:main}.
\begin{enumerate}
    \item We prove a version of Lemma \ref{l:iteration} that applies to weak solutions $u$ without any further smoothness assumption. For this purpose, we regularize the equation and prove the same improvement of flatness for the respective solution $u_\eps$, which is smooth enough. Since the estimates we get are uniform in $\eps$, we can take $\eps\to 0$ and obtain Lemma \ref{l:iteration} for $u$. We can then proceed with the iterative argument in Section \ref{s:stage1}. The regularization of the PDE is used to prove the iteration step at each fixed scale. This is needed to overcome the bad scaling properties of the regularized equation \eqref{eq:main_apriori}. We end with with the same dichotomy as in Corollary \ref{c:end_of_stage1}, concluding the first stage.

    Technically, we could prove the estimates in the second and third stages of the proof by regularization as well. However, these parts of the proof adapt more easily to the notions of viscosity and weak solutions, and a regularization is not strictly necessary.
    \item The Ishii-Lions method originated in the study of viscosity solutions for fully nonlinear elliptic equations. It applies seemlessly to viscosity solutions, with practically no extra effort. We conclude with a version of Corollary \ref{c:stages1and2}.
    \item The third stage of the proof can be adapted to weak solutions (in the sense of distributions). We consider the difference quotients of $u$ instead of the directional derivatives and prove that they satisfy an equation which still satisfies the assumptions of Theorem \ref{Thm:ChakerLuis}. An application of Theorem \ref{Thm:Kassman&co} gives H\"older regularity independently of $\eps$. 
\end{enumerate}

Once we established the main results of Sections \ref{s:reduc_grad}--\ref{s:stage3}, the argument in Section \ref{s:main_proof} can be repeated \textit{verbatim}. In fact, the previous steps imply that the gradient exists at every point.

We split each part of the discussion into their respective subsection.

\subsection{Justifying Sections \ref{s:linearized}--\ref{s:stage1}}

Let $u$ solve \eqref{eq:fracplap} in $B_{2R}$ for some arbitrary $R$ large. This is at the same time a weak solution in $W^{s,p}_{loc}(B_{2R})$ and a viscosity solution, continuous in $B_{2R}$. For $\eps>0$ small, we consider the function $u_\eps$ that solves the regularized equation
\begin{align}\label{eq:main_apriori}
    \begin{cases}
        (-\Delta_p)^su_\varepsilon-\eps\Delta u_\varepsilon=0 &\text{ in } B_R,\\
    u_\varepsilon=u &\text{ on } B_R^c.
    \end{cases}
\end{align}
This function naturally belongs to $$u_\eps \in H^1(B_R) \cap W^{s,p}(B_{2R}) \cap L_{sp}^{p-1}(\R^d)$$ 
as the weak solution to the previous regularized problem. We will see that for any $\eps>0$, the function $u_\eps$ is actually much more regular. In the case $p \geq 1/(1-s)$, we will prove $u_\eps \in C^{2,\alpha}$ for all $\alpha \in (0,1)$. When $p \in [2,1/(1-s))$, we will prove that $u_\eps \in C^{2,\gamma}$ for all $\gamma \in (0,p(1-s))$.

The definition of weak solutions follows the definitions of Subsection \ref{subsection_Def} with obvious changes. This notion of solution is equivalent to the viscosity one (also defined in Subsection \ref{subsection_Def}), as proven in \cite{korvenpaa2019equivalence}. Strictly speaking, we only use here that weak solutions are viscosity solutions, which is the easier implication to prove.

\begin{lemma}\label{l:equivalence}
    The function $u_\varepsilon$ is a weak solution of \eqref{eq:main_apriori} if and only if it is a viscosity solution.
\end{lemma}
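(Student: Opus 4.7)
The proof follows the blueprint of \cite{korvenpaa2019equivalence} with straightforward modifications for the regularized equation \eqref{eq:main_apriori}. The strictly elliptic local term $-\varepsilon\Delta$ only simplifies each step: it produces a strictly convex energy, yields extra classical regularity by bootstrapping, and contributes favorable signs in comparison arguments. The plan has four stages. First, obtain a unique weak solution $u^w_\varepsilon$ by direct minimization of the strictly convex energy
\begin{align*}
    \mathcal F(v) = \tfrac{1}{p}[v]^p_{W^{s,p}(\R^d)} + \tfrac{\varepsilon}{2}\|\nabla v\|_{L^2(B_R)}^2
\end{align*}
over the affine class $\{v \in H^1_{\mathrm{loc}}(\R^d) \cap W^{s,p}(\R^d) : v = u \text{ on } B_R^c\}$; coercivity and lower semicontinuity are immediate, and strict convexity gives uniqueness.

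Second, verify that $u^w_\varepsilon$ is also a viscosity solution. Let $\phi \in C^2(B_r(x_0))$ touch $u^w_\varepsilon$ from above at $x_0$, with $\phi_r$ as in Definition \ref{DefViscSol}. Test the weak inequality against a nonnegative bump function $\psi_\rho$ concentrated at $x_0$ and pass to the limit $\rho \to 0$. The nonlocal portion of this computation is identical to \cite{korvenpaa2019equivalence} and contributes $(-\Delta_p)^s\phi_r(x_0)$ in the limit. The extra local term $-\varepsilon \int \nabla u^w_\varepsilon \cdot \nabla \psi_\rho\,\mathrm{d}x$ converges to $-\varepsilon \Delta \phi(x_0)$, using the local $H^2$-regularity of $u^w_\varepsilon$ granted by the uniform ellipticity of the $-\varepsilon\Delta$ term (the fractional contribution is treated by standard bootstrapping as a bounded or H\"older continuous source, starting from the Lipschitz regularity inherited from the exterior data). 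Third, establish a comparison principle for viscosity sub/supersolutions of \eqref{eq:main_apriori} by a doubling-of-variables argument. Fourth, conclude via uniqueness: any viscosity solution with exterior datum $u$ coincides with $u^w_\varepsilon$ by the second and third stages, and is therefore weak.

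The main obstacle is the comparison principle in the third stage. However, the local and nonlocal parts of the operator do not couple in any subtle way in the Ishii--Lions doubling argument: the classical argument of \cite{ishii2010class} for $(-\Delta_p)^s$ adapts essentially verbatim, with the penalty $|x-y|^2/\alpha$ yielding an additional nonnegative contribution from $-\varepsilon\Delta$ through the trace of the associated second-order test matrix, which only helps the estimate. No new ideas are required beyond what is already in the literature.
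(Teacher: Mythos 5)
First, a point of comparison: the paper does not actually prove this lemma. It is stated without proof, the preceding sentence deferring to \cite{korvenpaa2019equivalence} together with the remark that only the implication ``weak $\Rightarrow$ viscosity'' is used later. So you are attempting strictly more than the authors do, and your overall architecture (existence and uniqueness of the weak solution by strictly convex minimization, weak $\Rightarrow$ viscosity, comparison for viscosity solutions, conclusion by uniqueness) is the standard blueprint for such equivalence statements.

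That said, your second stage has a genuine gap as written. Testing the weak formulation with a concentrating bump $\psi_\rho$ and letting $\rho\to 0$ does not produce the viscosity inequality of Definition \ref{DefViscSol}: that inequality reads $(-\Delta_p)^s\phi_r(x_0)-\varepsilon\Delta\phi(x_0)\leq 0$, where $\phi_r$ replaces $u_\varepsilon$ by the test function only inside $B_r(x_0)$, whereas your limit (if it exists) yields information about the operator evaluated on $u_\varepsilon$ itself, which is a priori not defined pointwise on a weak solution. The known proofs of this implication for the fractional $p$-Laplacian argue by contradiction, perturbing $\phi$ and invoking comparison for \emph{weak} solutions on a small ball, precisely to avoid this. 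For the regularized equation the clean repair is the one the paper's Appendix A supplies: $u_\varepsilon\in C^{2,\gamma}_{loc}$ (Lemma \ref{l:eps-regularity-needed}), so \eqref{eq:main_apriori} holds classically and both viscosity inequalities follow from the monotonicity of $J_p$ (Lemma \ref{l:appendix}) at the touching point. Note, however, that this bootstrap starts from the interior Lipschitz bound of Lemma \ref{l:Lips_apriori} (not from anything ``inherited from the exterior data''), and that for $p<1/(1-s)$ the fractional term of a Lipschitz function is only a negative-order Besov distribution (Lemma \ref{l:frac_plap_of_Lipschitz}), not a bounded source, so your $H^2$-plus-bounded-source bootstrap does not cover the whole range. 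Finally, ``adapts essentially verbatim'' oversells the comparison principle: \cite{ishii2010class} treats a purely nonlocal operator, and adding $-\varepsilon\Delta$ forces you to work with second-order jets and the nonlocal version of the Jensen--Ishii lemma in the doubling argument. Your observation that the resulting trace term has a favorable sign is correct, but the combination is an adaptation requiring the Barles--Imbert machinery, not a verbatim repetition.
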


We observe that the Lipschitz regularity obtained in \cite{biswas2025lipschitz,biswas2025improved} applies to the equation \eqref{eq:main_apriori} independently of $\eps \in (0,1)$.
 
\begin{lemma}\label{l:Lips_apriori}
    Let $u_\varepsilon$ be a solution of \eqref{eq:main_apriori} with $\eps \in (0,1)$. Then $u_\varepsilon$ is locally Lipschitz continuous in $B_{R}$ and
    \[
    \|u_\varepsilon\|_{Lip(B_{R/2})}\leq CR^{-1}(\|u_\varepsilon\|_{L^\infty(B_R)}+{\Tail}_{p-1,sp}(u_\varepsilon;R)),
    \]
    where $C>0$ depends only on $d,s$ and $p$, but not on $\eps$.
\end{lemma}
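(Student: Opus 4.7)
The plan is to adapt the Ishii-Lions doubling-variables argument of \cite{biswas2025lipschitz, biswas2025improved} to the regularized equation \eqref{eq:main_apriori}, with the crucial observation that the regularizer $-\eps\Delta$ gives a sign-compatible contribution in the doubling-variables comparison, so the resulting Lipschitz estimate will be uniform in $\eps$. By Lemma \ref{l:equivalence} we may work in the viscosity framework throughout.

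First I would set up the auxiliary function
\[
\Phi(x,y) = u_\eps(x) - u_\eps(y) - L\,\omega(|x-y|) - \kappa\,\psi(x), \qquad x,y \in B_R,
\]
with a strictly concave Lipschitz modulus of continuity $\omega$, a smooth cutoff $\psi$ confining any maximum of $\Phi$ to $B_{R/2}$, and amplitude $L = CR^{-1}(\|u_\eps\|_{L^\infty(B_R)} + \Tail_{p-1,sp}(u_\eps;R))$. The goal is to prove that $\Phi \leq 0$ everywhere by contradiction, from which the Lipschitz bound follows directly. Assume $\sup\Phi > 0$, attained at $(\bar x,\bar y)$; the standard lower-order estimates from \cite{biswas2025improved} force $\bar x,\bar y \in B_{R/2}$ with $|\bar x-\bar y|$ small.

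Next I would invoke the Crandall-Ishii maximum principle for the semicontinuous function $\Phi$ to obtain, for each $\tau>0$, symmetric matrices $X_\tau,Y_\tau$ belonging to the closures of the second-order superjet of $u_\eps$ at $\bar x$ and subjet of $u_\eps$ at $\bar y$, respectively, satisfying $X_\tau - Y_\tau \leq O(\tau)$ in the matrix sense; in particular $\operatorname{tr}(X_\tau - Y_\tau) \to 0^-$ as $\tau\to 0$. Writing the viscosity sub- and supersolution inequalities for \eqref{eq:main_apriori} against the appropriate test envelopes $\phi_{r,\bar x}, \phi_{r,\bar y}$ (as in Definition \ref{DefViscSol}) and subtracting, one obtains
\[
(-\Delta_p)^s \phi_{r,\bar x}(\bar x) - (-\Delta_p)^s \phi_{r,\bar y}(\bar y) \;\leq\; \eps\,\operatorname{tr}(X_\tau - Y_\tau) \;\leq\; 0,
\]
so the local regularizer drops out with a favourable sign. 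It remains to analyze the nonlocal difference on the left. I would decompose the integration domain into the cone of concavity of $\omega$, the narrow and intermediate error regions $\mathcal D_1,\mathcal D_2$ around the diagonal, and the tail region, exactly as in Section \ref{s:stage2} and in \cite{biswas2025improved}. The coercive contribution from the cone of order $L\,\delta_0^{d+p-1}|\bar x-\bar y|^{p(1-s)-1}$ dominates all error and tail terms once $L$ is large enough in terms of $\|u_\eps\|_{L^\infty(B_R)} + \Tail_{p-1,sp}(u_\eps;R)$, producing a strictly positive left-hand side and the desired contradiction. Since none of these quantities involve $\eps$, the Lipschitz constant is uniform in $\eps \in (0,1)$.

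The main obstacle is technical rather than conceptual: one must verify that the Crandall-Ishii machinery combines cleanly with the nonlocal operator evaluated on the viscosity test envelopes, ensuring that the matrices $X_\tau, Y_\tau$ one extracts really do control $-\eps\Delta u_\eps$ at both points with the correct matrix inequality. This is by now standard for mixed local-nonlocal viscosity problems and underlies, e.g., the viscosity treatment of \eqref{eq:fracplap} itself; the punchline is that the local Laplacian term can only contribute $\eps\,\operatorname{tr}(Y_\tau - X_\tau) \geq 0$ to the right-hand side of the comparison, which helps rather than hinders the contradiction, so the scale-invariant bookkeeping of \cite{biswas2025improved} transfers without modification.
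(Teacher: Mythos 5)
Your proposal is correct and follows essentially the same route as the paper, whose proof of this lemma is simply the remark that the Ishii--Lions argument of \cite{biswas2025lipschitz,biswas2025improved} goes through verbatim because the added term $-\eps\Delta$ only makes the equation more elliptic; you supply the extra detail of handling the local part via the Crandall--Ishii lemma, which is the standard way to make that remark precise. One minor imprecision: with the localization term $\kappa\,\psi(x)$ present one only gets $\operatorname{tr}(X_\tau - Y_\tau) \leq \kappa\,\operatorname{tr} D^2\psi(\bar x) + O(\tau)$ rather than a nonpositive quantity, but this is a bounded error of exactly the same type as the other localization errors already absorbed in the bookkeeping of \cite{biswas2025improved}, so the conclusion is unaffected.
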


\begin{proof}
    The proof follows exactly the same lines as \cite{biswas2025lipschitz,biswas2025improved}. Adding the term $\eps\Delta$ makes the equation more elliptic, which only helps with the estimates.
\end{proof}

Using this uniform regularity as well as good stability properties for viscosity solutions, we obtain the following stability result. 
\begin{lemma}\label{l:stability}
    Consider the solutions $u_\varepsilon$ of \eqref{eq:main_apriori} for various values of $\eps \in (0,1)$. Here, $u$ is a solution to \eqref{eq:fracplap} in $B_{2R}$. Then,
    \[
    u_{\eps}\to u \quad \text{ locally uniformly in } B_{2R}
    \]
    and
    \[
    [u_{\eps}-u]_{W^{s,p}(B_r)}\to 0\quad \text{ as } \eps \to 0 \text{ for any } r <2R.
    \]
\end{lemma}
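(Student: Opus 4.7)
The plan is to derive locally uniform convergence by a standard compactness-plus-viscosity-stability argument, and to obtain the quantitative Sobolev convergence by an energy identity coming from testing both equations against the difference $u_\eps - u$.

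By Lemma \ref{l:Lips_apriori} applied to $u_\eps$, together with standard $L^\infty$ estimates for the equation \eqref{eq:main_apriori} (the extra term $-\eps \Delta$ only enhances ellipticity and is compatible with such bounds), the family $\{u_\eps\}_{\eps \in (0,1)}$ satisfies uniform Lipschitz estimates on every compact subset of $B_R$ depending only on $R$, $\|u\|_{L^\infty(B_R)}$ and $\Tail_{p-1,sp}(u;R)$, but independent of $\eps$. Combined with the boundary condition $u_\eps = u$ in $B_R^c$, the family is locally equi-Lipschitz in $B_{2R}$. By Arzel\`a--Ascoli, any sequence $\eps_k \to 0$ admits a subsequence along which $u_{\eps_k}$ converges locally uniformly in $B_{2R}$ to some Lipschitz function $v$ that agrees with $u$ outside $B_R$. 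By Lemma \ref{l:equivalence}, each $u_\eps$ is a viscosity solution of its equation; a standard stability argument in which the local term $\eps_k \Delta \phi(x_k) \to 0$ for any smooth test function $\phi$ then shows that $v$ is a viscosity solution of \eqref{eq:fracplap} in $B_R$. The equivalence result of \cite{korvenpaa2019equivalence} together with the comparison principle for \eqref{eq:fracplap} forces $v = u$; uniqueness of the subsequential limit upgrades this to convergence of the full family.

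For the Sobolev estimate, the function $\phi := u_\eps - u$ is Lipschitz, identically zero outside $B_R$, and therefore admissible as a test function in both weak formulations. Subtracting them produces the energy identity
\begin{align*}
\iint_{\R^d \times \R^d} \frac{\bigl(J_p(u_\eps(x)-u_\eps(y)) - J_p(u(x)-u(y))\bigr)\bigl(\phi(x)-\phi(y)\bigr)}{|x-y|^{d+sp}}\,\dd x\,\dd y + \eps \int_{B_R} \nabla u_\eps \cdot \nabla \phi \,\dd x = 0.
\end{align*}
For $p \geq 2$, the elementary monotonicity $(J_p(a) - J_p(b))(a-b) \geq c_p |a-b|^p$ bounds the first integral from below by $c_p [u_\eps - u]_{W^{s,p}(\R^d)}^p$. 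Writing $\nabla \phi = \nabla u_\eps - \nabla u$, the second term equals $\eps \|\nabla u_\eps\|_{L^2(B_R)}^2 - \eps \int_{B_R} \nabla u_\eps \cdot \nabla u$, whose absolute value is controlled by $C\eps$ by the uniform Lipschitz bounds on $u_\eps$ and $u$ in $B_R$. Rearranging gives $[u_\eps - u]_{W^{s,p}(\R^d)}^p \leq C\eps$, from which the stated convergence in $W^{s,p}(B_r)$ for every $r < 2R$ follows immediately, since the local Gagliardo seminorm is dominated by the global one.

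The main obstacle is the simultaneous legitimacy of $\phi = u_\eps - u$ as a test function in both weak formulations, in particular the absolute convergence of the nonlocal tail contributions. This is handled by approximating $\phi$ by smooth compactly supported functions and passing to the limit, using the uniform Lipschitz estimate in $B_R$ together with the $L^{p-1}_{sp}$ tail condition on $u$ (and hence on $u_\eps$, since the two agree outside $B_R$). A secondary technical point is obtaining the $\eps$-independent $L^\infty$ bound on $u_\eps$ in $B_R$ needed to feed into Lemma \ref{l:Lips_apriori}; this follows from a standard comparison/maximum principle for the mixed equation \eqref{eq:main_apriori}, whose proof parallels the purely nonlocal case since the added $-\eps \Delta$ term preserves the ordering of sub- and supersolutions.
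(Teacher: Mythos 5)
Your argument for the locally uniform convergence is essentially the paper's: uniform interior Lipschitz bounds from Lemma \ref{l:Lips_apriori}, Arzel\`a--Ascoli, stability of viscosity solutions under uniform limits (the extra term $\eps_k\Delta\phi(x_k)\to 0$ against smooth test functions), and uniqueness to identify the limit and pass from subsequences to the full family. For the $W^{s,p}$ convergence you take a genuinely different route. The paper observes that a family that is uniformly Lipschitz on $B_r$ is precompact in $W^{s,p}(B_r)$ for $s<1$, so the same subsequence converges in $W^{s,p}(B_r)$ and the uniqueness of the limit finishes the job; this is soft but short. Your energy identity, obtained by testing both weak formulations with $\phi=u_\eps-u$ and invoking the monotonicity $(J_p(a)-J_p(b))(a-b)\geq c_p|a-b|^p$ for $p\geq 2$, is quantitative and yields a rate, which is strictly more information. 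One step is not justified as written: you control $\eps\int_{B_R}\nabla u_\eps\cdot\nabla\phi$ by $C\eps$ using ``uniform Lipschitz bounds on $u_\eps$ in $B_R$'', but Lemma \ref{l:Lips_apriori} is an \emph{interior} estimate and gives no $\eps$-independent bound on $\nabla u_\eps$ up to $\partial B_R$ (nor does the paper prove one). The repair is to split
\[
\eps\int_{B_R}\nabla u_\eps\cdot\nabla\phi \;=\; \eps\|\nabla u_\eps\|_{L^2(B_R)}^2-\eps\int_{B_R}\nabla u_\eps\cdot\nabla u,
\]
discard the first term, which has the favorable sign, and bound the second by $C\sqrt{\eps}$ via Cauchy--Schwarz together with the a priori bound $\eps\|\nabla u_\eps\|_{L^2(B_R)}^2\leq C$ that follows from comparing the energy of the minimizer $u_\eps$ with that of the competitor $u$. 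This gives $[u_\eps-u]_{W^{s,p}}^p\leq C\sqrt{\eps}$, which still implies the claim. (Your write-up and the paper's are equally informal about the behavior near $\partial B_R$, where the interior Lipschitz estimate degenerates, so I do not count that against you.)
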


\begin{proof}
    From Lemma \ref{l:Lips_apriori}, we know that the family $\{u_\eps\}$ is uniformly Lipschitz. By Arzelà-Ascoli Theorem, there exists a subsequence $u_{\eps_n}$ that converges locally uniformly. Since $W^{s,p}(B_r)$ is compactly contained in the space of Lipschitz functions, we can also conclude that $u_{\eps_n}$ converges in $W^{s,p}(B_r)$ by taking a further subsequence if necessary.

    From the stability of viscosity solutions under uniform limits, we conclude that the $\lim_{n\to \infty} u_{\eps_n}$ is a viscosity solution of \eqref{eq:fracplap}, that has to coincide with $u$ by uniqueness. Thus, $\lim_{n\to \infty} u_{\eps_n} = u$.

    Since every subsequential limit of $u_\eps$ has the same limit, and the $u_\eps$ are contained in a compact set of functions, then the full limit holds $\lim_{\eps \to 0} u_\eps = u$ both locally uniformly and locally in $W^{s,p}$.
\end{proof}
In the next result, we summarize the further regularity properties of $u_\eps$. The regularity obtained justifies the assumption in Sections \ref{s:reduc_grad} and \ref{s:stage3}. 
To keep the focus on the discussion, the proof of this result, being technical and arguably standard, is presented in the Appendix  \ref{s:appendix}.

\begin{lemma} \label{l:eps-regularity-needed}
    Let $u_\eps$ solve \eqref{eq:main_apriori}. Then, for each small $\eps$, we have
    \begin{itemize}
    \item If  $p \in [2,1/(1-s))$ then $u_\eps\in C^{2,\alpha}_{loc}(B_R)$ for all $\alpha<p(1-s)$.
    \item If $p \in [1/(1-s),2/(1-s))$ then $u_\eps\in C^{2,\alpha}_{loc}(B_R)$ for all $\alpha \in (0,1)$.
    \end{itemize}
\end{lemma}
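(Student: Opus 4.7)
The strategy is a bootstrap argument driven by the regularizing effect of the local Laplacian. Rewriting the equation as
\[
-\eps\Delta u_\eps = -(-\Delta_p)^s u_\eps,
\]
each iteration will control the right-hand side in some function space using the current regularity of $u_\eps$, and then apply standard $L^q$ or Schauder estimates for $-\Delta$ to gain two derivatives. The baseline is Lemma \ref{l:Lips_apriori}, which provides $u_\eps\in\mathrm{Lip}_{loc}(B_R)$.

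For $p\in[1/(1-s),2/(1-s))$ the fractional $p$-Laplacian of a Lipschitz function is pointwise bounded: splitting at $|x-y|=1$, the interior integral is controlled by $[u_\eps]_{Lip}^{p-1}\int_0^1 r^{p-2-sp}\,dr$, which converges precisely because $p(1-s)\geq 1$, while the exterior part is dominated by $\|u_\eps\|_{L^\infty}$ and ${\Tail}_{p-1,sp}(u_\eps;R)$. Hence $(-\Delta_p)^s u_\eps\in L^\infty_{loc}$, and $L^q$-theory gives $u_\eps\in W^{2,q}_{loc}$ for every $q<\infty$, in particular $u_\eps\in C^{1,\beta}_{loc}$ for every $\beta<1$. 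A second iteration, using Lemma \ref{l:est_tail_difference} to control the tail and a direct estimate for the local part (now straightforward since $\nabla u_\eps$ is Hölder continuous), shows $(-\Delta_p)^s u_\eps\in C^\beta_{loc}$ for every $\beta<1$, and Schauder yields the claim $u_\eps\in C^{2,\alpha}_{loc}$ for every $\alpha<1$.

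For $p\in[2,1/(1-s))$ the Lipschitz bound alone does not make the defining integral absolutely convergent, so I would interpret $(-\Delta_p)^s u_\eps$ as an element of $H^{-\gamma}_{loc}$ with $\gamma=1-p(1-s)/2$. Following the bilinear-form computation of Lemma \ref{l:LL-in-Walpha} adapted to the nonlinear operator (bounding the interior contribution via $|u_\eps(x)-u_\eps(y)|\leq [u_\eps]_{Lip}|x-y|$ and treating the tail separately) yields $(-\Delta_p)^s u_\eps\in H^{-\gamma}_{loc}$. Fractional elliptic regularity for $-\eps\Delta$ then promotes $u_\eps$ to $H^{2-\gamma}_{loc}=H^{1+p(1-s)/2}_{loc}$, hence $\nabla u_\eps\in C^{\alpha_0}_{loc}$ for some $\alpha_0>0$ by Sobolev embedding. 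Iterating refines the Hölder exponent: at each step the regularity of $(-\Delta_p)^s u_\eps$ tracks that of $u_\eps$ modulo the fixed loss of $2-p(1-s)$ derivatives intrinsic to the operator, and the iteration stabilizes at $u_\eps\in C^{2,\alpha}_{loc}$ for every $\alpha<p(1-s)$.

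The main obstacle will be the saturation of the Hölder exponent at $p(1-s)$ in the subcritical range. This reflects the differential order $2-p(1-s)$ of $(-\Delta_p)^s$ acting on generic $C^{2,\alpha}$ functions: near a critical point of $u_\eps$ the integrand in $(-\Delta_p)^s u_\eps$ is genuinely singular, and the modulus of continuity of the output is controlled only by the second-order expansion of $u_\eps$ there. Closing the estimate at this sharp exponent requires carefully accounting for the contribution of a shrinking neighborhood of the critical set, which is where the main technical work will lie.
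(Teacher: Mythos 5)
Your first bullet follows essentially the paper's route (pointwise boundedness of $(-\Delta_p)^s$ on Lipschitz functions, then Lipschitz continuity of $(-\Delta_p)^s$ on $C^{1,\alpha}$ functions with $\alpha>2-p(1-s)$, then Schauder), but it silently fails at the endpoint $p=1/(1-s)$, which your statement includes. There $p-2-sp=-1$, so $\int_0^1 r^{p-2-sp}\,\dd r$ diverges and $(-\Delta_p)^s$ of a Lipschitz function is \emph{not} pointwise bounded; the claim that the integral ``converges precisely because $p(1-s)\geq 1$'' is wrong at equality. The paper treats this borderline case separately (Lemma \ref{l:frac_plap_of_Lipschitz_special}), placing $(-\Delta_p)^s u_\eps$ in every negative Besov space $B^{-\alpha}_{\infty,\infty}$ by exploiting only $C^{0,\beta}$ regularity with $\beta<1$, and then runs the same incremental-quotient bootstrap as in the subcritical range.

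The second bullet has a genuine gap at the very first promotion step. From $(-\Delta_p)^s u_\eps\in H^{-\gamma}_{loc}$ with $\gamma=1-p(1-s)/2$ you get $u_\eps\in H^{1+p(1-s)/2}_{loc}$, but this space does \emph{not} embed into $C^{1,\alpha_0}$ (nor even into $C^1$) for $d\geq 2$: Sobolev embedding would require $p(1-s)/2>d/2$, which is impossible since $p(1-s)<1$. So the iteration never produces a H\"older-continuous gradient and cannot start. The paper circumvents exactly this by testing against $L^1$-based spaces: $(-\Delta_p)^s u_\eps$ is shown to lie in the dual of $W^{\alpha,1}_0=B^{\alpha}_{1,1}$ with $\alpha=1-p(1-s)$ (Lemma \ref{l:frac_plap_of_Lipschitz}), i.e.\ in $B^{-\alpha}_{\infty,\infty}$, and Besov regularity for $\Delta$ then gives $u_\eps\in B^{2-\alpha}_{\infty,\infty}=C^{1,p(1-s)}_{loc}$ with no dimensional loss. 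The subsequent gain from $C^{1,\alpha}$ to $C^{1,2\alpha}$ and eventually $C^{2,\gamma}$ is not a formal ``order $2-p(1-s)$'' bookkeeping: it is carried out by taking incremental quotients of fractional order $\beta\in(\alpha,\min\{2\alpha,1\})$, linearizing the operator via Lemma \ref{l:appendix}, and observing that the resulting kernel is bounded \emph{above} by $C|x-y|^{-d-2+p(1-s)}$ thanks to the Lipschitz bound on $u_\eps$. In particular, the degeneracy of the kernel near critical points of $u_\eps$, which you flag as the main obstacle, is harmless here: for these upper bounds one only needs that $|u_\eps(x)-u_\eps(y)|^{p-2}\leq C|x-y|^{p-2}$, and the vanishing of the kernel only makes the right-hand side smaller. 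The real work is in the duality argument and the bookkeeping of the exponents $\alpha$, $\beta$, $\gamma=1-2\alpha+\beta$ across iterations, none of which appears in your sketch.
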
 

The regularity estimates of Lemma \ref{l:eps-regularity-needed} are probably not optimal, but they are more than enough to carry out all the arguments in Section \ref{s:reduc_grad}. 

We now proceed to prove all the estimates from Section \ref{s:reduc_grad} but for a solution $u_\eps$ of \eqref{eq:main_apriori} instead of a solution $u$ of \eqref{eq:fracplap}. The main objective is to prove that Lemma \ref{l:iteration} holds for $u_\eps$.

We start with a properly modified version of Lemma \ref{l:ve-eq-small-rhs} about the linearized equation, that now contains an extra term.
\begin{lemma} \label{l:equationforv_e_eps}
    Let $u_\eps$ be a function satisfying the assumptions of Lemma \ref{l:iteration} but with the equation \eqref{eq:main_apriori} instead of \eqref{eq:fracplap}. Let
    \begin{align}\label{eq:def_ve_apriori}
        v_e(x):= \eta(2^{1-K_0} x) \left(e \cdot \nabla u_\eps(x) \right),
    \end{align}
    with $\eta$ defined in Section \ref{s:linearized}. Then, it solves the equation 
    \begin{equation} \label{eq:v_e_eps}
    |\LL_{u_\eps} v_e -\varepsilon\Delta v_e|  \leq \eps_0 \quad \text{ in }B_1,
    \end{equation}
    where $\eps_0$ is arbitrarily small if we choose $\eps_1$ and $\delta$ small and $K_0$ large in Lemma \ref{l:iteration}.
\end{lemma}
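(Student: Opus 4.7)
The plan is to repeat, essentially verbatim, the derivation of Lemma \ref{l:eqforv_e_scaled} and Lemma \ref{l:ve-eq-small-rhs}, with the only new ingredient being the clean observation that differentiating the extra term $-\eps\Delta u_\eps$ produces precisely $-\eps\Delta v_e$ inside the ball where the cutoff is identically one. The regularity $u_\eps\in C^{2,\alpha}_{\text{loc}}(B_R)$ provided by Lemma \ref{l:eps-regularity-needed} (together with a standard bootstrap for the local term, if needed) is sufficient to make every computation classical.

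The first step is to choose $R=2^{K_0-1}$, so that the cutoff $\eta_R(x):=\eta(x/R)=\eta(2^{1-K_0}x)$ satisfies $\eta_R\equiv 1$ on $B_{3R/2}\supset B_1$ (taking $K_0\geq 2$). In particular, $v_e(x)=e\cdot\nabla u_\eps(x)$ on a neighborhood of $B_1$, and therefore $\Delta v_e(x)=\Delta(e\cdot\nabla u_\eps)(x)=e\cdot\nabla\Delta u_\eps(x)$ for $x\in B_1$ in the classical sense (or, if one only has $C^{2,\alpha}$ regularity, in the distributional sense, which is all that is required since in the final statement we only test $\LL_{u_\eps}v_e-\eps\Delta v_e$ against test functions or bound it pointwise up to the ambient regularity).

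The second step reproduces the derivation in Lemma \ref{l:eqforv_e_scaled} applied to $u_\eps$, noting that the computation there uses only the definition of $(-\Delta_p)^s u_\eps$ (not the vanishing of the equation): splitting the integrand with $\eta_R$ and $1-\eta_R$, differentiating in direction $e$, and exploiting the cancellations, one obtains the identity
\[
\LL_{u_\eps}v_e(x)-e\cdot\nabla\bigl[(-\Delta_p)^s u_\eps\bigr](x)
=\int_{\R^d}|u_\eps(x)-u_\eps(y)|^{p-2}(u_\eps(x)-u_\eps(y))\,e\cdot\nabla_y\!\left(\frac{1-\eta_R(y)}{|x-y|^{d+sp}}\right)\dd y,
\]
for $x\in B_R$, where the right-hand side is bounded in absolute value by $C R^{-1-sp}\|u_\eps\|_{L^\infty(B_R)}^{p-1}+C\,\Tail_{p-1,sp+1}(u_\eps,R)^{p-1}$. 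Substituting the equation $(-\Delta_p)^s u_\eps=\eps\Delta u_\eps$ and using the identification $\eps\Delta v_e(x)=\eps\,e\cdot\nabla\Delta u_\eps(x)$ on $B_1$ from the first step, we get for $x\in B_1$
\[
\bigl|\LL_{u_\eps}v_e(x)-\eps\Delta v_e(x)\bigr|\leq C R^{-1-sp}\|u_\eps\|_{L^\infty(B_R)}^{p-1}+C\,\Tail_{p-1,sp+1}(u_\eps,R)^{p-1}.
\]

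The third step is precisely the bookkeeping at the end of the proof of Lemma \ref{l:ve-eq-small-rhs}: assumptions (3)--(4) of Lemma \ref{l:iteration} (applied to $u_\eps$) bound $\|u_\eps\|_{L^\infty(B_R)}\leq (1-\delta)^{-K_0}2^{K_0}$ and $\Tail_{p-1,sp+1}(u_\eps,R)^{p-1}\leq\eps_1^{p-1}$, whence the right-hand side is controlled by $C\bigl(((1-\delta)^{-(p-1)}2^{p(1-s)-2})^{K_0}+\eps_1^{p-1}\bigr)=:\eps_0$, which is arbitrarily small for $\delta,\eps_1$ small and $K_0$ large since $(1-\delta)^{-(p-1)}2^{p(1-s)-2}<1$ in the regime $p<2/(1-s)$.

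The only genuinely new point compared to Lemma \ref{l:ve-eq-small-rhs} is the handling of the local dissipative term; this is the easy half, because differentiation of $\eps\Delta u_\eps$ commutes with $e\cdot\nabla$ on $B_1$ where the cutoff is trivial. The mild obstacle, if any, is justifying the pointwise identity $\Delta v_e=e\cdot\nabla\Delta u_\eps$ when $u_\eps$ is only classically $C^{2,\alpha}$; this is however transparent since every use of $\LL_{u_\eps}v_e-\eps\Delta v_e$ later on (in Section \ref{s:reduc_grad}) is through its bilinear pairing against test functions, where the identification is unambiguous, and at smoothness $C^{2,\alpha}$ the pointwise interpretation is already available up to an extra $C^{0,\alpha}$ bootstrap on $\Delta u_\eps$ implied by rewriting $\eps\Delta u_\eps=(-\Delta_p)^s u_\eps$ and using the regularity of the fractional term on $u_\eps\in C^{2,\alpha}$.
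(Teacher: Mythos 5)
Your proof is correct and follows essentially the same route as the paper: the paper's own argument is precisely that $-\eps\Delta v_e$ coincides with $e\cdot\nabla(-\eps\Delta u_\eps)$ in $B_1$ where the cutoff is identically one, so the derivation of Lemma \ref{l:eqforv_e_scaled} and the bookkeeping of Lemma \ref{l:ve-eq-small-rhs} go through unchanged. Your additional remarks on the $C^{2,\alpha}$ regularity of $u_\eps$ justifying the classical computations are consistent with what the paper establishes in Lemma \ref{l:eps-regularity-needed}.
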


\begin{proof}
The term $-\eps \Delta v_e$ is local and equals $e \cdot \nabla (-\eps \Delta u_\eps)$ in $B_1$. Thus, it does not affect the proof of Lemma \ref{l:eqforv_e_scaled}, leading to the conclusion.
\end{proof}

Using the equation \eqref{eq:v_e_eps}, we reproduce the result of Lemma \ref{l:insomeball} for $u_\eps$, with an estimate independent of $\eps$. Indeed, the proofs in Section \ref{s:reduc_grad} apply to the equation \eqref{eq:main_apriori} with only some minimal modification that we explain below. In this case, the function $v_\eps$ is guaranteed to be smooth, so all the computations are well justified.

Multiplying equation \eqref{eq:v_e_eps} by $v_k$ and integrating over $B_1$ we get
\begin{equation} \label{eq:testvkeps}
    \int_{B_1} (\LL_{u_\eps} [v_e] - \varepsilon \Delta v_e) v_k\,\dd x\leq \varepsilon_0 \int_{B_1} v_k \,\dd x.
\end{equation}
We split the term $ \int_{B_1} \LL_u [v_e] v_k \dd x$ following the same logic as in Section \ref{s:reduc_grad} and bound each term in a similar way. We have
\begin{align*}
    J_{Tail,k} =&\,\int_{\R^d\setminus B_1}\int_{B_1} (\varphi_k(x)-v_e(y))v_k(x) K_u(x,y)\,\dd x\dd y\\
J_{\varphi_k}=&\,\iint_{B_1\times B_1}(\varphi_k(x)-\varphi_k(y))v_k(x)K_u(x,y)\,\dd x\dd y + \eps \int_{B_1} \nabla v_k(x) \cdot \nabla \varphi_k(x) \dd x \\
    J_{gap,k}=&\,\iint_{B_1\times B_1}[\varphi_k(y)-v_e(y)]_+v_k(x)K_u(x,y)\,\dd x\dd y\\
    J_{r,k}=&\, \iint _{B_1\times B_1} (v_k(x)-v_k(y))v_k(x) K_u(x,y)\,\dd x\dd y + \eps \int_{B_1} |\nabla v_r(x)|^2 \dd x.
\end{align*}
We observe that the term $-\eps \Delta v_e$ in \eqref{eq:testvkeps} affects the terms $J_{\varphi_k}$ and $J_{r,k}$. However, the results of Lemmas \ref{l:bound_Jphi} remains true as stated in terms of the new $J_{\varphi_k}$ and $J_{r,k}$. Moreover, the estimate from Lemma \ref{l:bound_Jr} remains true by ignoring the second term. The rest of the proof proceeds without modification.

Combining the result of Lemma \ref{l:iteration} with Lemma \ref{l:eps_osc}, we obtain an estimate for $u_\eps$ independent of $\eps$. Passing to the limit as $\eps \to 0$, we get the following result that we restate here in terms that do not directly involve the gradient $\nabla u$.

\begin{lemma} \label{l:iteration-weak-solutions}
For any $\eps_0 > 0$, there exists $\delta, \eps_1$ (small) and $K_0$ (large) depending on $\eps_0, d, s, p$ such that if the following conditions hold
\begin{itemize}
    \item $u$ satisfies \eqref{eq:fracplap} in $B_{2^{K_0+1}}$ in the weak (and viscosity) sense,
    \item $u(0)=0$,
    \item $[u]_{Lip(B_{2^n})} \leq (1-\delta)^{-n}$, for $n=0,\dots,K_0$,
    \item ${\Tail}_{p-1,sp+1}(u,2^{K_0}) \leq \eps_1$.
\end{itemize}
Then it implies one of the two options below
\[
\begin{cases}
\text{there exists } e \in S^{d-1} \text{ such that } |u(x) - e\cdot x|   \leq \eps \text{ in } B_{1/2}   \\
[u]_{Lip(B_{1/2})} \leq (1-\delta).
\end{cases}
\]
\end{lemma}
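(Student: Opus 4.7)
The plan is to convert the gradient-based dichotomy of Lemma \ref{l:iteration}, which requires classical regularity, into the gradient-free dichotomy of the present statement by inserting the vanishing-viscosity regularization \eqref{eq:main_apriori} as an intermediate step. First I would fix $\tau>0$ and let $u_\tau$ denote the solution of \eqref{eq:main_apriori} on $B_{2^{K_0+1}}$ with exterior data equal to $u$. By Lemma \ref{l:eps-regularity-needed}, $u_\tau\in C^{2,\alpha}_{loc}$, so every quadratic-form manipulation in Section \ref{s:reduc_grad} is classically justified. By Lemma \ref{l:stability}, $u_\tau\to u$ locally uniformly and in $W^{s,p}_{loc}$ as $\tau\to 0$; in particular $u_\tau(0)\to 0$. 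Iterating the uniform Lipschitz estimate of Lemma \ref{l:Lips_apriori} at each dyadic scale and combining it with hypothesis (3) on $u$, one obtains, for all sufficiently small $\tau$ and some $\delta'$ arbitrarily close to $\delta$, the gradient bounds
\[
\|\nabla u_\tau\|_{L^\infty(B_{2^n})}\leq (1-\delta')^{-n},\qquad n=0,1,\ldots,K_0.
\]
The tail condition (4) and the normalization at the origin pass to $u_\tau$ with negligible loss because $u_\tau=u$ outside $B_{2^{K_0+1}}$.

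Next I would execute the dichotomy of Section \ref{s:reduc_grad} on the smooth function $u_\tau$. By Lemma \ref{l:equationforv_e_eps}, the extra term $-\tau\Delta u_\tau$ contributes, after testing against $v_k$, a nonnegative Dirichlet-energy piece $\tau\int|\nabla v_k|^2$ to $J_{r,k}$ and a sign-indefinite piece $\tau\int\nabla v_k\cdot\nabla\varphi_k$ to $J_{\varphi_k}$; the latter is absorbed into $J_{r,k}$ by Cauchy--Schwarz, leaving every estimate of Lemmas \ref{l:bound_J_Tail}--\ref{l:bound_Jr} unaffected. Choosing $r,\mu>0$ small enough (depending on $\eps$) so that Lemma \ref{l:eps_osc} yields $\osc_{B_1}(u_\tau-e\cdot x)\leq \eps/2$ whenever $\nabla u_\tau$ is $r$-close to $e$ on a $(1-\mu)$-fraction of $B_1$, we arrive at the following alternative for each $\tau$: either there exists $e_\tau\in S^{d-1}$ with
\[
|\{x\in B_1:|\nabla u_\tau(x)-e_\tau|\geq r\}|<\mu|B_1|,
\]
in which case Lemma \ref{l:eps_osc} gives $\osc_{B_1}(u_\tau-e_\tau\cdot x)\leq \eps/2$; or the opposite measure bound holds for every $e\in S^{d-1}$, and the $\tau$-adapted version of Lemma \ref{l:iteration} forces $e\cdot\nabla u_\tau\leq 1-\delta'$ in $B_{1/2}$ for every $e$, whence $[u_\tau]_{Lip(B_{1/2})}\leq 1-\delta'$.

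Finally I would pass to a limit along a sequence $\tau_n\to 0$. By compactness of $S^{d-1}$, after extracting a subsequence, either all $u_{\tau_n}$ fall into the first alternative with $e_{\tau_n}\to e$, in which case the uniform convergence $u_{\tau_n}\to u$ gives $\osc_{B_1}(u-e\cdot x)\leq \eps/2$ and the normalization $u(0)=0$ lets us absorb the additive constant to conclude $|u(x)-e\cdot x|\leq \eps$ on $B_{1/2}$; or all $u_{\tau_n}$ fall into the second alternative, and the Lipschitz bound $[u]_{Lip(B_{1/2})}\leq 1-\delta'$ is inherited by $u$ under uniform limits. A final minor rechoice of the parameter $\delta$ reconciles $\delta'$ with the $\delta$ in the statement. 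The main obstacle is the uniform-in-$\tau$ transfer of the dyadic Lipschitz hypothesis from $u$ to $u_\tau$: this depends on the quantitative robustness of the estimate of \cite{biswas2025lipschitz,biswas2025improved} under the added viscosity term, together with the stability of viscosity solutions under vanishing-viscosity regularization provided by Lemma \ref{l:stability}.
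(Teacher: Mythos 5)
Your proposal follows essentially the same route as the paper: regularize via \eqref{eq:main_apriori}, run the De Giorgi machinery of Section \ref{s:reduc_grad} on the smooth $u_\tau$ with the extra $-\tau\Delta$ term folded into $J_{\varphi_k}$ and $J_{r,k}$ exactly as in Lemma \ref{l:equationforv_e_eps}, convert the small-measure alternative into an oscillation bound via Lemma \ref{l:eps_osc}, and pass to the limit using Lemma \ref{l:stability}; the limit argument (subsequence of directions $e_{\tau_n}\to e$, normalization $u(0)=0$ to absorb the constant, stability of Lipschitz bounds under uniform convergence) is sound. The one step you assert rather than establish is the uniform-in-$\tau$ transfer of hypothesis (3) to $u_\tau$: iterating Lemma \ref{l:Lips_apriori} yields a Lipschitz bound with a multiplicative constant $C>1$ depending on $d,s,p$, not the constant-free bound $(1-\delta')^{-n}$ with $\delta'$ close to $\delta$ that the hypotheses of Lemma \ref{l:iteration} require; this point needs a separate argument (e.g.\ tracking which parts of (3) are actually used — mostly $L^\infty$ bounds that pass to $u_\tau$ by uniform convergence — plus a genuine gradient bound on $B_1$), though the paper itself is equally terse about it.
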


Iterating Lemma \ref{l:iteration-weak-solutions} we reach the same conclusion of Section \ref{s:stage1} concluding the first stage of the proof for weak solutions. Namely, essentially the same conclusions as in Lemma \ref{l:mainlemma5} and Corollary \ref{c:end_of_stage1} apply to weak solutions as well. Here is a reformulated version of Lemma \ref{l:mainlemma5} stated without any reference to derivatives of $u$.

\begin{lemma} \label{l:stage1-weak-sols}
For any $\eps_0 >0$ small, there exist $\delta, \eps_1>0$ (small) and $K_0>0$ (large) depending on $\eps_0, \, d,\, s,\, p$ such that if the following conditions hold
    \begin{enumerate}
    \item $u$ satisfies \eqref{eq:fracplap} in $B_{2^{K_0+1}}$ in the weak sense,
    \item $u(0) = 0$,
    \item  $[u]_{Lip(B_{2^{n}})} \leq (1-\delta)^{-n} ,$ for $n=0,1,\dots,K_0$, 
    \item $\Tail_{p-1,sp+1}(u,2^{K_0}) \leq \eps_1$.
    \end{enumerate}
 Then one of the following must hold.
    \begin{itemize}
      \item 
      There is a nonnegative integer $\kmax$ such that  
      \[
      [u]_{Lip(B_{2^{-n}})} \leq (1-\delta)^n  \text{ for } n = 0,1,\dots,\kmax.
      \] 
      Moreover, there exists a unit vector $e \in S^{d-1}$ such that
       \[ |u(x) - (1-\delta)^{-\kmax} e \cdot x| \leq \eps_0 (1-\delta)^{-\kmax} 2^{-\kmax} \text{ in } B_{2^{-\kmax}}.\]
    \item 
    For $\alpha_0>0$ so that $2^{-\alpha_0}=(1-\delta)$ and  $C_0 = (1-\delta)^{-1}$, we have
    \begin{align*}
    [u]_{Lip(B_r(x))} &\leq C_0 r^{\alpha_0}\quad \text{ for } r \in (0,1], \\
    |u(x)| &\leq C_0 |x|^{1+\alpha_0} \quad \text{ for }  x \in B_1.
    \end{align*}
    In particular $\nabla u(0)$ is well-defined and equals zero.
\end{itemize}
    
\end{lemma}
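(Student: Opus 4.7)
\textbf{Proof plan for Lemma \ref{l:stage1-weak-sols}.}
The plan is to mimic the argument used for classical solutions in Lemma \ref{l:mainlemma5}, replacing Lemma \ref{l:iteration} by its weak-solution version Lemma \ref{l:iteration-weak-solutions} and tracking the information in terms of $[u]_{Lip}$ and closeness to a linear function rather than in terms of pointwise values of $\nabla u$. More precisely, for each nonnegative integer $n$ I will consider the rescaled function
\[
v_n(x) := (1-\delta)^{-n}\, 2^n\, u(2^{-n}x),
\]
and iterate on $n$ the dichotomy provided by Lemma \ref{l:iteration-weak-solutions}. Condition \textit{(1)} for $v_n$ is automatic from \textit{(1)} for $u$ (up to choosing $K_0$ appropriately), condition \textit{(2)} follows from $u(0)=0$, while the Lipschitz bound \textit{(3)} needs to be propagated scale by scale: if we inductively have $[u]_{Lip(B_{2^{-k}})}\leq (1-\delta)^k$ for $k=0,\dots,n$, then together with hypothesis \textit{(3)} of the lemma we get $[v_n]_{Lip(B_{2^m})}\leq (1-\delta)^{-m}$ for all $m$ in the required range.

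The delicate input is the tail bound \textit{(4)} for $v_n$. Using the scaling identity \eqref{scale_tails} and splitting the integral into dyadic annuli exactly as in \eqref{eq:bound_tail_iter}, the contribution of the region $\{|y|\leq 2^{K_0+n}\}$ is controlled by summing a geometric series in $\theta = (1-\delta)^{-(p-1)}2^{p(1-s)-2}<1$ built from the Lipschitz bounds at each annulus, while the contribution of $\{|y|>2^{K_0+n}\}$ is $\theta^n \eps_1^{p-1}$ by the scaling of the tail. Choosing $K_0$ large enough (depending on $\eps_1$) makes the total bounded by $\eps_1^{p-1}$, so \textit{(4)} holds for $v_n$ with the same constant as for $u$. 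Having verified the hypotheses, Lemma \ref{l:iteration-weak-solutions} applied to $v_n$ produces the dichotomy: either there is $e_n\in S^{d-1}$ with $|v_n(x)-e_n\cdot x|\leq \eps_0$ in $B_{1/2}$, or $[v_n]_{Lip(B_{1/2})}\leq 1-\delta$. The first alternative scales back to the closeness statement at level $\kmax=n$ in the lemma, and the second scales back to $[u]_{Lip(B_{2^{-n-1}})}\leq (1-\delta)^{n+1}$, extending the induction.

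Define $\kmax$ as the smallest $n\geq 0$ for which the first alternative occurs (with $\kmax=\infty$ if it never does). If $\kmax<\infty$, the first bullet of the conclusion follows directly from the construction above. If $\kmax=\infty$, then $[u]_{Lip(B_{2^{-n}})}\leq (1-\delta)^n$ holds for every $n\geq 0$. Given any $r\in (0,1]$, pick $n$ with $2^{-n-1}\leq r\leq 2^{-n}$; then
\[
[u]_{Lip(B_r)}\leq [u]_{Lip(B_{2^{-n}})}\leq (1-\delta)^n = (1-\delta)^{-1}(1-\delta)^{n+1} = C_0\, (2^{-n-1})^{\alpha_0}\leq C_0 r^{\alpha_0},
\]
which gives the first inequality after a standard translation argument to center at a general $x$ (valid since the hypotheses transfer to any base point in $B_1$ after a harmless adjustment of $K_0$). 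Combining this with $u(0)=0$, for $x\in B_1$ we bound $|u(x)|=|u(x)-u(0)|\leq [u]_{Lip(B_{|x|})}|x|\leq C_0 |x|^{1+\alpha_0}$, proving the pointwise bound. In particular $|u(x)|/|x|\to 0$ as $x\to 0$, so $u$ is differentiable at the origin with $\nabla u(0)=0$.

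The main obstacle I anticipate is the bookkeeping of the tail during the iteration in the second paragraph above: one must carefully use the Lipschitz decay obtained at previous scales to absorb the contribution of the growing dyadic annuli into the small constant $\eps_1$, which only works because $\theta<1$ for $\delta$ small and $p<2/(1-s)$. Apart from this, the argument is the direct translation of the proof of Lemma \ref{l:mainlemma5} and Corollary \ref{c:end_of_stage1} to the weak-solution setting, with the improvement-of-flatness alternative replacing the $L^\infty$ control on $\nabla u$.
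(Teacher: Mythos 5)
Your proposal is correct and follows essentially the same route as the paper, which proves this lemma simply by iterating Lemma \ref{l:iteration-weak-solutions} through the dyadic rescalings $v_n(x)=(1-\delta)^{-n}2^n u(2^{-n}x)$ exactly as in the proofs of Lemma \ref{l:iteration_n_0times} and Lemma \ref{l:mainlemma5}, with the tail propagated by the geometric-series computation of \eqref{eq:bound_tail_iter}. The only superfluous step is your appeal to a translation argument for the bound $[u]_{Lip(B_r)}\leq C_0 r^{\alpha_0}$: in the second alternative this estimate is only asserted for balls about the origin (where the iteration never terminates), so the dyadic decay of the Lipschitz seminorms already suffices.
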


\subsection{Justifying Section \ref{s:stage2}}

We move on to justifying the second stage of our proof, in Section \ref{s:stage2}, for weak (and viscosity) solutions, without any a priori assumption on their smoothness. 

This stage of the proof consists of an Ishii-Lions argument, applied directly to $u$. This method is well suited for viscosity solutions, hence it can be carried out in this context essentially as written.

In the definition of viscosity solutions, the equation is tested by evaluating the operator $(-\Delta_p)^s$ at special functions and only at points that can be touched from one side by a local smooth function. In our proof in Section \ref{s:stage2}, we obtain a contradiction by evaluating the equality \eqref{eq:splapforu} at the special pair of points $\bar x$ and $\bar y$ where the maximum of \eqref{contrad1} is achieved. These points are exactly points where the function $u$ is touched from above and below respectively by the smooth function $\phi$ defined in \eqref{eq:def_phi1}.

Following the standard definition of viscosity solutions for integro-differential equations, we consider for some small $\rho >0$, the following test functions
\[
w_1(z) = 
\begin{cases}
u(\bar x) - \phi(\bar x, \bar y) + \phi(z,\bar{y}) & \text{if } z \in B_\rho(\bar{x}), \\[6pt]
u(z) & \text{otherwise},
\end{cases}
\]
and
\[
w_2(z) = 
\begin{cases}
u(\bar y) + \phi(\bar x, \bar y) -\phi(\bar{x},z) & \text{if } z \in B_\rho(\bar{y}), \\[6pt]
u(z) & \text{otherwise}.
\end{cases}
\]

The definition of viscosity solutions tells us that $(-\Delta_p)^s w_1(\bar x)\leq 0$ and $(-\Delta_p)^s w_2(\bar y)\geq 0$. Thus, we substitute \eqref{contrad2} with $(-\Delta_p)^s w_1(\bar x) - (-\Delta_p)^s w_2(\bar y) \leq 0$.

The rest of the proof can be repeated \textit{mutatis mutandis} provided that we choose $\rho < \delta_1 |\bar a|$. Indeed, the estimate for $I_1$ and $I_2$ in Lemmas \ref{l:cone} and \ref{l:D1} use only that $u(\bar x+z) - u(\bar x) \leq \phi(\bar x+z) - \phi(\bar x)$ and a corresponding inequality centered at $\bar y$. These inequalities are, of course, preserved for the new test functions $w_1$ and $w_2$.

We obtain again Corollary \ref{c:stages1and2}. Let us reformulate it here without any reference to derivatives of $u$. 

\begin{corollary} \label{c:stages1and2-weaksolutions}
 For small enough $\delta>0$, let $u$ be function that satisfies the assumptions of Lemma \ref{l:stage1-weak-sols}. Then one of the following must hold.
 \begin{itemize}
    \item  There is a positive integer $N$ such that $[u]_{Lip(B_{{2^{-k}}})} < (1-\delta)^k$ for all $k=1,2,\dots,\kmax$. Moreover, there exists a unit vector $e \in S^{d-1}$ such that 
    \[
    \left[u(x) - (1-\delta)^{\kmax} 2^{-\kmax} e\cdot x \right]_{Lip(B_{2^{-N-1}})} < \frac 14 (1-\delta)^{\kmax}.
    \]
     \item  Let $\alpha_0$ be a small number satisfying $2^{-\alpha_0} = (1-\delta)$ and let $C_0 = (1-\delta)^{-1}$. Then
    \begin{align*}
    [u]_{Lip(B_r(x))} &\leq C_0 r^{\alpha_0}\quad \text{ for } r \in (0,1], \\
    |u(x)| &\leq C_0 |x|^{1+\alpha_0} \quad \text{ for }  x \in B_1.
    \end{align*}
    In particular $\nabla u(0)$ is well-defined and equals zero.
\end{itemize}
\end{corollary}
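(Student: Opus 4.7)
The plan is to combine Lemma \ref{l:stage1-weak-sols}, which discharges the first stage for weak solutions, with the Ishii-Lions argument of Section \ref{s:stage2}, which -- as already noted in the preceding subsection -- applies directly to viscosity (equivalently, weak) solutions without any a priori smoothness assumption on $u$.

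First I would invoke Lemma \ref{l:stage1-weak-sols} with a small parameter $\eps_0 > 0$ to be fixed later. Its second alternative coincides with the second alternative of the corollary, so I may assume the first alternative holds, producing a nonnegative integer $\kmax$, a unit vector $e \in S^{d-1}$, the scale-by-scale Lipschitz control $[u]_{Lip(B_{2^{-n}})} \leq (1-\delta)^n$ for $n = 0, 1, \ldots, \kmax$, and a flatness estimate saying that $u$ is $\eps_0$-close (after the natural normalization) to the linear function $(1-\delta)^{\kmax} e \cdot x$ on $B_{2^{-\kmax}}$.

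Second, I would pass to the rescaling
\[
\bar u(x) := \frac{2^{\kmax}}{(1-\delta)^{\kmax}} \, u(2^{-\kmax} x),
\]
and check, in complete analogy with Corollary \ref{c:end_of_stage1}, that $\bar u$ is a weak solution of \eqref{eq:fracplap} on a large ball, satisfies $[\bar u]_{Lip(B_1)} \leq 1$, and has $\osc_{B_1}(\bar u - e \cdot x) \leq 2\eps_0$. Crucially, using the scaling identity \eqref{scale_tails} together with the telescoping argument of \eqref{eq:bound_tail_iter}, the tail of $\bar u$ is controlled uniformly in $\kmax$: $\Tail_{p-1,sp+1}(\bar u, 1) \leq C_1$. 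I would then apply the viscosity/weak formulation of Lemma \ref{l:IL} to $\bar u$ with target slope $L = 1/4$, picking $\eps_0$ sufficiently small depending on $C_1$. The doubling-variables construction of Section \ref{s:stage2} produces the inequality
\[
\bar u(x) - \bar u(y) - e \cdot (x-y) \leq \tfrac{1}{4}\, \omega(|x-y|) + \kappa \eps_0\, \psi(x), \qquad x, y \in B_1,
\]
with $\psi$ vanishing on $\overline{B}_{1/2}$ and $\omega(r) \leq r$; running the analogous construction with the sign of the linear term reversed (or, equivalently, using the symmetrized penalization $\psi(x)+\psi(y)$) delivers the opposite inequality. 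Restricting to $B_{1/2}$ yields $[\bar u - e \cdot x]_{Lip(B_{1/2})} \leq 1/4$, and unwinding the blow-up translates this into the Lipschitz bound for $u - (1-\delta)^{\kmax} e\cdot x$ on $B_{2^{-\kmax-1}}$ claimed in the first alternative.

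The genuinely analytic content -- the Ishii-Lions argument for viscosity solutions and the first-stage dichotomy for weak solutions -- has already been carried out earlier in this section, so the remaining work is bookkeeping. The one point that needs care is producing a tail bound $C_1$ that is independent of $\kmax$, so that the threshold $\eps_0$ coming from Lemma \ref{l:IL} can be chosen once and for all; this is the same computation as in the proof of Corollary \ref{c:end_of_stage1} and is the only step where one must be vigilant about the scaling.
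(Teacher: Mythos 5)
Your proposal is correct and follows essentially the same route as the paper: invoke Lemma \ref{l:stage1-weak-sols}, rescale to $\bar u$ with a tail bound $C_1$ uniform in $\kmax$ (the computation of \eqref{eq:bound_tail_iter}), and apply the viscosity-solution adaptation of the Ishii--Lions Lemma \ref{l:IL} with $L=1/4$. One small simplification: no reversed-sign or symmetrized construction is needed for the two-sided bound, since $\Phi(x,y)\leq 0$ for \emph{all} pairs $(x,y)\in B_1\times B_1$, so swapping the roles of $x$ and $y$ already gives the opposite inequality, and both penalization terms vanish on $B_{1/2}$.
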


The first alternative sets up of the third stage of the proof. Recall the rescaling $\bar u$ defined in Corollary \ref{c:end_of_stage1}. In addition to the assumptions there, $\bar u$ further satisfies
\begin{align}\label{eq:a.e.apriori}
    \left[ \bar u(x) - e\cdot x \right]_{Lip(B_{1/2})} \leq \frac 14.
\end{align}
This condition above is the same as saying that $|\nabla \bar u - e| \leq 1/4$ almost everywhere, if $\nabla \bar u$ is the weak derivative of the Lipschitz function $\bar u$.

This function $\bar u$, satisfying the condition above as well as \textit{(1)--(5)} in Corollary \ref{c:end_of_stage1}, is the starting point of the next section. We call it again $u$.

\subsection{Justifying Section \ref{s:stage3}}

The third stage in our proof is based on applying elliptic regularity results for integro-differential equations (as in \cite{Kassman-Schwab}) to the directional derivatives of $u$. This method is well suited to be applied to weak solutions (in the sense of distributions) provided that some precautions are taken into consideration.

If we start from a weak solution $u$ of \eqref{eq:fracplap}, even after establishing its Lipschitz regularity through Theorem \ref{t:biswas&co}, its directional derivatives are no better than $L^\infty$ inside the domain of the equation. This is not enough regularity to make sense of the equation \eqref{eq:ve_RHS} in Section \ref{s:stage3}. To overcome this technical inconvenience, we compute an equation for incremental quotients instead.

We will compute the kernel of the equation solved by the difference quotients of $u$ using Lemma \ref{l:appendix}. Furthermore, the same result ensures the nondegeneracy of the kernel in a cone of directions, which in turn allows us to apply the argument developed in Section \ref{s:stage3}.

Let us define the difference quotient of a function $w$ in the direction $e\in S^{d-1}$ by
\[
D_e^hw(x):=\frac{w(x+he)-w(x)}{h} \quad \text{and} \quad  \delta_e^h w(x):= w(x+he)-w(x).
\]

\begin{lemma}\label{l:PDE_DifQuot}
    Let $u$ be a function in $L^{p-1}_{sp}(\R^d)$ that is Lipschitz in $B_1$ and solves \eqref{eq:fracplap}.

    Assume that $[u]_{Lip(B_1)} \leq 1$, $\Tail_{p-1,sp+1}(u,1) \leq C_1$ and there is a unit vector $e \in S^{d-1}$ such that $[u(x) - e \cdot x]_{Lip(B_1)} \leq 1/4$.
    
    For a small $h > 0$, let us define
    \[
    v_e^h(x):= D_e^h u(x)\eta(x),
    \]
    where $\eta$ is a smooth cutoff function which is equal to $1$ in $B_{1/2}$ and equal to zero in $B_{7/8}^c$ Then $v_e^h$  satisfies the equation
    \begin{align*}
        \left|   \int_{\R^d} K(x,x+z) (v_e^h(x) - v_e^h(x+z))\dd z \right| \leq C, \quad \text{ for } x\in B_{1/2}
    \end{align*}
    where $K$ satisfies the assumptions of Theorem \ref{Thm:Kassman&co}.
\end{lemma}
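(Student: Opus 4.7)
My plan follows the strategy of Lemma~\ref{l:stage3}, replacing the directional derivative $e\cdot \nabla u$ by the finite difference $D_e^h u$, so that the argument demands nothing beyond the Lipschitz regularity of $u$. The first step is to linearise the equation discretely. Testing the weak formulation of $(-\Delta_p)^s u = 0$ against $\psi\in C_c^\infty(B_{1-h})$ and its shift $\psi(\,\cdot\,-he)$, subtracting, changing variables, and applying the integral representation in Lemma~\ref{l:appendix} to
\[
\frac{J_p(u(x+he)-u(y+he))-J_p(u(x)-u(y))}{h},
\]
I obtain
\[
\iint_{\R^d\times \R^d}\widetilde K_h(x,y)\bigl(D_e^h u(x)-D_e^h u(y)\bigr)\bigl(\psi(x)-\psi(y)\bigr)\dd x\,\dd y = 0,\qquad \psi\in C_c^\infty(B_{1-h}),
\]
with the symmetric kernel
\[
\widetilde K_h(x,y) := (p-1)|x-y|^{-d-sp}\int_0^1\bigl|(1-t)(u(x)-u(y))+t(u(x+he)-u(y+he))\bigr|^{p-2}\dd t.
\]
The second inequality in Lemma~\ref{l:appendix} gives $cM_h^{p-2}|x-y|^{-d-sp}\le \widetilde K_h\le CM_h^{p-2}|x-y|^{-d-sp}$, where $M_h(x,y):=|u(x)-u(y)|+h|D_e^h u(x)-D_e^h u(y)|$.

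From $[u]_{Lip(B_1)}\le 1$ one reads off $M_h(x,y)\le C|x-y|$ (for $x,y$ in any ball strictly inside $B_1$ and small $h$), hence
\[
\widetilde K_h(x,y)\le C|x-y|^{-d-2\gamma},\qquad \gamma := 1-\frac{p(1-s)}{2}\in (0,1).
\]
From $[u-e\cdot x]_{Lip(B_1)}\le 1/4$ one gets $|u(x)-u(y)|\ge \tfrac{1}{4}|x-y|$ whenever $x-y$ lies in the cone $\Sigma:=\{z:|e\cdot z|\ge |z|/2\}$, giving the matching lower bound $\widetilde K_h(x,y)\ge c|x-y|^{-d-2\gamma}$ on this set. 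Choosing $\eta$ so that in addition $\eta\equiv 1$ on $B_{3/4}$ (compatible with the hypothesis that $\eta\equiv 1$ on $B_{1/2}$ and vanishes outside $B_{7/8}$), and fixing $\rho := 1/8$, I then define
\[
K(x,y) := \widetilde K_h(x,y)\one_{B_\rho(x)}(y) + c_\ast |x-y|^{-d-2\gamma}\one_{B_\rho(x)^c}(y).
\]
Symmetry (A1) is automatic; the moment bound (A2) follows from the upper bound above and a direct computation on the tail piece; for the coercivity (A3), Theorem~\ref{Thm:ChakerLuis} applies, since on any test ball $B$ either $B\subset B_\rho(x_0)$ (and $\Sigma\cap B$ provides the required positive-measure non-degeneracy through the lower bound) or $B\setminus B_\rho(x_0)$ already has a positive measure fraction on which $K$ is of the form $c_\ast |x-y|^{-d-2\gamma}$.

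For $x\in B_{1/2}$, the identity $\eta(x)=1$ gives the splitting $v_e^h(x)-v_e^h(y)=(D_e^h u(x)-D_e^h u(y))+(1-\eta(y))D_e^h u(y)$. Pairing with $K$ and using the weak equation from Step~1 to cancel the singular contribution of $\widetilde K_h(x,y)(D_e^h u(x)-D_e^h u(y))$ near the diagonal, the equation to be verified reduces to
\begin{align*}
\int_{\R^d}K(x,y)\bigl(v_e^h(x)-v_e^h(y)\bigr)\dd y &= \int_{B_\rho(x)^c}\!\bigl(c_\ast|x-y|^{-d-2\gamma}-\widetilde K_h(x,y)\bigr)\bigl(D_e^h u(x)-D_e^h u(y)\bigr)\dd y \\
&\quad + \int_{B_{3/4}^c} K(x,y)(1-\eta(y))D_e^h u(y)\dd y.
\end{align*}
Both integrals avoid the diagonal: the first by construction, the second because $(1-\eta)$ vanishes on $B_{3/4}$ while $x\in B_{1/2}$. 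Inside $B_1$ the bound $|D_e^h u(y)|\le 1$ from the Lipschitz hypothesis suffices. For $|y|\gg 1$, I will perform a discrete integration by parts,
\[
\int K(x,y)D_e^h u(y)\,\dd y = \int \frac{K(x,y-he)-K(x,y)}{h}\,u(y)\,\dd y,
\]
and use $|K(x,y-he)-K(x,y)|\le Ch|x-y|^{-d-2\gamma-1}$ together with ${\Tail}_{p-1,sp+1}(u,1)\le C_1$ and Hölder's inequality (as in the proof of Lemma~\ref{l:est_tail_difference}) to bound the tail $h$-uniformly.

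The delicate point is maintaining uniformity in $h$. A naive estimate of the tail via $\|D_e^h u\|_{L^\infty}\lesssim 1/h$ (which is all one gets from $|u(y+he)-u(y)|\le |u(y+he)|+|u(y)|$) would blow up as $h\to 0$; the discrete integration by parts above is the key device that trades a finite difference of the low-regularity function $u$ for a finite difference of the smooth-away-from-diagonal kernel $K$, restoring $h$-uniform constants. Once this is in place, Theorem~\ref{Thm:Kassman&co} produces a Hölder estimate for $v_e^h$ with constants independent of $h$, which can subsequently be passed to the limit $h\to 0$ to recover the desired Hölder regularity of $\partial_e u$.
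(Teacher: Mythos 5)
Your overall route is the same as the paper's: you derive the difference-quotient equation with the kernel $\widetilde K_h$ via Lemma \ref{l:appendix}, get the upper bound from $[u]_{Lip(B_1)}\le 1$ and the lower bound on the cone $\{|e\cdot z|\ge |z|/2\}$ from $[u-e\cdot x]_{Lip(B_1)}\le 1/4$, replace the kernel away from the diagonal by the explicit one $c_*|x-y|^{-d-2\gamma}$, and verify \textit{(A1)}--\textit{(A3)} through Theorem \ref{Thm:ChakerLuis} before invoking Theorem \ref{Thm:Kassman&co}. The paper does exactly this, with two cosmetic differences: the kernel is truncated at $|z|=1/2$ rather than $\rho=1/8$, and the far-field integrand carries the bounded, compactly supported function $v_e^h$ rather than $D_e^h u$, which spares the paper any tail computation there.

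The one step that does not work as written is your tail estimate. The discrete integration by parts with the bound $|K(x,y-he)-K(x,y)|\le Ch|x-y|^{-d-2\gamma-1}$ is valid for the explicit, translation-invariant piece $c_*|x-y|^{-d-2\gamma}$, but not where $K=\widetilde K_h$: shifting $y$ by $he$ inside $\widetilde K_h$ produces increments $u(y)-u(y\pm he)$ at far-away points $y$, where $u$ is only assumed to lie in $L^{p-1}_{sp}(\R^d)$, not to be Lipschitz, so these increments are not $O(h)$ and the claimed kernel-difference bound fails. The correct manipulation is to recombine the far-field contribution using the identity
\[
\widetilde K_h(x,y)\bigl(D_e^hu(x)-D_e^hu(y)\bigr) = \frac{1}{h}\Bigl[J_p\bigl(u(x+he)-u(y+he)\bigr)-J_p\bigl(u(x)-u(y)\bigr)\Bigr]\,|x-y|^{-d-sp}
\]
together with the change of variables $y\mapsto y+he$, which turns $\int_{B_\rho(x)^c}\widetilde K_h(x,y)(D_e^hu(x)-D_e^hu(y))\dd y$ into $(F(x+he)-F(x))/h$ with $F$ as in Lemma \ref{l:est_tail_difference}; there the difference quotient falls only on the kernel, on the integration domain, and on $u$ near $x$ (where it is Lipschitz), and that lemma gives the $h$-uniform bound. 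This is precisely how the paper handles the term. Separately, your remaining explicit-kernel tail $\int_{B_1^c}|u(y)|\,|y|^{-d-2\gamma-1}\dd y$ requires a H\"older exponent check against $\Tail_{p-1,sp+1}(u,1)$ that is borderline (the exponent inequality degenerates to an equality at $p=2$, where no H\"older is needed, and at $p=2/(1-s)$, which is excluded); it does go through on $[2,2/(1-s))$, but it should be verified explicitly — or avoided altogether by placing $v_e^h$ rather than $D_e^hu$ in the far-field integrand, as the paper does.
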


\begin{proof}

Using the notation introduced in \eqref{eq:notation_Integral}, for $x \in B_{1/2}$ and for small positive $h$, we obtain 
\begin{equation} \label{eq:compute_diff_quot}
\begin{aligned}
    0&=(-\Delta_p)^s u(x+he) -  (-\Delta_p)^s u(x) \\
    &= \mathcal{L}[B_{1/2}] u(x+he) -\mathcal{L}[B_{1/2}] u(x) + \mathcal{L}[B_{1/2}^c] u(x+he) -\mathcal{L}[B_{1/2}^c] u(x).
\end{aligned}
\end{equation}
Using Lemma \ref{l:est_tail_difference}, we can bound the terms involving $B_{1/2}^c$ by
\begin{align}\label{eq:tail_difquot_apriori}
    \left| \mathcal{L}[B_{1/2}^c] u(x+he) -\mathcal{L}[B_{1/2}^c] u(x)\right| \leq C h,
\end{align}
where $C$ is a constant depending on ${\Tail}_{p-1,sp+1}(u,1)$, $d$, $s$ and $p$.
On the other hand, to deal with the  terms with $B_{1/2}$ in \eqref{eq:compute_diff_quot} we use Lemma \ref{l:appendix} and obtain the equation satisfied by the difference quotient of $u$, namely for $x \in B_{1/2}$
\begin{align} \label{eq:diffquot_h}
    \left|   \int_{B_{1/2}} K_u^h(x,x+z) (D_e^hu(x)-D_e^hu(x+z))\dd z \right| \leq C,
\end{align}
with
\[
    K_u^h(x,x+z):= \frac{p-1}{|z|^{d+sp}} \int_0^1 |u(x)-u(x+z)+t( \delta_e^h u(x)- \delta_e^h u(x+z))|^{p-2}\dd t.
\]

From \eqref{eq:a.e.apriori}, we get
\[
|\nabla u(x) \cdot \tau| >\frac{1}{4} \quad \text{for a.e. }x \in B_{1}, \, |\tau \cdot e| >1/2.
\]
This implies that the kernel $K_u^h(x,y)$ is uniformly elliptic for $y$ in the cone
\[
\C(x)=\{x+z \in B_{1/2}:|z \cdot e| > |z|/2 \}.
\]
Indeed, writing $z=t\nu$ with $t \in (0,1/2)$ and $|\nu \cdot e|>1/2$, it holds that
\[
|u(x+t\nu)-u(x)|= \left| \int_{0}^{t} \nabla u(x+s\nu) \cdot \nu \dd s\right| >\frac{1}{4}t,
\]
which together with the second part of Lemma \ref{l:appendix}, leads to
\begin{equation} \label{K_u^h_nondeg}
\begin{aligned}
 |K_u^h(x,x+z)| &\geq \frac{p-1}{|z|^{d+sp}} \left( |u(x)-u(x+z)| + | \delta_e^h u(x)- \delta_e^h u(x+z) |\right)^{p-2} \\
 & \geq \frac{p-1}{|z|^{d+sp}} |u(x)-u(x+z)|^{p-2} \geq c |z|^{-d+p(1-s)-2}.
\end{aligned}
\end{equation}
Therefore the difference quotient is a weak solution to an equation satisfying the same conditions as in  Section \ref{s:stage3}.

Consider now, $v_e^h(x):= D_e^h u(x) \eta (x)$, where $\eta$ is a smooth cutoff function which is equal to $1$ in $B_{1/2}$ and equal to zero in $B_{7/8}^c$. Since $[u]_{Lip(B_1)} \leq 1$, $v_e^h$ is a globally bounded function and also satisfies \eqref{eq:diffquot} in $B_{1/2}$. 
Define the kernel
\[
K(x,x+z)=K_u^h(x,x+z)\one_{B_{1/2}}(z)+c|z|^{-d+p(1-s)-2}\one_{B_{1/2}^c}(z),
\]
where $c$ is the same constant appearing in \eqref{K_u^h_nondeg}. Notice that, using the Lipschitz continuity of $u$,  we get
\begin{align*}
    \left|\int_{B_{1/2}^c} c|z|^{-d+p(1-s)-2} \left(v_e^h(x) - v_e^h(x+z) \right)\dd z \right| \leq C [u]_{Lip(B_{1})} \int_{B_{1/2}^c}|z|^{-d+p(1-s)-2}  \dd z\leq C,
\end{align*}
where here we used that $\eta \leq 1$ and for $x+z \in B_{7/8}$ and $h<1/8$
\begin{align*}
    |v_e^h(x) - v_e^h(x+z)| &= \left| \frac{u(x+he)-u(x)}{h} \eta(x) -  \frac{u(x+z+he)-u(x+z)}{h} \eta(x+z) \right| \\
    &\leq  \left|\frac{u(x+he)-u(x)}{h} \right| + \left| \frac{u(x+z+he)-u(x+z)}{h} \right|  \leq 2 [u]_{Lip(B_{1})} \leq 2, 
\end{align*}
thus we find that $v_e^h$ satisfies the following equation in $B_{1/2}$
\begin{align} \label{eq:diffquot}
    \left|   \int_{\R^d} K(x,x+z) (v_e^h(x) - v_e^h(x+z))\dd z \right| \leq C.
\end{align}
It is not difficult to check that also in this case $K$ satisfies the assumptions of Theorem \ref{Thm:Kassman&co}. 

\end{proof}

Using Lemma \ref{l:PDE_DifQuot}, we obtain an equation for the differential quotients $v^h_e(x)$ with the same properties of the equation of $v_e$ in Section \ref{s:stage3}. We proceed to apply Theorem \ref{Thm:Kassman&co} to $v^h_e$ and obtain H\"older estimates independent of $h$. This implies that the solution $u$ is $C^{1,\alpha}$ with the same estimates.

\subsection{Wrapping up the proof}

The estimate of the stage 3 tells us that the function $u$ is locally $C^{1,\alpha}$ around the points where the first alternative in Corollary \ref{c:stages1and2-weaksolutions} holds. We observe that $u$ must be differentiable at the origin in either case. By translation of this argument, $u$ must be differentiable at any point in the domain of the equation. Once we make this observation, and taking into account the results of Lemma \ref{l:stage1-weak-sols} and Corollary \ref{c:stages1and2-weaksolutions}, together with the $C^{1,\alpha}$ regularity of the third stage of the proof, the proof of Theorem \ref{t:main} for weak solutions proceeds verbatim as in Section \ref{s:main_proof}.

\appendix
\section{Regularity estimates for the approximate problem}
\label{s:appendix}
In this appendix, we prove regularity estimates  for a solution of the approximated problem \eqref{eq:main_apriori}.
In particular, the aim is to provide a proof for Lemma \ref{l:eps-regularity-needed}.

The Lipschitz regularity of $u_\eps$ given in Lemma \ref{l:Lips_apriori} is our starting point.

\subsection{Regularity for the fractional p-Laplacian operator}

\begin{lemma} \label{l:frac_plap_of_Lipschitz}
Let $2 \leq p < 1/(1-s)$ and $v$ be a function in $L^{p-1}_{sp}(\R^d)$ that is Lipschitz in $B_R$.  Then $(-\Delta_p)^s v$ is in the local Besov space $B^{-\alpha}_{\infty,\infty}(B_r)$ for any $r<R$ where $\alpha = 1 - p(1-s)$.
\end{lemma}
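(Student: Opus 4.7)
My plan is to use the standard testing (or "atomic") characterization of the negative Besov space $B^{-\alpha}_{\infty,\infty}(B_r)$ with $\alpha = 1 - p(1-s) \in (0,1)$: namely, it suffices to show that for every $\varphi \in C^1_c(B_r)$ supported in some ball $B_\rho(x_0) \subset B_r$ with $\|\varphi\|_\infty \leq 1$ and $\rho\|\nabla\varphi\|_\infty \leq 1$, one has $|\langle (-\Delta_p)^s v,\varphi\rangle| \leq C \rho^{d-\alpha}$, with $C$ depending only on $[v]_{Lip(B_R)}$, $\Tail_{p-1,sp}(v;R)$, $R-r$, $d$, $s$, $p$.

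I will first define $\langle (-\Delta_p)^s v,\varphi\rangle$ through the symmetric bilinear form
\[
\langle (-\Delta_p)^s v,\varphi\rangle := \frac{1}{2}\iint_{\R^d\times\R^d} \frac{J_p(v(x)-v(y))(\varphi(x)-\varphi(y))}{|x-y|^{d+sp}}\dd x \dd y,
\]
and verify its convergence: near the diagonal, Lipschitz continuity of both $v$ and $\varphi$ bounds the integrand by $C|x-y|^{p-d-sp}$, which is locally integrable since $p(1-s)>0$; at infinity, the compact support of $\varphi$ and $v \in L^{p-1}_{sp}(\R^d)$ handle the decay. This justifies $(-\Delta_p)^s v$ as a well-defined distribution on $B_r$.

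For the main estimate, I fix $x \in \supp\varphi \subset B_\rho(x_0)$ and split the $y$-integral into three regimes. \textbf{Short range} $|x-y|\leq \rho$: using $|\varphi(x)-\varphi(y)| \leq \rho^{-1}|x-y|$ and $|J_p(v(x)-v(y))|\leq C|x-y|^{p-1}$, the integrand is $\lesssim \rho^{-1}|x-y|^{p-d-sp}$, and integration yields $\lesssim \rho^{-1}\int_0^\rho r^{p(1-s)-1}\dd r \sim \rho^{p(1-s)-1} = \rho^{-\alpha}$. \textbf{Middle range} $\rho < |x-y|\leq (R-r)/2$: using $|\varphi(x)-\varphi(y)|\leq 2$ with the Lipschitz bound for $v$, the integrand is $\lesssim |x-y|^{p(1-s)-2-d}|x-y|^{d-1}$ after passing to polar coordinates, giving $\int_\rho^{(R-r)/2} r^{p(1-s)-2}\dd r \lesssim \rho^{p(1-s)-1} = \rho^{-\alpha}$. \textbf{Far range} $|x-y|>(R-r)/2$: this piece is controlled by $\|v\|_{L^\infty(B_R)}^{p-1}$ plus the tail $\Tail_{p-1,sp}(v;R)^{p-1}$, giving a contribution bounded independently of $\rho$. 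Integrating the resulting pointwise bound $\lesssim \rho^{-\alpha}$ over $x \in \supp\varphi$ (of measure $\lesssim \rho^d$) produces $|\langle (-\Delta_p)^s v,\varphi\rangle| \lesssim \rho^{d-\alpha}$, as required.

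The main obstacle is the middle range: the integral $\int_\rho^{(R-r)/2} r^{p(1-s)-2}\dd r$ only produces the desired singular factor $\rho^{-\alpha}$ because $p < 1/(1-s)$, which forces $p(1-s)-2 < -1$ so that the lower endpoint $r=\rho$ dominates. This is the exact range of $p$ to which the lemma is confined, and it explains why the estimate degrades to "just" a negative-order Besov statement rather than a stronger bound. A secondary subtlety is that the bilinear-form interpretation is essential: the pointwise principal-value integral defining $(-\Delta_p)^s v(x)$ need not be absolutely convergent in this regime (cf.\ the discussion in Section~\ref{s:linearized} and the appendix referenced there), and the symmetrization in the bilinear form is precisely what tames this non-absolute convergence.
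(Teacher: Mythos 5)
Your proof is correct, and it runs on the same engine as the paper's — the symmetric bilinear form, the pointwise bound $|J_p(v(x)-v(y))|\le C|x-y|^{p-1}$ from the Lipschitz hypothesis near the diagonal, and a tail bound for the far field — but you package the Besov membership differently. The paper tests against arbitrary $\varphi\in W^{\alpha,1}_0(B_r)$, absorbs the entire near-diagonal contribution into the seminorm $[\varphi]_{W^{\alpha,1}}$ in one line (the exponent bookkeeping $d+sp-(p-1)=d+\alpha$ is exactly your computation), and then invokes the duality between $B^{-\alpha}_{\infty,\infty}$ and $B^{\alpha}_{1,1}=W^{\alpha,1}$. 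You instead test against normalized bumps at scale $\rho$ and track the power of $\rho$ explicitly through a three-scale decomposition, which forces you to split the near-diagonal region at $|x-y|=\rho$ and check that both pieces contribute $\rho^{-\alpha}$; this is where the hypothesis $p<1/(1-s)$ enters for you (lower-endpoint dominance in the middle range), whereas in the paper it enters only through the positivity of $\alpha$. The two characterizations of $B^{-\alpha}_{\infty,\infty}$ are equivalent — your normalized bumps are, up to the factor $\rho^{d-\alpha}$, atoms for $B^{\alpha}_{1,1}$ — so both proofs defer to standard Besov theory at the same point; your version is more hands-on, the paper's is shorter. One typo: in the middle range the integrand before the polar Jacobian should be $|x-y|^{p(1-s)-1-d}$ rather than $|x-y|^{p(1-s)-2-d}$; the integral $\int_\rho^{(R-r)/2} r^{p(1-s)-2}\,\dd r$ that you then write down is the correct one, so the slip is harmless.
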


\begin{proof}
Let $\varphi \in W^{\alpha,1}_0(B_r) $ with $r<R$. Then
    \begin{align*}
        \int_{\R^d} (-\Delta_p )^s v(x) \varphi(x)\dd x=\frac{1}{2} \iint_{\R^d\times \R^d}\frac{|v(x)-v(y)|^{p-2}(v(x)-v(y))(\varphi(x)-\varphi(y))}{|x-y|^{d+sp}}\dd x\dd y.
    \end{align*}
    We split this integral into the local and nonlocal parts. In the local part, we use the Lipschitz regularity of $v$ to get
    \begin{align*}
        &\left|\iint_{B_{\frac{R+r}{2}}\times B_{\frac{R+r}{2}}}\frac{|v(x)-v(y)|^{p-2}(v(x)-v(y))(\varphi(x)-\varphi(y))}{|x-y|^{d+sp}}\dd x\dd y\right|\\
        &\qquad \leq C\iint_{B_{\frac{R+r}{2}}\times B_{\frac{R+r}{2}}}\frac{|\varphi(x)-\varphi(y)|}{|x-y|^{d+sp-p+1}}\dd x\dd y=C [\varphi]_{W^{\alpha,1}},
    \end{align*}
    where $C$ depends on $d,\,s,\,p$ and $[v]_{Lip(B_{\frac{R+r}{2}})}$. Regarding the nonlocal part, we take into account the fact that the support of $\varphi$ is contained in $B_{r}$. Hence we need to estimate
    \begin{align*}
        &\left|\int_{B^c_{\frac{R+r}{2}}}\int_{B_r} \frac{|v(x)-v(y)|^{p-2}(v(x)-v(y))\varphi(x)}{|x-y|^{d+sp}}\dd x\dd y\right|\\
        &\qquad \leq \int_{B_r}|\varphi(x)| \int_{B^c_{\frac{R+r}{2}}} \frac{|v(y)+1|^{p-1}}{|x-y|^{d+sp}}\dd y\dd x\leq C \|
        \varphi\|_{L^1(B_r)},
    \end{align*}
    where $C$ depends on $d,\,s,\,p$, $r,\,R$, $\|v\|_{L^\infty(B_{R})}$ and the $\Tail$ of $v$. Therefore, we proved that 
        \[
        \left|\int_{\R^d} (-\Delta_p )^s v(x) \varphi(x)\dd x\right|\leq C\|\varphi\|_{W^{\alpha,1}}.
        \]
        Since $\varphi \in W_0^{\alpha,1}(B_r)$ was arbitrary, we conclude that $(-\Delta_p )^s v$ belongs to the dual space of $W_0^{\alpha,1}(B_r)$. Recall that the fractional Sobolev space $W^{\alpha,1}(B_r)$ coincides with the Besov space $B^{\alpha}_{1,1}(B_r)$, whose dual consist of distributions that are locally in $B^{-\alpha}_{\infty,\infty}$ (see \cite{triebel1983book,runst1996book}).
\end{proof}
A minor modification of the argument of the previous lemma also gives the case $p=1/(1-s)$. 
\begin{lemma} \label{l:frac_plap_of_Lipschitz_special}
Let $p =1/(1-s)$ and $v$ be a function in $L^{p-1}_{sp}(\R^d)$ that is Lipschitz in $B_R$.  Then $(-\Delta_p)^s v$ is in any local Besov space $B^{-\alpha}_{\infty,\infty}(B_r)$ for any $r<R$ and any $\alpha>0$.
\end{lemma}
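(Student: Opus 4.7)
The plan is to repeat the proof of Lemma \ref{l:frac_plap_of_Lipschitz} verbatim, splitting the bilinear form tested against $\varphi \in W^{\alpha,1}_0(B_r)$ into a local piece on $B_{(R+r)/2} \times B_{(R+r)/2}$ and a tail piece on $B_{(R+r)/2}^c \times B_r$. The only step that needs adjustment is the local piece, because at the critical exponent $p = 1/(1-s)$ one has $sp - p + 1 = 0$, so the Lipschitz estimate on $v$ gives
\[
\left|\iint_{B_{\frac{R+r}{2}}\times B_{\frac{R+r}{2}}}\frac{|v(x)-v(y)|^{p-2}(v(x)-v(y))(\varphi(x)-\varphi(y))}{|x-y|^{d+sp}}\dd x\dd y\right|
\leq C \iint_{B \times B} \frac{|\varphi(x)-\varphi(y)|}{|x-y|^{d}}\dd x\dd y,
\]
which is exactly the borderline case that fails to be controlled by any $W^{\alpha,1}$ seminorm by itself.

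The key trick to overcome this is to exploit that $B := B_{(R+r)/2}$ is bounded. For any $\alpha > 0$ and any $x,y \in B$, we have $|x-y|\leq R+r$, so
\[
\frac{1}{|x-y|^d} = \frac{|x-y|^{\alpha}}{|x-y|^{d+\alpha}} \leq \frac{(R+r)^{\alpha}}{|x-y|^{d+\alpha}}.
\]
Inserting this into the previous display, we get
\[
\iint_{B \times B} \frac{|\varphi(x)-\varphi(y)|}{|x-y|^{d}}\dd x\dd y \leq (R+r)^{\alpha}\,[\varphi]_{W^{\alpha,1}(\R^d)}.
\]
This controls the local part in terms of the $W^{\alpha,1}$ seminorm of $\varphi$, with a constant depending on $\alpha$, $d$, $s$, $p$, $[v]_{Lip(B_{(R+r)/2})}$ and $R$.

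The tail piece is estimated exactly as in Lemma \ref{l:frac_plap_of_Lipschitz}: the integrand is already absolutely convergent since $\varphi$ is supported in $B_r$ and $|x-y|\geq (R-r)/2$ for $y \in B_{(R+r)/2}^c$, so one gets a bound $C\|\varphi\|_{L^1(B_r)} \leq C\|\varphi\|_{W^{\alpha,1}(\R^d)}$ with $C$ depending on $\|v\|_{L^\infty(B_R)}$ and $\Tail_{p-1,sp}(v;R)$. Combining the two estimates yields
\[
\left|\int_{\R^d} (-\Delta_p)^s v(x)\,\varphi(x)\dd x\right| \leq C\,\|\varphi\|_{W^{\alpha,1}(\R^d)}
\]
for every $\varphi \in W^{\alpha,1}_0(B_r)$, and the identification $(W^{\alpha,1}_0(B_r))^* = B^{-\alpha}_{\infty,\infty}(B_r)$ (via $W^{\alpha,1}=B^{\alpha}_{1,1}$, as invoked at the end of the proof of Lemma \ref{l:frac_plap_of_Lipschitz}) concludes the proof. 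No step here is genuinely hard; the main (and only) subtlety is the one indicated, namely inserting the bounded-domain factor $|x-y|^\alpha \leq (R+r)^\alpha$ to convert the borderline $L^1$-modulus integral into an honest positive-order Sobolev seminorm.
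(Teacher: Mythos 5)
Your proposal is correct and is essentially the paper's own argument: the paper handles the borderline local term by using $v\in C^{0,\beta}$ with $\beta=(sp-\alpha)/(p-1)$, which after raising to the power $p-1$ produces exactly the factor $|x-y|^{sp-\alpha}$ that your insertion of $|x-y|^{\alpha}\leq (R+r)^{\alpha}$ reproduces. The tail estimate and the duality identification are likewise identical to Lemma \ref{l:frac_plap_of_Lipschitz}, so there is nothing to add.
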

\begin{proof}
The proof of this result follows the lines of the one of Lemma \ref{l:frac_plap_of_Lipschitz}. Except when dealing with the local term we make use of the fact that $v\in C^{0,\beta}$ for $\beta=(sp-\alpha)/(p-1)<1$.
\end{proof}
The case $p>1/(1-s)$ is the simplest case in this analysis. 
\begin{lemma} \label{l:bounded_frac_plap_of_Lipschitz}
Let $1/(1-s) < p < 2/(1-s)$ and $v$ be a function in $L^{p-1}_{sp}(\R^d)$ that is Lipschitz in $B_R$. Then $(-\Delta_p)^s v$ is bounded in $B_r$ for any $r<R$.
\end{lemma}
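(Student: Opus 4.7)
The plan is to show that under the hypothesis $p(1-s) > 1$, the integral defining $(-\Delta_p)^s v(x)$ is absolutely convergent pointwise, with a bound that is uniform for $x \in B_r$. Unlike in Lemmas \ref{l:frac_plap_of_Lipschitz} and \ref{l:frac_plap_of_Lipschitz_special}, we will not need to rely on any cancellation by oddness: we can estimate by absolute values throughout. The key observation is that the near-diagonal integrand behaves like $|x-y|^{p(1-s)-2}$ after using the Lipschitz bound, and this is integrable precisely when $p > 1/(1-s)$.

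First I would fix $\rho = (R-r)/2 > 0$, so that $B_\rho(x) \subset B_{(R+r)/2} \subset B_R$ for every $x \in B_r$, and split
\[
|(-\Delta_p)^s v(x)| \leq \underbrace{\int_{B_\rho(x)} \frac{|v(x)-v(y)|^{p-1}}{|x-y|^{d+sp}}\dd y}_{=: I_1(x)} + \underbrace{\int_{B_\rho(x)^c} \frac{|v(x)-v(y)|^{p-1}}{|x-y|^{d+sp}}\dd y}_{=: I_2(x)}.
\]
Since $v$ is Lipschitz on $B_R$, set $L := [v]_{Lip(B_R)}$ and note that in $B_\rho(x)$ we have $|v(x)-v(y)|\leq L|x-y|$. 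Polar coordinates give
\[
I_1(x) \leq L^{p-1} |S^{d-1}| \int_0^\rho r^{p(1-s)-2}\dd r = \frac{L^{p-1}|S^{d-1}|}{p(1-s)-1}\rho^{p(1-s)-1},
\]
where the integral converges thanks to the assumption $p>1/(1-s)$, which is exactly equivalent to $p(1-s)-2 > -1$.

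For $I_2$, I use the elementary inequality $|v(x)-v(y)|^{p-1} \leq 2^{p-2}(|v(x)|^{p-1}+|v(y)|^{p-1})$ together with the fact that $v$, being Lipschitz on $B_R$, is bounded there:
\[
I_2(x) \leq C\|v\|_{L^\infty(B_R)}^{p-1}\int_{B_\rho(x)^c}\frac{\dd y}{|x-y|^{d+sp}} + C\int_{B_\rho(x)^c}\frac{|v(y)|^{p-1}}{|x-y|^{d+sp}}\dd y.
\]
The first summand is bounded by $C\|v\|_{L^\infty(B_R)}^{p-1}\rho^{-sp}$; the second is $C\,\text{Tail}_{p-1,sp}(v;x,\rho)^{p-1}$, which by Lemma \ref{l:move_center_tail} is controlled uniformly for $x \in B_r$ in terms of $\text{Tail}_{p-1,sp}(v;0,2R)$ (finite since $v\in L^{p-1}_{sp}(\R^d)$) and $\|v\|_{L^\infty(B_R)}$.

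There is no genuine obstacle—this is the easy case because the hypothesis $p > 1/(1-s)$ provides exactly enough singularity absorption for a direct pointwise estimate. Combining the two bounds yields a constant depending only on $d$, $s$, $p$, $R-r$, $[v]_{Lip(B_R)}$, $\|v\|_{L^\infty(B_R)}$ and $\text{Tail}_{p-1,sp}(v;0,2R)$, uniform for $x \in B_r$, which proves boundedness of $(-\Delta_p)^s v$ on $B_r$.
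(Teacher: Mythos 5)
Your proof is correct and follows essentially the same route as the paper's: split the integral at a radius comparable to $R-r$, use the Lipschitz bound near the diagonal (where the exponent $p(1-s)-2>-1$ makes the radial integral converge), and control the far part by $\|v\|_{L^\infty(B_R)}$ plus the tail, recentered via Lemma \ref{l:move_center_tail}. No issues.
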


\begin{proof}
Let $x \in B_{r}$ and divide the integral into two parts: within the ball $B_{R-r}(x)$, where Lipschitz continuity applies, and outside such ball, where we make use of the tail bound, obtaining
 \begin{align*}
     \left|(-\Delta_p)^s v(x) \right| &= \left| \int_{B_{R-r}(x)} \frac{|v(x)-v(y)|^{p-2}}{|x-y|^{d+sp}}(v(x)-v(y)) \dd y  + \int_{B_{R-r}^c(x)} \frac{|v(x)-v(y)|^{p-2}}{|x-y|^{d+sp}}(v(x)-v(y)) \dd y \right| \\
     &\leq C[v]_{Lip(B_R)}^{p-1}(R-r)^{p(1-s)-1} +C \left( {\Tail}_{p-1,sp}(v;x,R-r)^{p-1}+(R-r)^{-sp} \|v \|_{L^{\infty}(B_{R})}^{p-1}\right).
 \end{align*}
Using Lemma \ref{l:move_center_tail} we can conclude that $|(-\Delta_p)^s v(x)| \leq C$, where $C$ is a universal constant depending on $R, \, r, \, \|v\|_{Lip(B_R)},\,{\Tail}_{p-1,sp}(v,R),\,d,\, s$ and $p$. The result follows since $x \in B_{r}$ is arbitrary.
\end{proof}

\subsection{Higher regularity for \texorpdfstring{$p \in [2,1/(1-s))$}{p small}}

Making use of the regularity of $(-\Delta_p)^s u_\eps$ in Besov spaces, we obtain H\"older differentiability for $u_\eps$.

\begin{lemma}\label{l:apriori_Coa}
    Let $u_\eps$ solve \eqref{eq:main_apriori}. Then, for each small $\eps$, we have
    \begin{itemize}
     \item $u_\eps\in C^{1,p(1-s)}_{loc}(B_R)$ if $p \in [2,1/(1-s))$.
    \item $u_\eps\in C^{1,\alpha}_{loc}(B_R)$ for all $\alpha \in (0,1)$ if $p = 1/(1-s)$.
    \end{itemize}
\end{lemma}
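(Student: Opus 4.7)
The plan is to bootstrap from the Lipschitz regularity of $u_\eps$ (already known by Lemma~\ref{l:Lips_apriori}) via elliptic regularity for the Laplacian with a right-hand side in a negative-order Besov space, and then convert the resulting Besov regularity into H\"older regularity.

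First, Lemma~\ref{l:frac_plap_of_Lipschitz} (respectively Lemma~\ref{l:frac_plap_of_Lipschitz_special} when $p = 1/(1-s)$) applied to the locally Lipschitz function $u_\eps$ yields
\[
(-\Delta_p)^s u_\eps \in B^{-\alpha}_{\infty,\infty}(B_r), \qquad r<R,
\]
with $\alpha = 1 - p(1-s) > 0$ in the first case and any $\alpha > 0$ in the critical case. Rewriting the approximate equation \eqref{eq:main_apriori} as
\[
\eps \, \Delta u_\eps = -(-\Delta_p)^s u_\eps \quad \text{in } B_R,
\]
we conclude that $\Delta u_\eps$ lies in the same local Besov space (with an $\eps$-dependent norm, which is harmless since Lemma~\ref{l:apriori_Coa} is purely qualitative for each fixed $\eps$).

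Standard interior Schauder--Besov estimates for the Poisson equation then upgrade this by two derivatives: for every $r' < r$,
\[
u_\eps \in B^{2-\alpha}_{\infty,\infty}(B_{r'}).
\]
Since $2-\alpha = 1 + p(1-s) \in (1,2)$ is non-integer when $p < 1/(1-s)$, the Besov space $B^{2-\alpha}_{\infty,\infty}$ coincides with the H\"older space $C^{1, p(1-s)}$; this gives the first conclusion. In the critical case $p = 1/(1-s)$, the exponent $\alpha > 0$ is arbitrary, so $u_\eps \in C^{1,\beta}_{loc}(B_R)$ for every $\beta \in (0,1)$, which is the second conclusion.

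The only nontrivial ingredient is invoking the Besov-space version of interior Schauder estimates for $-\Delta$, namely an estimate of the form $\|u\|_{B^{s+2}_{\infty,\infty}(B_{r'})} \leq C \bigl( \|\Delta u\|_{B^{s}_{\infty,\infty}(B_{r})} + \|u\|_{L^\infty(B_r)} \bigr)$ for $r' < r$ and $s \in \R$, combined with the classical identification $B^{\sigma}_{\infty,\infty} = C^{\sigma}$ for non-integer $\sigma > 0$. Both facts are standard and can be found in the Triebel and Runst--Sickel references already cited in the proof of Lemma~\ref{l:frac_plap_of_Lipschitz}.
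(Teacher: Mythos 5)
Your proposal is correct and follows essentially the same route as the paper's proof: apply Lemma \ref{l:frac_plap_of_Lipschitz} (or Lemma \ref{l:frac_plap_of_Lipschitz_special} in the critical case) to place $(-\Delta_p)^s u_\eps$, and hence $\eps\,\Delta u_\eps$, in a local negative-order Besov space, then lift two derivatives via the Besov--Schauder theory from Runst--Sickel and identify $B^{1+p(1-s)}_{\infty,\infty}$ with $C^{1,p(1-s)}$. The only cosmetic difference is that you state the intermediate Schauder--Besov estimate explicitly, whereas the paper simply cites the reference.
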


\begin{proof}
    We split the proof into two cases:
    
\textbf{Case 1: $p\in [2, \frac{1}{1-s})$}.  Since $u_\eps$ is Lipschitz, from Lemma \ref{l:frac_plap_of_Lipschitz}, $(-\Delta_p)^s u_\eps$ is locally in $B^{-\alpha}_{\infty,\infty}$ for $\alpha = 1-(1-s)p$. From  equation \eqref{eq:main_apriori}, we get that $\Delta u_\eps$ belongs to this space and therefore by \cite[Section 3.4.3]{runst1996book} $u_\eps \in B^{1+p(1-s)}_{\infty,\infty}(B_r)$ for $r <R$, that corresponds to $ u_\eps \in C^{1,p(1-s)}_{loc}(B_R)$.

\textbf{Case 2: $p=\frac{1}{1-s}$}. 
 From the Lipschitz regularity of $u_\eps$, by Lemma \ref{l:frac_plap_of_Lipschitz_special}, we deduce that $(-\Delta_p)^s u_\eps$ is in any local Besov space $B^{-\alpha}_{\infty,\infty}$ for $\alpha>0$. Since $u_\eps$ solves \eqref{eq:main_apriori}, also $\Delta u_\eps$ belongs to these spaces which implies that $u_\eps \in  B^{2-\alpha}_{\infty,\infty}(B_r)$ for $r <R$ that is $ u_\eps \in C^{1,1-\alpha}_{loc}(B_R)$ for all $\alpha \in (0,1)$.
\end{proof}

We can obtain higher regularity of $u_\eps$ in the range $2\leq p \leq1/(1-s)$ analyzing the equations satisfied by incremental quotients.
For $\beta \in (0,1)$ and $e \in S^{d-1}$, let us define the incremental quotient of order $\beta$ of a function $v$ to be
\[ v_h(x) = \frac{\delta_h v(x)}{|h|^\beta},\]
where $\delta_h v(x)=v(x+he)-v(x)$.

We start with a generic lemma about $C^{1,\alpha}$ functions and their incremental quotient. 
\begin{lemma} \label{l:increment_quot}
Let $v \in C^{1,\alpha}(B_R)$ for some $\alpha \in (0,1)$. For any $e \in S^{d-1}$ and $h \in (0,R-r)$. Let $v_h$ be the incremental quotient of $v$ of order $\beta$, for some $\beta \in [\alpha,1]$.
Then this function is bounded in $C^{0,1+\alpha-\beta}(B_r)$ uniformly with respect to $e$ and $h$.
\end{lemma}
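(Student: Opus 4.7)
Write $g_h(x) := v(x+he) - v(x)$, so $v_h = g_h/|h|^{\beta}$. The plan is to derive two separate bounds for $g_h$ and then split the analysis according to whether $|x-y|$ is smaller or larger than $|h|$.

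For the uniform $L^\infty$ bound, the mean value theorem applied to $v$ gives $|g_h(x)| \leq \|\nabla v\|_{L^\infty(B_R)}|h|$, and since $\beta \leq 1$ and $|h| \leq R-r$, we obtain $\|v_h\|_{L^\infty(B_r)} \leq (R-r)^{1-\beta}\|\nabla v\|_{L^\infty(B_R)}$, uniformly in $e$ and $h$.

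For the $C^{0,1+\alpha-\beta}$ seminorm, I would produce two estimates for $|g_h(x)-g_h(y)|$. The first comes from differentiating: $\nabla g_h(z) = \nabla v(z+he)-\nabla v(z)$, which is bounded by $[\nabla v]_{C^\alpha}|h|^\alpha$, so integrating along the segment $[x,y]$ yields
\[
|g_h(x)-g_h(y)| \;\leq\; [\nabla v]_{C^\alpha}\,|h|^\alpha\,|x-y|.
\]
The second comes from a first-order Taylor expansion of $v$: $g_h(x) = he\cdot \nabla v(x) + R(x,h)$ with $|R(x,h)|\leq C[\nabla v]_{C^\alpha}|h|^{1+\alpha}$, giving
\[
|g_h(x)-g_h(y)| \;\leq\; |h|\,[\nabla v]_{C^\alpha}\,|x-y|^\alpha \;+\; 2C\,[\nabla v]_{C^\alpha}\,|h|^{1+\alpha}.
\]

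Now I would divide by $|h|^\beta$ and split cases. When $|x-y|\leq |h|$, use the first bound: since $\alpha-\beta\leq 0$, one has $|h|^{\alpha-\beta}\leq |x-y|^{\alpha-\beta}$, and therefore $|v_h(x)-v_h(y)|\leq [\nabla v]_{C^\alpha}|x-y|^{1+\alpha-\beta}$. When $|x-y|\geq |h|$, use the second bound: since $1-\beta\geq 0$ and $1+\alpha-\beta\geq 0$, both $|h|^{1-\beta}\leq |x-y|^{1-\beta}$ and $|h|^{1+\alpha-\beta}\leq |x-y|^{1+\alpha-\beta}$, so the two terms combine into a bound by $C[\nabla v]_{C^\alpha}|x-y|^{1+\alpha-\beta}$. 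There is no real obstacle here; the only point worth flagging is that a single estimate will not close the argument uniformly in $h$, so the complementary case split (mean value on $g_h$ for small $|x-y|$, Taylor expansion of $v$ for large $|x-y|$) is essential. Merging the two cases gives $[v_h]_{C^{0,1+\alpha-\beta}(B_r)}\leq C\,[\nabla v]_{C^\alpha(B_R)}$, with $C$ independent of $e$ and $h$, which together with the $L^\infty$ bound completes the proof.
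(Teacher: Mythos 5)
Your proof is correct and follows essentially the same strategy as the paper's: the same case split on $|x-y|$ versus $|h|$, with the Taylor expansion in the $he$ direction handling $|x-y|\geq |h|$ and control of the increment along $[x,y]$ handling $|x-y|\leq |h|$. Your use of the mean value theorem on $g_h$ in the small-$|x-y|$ case is a marginally cleaner packaging of the paper's expansion (it avoids the extra $|x-y|^{1+\alpha}$ remainder), and the explicit $L^\infty$ bound is a welcome addition, but the argument is the same.
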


\begin{proof}
    Let us estimate the difference $v_h(x)-v_h(y)$ for any two points $x,y \in B_r$. We consider two cases depending on whether $|x-y| > h$ or $|x-y| \leq h$.

    In the first case, we estimate $v_h(x)-v_h(y)$ in the following way.
    \begin{align*}
        |v_h(x)-v_h(y)| &= \frac {\left| v(x+he) - v(x) - v(y+he) + v(y) \right|}{|h|^\beta} \\
        &\leq \frac {\left| he \cdot (\nabla v(x) - \nabla v(y)) \right| + C |h|^{1+\alpha}}{|h|^\beta} \\
        &\leq C \left( |h|^{1-\beta} |x-y|^\alpha + |h|^{1+\alpha-\beta} \right) \leq C |x-y|^{1+\alpha-\beta}.
    \end{align*}

    In the second case, we estimate $v_h(x)-v_h(y)$ differently, using that $v_h(x) = v_h(y) + \nabla v_h(y) \cdot(x-y) + O(|x-y|^{1+\alpha})$ and $v_h(x+he) = v_h(y+he) + \nabla v_h(y+he) \cdot(x-y) + O(|x-y|^{1+\alpha})$.
    \begin{align*}
        |v_h(x)-v_h(y)| &= \frac {\left| v(x+he) - v(x) - v(y+he) + v(y) \right|}{|h|^\beta} \\
        &\leq \frac {\left| (x-y) \cdot (\nabla v(y+he) - \nabla v(y)) \right| + C |x-y|^{1+\alpha}}{|h|^\beta} \\
        &\leq C \left( |x-y| |h|^{\alpha-\beta} + |x-y|^{1+\alpha} |h|^{-\beta} \right) \leq C |x-y|^{1+\alpha-\beta}.
    \end{align*}
\end{proof}

\begin{lemma}
Let $2 \leq p\leq 1/(1-s)$  and assume $u_\eps$ is a solution of \eqref{eq:main_apriori} then $u_{\eps} \in C_{loc}^{2,\gamma}(B_R)$ for all $\gamma < p(1-s)$.
\end{lemma}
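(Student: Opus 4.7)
The strategy is to bootstrap the $C^{1,\alpha}$ regularity already established in Lemma \ref{l:apriori_Coa} (with $\alpha=p(1-s)$ when $p<1/(1-s)$, or any $\alpha<1$ when $p=1/(1-s)$) by means of an incremental quotient analysis carried out directly on the regularized equation \eqref{eq:main_apriori}. Fix $e\in S^{d-1}$ and $h>0$ small, and set $w_h := D_e^h u_\eps$. Arguing exactly as in the proof of Lemma \ref{l:PDE_DifQuot} (using Lemma \ref{l:appendix} to rewrite the first-order difference of $(-\Delta_p)^s u_\eps$), the function $w_h$ solves the linear mixed equation
\[
\int_{\R^d} \tilde K^h(x,x+z)\big(w_h(x)-w_h(x+z)\big)\dd z - \eps\,\Delta w_h(x) = 0 \quad \text{in } B_{r},
\]
for every $r<R$, where
\[
\tilde K^h(x,x+z) = \frac{p-1}{|z|^{d+sp}} \int_0^1 \big| \delta_z u_\eps(x) + t\,\delta_z \delta_h u_\eps(x) \big|^{p-2} \dd t
\]
is symmetric in $x\leftrightarrow x+z$ and, by the Lipschitz regularity of $u_\eps$, enjoys the pointwise upper bound $\tilde K^h(x,x+z) \leq C|z|^{-d-2+p(1-s)}$. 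By Lemma \ref{l:increment_quot} applied with $\beta=1$, $\|w_h\|_{C^{0,\alpha}(B_r)}$ is bounded uniformly in $h$.

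The next step is to show that the nonlocal term $\LL^h w_h(x) := \int \tilde K^h(x,x+z)(w_h(x)-w_h(x+z))\dd z$ is bounded in $C^{0,\gamma}(B_{r'})$ uniformly in $h$, for every $\gamma<p(1-s)$ and every $r'<r$. Once this is in hand, reading the equation as the Poisson problem $-\eps\Delta w_h = -\LL^h w_h$ and applying the classical interior Schauder estimate for $-\Delta$ gives $\|w_h\|_{C^{2,\gamma}(B_{r''})} \leq C(\eps)$ uniformly in $h$, for any $r''<r'$. Sending $h\to 0^+$ then yields $\partial_e u_\eps \in C^{1,\gamma}(B_{r''})$ with the same estimate, and since the direction $e$ is arbitrary, we conclude $u_\eps \in C^{2,\gamma}_{loc}(B_R)$.

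The main obstacle is thus the Hölder estimate on $\LL^h w_h$ in the regime $p(1-s)\leq 1$, where the naive pointwise bound $|\tilde K^h(x,x+z)(w_h(x)-w_h(x+z))|\lesssim |z|^{-d-2+p(1-s)+\alpha}$ fails to be integrable near $z=0$. To overcome this, one exploits the reflection symmetry of the leading order part of $\tilde K^h$ under $z\mapsto -z$: at $h=0$, to leading order,
\[
\tilde K^0(x,x+z) = (p-1)\,|z|^{-d-sp}\,|\nabla u_\eps(x)\cdot z|^{p-2} + O\big(|z|^{-d-2+p(1-s)+\alpha}\big),
\]
and the leading term is even in $z$. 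Pairing the contributions from $z$ and $-z$ replaces the first-order increment of $w_h$ by its second-order increment, and the $C^{1,\alpha}$ regularity of $u_\eps$ (which controls the error term and provides the rate at which $w_h \to \partial_e u_\eps$) can then be iterated through the inner equation to upgrade $w_h$ from $C^{0,\alpha}$ to $C^{1,\delta}$, at which point the symmetry-based cancellation closes the desired $C^{0,\gamma}$ estimate on $\LL^h w_h$. This symmetrization-plus-bootstrap argument is the analytic heart of the proof; everything else reduces to standard Schauder theory for the regularized Laplacian.
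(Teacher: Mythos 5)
Your overall architecture (difference quotients of $u_\eps$, a bound on the nonlocal term, then Schauder for $-\eps\Delta$) matches the paper's, but the step you yourself call ``the analytic heart'' --- a uniform $C^{0,\gamma}$ bound on the nonlocal term $\LL^h w_h$ --- is precisely where the argument has a genuine gap, and it is the step the paper handles in a structurally different way. Your symmetrization rests on the expansion $\tilde K^0(x,x+z) = (p-1)|z|^{-d-sp}|\nabla u_\eps(x)\cdot z|^{p-2} + O\bigl(|z|^{-d-2+p(1-s)+\alpha}\bigr)$, which comes from $\bigl||a+b|^{p-2}-|a|^{p-2}\bigr|\leq C|a|^{p-3}|b|$ and is therefore valid only where $|z|^{1+\alpha}\lesssim |\nabla u_\eps(x)\cdot z|$: it degenerates near the hyperplane $z\perp\nabla u_\eps(x)$ and fails entirely at points where $\nabla u_\eps(x)=0$. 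Appendix \ref{s:appendix_2} shows that for $p\in[2,1/(1-s))$ the linearized operator applied to a smooth function genuinely need not have a finite pointwise value at critical points, so no pointwise H\"older (or even $L^\infty$) bound on $\LL^h w_h$ can be obtained along these lines without first excluding $\{\nabla u_\eps=0\}$, which you do not do. Moreover, the bootstrap you invoke to pass from $w_h\in C^{0,\alpha}$ to $w_h\in C^{1,\delta}$ is circular as stated: Schauder requires the bound on $\LL^h w_h$, while the symmetrized integral $\int \tilde K(z)\bigl(2w_h(x)-w_h(x+z)-w_h(x-z)\bigr)\dd z$ only converges once $w_h\in C^{1,\delta}$ with $\delta>1-p(1-s)$. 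Even granting the cancellation, one pass multiplies the kernel error $O\bigl(|z|^{-d-2+p(1-s)+\alpha}\bigr)$ by $|w_h(x)-w_h(x+z)|\lesssim|z|^\alpha$, giving $|z|^{-d-2+p(1-s)+2\alpha}$, which with $\alpha=p(1-s)$ is still non-integrable whenever $p(1-s)\leq 2/3$; the required iteration is asserted but never constructed.

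The paper sidesteps all of this by never asking for a pointwise bound. It takes incremental quotients of \emph{fractional} order $\beta\in(\alpha,\min\{2\alpha,1\})$, so that $v_h\in C^{0,1+\alpha-\beta}$ uniformly in $h$ by Lemma \ref{l:increment_quot}, and estimates the nonlocal term only as a distribution in $B^{-\gamma}_{\infty,\infty}$ with $\gamma=1-2\alpha+\beta$: pairing with $\varphi\in W^{\gamma,1}_0$ and using the symmetry $K(x,y)=K(y,x)$ places a $\gamma$-order difference on the test function, and the exponents $(-d-2+\alpha)+(1+\alpha-\beta)=-d-\gamma$ close exactly. Besov elliptic regularity for $-\eps\Delta$ then yields $v_h\in B^{2-\gamma}_{\infty,\infty}=C^{1,2\alpha-\beta}$, a net gain of $\alpha$ in the H\"older exponent of $\nabla u_\eps$ per step, iterated finitely many times to reach $C^{1,1}$ and finally $C^{2,\gamma}$. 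To repair your argument you would need either to adopt this duality formulation or to restrict any pointwise analysis to the set where $\nabla u_\eps\neq 0$, in the spirit of Lemma \ref{l:LL-well-defined}.
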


\begin{proof}
We first deal with the case $p< 1/(1-s)$. From Lemma \ref{l:apriori_Coa} we know $u_\eps \in C_{loc}^{1,p(1-s)}(B_R)$. Take $\alpha=p(1-s)$ and suppose that $1/\alpha$ is not an integer, otherwise take $\alpha$ slightly smaller. 
 
 Take $\beta \in \big(\alpha, \min\{2\alpha,1\}\big)$,
 and for $r<R$ and $e \in S^{d-1}$, let $v_h$ be the incremental quotient of order $\beta$ with $|h| < (R-r)/4$.
 The equation solved by $v_h$ is
\begin{align}  \label{eq:v_h_incre_quot}
-\eps \Delta v_h + \frac{1}{|h|^\beta} \left( (-\Delta_p)^s u_\eps(x+he) - (-\Delta_p)^s u_\eps(x) \right) = 0 \qquad \text{for }x \in B_{r}. 
\end{align} 
To estimate the nonlocal term, we split the integral in $B_{(R+r)/2}$ and outside $B_{(R+r)/2}$. 
The contribution inside $B_{(R+r)/2}$ can be written using the notation introduced in \eqref{eq:notation_Integral} and Lemma \ref{l:appendix}  as
\begin{align} \label{eq:main_term_incre}
    \frac{1}{|h|^{\beta}}\left( \mathcal{L}[B_{\frac{R+r}{2}}] u(x+he) -\mathcal{L}[B_{{\frac{R+r}{2}}}] u(x)\right) =\int_{B_{\frac{R+r}{2}}} K(x,y)\left( v_h(x)- v_h(y)\right) \dd y,
\end{align}
 where
\begin{align*}
     K(x,y) &= (p-1) \int_0^1 \frac{|u_\eps(x)-u_\eps(y)+t(\delta_h u(x)-\delta_h u(y))|^{p-2}}{|x-y|^{d+sp}} \dd t\\
     &\leq C \frac{1}{|x-y|^{d+2-\alpha}}.
\end{align*}
The inequality is a consequence of the Lipschitz continuity of $u_\eps$.
By Lemma \ref{l:increment_quot} we have that $v_h \in C_{loc}^{0,1+\alpha-\beta}(B_R)$. For $\gamma=1-2\alpha + \beta$ we test \eqref{eq:main_term_incre} with $\varphi \in W_0^{\gamma,1}(B_{r})$ getting
\begin{align*}
    &\left| \iint_{B_{\frac{R+r}{2}}\times B_{\frac{R+r}{2}}}  K(x,y)\left( v_h(x)- v_h(y)\right)  \varphi(x) \dd y \, \dd x\right|  \\&=  \left| \frac{1}{2} \iint_{B_{\frac{R+r}{2}}\times B_{\frac{R+r}{2}}}K(x,y)\left( v_h(x)- v_h(y)\right)\left( \varphi(x)-  \varphi(y) \right) \dd y \dd x\right|  \\
    &\leq C \iint_{B_{\frac{R+r}{2}}\times B_{\frac{R+r}{2}}} \frac{(\varphi(x)-\varphi(y))}{|x-y|^{d+1-2\alpha+\beta}} \dd y \dd x = C[\varphi]_{W^{\gamma,1}(B_{r})} .
\end{align*}
The contribution of the part outside $B_{(R+r)/2}$ can be estimated using Lemma \ref{l:est_tail_difference}. In this case, we obtain
\[
\frac{1}{|h|^{\beta}}\left| \mathcal{L}[B_{\frac{R+r}{2}}^c] u(x+he) -\mathcal{L}[B_{\frac{R+r}{2}}^c] u(x)\right| \leq C |h|^{1-\beta}.
\]
Hence, testing with $\varphi \in W_0^{\gamma,1}(B_{r})$ gives 
\begin{align*}
    \int_{B_{r}}\frac{1}{|h|^{\beta}}\left| \mathcal{L}[B_{\frac{R+r}{2}}^c] u(x+he) -\mathcal{L}[B_{\frac{R+r}{2}}^c] u(x)\right| |\varphi(x)| \dd x \leq C |h|^{1-\beta} \| \varphi \|_{L^{1}(B_{r})}.
\end{align*}
Collecting the estimates above gives
\[
\frac{1}{|h|^\beta} \left( (-\Delta_p)^s u_\eps(\,\cdot+he) - (-\Delta_p)^s u_\eps(\,\cdot\,) \right)  \in B_{\infty,\infty}^{-\gamma}(B_r),
\]
which implies by \eqref{eq:v_h_incre_quot} that
$\Delta v_h \in B_{\infty,\infty}^{-\gamma}(B_{r})$. This gives $v_h \in B_{\infty,\infty}^{2-\gamma}(B_\rho)=C^{1,2\alpha-\beta}(B_\rho)$ for $\rho <r$, using \cite[Section 3.4.3]{runst1996book}.
Since $\rho<r<R$ and $e \in S^{d-1}$ were arbitrary, this in terms of $u_\eps$, becomes $u_\eps \in C^{1,2\alpha}_{loc}(B_R)$, see for instance \cite[Lemma 5.6]{CaffarelliCabre}.

We now repeat the argument above $n$ times, where $n\alpha<1$ and $(n+1)\alpha>1$, replacing $\alpha$ by $n\alpha $ and taking $\beta \in (n\alpha, 1)$, we can apply the second alternative in \cite[Lemma 5.6]{CaffarelliCabre} and reach $u \in C_{loc}^{1,1}(B_R)$. 

To conclude, we repeat the argument once more, taking $\gamma\in(0,p(1-s))$ arbitrarily and choosing $\alpha=1+\gamma-p(1-s)$ and $\beta=1$. We get $C_{loc}^{2,\gamma}(B_R)$.

For the case $p=1/(1-s)$, observe that we know from Lemma \ref{l:apriori_Coa}  that $u \in C_{loc}^{1,\alpha}(B_R)$ for every $\alpha \in (0,1)$. Thus, we can repeat directly the last step above and get $u_\eps \in C^{2,\gamma}_{loc}(B_R)$ for all $\gamma \in (0,1)$.

\end{proof}

\subsection{Higher regularity in the range \texorpdfstring{$p \in (1/(1-s),2/(1-s))$}{larger p}}

In this case, we use a bootstrapping argument combined with Schauder estimates to improve the regularity of $u_\eps$.

\begin{lemma}\label{l:apriori_Coa2}
    Let $u_\eps$ solve \eqref{eq:main_apriori}. Then, for each small $\eps$, we have  $u_\eps\in C^{1,\alpha}_{loc}(B_R)$ for all $\alpha \in (0,1)$.
\end{lemma}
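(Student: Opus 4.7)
My plan is to obtain the $C^{1,\alpha}$ regularity by a single-shot elliptic bootstrap, exploiting the fact that in the range $p>1/(1-s)$ the fractional $p$-Laplacian of a Lipschitz function is already a bounded function (Lemma \ref{l:bounded_frac_plap_of_Lipschitz}). The starting point is Lemma \ref{l:Lips_apriori}, which tells us that $u_\eps$ is locally Lipschitz in $B_R$ with a bound that is uniform in $\eps\in(0,1)$, and that ${\Tail}_{p-1,sp}(u_\eps;R)<\infty$ since $u_\eps=u$ outside $B_R$.

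The key step is to rewrite the regularized equation \eqref{eq:main_apriori} as
\[
-\eps\Delta u_\eps(x) = -(-\Delta_p)^s u_\eps(x) \qquad \text{for } x\in B_R.
\]
Since $u_\eps$ is Lipschitz in $B_R$ and belongs to $L^{p-1}_{sp}(\R^d)$, Lemma \ref{l:bounded_frac_plap_of_Lipschitz} (applied in the range $1/(1-s)<p<2/(1-s)$) implies that for every $r<R$ we have $(-\Delta_p)^s u_\eps\in L^\infty(B_r)$, with a bound depending only on $r$, $R$, $[u_\eps]_{Lip(B_R)}$, $\|u_\eps\|_{L^\infty(B_R)}$, ${\Tail}_{p-1,sp}(u_\eps;R)$, $d$, $s$, $p$. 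Consequently $\Delta u_\eps\in L^\infty(B_r)$.

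Once we know that $\Delta u_\eps\in L^\infty_{loc}(B_R)$, we conclude by standard linear elliptic regularity for the Laplacian. Interior Calder\'on--Zygmund estimates give $u_\eps\in W^{2,q}_{loc}(B_R)$ for every $q<\infty$, and Morrey's embedding then yields $u_\eps\in C^{1,\alpha}_{loc}(B_R)$ for every $\alpha\in(0,1)$, as claimed.

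There is no serious obstacle here: the whole point of the restriction $p>1/(1-s)$ is precisely that the singularity of the kernel $|x-y|^{p-1-d-sp}$ coming from a Lipschitz function is integrable, so the pointwise definition of $(-\Delta_p)^s u_\eps$ makes sense and is bounded. The mild technical point to check is that Lemma \ref{l:bounded_frac_plap_of_Lipschitz} is applicable pointwise (rather than in a weak/distributional sense) to our weak solution $u_\eps$; this is fine since $u_\eps$ is continuous and Lipschitz in $B_R$ by Lemma \ref{l:Lips_apriori}, and the tail integral is absolutely convergent by the assumption $u\in L^{p-1}_{sp}(\R^d)$ together with $u_\eps=u$ outside $B_R$. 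The constants in the resulting $C^{1,\alpha}$ bound will depend on $\eps$ through the factor $\eps^{-1}$ in the elliptic estimate, but this is harmless since the only role of Lemma \ref{l:apriori_Coa2} is to justify the qualitative smoothness needed for the computations in Section \ref{s:reduc_grad} applied to $u_\eps$.
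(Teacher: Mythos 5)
Your proposal is correct and follows essentially the same route as the paper's own proof: Lipschitz regularity from Lemma \ref{l:Lips_apriori}, boundedness of $(-\Delta_p)^s u_\eps$ from Lemma \ref{l:bounded_frac_plap_of_Lipschitz} (valid precisely because this lemma sits in the subsection treating $p\in(1/(1-s),2/(1-s))$), and then interior elliptic regularity for $\eps\Delta u_\eps\in L^\infty_{loc}$. You merely spell out the last step (Calder\'on--Zygmund plus Morrey embedding) and the pointwise-versus-distributional identification in slightly more detail than the paper does, which is harmless.
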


\begin{proof}

    The Lipschitz regularity of $u_\varepsilon$ obtained in Lemma \ref{l:Lips_apriori} implies via Lemma \ref{l:bounded_frac_plap_of_Lipschitz} that $(-\Delta_p)^su_\eps\in L^\infty(B_r)$ for every $r<R$. Then we use interior regularity of solutions to
    \[
    \varepsilon\Delta u_\varepsilon\in L^\infty(B_r),
    \]
    to conclude that $u_\varepsilon\in C^{1,\alpha}_{loc}(B_R)$.
\end{proof}
We improve the previous result by showing that if $v \in C^{1,\alpha}$ then $(-\Delta_p)^s v$ is a Lipschitz function.

\begin{lemma}
 \label{l:frac_plap_of_C1a}
 Let $v$ be a function in $L^{p-1}_{sp}(\R^d) \cap C^{1,\alpha}(B_R)$ with $\alpha > 2 - p(1-s)$. Then $(-\Delta_p)^s v$ is locally a Lipschitz function in $B_R$.
\end{lemma}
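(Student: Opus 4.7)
To prove the lemma, I would split the nonlocal operator into a local part and a tail part and estimate each separately. Fix $B_r \Subset B_R$ and consider $x_1,x_2 \in B_r$ with $h := |x_1 - x_2|$ small; setting $\rho := (R-r)/4$ and using the shift $z = y-x$ and the shorthand $J_p(t) := |t|^{p-2}t$, I write
\[
(-\Delta_p)^s v(x) = \int_{|z|\leq \rho} \frac{J_p(v(x)-v(x+z))}{|z|^{d+sp}}\dd z \;+\; \int_{|z|> \rho} \frac{J_p(v(x)-v(x+z))}{|z|^{d+sp}}\dd z =: F_{loc}(x) + F_{tail}(x).
\]
The tail part is handled by Lemma~\ref{l:est_tail_difference}: applied with outer radius $\rho$, and using that $v$ is Lipschitz and bounded in $B_{\rho+r}\subset B_R$ together with $v\in L^{p-1}_{sp}(\R^d)$, it directly gives $[F_{tail}]_{Lip(B_r)} \leq C$ with $C$ depending on the listed quantities.

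For $F_{loc}$, set $a := v(x_1) - v(x_1+z)$ and $b := v(x_2) - v(x_2+z)$. I would first invoke Lemma~\ref{l:appendix} together with $p\geq 2$ to get
\[
|J_p(a) - J_p(b)| \leq C_p (|a|+|b|)^{p-2}|a-b|.
\]
The mean value theorem and $v \in C^1(B_R)$ give the pointwise bounds $|a|, |b| \leq \|\nabla v\|_{L^\infty(B_R)}|z|$. To control $|a-b|$, I would apply the fundamental theorem of calculus twice to obtain
\[
a - b = \int_0^1 \bigl[\nabla v(x_2 + t(x_1-x_2)) - \nabla v(x_2 + z + t(x_1-x_2))\bigr] \cdot (x_1-x_2)\dd t,
\]
so the $C^\alpha$ bound on $\nabla v$ yields $|a-b| \leq [\nabla v]_{C^\alpha(B_R)}\, h\, |z|^\alpha$. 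Combining these two estimates,
\[
|J_p(a)-J_p(b)| \leq C h\, |z|^{p-2+\alpha}.
\]

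Integrating against $|z|^{-d-sp}$ over $\{|z|\leq \rho\}$ and passing to polar coordinates,
\[
|F_{loc}(x_1) - F_{loc}(x_2)| \leq C h \int_0^\rho r^{\,p-2+\alpha-sp-1}\dd r,
\]
which is finite precisely when $p + \alpha - sp > 2$, that is, when $\alpha > 2 - p(1-s)$; this matches exactly the hypothesis of the lemma, and produces $|F_{loc}(x_1)-F_{loc}(x_2)| \leq C h$. Combining with the Lipschitz bound for $F_{tail}$ gives a Lipschitz estimate on pairs $x_1,x_2 \in B_r$ at small distance, and a standard chaining argument along a short polygonal path in $B_r$ extends it to all of $B_r$. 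The only real obstacle is lining up the integrability threshold in the local piece with the regularity assumption $\alpha > 2 - p(1-s)$; once the integrand is decomposed via Lemma~\ref{l:appendix} and the fundamental theorem of calculus, the estimate falls out with the correct exponent.
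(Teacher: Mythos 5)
Your proof is correct and follows essentially the same route as the paper: the same split into a local piece and a tail piece, the tail handled by Lemma~\ref{l:est_tail_difference}, and the local piece controlled by the exponent count $p-2+\alpha-sp>0$, which is exactly $\alpha>2-p(1-s)$. The only difference is cosmetic: the paper differentiates the local integral and bounds $\nabla I_1$ in $L^\infty$, whereas you estimate the finite increment directly via the representation of $J_p(a)-J_p(b)$ from Lemma~\ref{l:appendix}; both yield the same estimate.
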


\begin{proof}
    Write
    \begin{align*}
        &(-\Delta_p)^s v(x)=\int_{B_r}\frac{|v(x)-v(x+h)|^{p-2}}{|h|^{d+sp}}(v(x)-v(x+h))\dd h+\int_{B_r^c}\frac{|v(x)-v(x+h)|^{p-2}}{|h|^{d+sp}}(v(x)-v(x+h))\dd h\\
        &\qquad =:I_1(x)+I_2(x),
    \end{align*}
    where $B_r(x)\subset B_R$.
    We prove that $I_1(\cdot)$ is Lipschitz continuous by differentiating it and proving that it is bounded. In fact,
\begin{align*}
    e\cdot \nabla I_1(x)=&\int_{B_r}(p-1)|v(x)-v(x+h)|^{p-2}(e \cdot \nabla v(x)-e \cdot \nabla v(x+h))\frac{\dd h}{|h|^{d+sp}}\\
    \leq &\, \|v\|_{C^{1,\alpha}(B_R)} (p-1)\int_0^r t^{\alpha+p-2-sp-1} \dd t\leq C
\end{align*}
where  $C>0$ depends only on $s,p$, $\alpha$ and $\|v\|_{C^{1,\alpha}(B_R)}$.

We now prove that $I_2(\cdot)$ is Lipschitz continuous directly. Taking any $x,z \in B_{R}$, we have
\begin{align*}
   \left| I_2(x) - I_2(z)\right| = \left| \int_{B_r^c}\frac{|v(x)-v(x+h)|^{p-2}}{|h|^{d+sp}}(v(x)-v(x+h)) -  \frac{|v(z)-v(z+h)|^{p-2}}{|h|^{d+sp}}(v(z)-v(z+h))\dd h \right|.
\end{align*}
From Lemma \ref{l:est_tail_difference} we deduce that 
\[
\left| I_2(x) - I_2(z)\right|  \leq C |x-z|,
\]
where $C$ depends on $r, \, R, \Tail_{p-1,sp}(v;R), \, \|v\|_{Lip(B_R)}, \, d, \,s $ and $p$.
\end{proof}

We conclude with the following regularity result.
\begin{lemma} \label{l:eps-further-regularity}
Let  $u_\eps$ solve \eqref{eq:main_apriori}. Then, for any $\eps>0$, we have $u_\eps \in C^{2,\alpha}_{loc}(B_R)$ for all $\alpha \in (0,1)$.
\end{lemma}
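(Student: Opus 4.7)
The plan is to chain together the preceding lemmas and then close with a classical Schauder argument. We are in the range $p \in (1/(1-s), 2/(1-s))$, which gives $p(1-s) > 1$ and hence $2 - p(1-s) < 1$. The starting point is Lemma \ref{l:apriori_Coa2}, which already provides $u_\eps \in C^{1,\alpha}_{loc}(B_R)$ for every $\alpha \in (0,1)$. In particular we may pick some $\alpha_0 \in (2 - p(1-s),\,1)$ and have $u_\eps \in C^{1,\alpha_0}_{loc}(B_R)$.

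With that regularity in hand I would apply Lemma \ref{l:frac_plap_of_C1a} directly to $v = u_\eps$ (noting that $u_\eps \in L^{p-1}_{sp}(\R^d)$ is built into the setup of the approximate problem). The conclusion is that $(-\Delta_p)^s u_\eps$ is locally Lipschitz in $B_R$. Now use the equation \eqref{eq:main_apriori}, which can be rewritten as
\[
\varepsilon \Delta u_\eps = -(-\Delta_p)^s u_\eps \qquad \text{in } B_R.
\]
Thus $\Delta u_\eps$ is locally Lipschitz in $B_R$, so in particular it belongs to $C^{0,\alpha}_{loc}(B_R)$ for every $\alpha \in (0,1)$.

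The final step is a purely local application of Schauder estimates for the Laplacian: if $\Delta w \in C^{0,\alpha}_{loc}(\Omega)$, then $w \in C^{2,\alpha}_{loc}(\Omega)$. Applying this to $u_\eps$ (and using that $u_\eps$ is already bounded and Lipschitz by Lemma \ref{l:Lips_apriori}) yields $u_\eps \in C^{2,\alpha}_{loc}(B_R)$ for every $\alpha \in (0,1)$, which is exactly the conclusion. There is really no genuine obstacle here; the only point that requires care is verifying that the hypothesis $\alpha > 2 - p(1-s)$ of Lemma \ref{l:frac_plap_of_C1a} is compatible with the regularity from Lemma \ref{l:apriori_Coa2}, and this is precisely where the range restriction $p > 1/(1-s)$ is used.
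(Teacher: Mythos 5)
Your proposal is correct and follows essentially the same route as the paper: Lemma \ref{l:apriori_Coa2} gives $C^{1,\alpha}_{loc}$ regularity, Lemma \ref{l:frac_plap_of_C1a} then makes $(-\Delta_p)^s u_\eps$ locally Lipschitz, the equation transfers this to $\Delta u_\eps$, and Schauder estimates conclude. Your explicit check that $2-p(1-s)<1$ in the range $p>1/(1-s)$, so that a valid $\alpha_0$ exists, is a point the paper leaves implicit.
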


\begin{proof}
Combining Lemmas \ref{l:apriori_Coa2} and \ref{l:frac_plap_of_C1a}, we get that $(-\Delta_p)^s u_\eps$ is locally Lipschitz. From the equation \eqref{eq:main_apriori}, we deduce that $\Delta u_\eps$ is locally Lipschitz, and we conclude.
\end{proof}

\section{Computing the linearized operator pointwise}\label{s:appendix_2}

In Lemma \ref{l:LL-well-defined-Lipschitz}, we proved that the linearized operator $\LL_u v(x)$ is well-defined in the classical poinwise sense when $u$ and $v$ are locally Lipschitz functions and $p \in (1/(1-s),2/(1-s))$. It is natural to wonder if such a result can hold for $p \in (2,1/(1-s)]$. We will show that the integral \eqref{eq:linearized} is well-defined at $x$ provided that $u \in C^{1,1}(x)$, $v \in C^{1,\alpha}(x)$ with $\alpha > 1-p(1-s)$, and $\nabla u(x) \neq 0$.

At the end of this appendix, we show some examples to explain why the hypothesis $\nabla u(x) \neq 0$ is necessary.

\begin{lemma} \label{l:LL-well-defined}
Let $p \in [2,1/(1-s))$. Let $u, v \in L^{p-1}_{sp}(\R^d)$ be two functions such that $u$ is $C^{1,1}$ at the point $x$ and $v$ is $C^{1,\alpha}$ at the point $x$ for some $\alpha > 1-p(1-s)$. If $p \geq 3/(2-s)$, we assume further that $\nabla u(x) \neq 0$. Then, the integral in \eqref{eq:linearized} is well-defined at $x$, at least in the principal value sense.
\end{lemma}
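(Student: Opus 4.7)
The plan is to separate the integral into a near region $B_r(x)$ and its complement. For $y \in B_r(x)^c$, absolute integrability follows from H\"older's inequality and the tail hypotheses $u, v \in L^{p-1}_{sp}$, exactly as in Lemma \ref{l:LL-well-defined-Lipschitz}. The whole difficulty lies in the near region, where I substitute $y = x+h$ and expand
\[
v(x+h) - v(x) = \nabla v(x) \cdot h + R_v(h), \qquad |R_v(h)| \leq C|h|^{1+\alpha},
\]
splitting the integrand into a remainder piece and a linear-in-$h$ piece.

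The remainder piece is absolutely integrable: using the Lipschitz bound $|u(x+h)-u(x)| \leq M|h|$ together with $p \geq 2$ gives $|K_u(x,x+h)| \leq C|h|^{p-2-d-sp}$, hence
\[
\int_{B_r}|K_u(x,x+h)||R_v(h)|\,dh \leq C\int_0^r t^{p(1-s)+\alpha-2}\,dt,
\]
which converges precisely because $\alpha > 1 - p(1-s)$. This is the sole point where that hypothesis is used.

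The linear piece $\int_{B_r} K_u(x,x+h)\,\nabla v(x)\cdot h\,dh$ is the delicate one since, for $p < 1/(1-s)$, the naive bound is not integrable. The key observation is that the change of variables $h \mapsto -h$ identifies this integral (in the principal value sense) with $\int_{B_r} K_u^a(h)\,\nabla v(x)\cdot h\,dh$, where $K_u^a(h) := \tfrac12[K_u(x,x+h) - K_u(x,x-h)]$. Writing $A = u(x+h)-u(x)$ and $B' = u(x)-u(x-h)$, we have $|A|, |B'| \leq M|h|$ and, by the $C^{1,1}$ property at $x$, $|A - B'| = |u(x+h)+u(x-h) - 2u(x)| \leq C|h|^2$. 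For $p \geq 3$, the elementary inequality $||a|^{p-2}-|b|^{p-2}| \leq (p-2)(|a|+|b|)^{p-3}|a-b|$ immediately gives $|K_u^a(h)| \leq C|h|^{p-1-d-sp}$ unconditionally, and $\int_{B_r}|K_u^a(h)||h|\,dh < \infty$ since $p(1-s) > 0$. For $p \in [2,3)$, the function $t \mapsto |t|^{p-2}$ is only $(p-2)$-H\"older, and the hypothesis $\nabla u(x) \neq 0$ (which is active exactly in the range $p \geq 3/(2-s)$) becomes essential: writing $e = \nabla u(x)/|\nabla u(x)|$, one splits $B_r = R_I \cup R_{II}$ with $R_I = \{h : |h\cdot e| \geq c_0|h|\}$. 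On $R_I$ both $|A|$ and $|B'|$ are comparable to $|h|$ and share the sign of $\nabla u(x)\cdot h$, so the mean-value theorem applied to $|t|^{p-2}$ away from zero again yields $|K_u^a(h)| \leq C|h|^{p-1-d-sp}$. On $R_{II}$, whose angular measure per spherical shell is of codimension one, one combines the H\"older estimate $||A|^{p-2}-|B'|^{p-2}| \leq C|A-B'|^{p-2} \leq C|h|^{2(p-2)}$ with the small measure to obtain a convergent integral, the condition for convergence reducing to $p(2-s) > 2$, which always holds for $p \geq 2$ and $s < 1$.

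The hard part will be the case $p \in [2,3)$: H\"older continuity of $t \mapsto |t|^{p-2}$ by itself does not produce an integrable bound on $K_u^a$, and one must combine the $C^{1,1}$ cancellation $|A - B'| \leq C|h|^2$ with the geometric splitting based on the direction of $\nabla u(x)$. This is precisely what the hypothesis $\nabla u(x) \neq 0$ in the range $p \geq 3/(2-s)$ is there to make available: it ensures that the region $R_I$, on which the mean-value argument applies, carries the essential contribution, while the thin region $R_{II}$ can be controlled by the crude H\"older bound.
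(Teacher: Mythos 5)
Your overall architecture is sound and runs parallel to the paper's: bound the tail, Taylor-expand $v$ and absorb the remainder using $\alpha>1-p(1-s)$, and reduce the linear-in-$h$ piece to a cancellation statement for the kernel (you symmetrize $K_u$ in $h\mapsto -h$; the paper instead replaces $|u(x)-u(y)|^{p-2}$ by the even kernel $|\nabla u(x)\cdot(x-y)|^{p-2}$ and uses oddness — both are legitimate). Your $R_I$ estimate via the mean value theorem is correct, as is the unconditional bound for $p\geq 3$.

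The gap is in the region $R_{II}=\{h:|h\cdot e|<c_0|h|\}$ for $p\in[2,3)$. With $c_0$ a fixed constant, $R_{II}$ is a fixed-aperture cone whose intersection with each sphere $\{|h|=t\}$ is a band of \emph{constant} angular measure — it is not of codimension one, and its measure per shell does not shrink with $t$. The crude H\"older bound $|K_u^a(h)|\leq C|h|^{2(p-2)-d-sp}$ then gives $\int_{R_{II}}|K_u^a(h)||h|\,\dd h\leq C\int_0^r t^{2p-4-sp}\,\dd t$, which converges if and only if $p(2-s)>3$, not $p(2-s)>2$. The condition $p(2-s)>3$ is exactly $p>3/(2-s)$, i.e.\ the regime where the pure $C^{1,1}$ cancellation already closes the argument over the \emph{whole} ball with no geometric splitting and no use of $\nabla u(x)\neq 0$. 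In the complementary regime $p\leq 3/(2-s)$ — precisely where the nondegeneracy of $\nabla u(x)$ is supposed to be doing the work — your $R_{II}$ integral diverges. The obstruction is the intermediate region $\{C|h|^2\lesssim|h\cdot e|\lesssim|h|\}$: there the mean value argument fails (since $|A|$ can be much smaller than $|h|$, making $\xi^{p-3}$ large for $p<3$), while the thin-measure argument is only available on the paraboloid $\{|h\cdot e|\lesssim|h|^2\}$, whose angular width per shell is genuinely $\propto t$. The paper resolves this with a dyadic decomposition $D=\bigcup_k D_k$ according to the ratio $|h\cdot e|/|h|^2$, obtaining on each piece the refined bound $\approx 2^{k(p-3)}|h|^{2(p-2)}$ together with angular width $\approx 2^k|h|$ and the constraint $|h|\lesssim 2^{-k}$, and summing $\sum_k 2^{-kp(1-s)}<\infty$. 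Some such interpolation is unavoidable; your two-region split cannot close. A secondary remark: your case division at $p=3$ is not the natural one (it should be $p\gtrless 3/(2-s)$), and for $p\in[2,3/(2-s))$ you invoke $\nabla u(x)\neq 0$, which the statement as written does not supply there — though the paper's own counterexample $u=x_+^2$ shows the hypothesis is in fact needed in that range, so the stated inequality $p\geq 3/(2-s)$ appears to be reversed relative to what both proofs require.
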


\begin{proof}
The tail of the integral is bounded in the same way as in Lemma \ref{l:LL-well-defined-Lipschitz}.

We now restrict our attention to the integration domain $y \in B_1(x)$.

The fact that $u$ is $C^{1,1}$ at $x$ means that $u$ is differentiable at $x$ and there is a constant $C_u$ such that
\[ \left| u(y) - u(x) - (y-x) \cdot \nabla u(x) \right| \leq C_u |x-y|^2.\]

The fact that $v$ is $C^{1,\alpha}$ at $x$ means that $v$ is differentiable at $x$ and there is a constant $C_v$ such that
\[ \left| v(y) - v(x) - (y-x) \cdot \nabla v(x) \right| \leq C_v |x-y|^{1+\alpha}.\]

We split the domain of integration. Let $D \subset B_1(x)$ be given by
\[ D := \left\{y \in B_1(x) : C_u|x-y|^2 \leq \frac 1 {10} |(x-y) \cdot \nabla u(x)|\right\}. \]

If $y \in B_1(x) \setminus D$, then we have $|u(y)-u(x)| \leq 11 C_u|x-y|^2$. Therefore
\begin{align*}
\int_{B_1(x) \setminus D} \left| \frac{|u(x)-u(y)|^{p-2}}{|x-y|^{d+sp}} (v(x)-v(y)) \right| \dd y &\leq (11 \, C_u)^{p-2} [v]_{Lip(x)} \int_{B_1(x) \setminus D} |x-y|^{2p-3-d-sp} \dd y.
\end{align*}
If the exponent $2p-3-d-sp$ is larger than $-d$, then the integral on the right-hand side is clearly bounded. This is the case in the first alternative when $p > 3/(2-s)$. Otherwise, we must analyze the shape of the set $B_1 \setminus D$ more closely using that $\nabla u(x) \neq 0$. We use polar coordinates $y-x = r\sigma$, for $r>0$ and $\sigma \in S^{d-1}$. We see that the set $B_1 \setminus D$ is described by
\[ B_1 \setminus D = \left\{ (r,\sigma) : r \in [0,1] \text{ and } C_u r \geq \frac 1{10} |\sigma \cdot \nabla u(x)| \right\}.\]
Using that $\nabla u(x) \neq 0$, for each value of $r$, the values of $\sigma \in S^{d-1}$ for which $(r,\sigma)$ is not in $D$ are in a band on the sphere of width proportional to $r$. Therefore, writing the integral in polar coordinates,
\begin{align*}
\int_{B_1(x) \setminus D} \left| \frac{|u(x)-u(y)|^{p-2}}{|x-y|^{d+sp}} (v(x)-v(y)) \right| \dd y &\leq C \int_0^1 |\{\sigma : (r,\sigma) \notin D\}| r^{2p-4-sp} \dd r \\
&\leq C \int_0^1 r^{2p-3-sp} \dd r < +\infty.
\end{align*}
The last quantity is finite since $2p - 3 - sp = (p-2) + p(1-s) - 1 > -1$.

We move on to the case $y \in D$. We observe that
\begin{equation} \label{eq:a2}
\begin{aligned} 
\left| |u(x) - u(y)|^{p-2} - |\nabla u(x)\cdot (x-y)|^{p-2} \right| &\leq C |\nabla u(x)\cdot (x-y)|^{p-3} \left| u(y) - u(x) - (y-x) \cdot \nabla u(x) \right| \\
&\leq C |y-x|^{2(p-2)}.
\end{aligned}
\end{equation}
This follows from the elementary inequality
\[ \left| |a+b|^{p-2} - |a|^{p-2} \right| \leq C \, |a|^{p-3} \, |b|,\]
which is true as long as $a, b \in \R^d$ and $|b| < c|a|$ for some $c < 1$. The constant $C$ depends on $c$.

Therefore, if $2p- 3 - sp > 0$,
\begin{align*}
\int_{D} &\left| \frac{|u(x)-u(y)|^{p-2}}{|x-y|^{d+sp}} (v(x)-v(y)) -  \frac{|\nabla u(x) \cdot (x-y)|^{p-2}}{|x-y|^{d+sp}} (v(x)-v(y)) \right| \dd y \\
&\qquad \leq \int_D C |x-y|^{2p-3-d-sp} < +\infty.
\end{align*}

Again, when the condition $2p- 3 - sp > 0$ does not hold we need a more careful analysis using $\nabla u(x) \neq 0$. We keep subdividing the domain $D$. We write $D = D_0 \cup D_1 \cup D_2 \cup \dots$, where
\[ D_k := \left\{y \in B_1(x) : 10 \cdot 2^k \, C_u|x-y|^2 \leq |(x-y) \cdot \nabla u(x)| < 10 \cdot 2^{k+1} \, C_u|x-y|^2\right\}. \]

As before, we observe that in polar coordinates, the set of $\sigma \in S^{d-1}$ such that $(r,\sigma) \in D_k$ is contained in a band of width $\approx 2^k r$ (here, we are using that $\nabla u(x) \neq 0$ again). Moreover, it is empty if $r > C 2^{-k}$. When $y \in D_k$, we have the following more precise estimate from \eqref{eq:a2}
\[ \left| |u(x) - u(y)|^{p-2} - |\nabla u(x)\cdot (x-y)|^{p-2} \right| \leq C 2^{k(p-3)} r^{2(p-2)}. \]

Therefore
\begin{align*}
\int_{D_k} &\left| \frac{|u(x)-u(y)|^{p-2}}{|x-y|^{d+sp}} (v(x)-v(y)) -  \frac{|\nabla u(x) \cdot (x-y)|^{p-2}}{|x-y|^{d+sp}} (v(x)-v(y)) \right| \dd y \\
&\leq C \int_0^{C2^{-k}} (2^k r) \cdot \left( 2^{k(p-3)} r^{2(p-2)} \right) \cdot r^{1-d-sp} \cdot r^{d-1} \dd r \\
&\leq C 2^{-k(p-2)} \int_0^{C2^{-k}} r^{2p-3-sp} \dd r \\
&= C 2^{-k p(1-s)}.
\end{align*}
Adding up over all $D_k$, we deduce that the integral over $D$ is finite.

We are left to analyze the term
\[ \int_D \frac{|\nabla u(x) \cdot (x-y)|^{p-2}}{|x-y|^{d+sp}} (v(x)-v(y)) \dd y. \]

We write this integral as $I_1 + I_2$, where
\begin{align*}
I_1 &= \int_D \frac{|\nabla u(x) \cdot (x-y)|^{p-2}}{|x-y|^{d+sp}} (v(x)-v(y) - (x-y) \cdot \nabla v(x)) \dd y \\
I_2 &= \int_D \frac{|\nabla u(x) \cdot (x-y)|^{p-2}}{|x-y|^{d+sp}} ((x-y) \cdot \nabla v(x)) \dd y.
\end{align*}
The expression for $I_2$ is not integrable, but it is odd. Thus, it equals zero when evaluated in the principal value sense.

The integral $I_1$ is integrable in the classical sense. Indeed, since $v$ is $C^{1,\alpha}$ at $x$, it follows that
\[
|v(x) - v(y) - \nabla v(x)\cdot (x-y)| \leq C_v |x-y|^{1+\alpha}.
\]
 This makes the integrand $\lesssim |x-y|^{p(1-s)-1+\alpha-d}$, which has an exponent greater than $-d$.

Adding up all the terms, we conclude that the integral in the definition \eqref{eq:linearized} is well-defined in the principal value sense.
\end{proof}

We now show an example of a pair of functions $u$ and $v$ in 1D, with arbitrarily high regularity, for which $\LL_u v(0)$ is not well-defined.

Let $u(x) = x_+^k$, for some arbitrarily large integer power $k$ for $|x|<1$, and continued smoothly for $|x|>1$ so that $u$ is compactly supported. This function $u$ is in $C^{k-1,1}$. Let $v(x) = x$.

Assume that $s > 1/2$ and $p$ is only barely larger than $2$, so that $k(p-2)-sp < -1$. We claim that $\LL_u v(0)$ is not well-defined in the sense that its integral representation given by \eqref{eq:linearized} does not make sense at $x = 0$.

Indeed since $u(x)=0$ for $x\leq 0$, we get
\[ \LL_u v(0) = \int_0^\infty \frac{u(y)^{p-2} v(y)}{|y|^{1+sp}} \dd y \geq \int_0^1 y^{k(p-2)-sp} \dd y = +\infty\]

The case of the classical (local) $p$-Laplacian is not better. In this case 
\[ \Delta_p u(x) = \nabla \cdot ( |\nabla u|^{p-2} \nabla u). \]
The linearized operator $L_u v(x)$ is given by
\[ L_u v(x) = (p-1) \nabla \cdot ( |\nabla u|^{p-2} \nabla v). \]
If we set $v(x) = e \cdot x$ for some unit vector $e$, and $u(x) = |x|^2$, then we have
\[ L_u v(x) = (p-1) e \cdot \nabla (|x|^{2(p-2)}).\]
Thus, $L_u v(0)$ does not exist classically if $2(p-2) \leq 1$.

This fact is related to the difficulty found in \cite{korvenpaa2019equivalence} when defining viscosity solutions in the singular case. There, they observe that the operator is not well-defined for smooth functions at points where their gradients vanish, and they are forced to take test functions with certain growth. Even though in this paper we are working in the degenerate case $p\geq 2$, when we linearize the equation we effectively lose one degree of homogeneity and this problem reappears.

\section*{Acknowledgements}

This research is partially supported by PRIN 2022 7HX33Z - CUP J53D23003610006 and by University of Bologna funds for the project “Attività di collaborazione con università del Nord America” within the framework of “Interplaying problems in analysis and geometry”. These funds supported the visit to the Department of Mathematics of the University of Chicago, where part of this project was carried out.
D.G. and D.J. are grateful to the Department of Mathematics of the University of Chicago for the warm hospitality.
D.G. is also partially supported by the INDAM-GNAMPA project 2025 “At The Edge Of Ellipticity” - CUP E5324001950001. 
L.S. is supported by NSF grants DMS-2054888 and DMS-2350263.

\bibliographystyle{plain} 
\bibliography{bib} 

\begin{thebibliography}{10}

\bibitem{andreu2008nonlocal}
F.~Andreu, J.~M. Maz\'on, J.~D. Rossi, and J.~Toledo.
\newblock A nonlocal {$p$}-{L}aplacian evolution equation with nonhomogeneous {D}irichlet boundary conditions.
\newblock {\em SIAM J. Math. Anal.}, 40(5):1815--1851, 2008/09.

\bibitem{biswas2025improved}
Anup Biswas and Aniket Sen.
\newblock Improved {H}\"older regularity of fractional $(p,q)$-{P}oisson equation with regular data.
\newblock arXiv:2507.09920, 2025.

\bibitem{biswas2025lipschitz}
Anup Biswas and Erwin Topp.
\newblock Lipschitz regularity of fractional p-laplacian.
\newblock {\em Annals of PDE}, 11(2):27, 2025.

\bibitem{bjorland2012non}
C.~Bjorland, L.~Caffarelli, and A.~Figalli.
\newblock Non-local gradient dependent operators.
\newblock {\em Adv. Math.}, 230(4-6):1859--1894, 2012.

\bibitem{bogelein2024gradient}
Verena B{\"o}gelein, Frank Duzaar, Naian Liao, Giovanni~Molica Bisci, and Raffaella Servadei.
\newblock Gradient regularity for $(s,p)$-harmonic functions.
\newblock {\em Calculus of Variations and Partial Differential Equations}, 64(8):253, 2025.

\bibitem{bogelein2024regularity}
Verena B{\"o}gelein, Frank Duzaar, Naian Liao, Giovanni~Molica Bisci, and Raffaella Servadei.
\newblock Regularity for the fractional $p$-{L}aplace equation.
\newblock {\em Journal of Functional Analysis}, 289(9):111078, 2025.

\bibitem{bogelein2024announcement}
Verena B\"ogelein, Frank Duzaar, Naian Liao, Giovanni Molica~Bisci, and Raffaella Servadei.
\newblock Higher regularity theory for {$(s,p)$}-harmonic functions.
\newblock {\em Atti Accad. Naz. Lincei Rend. Lincei Mat. Appl.}, 35(2):311--321, 2024.

\bibitem{bogelein2025gradient}
Verena B{\"o}gelein, Frank Duzaar, Naian Liao, and Kristian Moring.
\newblock Gradient estimates for the fractional $p$-{P}oisson equation.
\newblock {\em Journal de Mathématiques Pures et Appliquées}, 204:103764, 2025.

\bibitem{brasco2017higher}
Lorenzo Brasco and Erik Lindgren.
\newblock Higher {S}obolev regularity for the fractional {$p$}-{L}aplace equation in the superquadratic case.
\newblock {\em Adv. Math.}, 304:300--354, 2017.

\bibitem{brasco2018higher}
Lorenzo Brasco, Erik Lindgren, and Armin Schikorra.
\newblock Higher {H}{\"o}lder regularity for the fractional $p$-{L}aplacian in the superquadratic case.
\newblock {\em Advances in Mathematics}, 338:782--846, 2018.

\bibitem{chihinchan}
Luis Caffarelli, Chi~Hin Chan, and Alexis Vasseur.
\newblock Regularity theory for parabolic nonlinear integral operators.
\newblock {\em J. Amer. Math. Soc.}, 24(3):849--869, 2011.

\bibitem{CaffarelliCabre}
Luis~A. Caffarelli and Xavier Cabr\'e.
\newblock {\em Fully nonlinear elliptic equations}, volume~43 of {\em American Mathematical Society Colloquium Publications}.
\newblock American Mathematical Society, Providence, RI, 1995.

\bibitem{chaker2020coercivity}
Jamil Chaker and Luis Silvestre.
\newblock Coercivity estimates for integro-differential operators.
\newblock {\em Calc. Var. Partial Differential Equations}, 59(4):Paper No. 106, 20, 2020.

\bibitem{di2014nonlocal}
Agnese Di~Castro, Tuomo Kuusi, and Giampiero Palatucci.
\newblock Nonlocal {H}arnack inequalities.
\newblock {\em J. Funct. Anal.}, 267(6):1807--1836, 2014.

\bibitem{di2016local}
Agnese Di~Castro, Tuomo Kuusi, and Giampiero Palatucci.
\newblock Local behavior of fractional {$p$}-minimizers.
\newblock {\em Ann. Inst. H. Poincar\'e{} C Anal. Non Lin\'eaire}, 33(5):1279--1299, 2016.

\bibitem{dibenedetto1983c}
E.~DiBenedetto.
\newblock {$C\sp{1+\alpha }$}\ local regularity of weak solutions of degenerate elliptic equations.
\newblock {\em Nonlinear Anal.}, 7(8):827--850, 1983.

\bibitem{Diening2025higher}
Lars Diening, Kyeongbae Kim, Ho-Sik Lee, and Simon Nowak.
\newblock Higher differentiability for the fractional {$p$}-{L}aplacian.
\newblock {\em Math. Ann.}, 391(4):5631--5693, 2025.

\bibitem{Diening2025calderon}
Lars Diening and Simon Nowak.
\newblock Calder\'on-{Z}ygmund estimates for the fractional {$p$}-{L}aplacian.
\newblock {\em Ann. PDE}, 11(1):Paper No. 6, 33, 2025.

\bibitem{evans1982}
Lawrence~C. Evans.
\newblock A new proof of local {$C\sp{1,\alpha }$}\ regularity for solutions of certain degenerate elliptic {P}.{D}.{E}.
\newblock {\em J. Differential Equations}, 45(3):356--373, 1982.

\bibitem{garain2024higher}
Prashanta Garain and Erik Lindgren.
\newblock Higher {H}\"older regularity for the fractional {$p$}-{L}aplace equation in the subquadratic case.
\newblock {\em Math. Ann.}, 390(4):5753--5792, 2024.

\bibitem{garain2025higher}
Prashanta Garain and Erik Lindgren.
\newblock Higher {H}\"older regularity for fractional $(p,q)$-{L}aplace equations.
\newblock arXiv:2509.15988, 2025.

\bibitem{garofalo2019thoughts}
Nicola Garofalo.
\newblock Fractional thoughts.
\newblock In {\em New developments in the analysis of nonlocal operators}, volume 723 of {\em Contemp. Math.}, pages 1--135. Amer. Math. Soc., [Providence], RI, [2019] \copyright 2019.

\bibitem{giaquinta1986}
Mariano Giaquinta and Giuseppe Modica.
\newblock Remarks on the regularity of the minimizers of certain degenerate functionals.
\newblock {\em Manuscripta Math.}, 57(1):55--99, 1986.

\bibitem{iannizzotto2024survey}
Antonio Iannizzotto.
\newblock A survey on boundary regularity for the fractional {$p$}-{L}aplacian and its applications.
\newblock In {\em Bruno {P}ini {M}athematical {A}nalysis {S}eminar 2024}, volume 15(1) of {\em Bruno Pini Math. Anal. Semin.}, pages 164--186. Univ. Bologna, Alma Mater Stud., Bologna, 2024.

\bibitem{Iannizzotto2016global}
Antonio Iannizzotto, Sunra Mosconi, and Marco Squassina.
\newblock Global {H}\"older regularity for the fractional {$p$}-{L}aplacian.
\newblock {\em Rev. Mat. Iberoam.}, 32(4):1353--1392, 2016.

\bibitem{imbert2019weak}
Cyril Imbert and Luis Silvestre.
\newblock The weak {H}arnack inequality for the {B}oltzmann equation without cut-off.
\newblock {\em J. Eur. Math. Soc. (JEMS)}, 22(2):507--592, 2020.

\bibitem{ishii2010class}
Hitoshi Ishii and Gou Nakamura.
\newblock A class of integral equations and approximation of {$p$}-{L}aplace equations.
\newblock {\em Calc. Var. Partial Differential Equations}, 37(3-4):485--522, 2010.

\bibitem{Kassman-Schwab}
Moritz Kassmann and Russell~W. Schwab.
\newblock Regularity results for nonlocal parabolic equations.
\newblock {\em Riv. Math. Univ. Parma (N.S.)}, 5(1):183--212, 2014.

\bibitem{korvenpaa2019equivalence}
Janne Korvenp\"a\"a, Tuomo Kuusi, and Erik Lindgren.
\newblock Equivalence of solutions to fractional {$p$}-{L}aplace type equations.
\newblock {\em J. Math. Pures Appl. (9)}, 132:1--26, 2019.

\bibitem{kuusi2018regularity}
Tuomo Kuusi, Giuseppe Mingione, and Yannick Sire.
\newblock Regularity issues involving the fractional {$p$}-{L}aplacian.
\newblock In {\em Recent developments in nonlocal theory}, pages 303--334. De Gruyter, Berlin, 2018.

\bibitem{kuusi2022gradient}
Tuomo Kuusi, Simon Nowak, and Yannick Sire.
\newblock Gradient regularity and first-order potential estimates for a class of nonlocal equations.
\newblock arXiv:2212.01950, 2024.
\newblock To appear in American Journal of Mathematics.

\bibitem{lewis1983}
John~L. Lewis.
\newblock Regularity of the derivatives of solutions to certain degenerate elliptic equations.
\newblock {\em Indiana Univ. Math. J.}, 32(6):849--858, 1983.

\bibitem{lindgren2016holder}
Erik Lindgren.
\newblock H\"older estimates for viscosity solutions of equations of fractional {$p$}-{L}aplace type.
\newblock {\em NoDEA Nonlinear Differential Equations Appl.}, 23(5):Art. 55, 18, 2016.

\bibitem{mosconi2016recent}
Sunra Mosconi and Marco Squassina.
\newblock Recent progresses in the theory of nonlinear nonlocal problems.
\newblock In {\em Bruno {P}ini {M}athematical {A}nalysis {S}eminar 2016}, volume~7 of {\em Bruno Pini Math. Anal. Semin.}, pages 147--164. Univ. Bologna, Alma Mater Stud., Bologna, 2016.

\bibitem{Palatucci2018dirichlet}
Giampiero Palatucci.
\newblock The {D}irichlet problem for the {$p$}-fractional {L}aplace equation.
\newblock {\em Nonlinear Anal.}, 177:699--732, 2018.

\bibitem{runst1996book}
Thomas Runst and Winfried Sickel.
\newblock {\em Sobolev spaces of fractional order, {N}emytskij operators, and nonlinear partial differential equations}, volume~3 of {\em De Gruyter Series in Nonlinear Analysis and Applications}.
\newblock Walter de Gruyter \& Co., Berlin, 1996.

\bibitem{schikorra2018regularity}
Armin Schikorra, Tien-Tsan Shieh, and Daniel~E. Spector.
\newblock Regularity for a fractional {$p$}-{L}aplace equation.
\newblock {\em Commun. Contemp. Math.}, 20(1):1750003, 6, 2018.

\bibitem{silvestre2006holder}
Luis Silvestre.
\newblock H\"older estimates for solutions of integro-differential equations like the fractional {L}aplace.
\newblock {\em Indiana Univ. Math. J.}, 55(3):1155--1174, 2006.

\bibitem{tolksdorf}
Peter Tolksdorf.
\newblock Regularity for a more general class of quasilinear elliptic equations.
\newblock {\em J. Differential Equations}, 51(1):126--150, 1984.

\bibitem{triebel1983book}
Hans Triebel.
\newblock {\em Theory of function spaces}.
\newblock Modern Birkh\"auser Classics. Birkh\"auser/Springer Basel AG, Basel, 2010.
\newblock Reprint of 1983 edition [MR0730762], Also published in 1983 by Birkh\"auser Verlag [MR0781540].

\bibitem{uhlenbeck1977regularity}
K.~Uhlenbeck.
\newblock Regularity for a class of non-linear elliptic systems.
\newblock {\em Acta Math.}, 138(3-4):219--240, 1977.

\bibitem{ural1968degenerate}
N.~N. Uralceva.
\newblock Degenerate quasilinear elliptic systems.
\newblock {\em Zap. Nau\v cn. Sem. Leningrad. Otdel. Mat. Inst. Steklov. (LOMI)}, 7:184--222, 1968.

\bibitem{wang1994compactness}
Lihe Wang.
\newblock Compactness methods for certain degenerate elliptic equations.
\newblock {\em J. Differential Equations}, 107(2):341--350, 1994.

\end{thebibliography}
\Addresses
\end{document}